\newcommand{\ignore}[1]{}
\theoremstyle{plain}
\newtheorem{definition}{Definition}[section]
\newtheorem{theorem}[definition]{Theorem}
\newtheorem{proposition}[definition]{Proposition}
\newtheorem{lemma}[definition]{Lemma}
\newtheorem{corollary}[definition]{Corollary}
\newtheorem{remark}[definition]{Remark}
\numberwithin{equation}{section}
\renewcommand{\theequation}{\thesection.\arabic{equation}}
\def\dis{\displaystyle}
\DeclareMathOperator*{\supp}{supp}
\def\R{\mathbb{R}}
\def\N{\mathbb{N}}
\def\RN{\mathbb{R}^{N}}
\def\norma#1#2{\|#1\|_{\lower 4pt \hbox{$ \scriptstyle #2$ }}}
\def\h{H^1(\mathbb{R}^N)}
\def\D{\mathcal{D}}
\def\A{{\mathcal A}}
\newcommand{\eps}{\varepsilon}
\newcommand{\e}{e}
\newcommand{\rr}{\bar r}
\newcommand{\emax}{e_{max}}
\newcommand{\emin}{e_{min}}
\author[R. Durastanti]{Riccardo Durastanti$^1$}
\author[L. Giacomelli]{Lorenzo Giacomelli$^{2,*}$}
\address{$^1$
Department of Mathematics and Applications ``Renato Caccioppoli'', University of Naples ``Federico II'', Via Cintia, Monte S. Angelo, 80126 Napoli, Italy
\\ riccardo.durastanti@unina.it}
\address{$^2$ SBAI Department, Sapienza University of Rome, Via Antonio Scarpa 16, 00161 Roma, Italy
\\ lorenzo.giacomelli@uniroma1.it}
\address{$^*$ Corresponding author}
\keywords{Singular minimization problem, mass constraint, singular potential, attractive-repulsive potential, inter-molecular potential, partial wetting, complete wetting, spreading coefficient, precursor, asymptotic behavior, scaling law, droplet, pancake, macroscopic contact angle, effective contact angle, uniqueness, lubrication theory, thin-film equation, free boundary problems, Alt-Phillips functional, Alt-Caffarelli functional}
\subjclass[2020]{
35Q35, 
34C60, 
34E10, 
35J91, 
49J05, 
49J10, 
76A20, 
76D03, 
76D08 
}
\begin{document}

\title[Spreading equilibria: pancakes vs droplets]{
Spreading equilibria under mildly singular potentials: pancakes versus droplets
}

\begin{abstract}
We study global minimizers of a functional modeling the free energy of thin liquid layers over a solid substrate under the combined effect of surface, gravitational, and intermolecular potentials. When the latter ones have a mild repulsive singularity at short ranges, global minimizers are compactly supported and display a microscopic contact angle of $\pi/2$. Depending on the form of the potential, the macroscopic shape can either be droplet-like or pancake-like, with a transition profile between the two at zero spreading coefficient. These results generalize, complete, and give mathematical rigor to de Gennes' formal discussion of spreading equilibria. Uniqueness and non-uniqueness phenomena are also discussed.
\end{abstract}

\maketitle

\section{Introduction and results}

\subsection{The problem} We consider a class of singular energy functionals of the form
\begin{equation}
\label{def-E}
E[u]=\int_{\R^N} \left(\tfrac{1}{2}|\nabla u|^2 + Q(u)\right)dx,
\end{equation}
where the potential $Q(u)$ satisfies the following structural assumptions:
\begin{equation}
\label{H}
\left\{\begin{array}{l}
Q\in C((0,+\infty)), \ Q\equiv 0 \ \mbox{in}\ (-\infty,0], \ \inf Q>-\infty, \\
[1ex]
\dis Q(u)\sim A u^{1-m} \quad \text{as } u\to 0^+, \ m>1, \ A>0.
\end{array}\right.
\end{equation}
In view of \eqref{H}, \eqref{def-E} may be seen as a generalized and singular version of the Alt-Cafferelli or Alt-Phillips functionals \cite{AC,AP}. When modeling the height $u(x)$ of a liquid film over a solid substrate in lubrication approximation, $\gamma E$ represents the free energy of the liquid, with $\gamma$ the liquid-vapor surface tension. In this case, the potential $Q$ usually combines the effects of intermolecular, gravitational, and surface forces:
$$
Q(u)=(P(u)+G(u)-S)\chi_{\{u>0\}}
$$
\cite{DG,ODB}. The function $P$ is a inter-molecular potential, singular at $u=0$ and decaying at $u=+\infty$; the function $G$ is a gravitational potential; $S$ is the non-dimensionalized spreading coefficient:
\begin{equation*}
S=\frac{\mbox{spreading coefficient}}{\gamma}= \frac{\gamma_{SG}-\gamma_{SL}-\gamma}{\gamma},
\end{equation*}
where $\gamma_{SG}$ and $\gamma_{SL}$ are the solid-gas and solid-liquid tensions, respectively. There is, however, a caveat to be made at this point.

In thermodynamic equilibrium of the solid with the vapor phase (the so-called ``moist'' case, which concerns for instance a surface which has been pre-exposed to vapor), $\gamma_{SG}$ is usually denoted by $\gamma_{SV}$, and its value can never exceed $\gamma_{SL}+\gamma$. Indeed, otherwise the free energy of a solid/vapor interface could be lowered by inserting a liquid film in between: the equilibrium solid/vapor interface would then comprise such film, leading to $\gamma_{SV}=\gamma_{SL}+\gamma$. Therefore, $S\le 0$ in the ``moist'' case. On the other hand, when the solid and the gaseous phase are not in equilibrium (the so-called ``dry'' case),  there is no constraint on the sign of $S$.

The cases $S<0$, resp. $S\ge 0$, are commonly referred to as {\it partial wetting}, resp. {\it complete wetting}: indeed, when $Q\equiv -S$, the global minimizer's support is compact if $-S>0$, whereas if $-S\le 0$ the global minimizer does not exist and the final spreading equilibrium is a zero-thickness unbounded film (see e.g. \cite[\S 19.4]{Maggi}, where the complete form of the surface energy is considered instead of its lubrication approximation).

\medskip

We are interested in non-negative {\em global} minimizers (hereafter simply called {\it minimizers}) of $E$ under the constraint of fixed mass; that is, in the set
\begin{equation}
\label{def-D}
\D= \D_M= \left\{u\in H^1(\RN) : u\geq 0, \int u=M \right\}
\end{equation}
(we shall omit the subscript $M$ when unnecessary).

\subsection{The potential $Q$}
When gravity is not taken into account, $Q$ is characterized by
\begin{equation}\label{modelQ0}
Q(u)= \left\{\begin{array}{ll}
Au^{1-m}(1+o(1)) & \mbox{as $u\to 0^+$}
\\[1ex]
-S- Bu^{1-n}(1+o(1)) & \mbox{as $u\to +\infty$}
\end{array}\right.
\end{equation}
with $A>0$, $B\in \R$, and $m,n>1$. Since $A>0$, the singularity of $Q$ at $u=0$ disfavors small heights of the droplet and corresponds to short-range repulsive forces. When the strength of the singularity is sufficiently high, namely when $m\ge 3$, the very existence of a minimizer is precluded, since $E[u]\equiv +\infty$ for any $u\in \mathcal D$ (see Lemma \ref{m>3} below). However, this is not the case when the singularity is milder ($m<3$), which is the focus of this manuscript.

\smallskip

At long ranges, $B<0$ corresponds to considering the effect of repulsive forces only (cf. the discussion in \cite[II.D.1]{DG} and references therein), whereas $B>0$ corresponds to considering short-range repulsive, long-range attractive, forces (cf. \cite[II.E]{ODB} and references therein). We anticipate that the long-range decay exponent $n$ is not essential: it enters the analysis only in critical cases.

\smallskip

Though our results cover a wide range of potentials, it will be convenient to introduce a few prototypical cases (Fig. \ref{fig-potentials}.A). The first one, $Q_a$, is repulsive-attractive for $B>0$ and purely repulsive for $B\le 0$:
\begin{equation}
\label{modelQ1a}
\begin{array}{c}
Q_{a}(u) = Au^{1-m}- Bu^{1-n}-S \quad \mbox{for $u>0$},
\\[1ex]
\quad B\in \R, \quad  1<n<m.
\end{array}
\end{equation}
For purely repulsive potentials ($B<0$), a long-range decay exponent $n$ larger than the short-range growth exponent $m$ is often considered. A prototype which is suited to this situation is
\begin{subequations}\label{modelQ1b}
\begin{equation}
Q_{b}(u) = \frac{A|B| u}{|B|u^m+Au^{n}}-S \quad \mbox{for $u>0$},
\quad B< 0, \quad  m<n,
\end{equation}
for which we only consider the convex case, corresponding to the constraint
\begin{equation}\label{strange}
1 + 2 m + m^2 + 2 n - 6 m n + n^2 \le 0.
\end{equation}
\end{subequations}
Finally, when gravity is taken into account, the potential $G$ has to be added:
\begin{equation}
\label{modelQg}
Q_{a,g}(u)= Q_a(u)+\tfrac12 Du^2, \quad Q_{b,g}(u)= Q_b(u)+\tfrac12 Du^2.
\end{equation}

\captionsetup{width=0.9\linewidth}
{\centering\raisebox{\dimexpr \topskip-\height}{
  \includegraphics[width=0.45\textwidth]{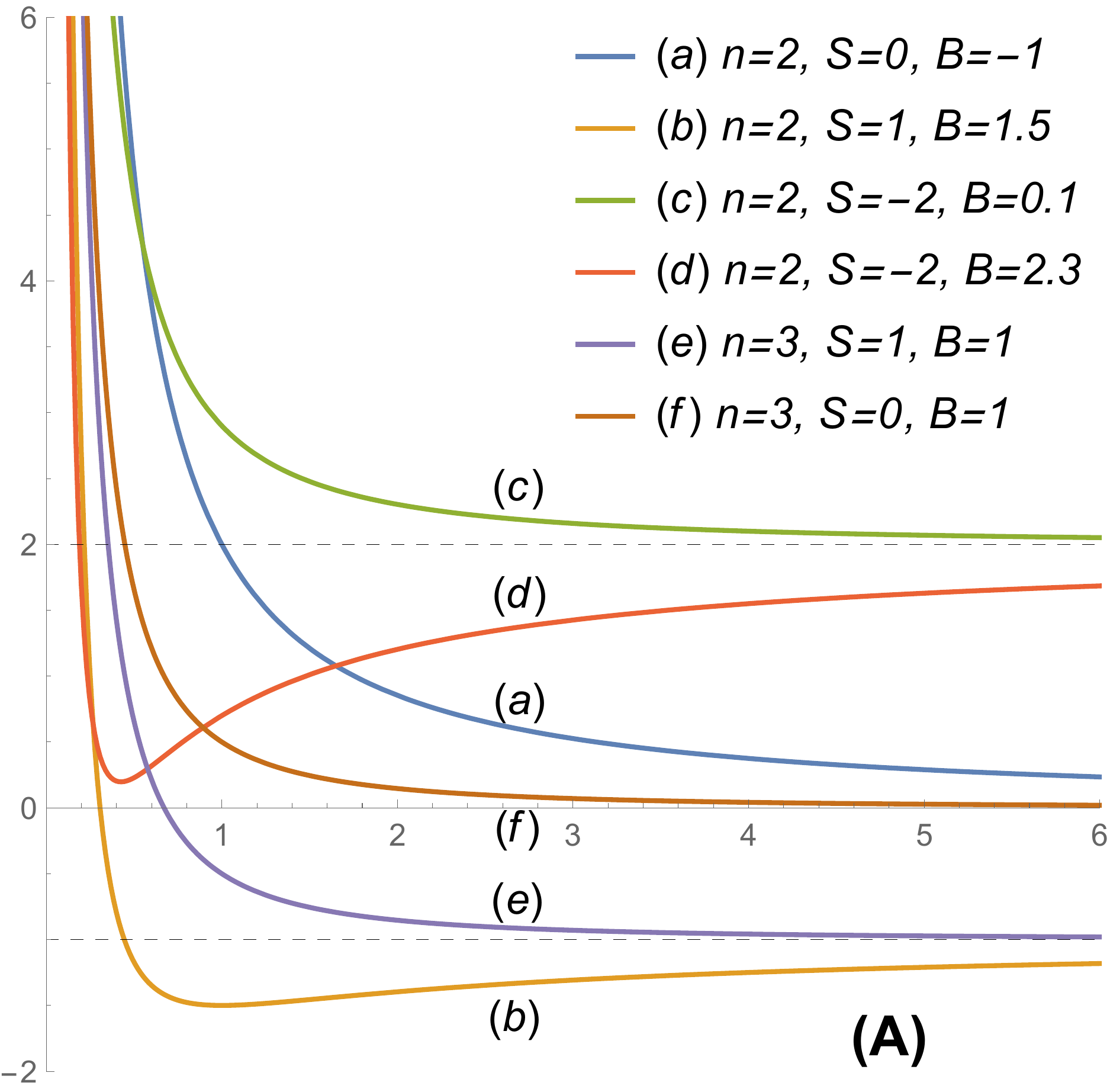} \qquad \includegraphics[width=0.45\textwidth]{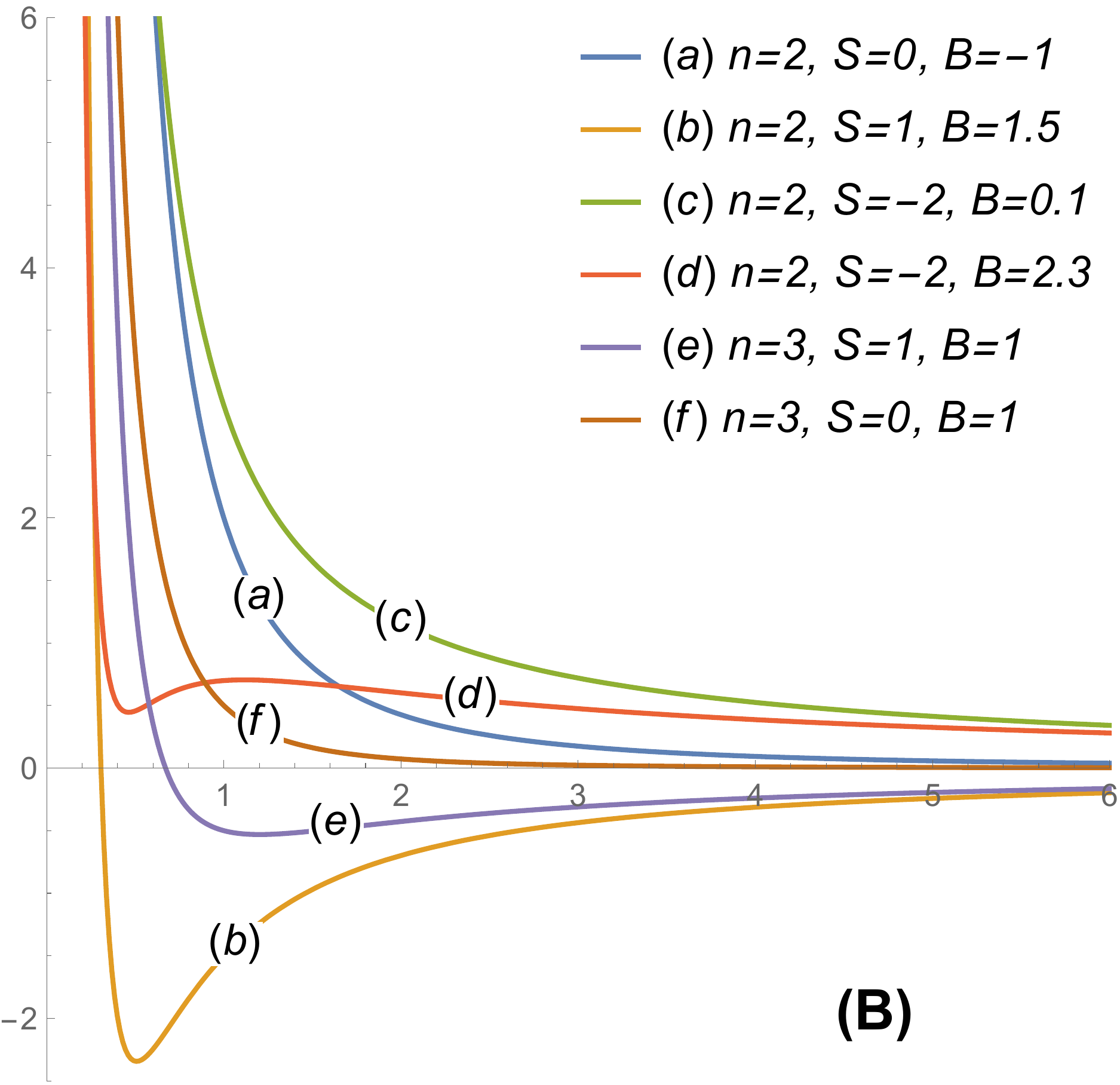} }
  \captionof{figure}{{\footnotesize {\bf (A)} On the left, prototypes of $Q$ with $A=1$ and $m=5/2$: from (a) to (d), $Q_a$ with $n=2$; in (e) and (f), $Q_b$ with $n=3$. {\bf (B)} On the right, the corresponding functions $R(u)=Q(u)/u$: $R'$ has no zeros in (a), (c), and (f), one zero in (b) and (e), two zeroes in (d); $\e_*<+\infty$ in (b) and (e).
  }}
  \label{fig-potentials}
}
\subsection{The framework}\label{ss-frame}

An enormous amount of work has been done on the fundamentals of wetting phenomena, from different perspectives: referenced discussions may be found in \cite{DG,Finn,ODB,Bonn,Maggi,A}. Concerning the analysis of energy functionals $E$ of the form \eqref{def-E} {\em with a singular potential $Q$}, the focus has mainly been on two aspects.

\begin{itemize}
\item {\em Positive minimizers with $Q\equiv +\infty$ for $u\le 0$ and/or $m\ge 3$}. In this case, short range repulsion is so strong that compactly supported minimizers do not exist, and energy minimization forces the creation of a tiny liquid layer fully separating gas and solid.
In this framework, interesting qualitative properties of minimizers, such as (in)stability of the flat film, bifurcation, concentration, and asymptotic scaling laws with respect to the potential's parameters, have been successfully investigated, also in relation to dynamic phenomena such as coarsening and dewetting; see \cite{Getal,BGW,GW,CJ,CJL,Ji,LP1,LP2,LP3,LP4,LW,ORS,GORS}, the references therein, and \cite{W} for a recent overview.

\item {\em Potentials with $A<0$.} In this case, minimizers and critical points also have a rich structure: we refer to \cite{JN} and again to \cite{LP1,LP2,LP3,LP4} for a thorough study, including classification, stability, and other qualitative properties.
\end{itemize}

On the other hand, in the case of mildly singular potentials,
\begin{equation}\label{main-m}
Q(u)\equiv 0 \ \mbox{ for } \ u\le 0 \quad\mbox{and} \quad 1<m<3,
\end{equation}
the minimization problem \eqref{def-E}-\eqref{def-D} does not seem to have been explored so far. We are only aware of two very recent and interesting works \cite{DeSa1,DeSa2}, where the model case $Q(u)=u^{1-m}\chi_{\{u>0\}}$ is considered on a bounded domain $\Omega$ with Dirichlet boundary condition (and no mass constraint). There, existence and regularity of minimizers is discussed, together with the regularity of the free boundary $\partial\{u>0\}\cap \Omega$ and the $\Gamma$-limit as $m\to 3^-$.

\smallskip

The case \eqref{main-m} is the focus of the present manuscript. Given the vastity of the potentials which have been introduced and considered through the years, we prefer to study generic potentials rather than concentrating on model cases.

\subsection{Existence and basic properties of minimizers}

Solely under \eqref{H} and \eqref{main-m}, the existence of a minimizer of $E$ in $\D$ is guaranteed by standard direct methods and symmetry arguments (see Theorem \ref{exist}). The assumption $m<3$ is crucial, since $E[u]\equiv +\infty$ on $\D$ if $m\ge 3$ (see Lemma \ref{m>3}).  It turns out that the minimizer we obtain is:

\begin{itemize}
\item[(a)] compactly supported;
\item[(b)] radially symmetric (up to a translation of $x$);
\item[(c-)] non-increasing along radii.
\end{itemize}

In the rest of this introduction we assume in addition that $Q\in C^1((0,+\infty))$. If either $N=1$, or if $Q'(u)$ satisfies a very mild additional condition for $u\gg 1$ (see \eqref{H2} below), then (a), (b), and (c-) in fact hold for {\em any} minimizer; in addition, {\em any} minimizer is (see Theorem \ref{alldec}):
\begin{itemize}
\item[(c)] strictly decreasing along radii;
\item[(d)] a smooth solution to the Euler-Lagrange equation for some $\lambda\in \R$:
\begin{equation}
\label{EL-intro}
-\Delta u+Q'(u)=\lambda \quad\mbox{in $\{u>0\}$}.
\end{equation}
\end{itemize}

\subsection{The one-dimensional case}

For $N=1$ we are able to obtain much more detailed information, such as uniqueness and asymptotic results, which are discussed in the next paragraphs. The key to both of them is the identification of $\lambda$, which we prove via a combination of ODE and variational arguments (Theorem \ref{th-lambda}):
\begin{equation}\label{def-R-intro}
\lambda = R(u(0)), \quad\mbox{where}\quad  R(u):=\frac{Q(u)}{u}.
\end{equation}
Not surprisingly, the function $R$ plays a crucial role in the analysis. First of all,
it follows from \eqref{def-R-intro} that a constant function $u_s\in (0,+\infty)$ is a stationary solution to \eqref{EL-intro} if and only if $Q'(u_s)=\lambda= R(u_s)=Q(u_s)/u_s$; since $u^2 R'(u)= uQ'(u)-Q(u)$, in fact
$$
\mbox{$u_s$ is a stationary solution to \eqref{EL-intro} if and only if $R'(u_s)=0$.}
$$
We assume, as in the model cases \eqref{modelQ1a}, \eqref{modelQ1b}, and \eqref{modelQg}, that these stationary solutions do not accumulate at $0$ or $+\infty$:
\begin{equation*}
\mbox{$\delta\in (0,1)$ exists such that \  $R'\ne 0$ \ in \, $(0,\delta)\cup (\delta^{-1},+\infty)$.}
\end{equation*}
Of crucial importance is the smallest among the absolute minimum points of $R$, provided they exist:
\begin{equation}
\label{def-s*-intro}
\e_*:=\left\{\begin{array}{ll}
+\infty & \mbox{if $\not\exists \, \min R$}
\\[1ex]
\min R^{-1}(\min R) & \mbox{otherwise}.
\end{array}\right.
\end{equation}
In the model cases, $\e_*$ coincides with the unique global minimum point of $R$, whenever such point exists (Fig. \ref{fig-potentials}.B).

\subsection{Uniqueness}\label{ss-intro-uniq}

As is often the case, uniqueness is related to convexity. If $Q$ is convex in $(0,\e_*)$, by comparison arguments we show that the minimizer is unique (see Theorem \ref{uniq1d}). In terms of the model cases, this translates into (see Section \ref{s:mod}):
\begin{itemize}
\item uniqueness for $Q_{a}$ if $B\le 0$, or if $B>0$ and $-S\le 0$, or if $-S\ge 0$ and $B\ge c_1(A,S)$, where
$$
c_1(A,S):= \textstyle (m-1)\left(\frac{A}{n-1}\right)^{\frac{n-1}{m-1}} \left(\frac{-S}{m-n}\right)^{\frac{m-n}{m-1}};
$$
\item uniqueness for $Q_{a,g}$ if $-S\le 0$ or if $B\le c_3(A,D)$, where
$$\textstyle
c_3(A,D):= \frac{m+1}{n(n-1)}\left(\frac{Am(m-1)}{n+1}\right)^{\frac{n+1}{m+1}}\left(\frac{D}{m-n}\right)^{\frac{m-n}{m+1}};
$$
\item uniqueness for $Q_{b}$ and $Q_{b,g}$.
\end{itemize}
Interestingly, however, potentials $Q$ exist such that the minimizer is {\em not} unique for at least one value of the mass $M$. Generally speaking, this occurs when $R$ is not injective in $(0,\e_*)$ (see Theorem \ref{no-un}): this is the case, for instance, in model $Q_a$ with $-S>0$ and $c_2(A,S) \leq B < c_1(A,S)$, where (see Section \ref{s:mod})
$$
c_2(A,S):= \tfrac{m-1}{n}\left(\tfrac{Am}{n-1}\right)^{\frac{n-1}{m-1}} \left(\tfrac{-S}{m-n}\right)^{\frac{m-n}{m-1}}.
$$
\captionsetup{width=0.9\linewidth}
{\centering\raisebox{\dimexpr \topskip-\height}{\includegraphics[width=0.9\textwidth]{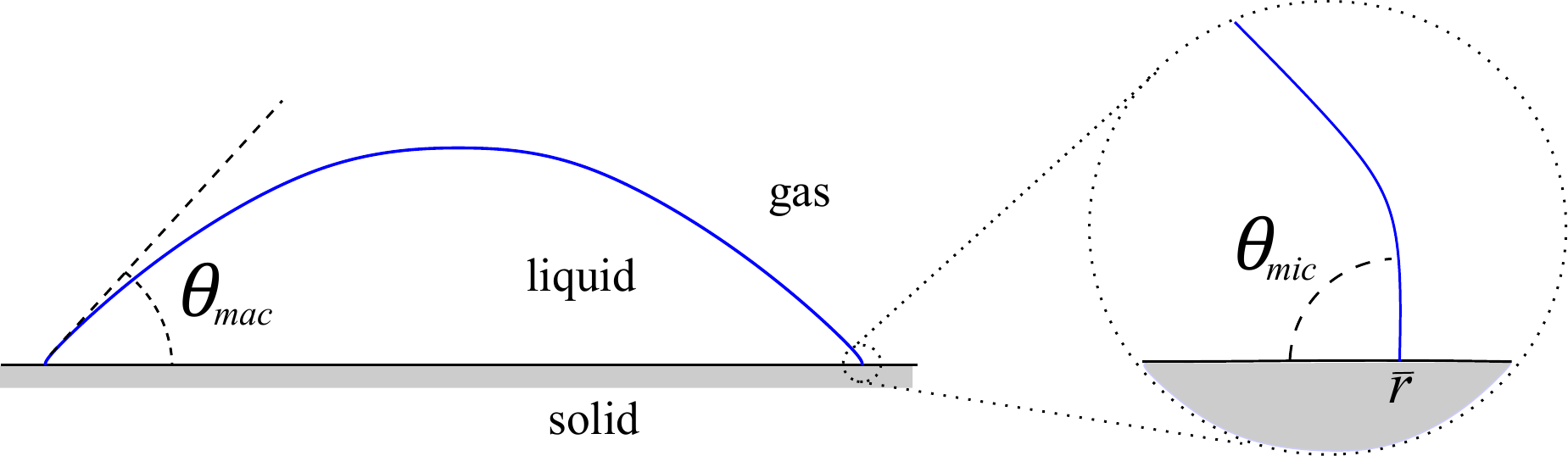}}
  \captionof{figure}{{\footnotesize A prototypical droplet ($N=1$). On the left, the macroscopic profile and the macroscopic contact angle $\theta_{mac}$; on the right, a zoom into the contact-line $\rr$ and the microscopic contact angle $\theta_{mic}$. All minimizers of $E$ in $\mathcal D$ have $\theta_{mic}=\pi/2$.
  }}
  \label{fig-droplet}
}

\subsection{Micro-macro relations and the regime $M\gg 1$}

When continuum models are considered, wetting phenomena are characterized by the presence of two interfaces of codimension-one (liquid-solid and liquid-gas) and an (unknown) {\em contact line}, i.e. a codimension-two interface where the solid, the liquid, and the surrounding gas or vapour meet (Fig. \ref{fig-droplet}). Among the main topics of interest to the physics and applied math communities, are the modelling of the ``microscopic'' physics near these interfaces --e.g. in terms of intermolecular potentials, substrate's corrugation and, in a dynamic context, slippage, contact-line free boundary conditions, and rheological properties-- and the analysis of how such microscopic laws affect the ``macroscopic'' behaviour of droplets. See the reviews \cite{DG,ODB,Bonn,A} and also \cite{FK,FliK,RHE} for referenced discussions.

\smallskip

In this context, of particular importance are the {\em microscopic contact angle} $\theta_{mic}$, identified with the arctangent of the droplet's slope {\em at} the contact line, and (various notions of) {\em macroscopic, or effective, or apparent contact angle} $\theta_{mac}$: generally speaking, this is the arctangent of the slope, near the contact line, of the profile that the droplet assumes in the bulk of the wetted region, see Fig. \ref{fig-droplet}.

\smallskip

For droplet's dynamics, after the pioneering works \cite{HS,DD,Voinov1976,Tanner}, the relation of $\theta_{mac}$ and macroscopic profile with $\theta_{mic}$, microscopic modelling, and speed of the contact line has been extensively studied via both formal asymptotic methods (see e.g. \cite{Cox,Hoc1,Hoc2,HM,BDDG,ES,AG,CG} and the references therein) and rigorous arguments \cite{GO3,GGO,DM}, especially in the case $\theta_{mic}=0$. More details may be found e.g. in \cite{ES} and \cite[\S C]{Bonn}.

\smallskip

In the framework of this paper, which is concerned with statics, the ``microscopic'' physics are encoded in the intermolecular part $P$ of the potential $Q$. In order to associate to $P$ a microscopic length-scale $\eps$, for a given reference potential $P_0$ one could set
\begin{equation}\label{case-eps}
P(u) = P_0\left(\tfrac{u}{\eps}\right)\quad \mbox{with } \ \eps\ll 1.
\end{equation}
Then the macroscopic profile of minimizers could be identified by taking the limit as $\eps\to 0$. However, due to the lack of scaling invariance of $E$ for general $Q$, it is more convenient to look at the limit as $M\to +\infty$. The two regimes are equivalent when $E$ has a scaling invariance. This is the case, for instance, when $P$ has the form \eqref{case-eps} and $G=0$ (no gravity): indeed, with the scaling $x=\eps\hat x$, $u=\eps \hat u$, one obtains
$$
E[u]= \int_{\R} \left(\tfrac12 u_x^2+P_0\left(\tfrac{u}{\eps}\right) -S \right)d x = \eps \int_{\R} \left( \tfrac12\hat u_{\hat x}^2+P_0(\hat u) -S\right)d \hat x
$$
with
$$
\int_{\R} \hat u d\hat x = \eps^{-2} \int_{\R} u d x \gg 1.
$$
Hence, we will analyze the limit $M\to +\infty$: the goal is to identify a macroscopic profile, whence a macroscopic contact angle (if it exists), as the limit of (suitably rescaled) minimizers $u_M$ of $E$ in $\mathcal D_M$.

\subsection{Microscopic behavior}\label{ss:micro}
The microscopic behavior of minimizers of $E$ in $\mathcal D$ is universally determined by the short-range form of the potential. Indeed, we show in Theorem \ref{asbe1} that
$$
u(x) \sim \left(\tfrac{A(m+1)^2}{2}\right)^{\frac{1}{m+1}}(\rr- x)^{\frac{2}{m+1}} \quad \text{as } x \to \rr^-\,,
$$
where $\rr$ denotes the right boundary of the minimizer's support. This shows that mildly singular potentials produce steady states with $\theta_{mic}=\pi/2$ (Fig. \ref{fig-droplet}).

\subsection{Macroscopic behavior: Pancakes versus droplets}\label{ss:macro}

Let $u_M$ ba a minimizer of $E$ in $\mathcal D_M$. By translation invariance, we may assume that $\supp u_M=[-\rr_M,\rr_M]$ and that the maximal height is $u_{0M}=u_M(0)$. The behavior of $u_M$ for $M\gg 1$ is essentially influenced by two quantities: the constant $\e_*$ defined in \eqref{def-s*-intro}, which is always finite in presence of gravity (i.e. $D>0$), and the non-dimensionalized spreading coefficient $S$, which for a generic potential $Q$ is defined by
$$
(-\infty,+\infty)\ni -S := \lim_{u\to +\infty} Q(u) \quad\mbox{when the limit exists and is finite.}
$$

We will prove in Section \ref{s4} that there are two generic behavior of $u_M$ as $M\to +\infty$.

\medskip
\captionsetup{width=\linewidth}
\centerline{\includegraphics[width=0.8\textwidth]{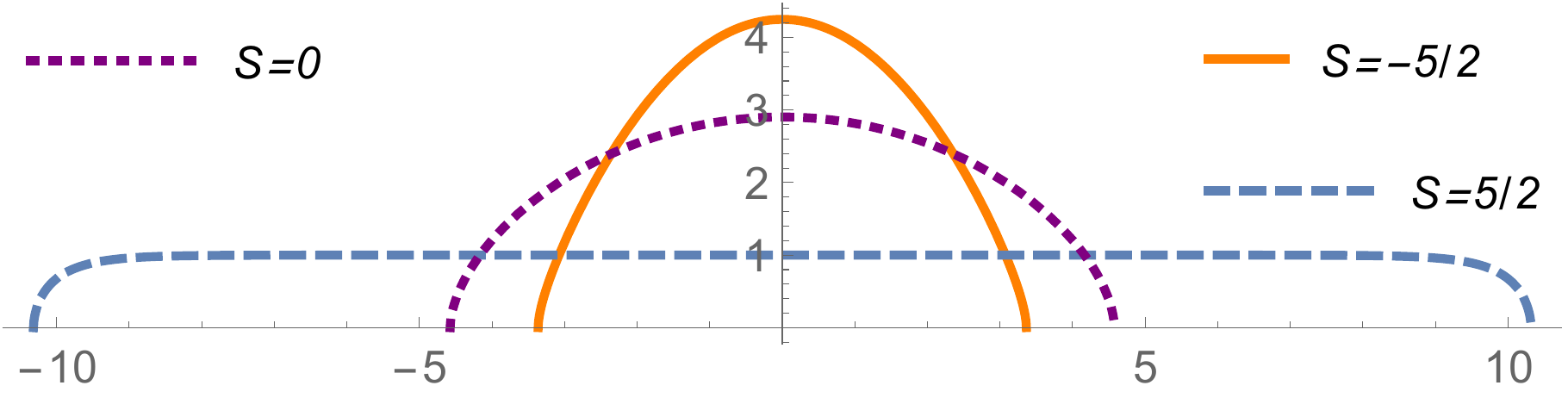}}
\captionof{figure}{{\footnotesize In the case $Q_a$ with $A=1$, $B=0$, $m=5/2$, and $M=20$, the unique minimizer of $E$ for different values of $S$. Minimizers are obtained as numerical solutions of \eqref{EL-intro}-\eqref{def-R-intro} with $u'(0)=0$, tuning $u(0)$ so that $M=20$ (with a tolerance of $10^{-2}$).}}
\label{fig-ode}
\medskip

\noindent $\bullet$  {\bf Droplet:} as $M\to +\infty$,
\begin{equation}\label{par-intro}
\begin{array}{c}
\displaystyle u_{0M}^4 \sim \frac{9|S|}{32}M^2,\quad \rr_M^4\sim \frac{9}{8|S|} M^2, \quad
u_{0M}^{-1} u_M(\rr_My) \sim 1-y^2.
\end{array}
\end{equation}

\noindent $\bullet$  {\bf Pancake:} as $M\to +\infty$,
\begin{equation}
\label{pan-intro}
\begin{array}{c}
\displaystyle u_{0M}\sim \e_*, \quad \rr_M \sim \frac{1}{2\e_*}M, \quad
u_M(\rr_M y) \sim \e_*.
\end{array}
\end{equation}

\noindent If $S=0$ and $Q$ is strictly decreasing, we also find a third, intermediate behavior, which connects droplets to pancakes (cf. Remark \ref{rem:f1}) through the decay exponent of $Q(u)$ as $u\to +\infty$.

\smallskip

\noindent $\bullet$  {\bf Transition profile}: with $S=0$, $Q'<0$, and $Q(u)\sim Ku^{1-p}$ as $u\to +\infty$,
\begin{equation}\label{crit-intro}
u_{0M}^{p+3} \sim \frac{pK}{2 c_p^2f_p(0)^2} M^2, \quad
\rr_M^{p+3} \sim \frac{f_p(0)^2}{2^{p+2} p K c_p^{p+1}} M^{p+1},\quad u_{0M}^{-1} u_M(\rr_My) \sim f_p^{-1}(f_p(0)|y|),
\end{equation}
where
\begin{equation*}
f_p(w):=\int_{w}^1 \frac{\sqrt{p}\tilde w^{\frac{p-1}{2}}d\tilde w}{\sqrt{1-\tilde w^p}}\quad\mbox{and}\quad c_p:=\int_0^1 f_p^{-1}(f_p(0)y)dy.
\end{equation*}

In Fig. \ref{fig-ode} we report numerical solutions to the minimization problem in a prototypical case in which uniqueness holds. Table 1 summarizes the main assumptions which lead to each of these behaviors, together with the corresponding model cases and with references to the corresponding results.

\begin{table}[H]
\centering
\begin{tiny}
\begin{tabular}{|p{0.18\textwidth}|p{0.12\textwidth}|p{0.15\textwidth}|p{0.26\textwidth}|p{0.18\textwidth}|}
\hline
\begin{center}{\bf Main assumptions}\end{center}  & \begin{center}{\bf Asymptotics as $M\to+\infty$} \\[2ex] (Theorem \ref{thm:finale}, \\ Theorem \ref{thm:crit}) \end{center} & \begin{center}{\bf Uniqueness} \\ $\ $ \\[2ex] (Theorem \ref{uniq1d},\\ Theorem \ref{no-un}) \end{center} & \begin{center}{\bf Model cases \\ (cf. \eqref{modelQ1a}, \eqref{modelQ1b}, \eqref{modelQg})} \\[2ex] (Section \ref{s:mod}) \end{center} &  \begin{center}{\bf Uniqueness in \\ model cases}\\[2ex] (Section \ref{s:mod}) \end{center} \\
\hline
\begin{tabular}[t]{l}
$\ $
\\
$\bullet$ $\e_*<+\infty$
\end{tabular}
& \begin{center} Pancake \end{center} &  \begin{center} Yes if $Q''\geq 0$ in $(0,\e_*)$ \end{center} &
\begin{tabular}[t]{l}
$\ $
\\
$\bullet$ $Q_{a}$, $-S<0$
\\[1ex]
$\bullet$ $Q_{a}$, $-S=0$, $B>0$
\\[1ex]
$\bullet$ $Q_{a}$, $-S>0$, $B\ge c_1(A,S)$
\\[1ex]
$\bullet$ $Q_{b}$, $-S<0$
\\[1ex]
$\bullet$ $Q_{b,g}$
\\[1ex]
$\bullet$ $Q_{a,g}$
\\[1ex]
\end{tabular}
&
\begin{tabular}[t]{l}
$\ $
\\
$\bullet$ Yes
\\[1ex]
$\bullet$ Yes
\\[1ex]
$\bullet$ Yes
\\[1ex]
$\bullet$ Yes
\\[1ex]
$\bullet$ Yes
\\[1ex]
$\bullet$ Yes if $-S\le 0$
\\ \phantom{$\bullet$} or $B\le c_3(A,D)$
\\[1ex] $\ $
\end{tabular}
\\

\hline

\begin{tabular}[t]{l}
$\ $
\\
$\bullet$ $\e_*=+\infty$
\\[1ex]
$\bullet$ $-S > 0$
\end{tabular}
&
\begin{center} Droplet \end{center} & \begin{center} Yes if $Q''\geq 0$ \end{center}
&
\begin{tabular}[t]{l}
$\ $
\\
$\bullet$ $Q_{a}$, $-S>0$, \\
\quad  $c_2(A,S)\leq B <c_1(A,S)$
\\[1ex]
$\bullet$ $Q_{a}$, $-S> 0$,
\\
\quad $0< B< c_2(A,S)$
\\[1ex]
$\bullet$ $Q_{a}$, $-S> 0$, $B\le 0$
\\[1ex]
$\bullet$ $Q_{b}$, $-S> 0$
\\[1ex] $\ $
\end{tabular}
&
\begin{tabular}[t]{l}
$\ $
\\
$\bullet$ No \\
$\ $
\\[1ex]
$\bullet$ Not known \\
$\ $
\\[1ex]
$\bullet$ Yes
\\[1ex]
$\bullet$ Yes
\\[1ex] $\ $
\end{tabular}
\\
\hline\hline

\begin{tabular}[t]{l}
$\ $
\\
$\bullet$ $\e_*=+\infty$
\\[1ex]
$\bullet$ $-S=0$
\\[1ex]
$\bullet$ $Q'<0$ in $(0,+\infty)$
\\[1ex] $\ $
\end{tabular}
& \begin{center} Transition profile \end{center} & \begin{center} Yes if $Q''\geq 0$\end{center}
&
\begin{tabular}[t]{l}
$\ $
\\
$\bullet$ $Q_{a}$, $-S=0$, $B\le 0$
\\[1ex]
$\bullet$ $Q_{b}$, $-S=0$
\end{tabular}
&
\begin{tabular}[t]{l}
$\ $
\\
$\bullet$ Yes
\\[1ex]
$\bullet$ Yes
\end{tabular}
\\
\hline

\end{tabular}
\end{tiny}
\caption{Synopsis of the main results.}\label{tabella}
\end{table}

\subsection{Profiles of minimizers: macroscopic contact angles and thickness}\label{ss:combi}

Combining the information in Paragraphs \ref{ss:micro} and \ref{ss:macro}, we can characterize minimizers as follows.

\smallskip

\noindent $\bullet$  {\bf Droplet:} we have
\begin{equation}\label{dfg}
u(x)\sim \left\{
\begin{array}{lll}
\frac{\sqrt{|S|}}{\rr\sqrt 2}(\rr^2-x^2) &  \delta\lesssim \rr- x\le \rr & \mbox{(macroscopic profile),}
\\
\left(\tfrac{A(m+1)^2}{2}\right)^{\frac{1}{m+1}}(\rr- x)^{\frac{2}{m+1}} & 0\le \rr- x\lesssim \delta & \mbox{(microscopic profile),}
\end{array}\right. \quad
\end{equation}
(see Fig. \ref{fig-4}), where
$$
\rr^4 \sim \tfrac{9 M^2}{8|S|}, \quad \delta^{m-1} \sim (2|S|)^{-\frac{m+1}{2}} \tfrac{A(m+1)^2}{2}.
$$
In this case, it is natural to define the {\em macroscopic contact angle} $\theta_{mac}$ as the arctangent of the slope of the macroscopic profile at the boundary of its support:
\begin{equation}\label{our-t}
\tan\theta_{mac}= \frac{\sqrt{|S|}}{\rr\sqrt 2}\frac{d}{d x}(\rr^2-x^2)\big|_{x=-\rr}=\sqrt{2|S|}.
\end{equation}
This analysis also identifies the {\em transitional thickness} as the height $\sqrt{2|S|}\delta$ at which the cross-over takes place:
\begin{equation}\label{our-thick}
\sqrt{2|S|}\delta \sim \left(\tfrac{A(m+1)^2}{4|S|}\right)^\frac{1}{m-1}.
\end{equation}

\captionsetup{width=0.9\linewidth}
{\centering{
\raisebox{\dimexpr \topskip-\height}{
  \includegraphics[width=0.45\textwidth]{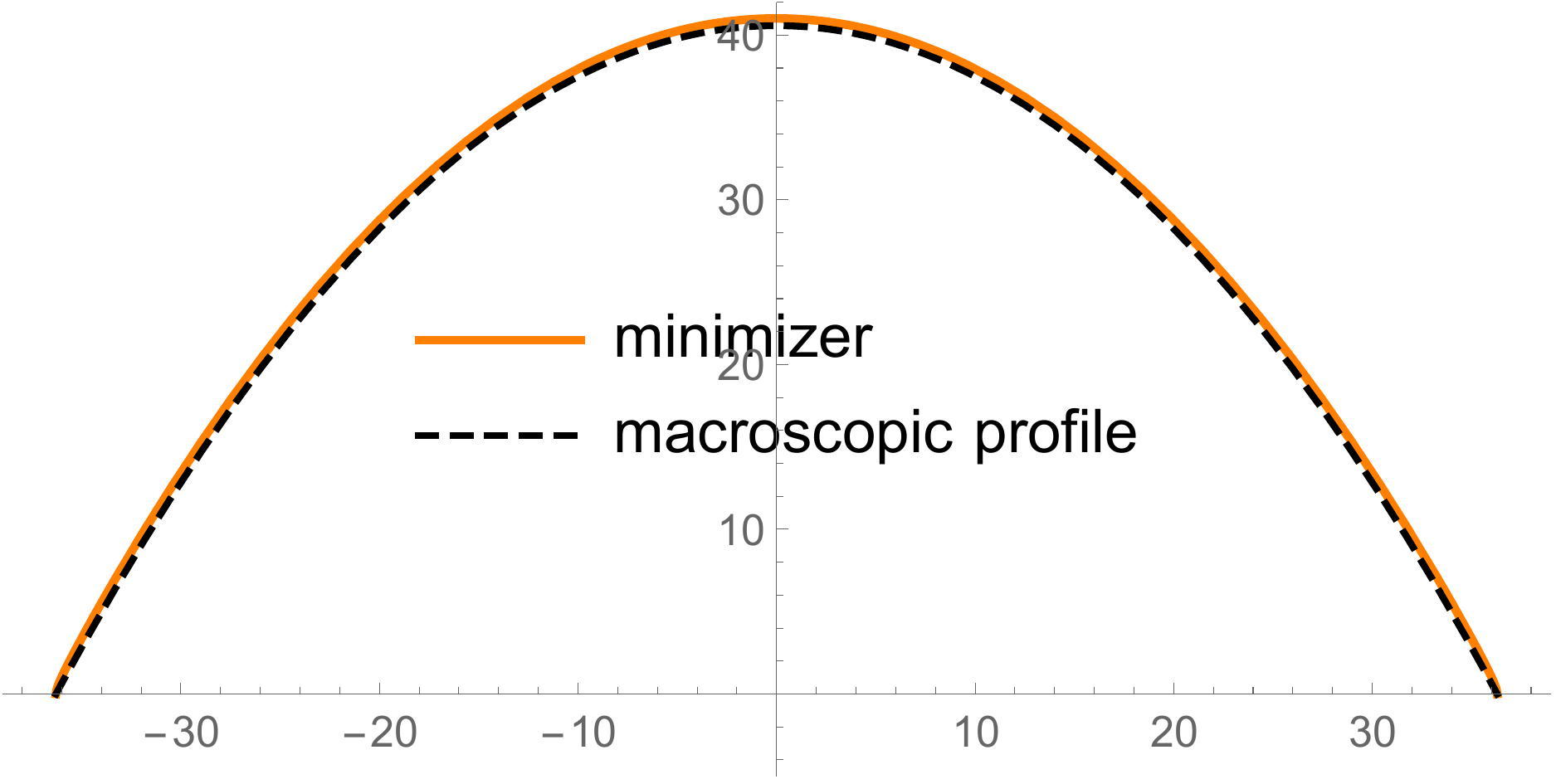} \quad \hfill\quad
  \includegraphics[width=0.45\textwidth]{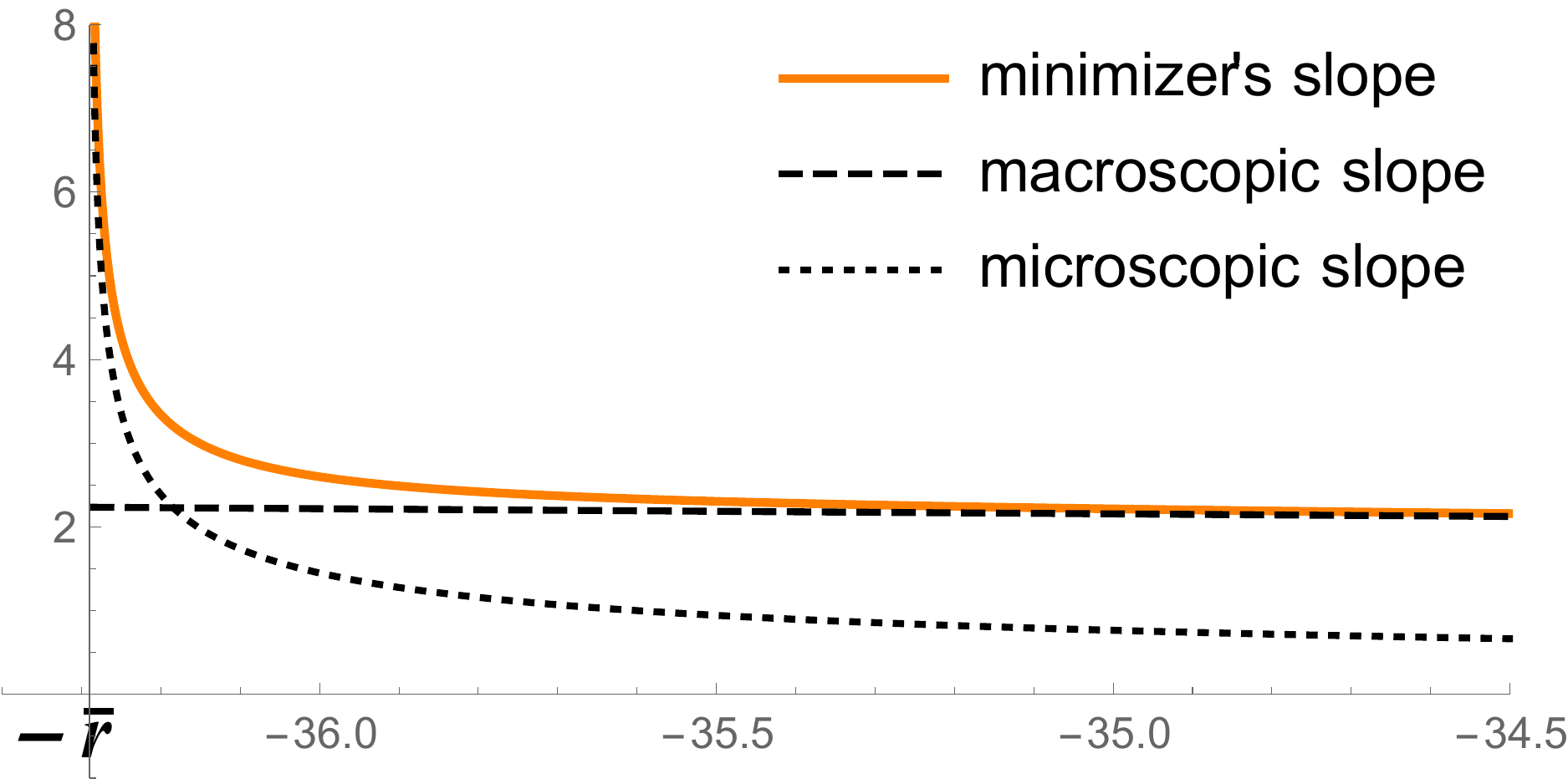}
  }
  \captionof{figure}{{\footnotesize
  As in Fig. \ref{fig-ode}, $Q=Q_a$ with $A=1$, $B=0$, $m=5/2$; here $S=-5/2$. {\bf Left:} the minimizer of $E$ (solid) with $M=2000$ --obtained as in Fig. \ref{fig-ode} with a tolerance of $10^{-1}$-- and its macroscopic profile (dashed), as defined in \eqref{dfg}. {\bf Right:} the slope of the same functions near the contact line $-\rr$, together with the slope of the microscopic profile (dotted), as defined in \eqref{dfg}.
  }}\label{fig-4}
  }}

\noindent $\bullet$  {\bf Pancake:} when $\e_*<+\infty$, we have
\begin{equation*}
u(x)\sim \left\{
\begin{array}{lll}
\e_* &  \delta\lesssim \rr- x\le \rr & \mbox{(macroscopic profile),}
\\
\left(\tfrac{A(m+1)^2}{2}\right)^{\tfrac{1}{m+1}}(\rr- x)^{\frac{2}{m+1}} & 0\le \rr- x\lesssim \delta & \mbox{(microscopic profile),}
\end{array}\right. \quad
\end{equation*}
where
$$
\rr \sim \frac{M}{2\e_*}, \quad \delta^{2} \sim  e_*^{m+1} \left(\tfrac{A(m+1)^2}{2}\right)^{-1}.
$$
The pancake's thickness $e_*<+\infty$ is defined in \eqref{def-s*-intro}: it satisfies $R'(\e_*)=0$, that is,
\begin{equation}\label{char-e}
Q(\e_*)=\e_* Q'(\e_*).
\end{equation}

$\bullet$ {\bf Transition profile:} the behavior of $f_p$ and $f_p^{-1}$ is detailed in Remark \ref{rem:f}. From there, we see that the droplet has three regimes:
\begin{equation*}
u(x)\sim \left\{
\begin{array}{lll}
\frac{u_{0}}{\rr^2} \left(\rr^2 - \tfrac{f_p^2(0)}{4}x^2\right) &   \rr- x\approx \rr & \mbox{(macroscopic profile),}
\\
\frac{u_{0}}{\rr^\frac{2}{p+1}} \left(\tfrac{(p+1)f_p(0)}{2\sqrt{p}} (\rr-x)\right)^\frac{2}{p+1} &   1\ll \rr- x\ll \rr & \mbox{(intermediate profile),}
\\
\left(\tfrac{A(m+1)^2}{2}\right)^{\frac{1}{m+1}}(\rr- x)^{\frac{2}{m+1}} & 0\le \rr- x\ll 1 & \mbox{(microscopic profile),}
\end{array}\right. \quad
\end{equation*}
where $\rr=\rr_M$ and $u_0=u_{0M}$ are as in \eqref{crit-intro}.

\subsection{Repulsive potentials: comparison with de Gennes' results}

In part II.D of his milestone review \cite{DG}, where final spreading equilibria are discussed, de Gennes considers two model cases. The first one (``van der Waals forces''), on which we focus, is of the generic form \eqref{modelQ0} with $m=3$ and $n=4$, which corresponds to $Q_{b}$ with $m=3$ and $n=4$. Now, we know from Lemma \ref{m>3} that minimizers of $E$ in $\mathcal D$ do not exist if $m\ge 3$. However, de Gennes confines his analysis to scales not below 30\AA, where ``a continuum picture is still applicable''. In any event, our results show that, replacing $m=3$ by a generic exponent $m\in(1,3)$, most of his formal predictions can be rigorously justified down to $u=0$. To proceed further, we distinguish three cases.

\smallskip

{\bf Partial wetting} ($-S> 0$ in $Q_{b}$). When $-S>0$, the macroscopic shape is of droplet type (see \eqref{par-intro}). Our results confirm, in the case of negligible gravitational effects, both the relation between $S$ and the macroscopic contact angle and, in the limiting case $m=3$, the estimate for the transitional thickness (compare \eqref{our-t} and \eqref{our-thick} with \cite[(2.54) and the discussion below it]{DG}).

\smallskip

{\bf Limiting case} ($-S=0$ in $Q_{b}$). In the limiting case $-S=0$, the macroscopic shape is given by $f_p^{-1}$ (see \eqref{crit-intro}). In particular, in the limiting case $m=3$ and for $p=4$, we recover the same scaling exponents for the microscopic and intermediate regimes in \cite[(2.55)-(2.56)]{DG},
\begin{equation*}
u(x)\sim \left\{
\begin{array}{lll}
\frac{u_{0}}{\rr^2} \left(\rr^2 - \tfrac{f_p^2(0)}{4}x^2\right) &   \rr- x\approx \rr & \mbox{(macroscopic profile),}
\\
\frac{u_{0}}{\rr^{2/5}} \left(\tfrac{5 f_p(0)}{4} (\rr-x)\right)^{2/5} &   1\ll \rr- x\ll \rr & \mbox{(intermediate profile),}
\\
\left(8A\right)^{1/4}(\rr- x)^{1/2} & 0\le \rr- x\ll 1 & \mbox{(microscopic profile),}
\end{array}\right. \quad
\end{equation*}
the only difference being in the multiplicative constants, which turn out to depend on $f_p(0)$ and are therefore expressed in terms of $\Gamma$ functions.

\smallskip

{\bf ``Dry'' complete wetting} ($-S<0$ in $Q_{b}$, and $Q_{b,g}$). In \cite{DG}, only the case $Q_{b,g}$ (with gravity) with $-S<0$ is discussed. However, we see from Table \ref{tabella} that the same qualitative result (pancake shape) holds for two other cases which do not seem to have been discussed there:
\begin{itemize}
\item model $Q_{b}$ (without gravity) when $-S<0$;
\item model $Q_{b,g}$ with $-S\ge 0$.
\end{itemize}
The characterization of $\e_*$ in \eqref{char-e} coincides with that in \cite[(2.63)]{DG} whenever $\e_*$ is uniquely defined. In particular, one easily checks that if $D$ is relatively small and $S$ is relatively large, namely
$$
S^{n-m} \gg A^{n-1}|B|^{1-m} \quad\mbox{and}\quad D^{n-m} \ll A^{n+1}B^{-m-1},
$$
then
$$
u Q'(u)-Q(u)\sim -Amu^{1-m}+S \quad\mbox{for $u\ll\left(\frac{|B|}{A}\right)^{\frac{1}{n-m}}$}, \quad\mbox{hence}\quad \e_*\sim \left(\frac{mA}{S}\right)^\frac{1}{m-1},
$$
which coincides with  \cite[(2.72)]{DG} in the critical case $m=3$. However, the reader can easily realize that there are various other possibilities, depending on the relation between the four parameters $S,A,B,D$.

\begin{remark}{\rm
As Table \ref{tabella} shows, the above conclusions holds not only for model $Q_b$, but also for model $Q_a$ when $B\le 0$ (which, in this case, is also purely repulsive). In addition, the qualitative aspects of our results remain true for the second model potential considered by de Gennes (``double-layer forces''):
\begin{equation*}
Q(u)= \left\{\begin{array}{ll}
A|\log u|(1+o(1)) & \mbox{as $u\to 0^+$}
\\[1ex]
-S- B\text{e}^{-u}(1+o(1)) & \mbox{as $u\to +\infty$.}
\end{array}\right.
\end{equation*}
However, quantitative information need be modified in this case, taking into account that a log singularity of the potential corresponds to the limiting case ``$m=1$''. We refrain from doing that for the sake of brevity.
}\end{remark}

\subsection{Repulsive/attractive potentials} As we mentioned in Section \ref{ss-frame}, potentials which are short-range repulsive and long-range attractive, such as  model case $Q_{a}$ with $B>0$, have been widely discussed in the thin-film literature, using various forms of them, especially in order to model and analyze coarsening dynamics and dewetting phenomena; however, qualitative studies of mild singularities ($Q(u)\equiv 0$ for $u\le 0$ and $1< m<3$) seem to have been missing so far.

\smallskip

\begin{minipage}[t]{0.65\textwidth}
When gravity is present, the minimizer for $M\gg 1$ is invariably a pancake. Hence we focus on $Q_a$ with $B>0$. The different possible behaviors are summarized in Figure \ref{fig-Q1a}, where $B\le 0$ is also shown for completeness.

If $-S\le 0$ (complete wetting), a unique minimizer exists with pancake shape. However, as opposed to purely repulsive potentials, a unique pancake-shaped minimizer may exist in the partial wetting regime ($-S>0$), too, provided $B$ is sufficiently large. In addition, as we mentioned already in Section \ref{ss-intro-uniq}, droplet-shaped minimizers can fail to be unique for moderate values of $B$.

\end{minipage}\quad
\begin{minipage}[t]{0.3\textwidth}
\captionsetup{width=0.9\linewidth}
\centering\raisebox{\dimexpr \topskip-\height}{
  \includegraphics[width=\textwidth]{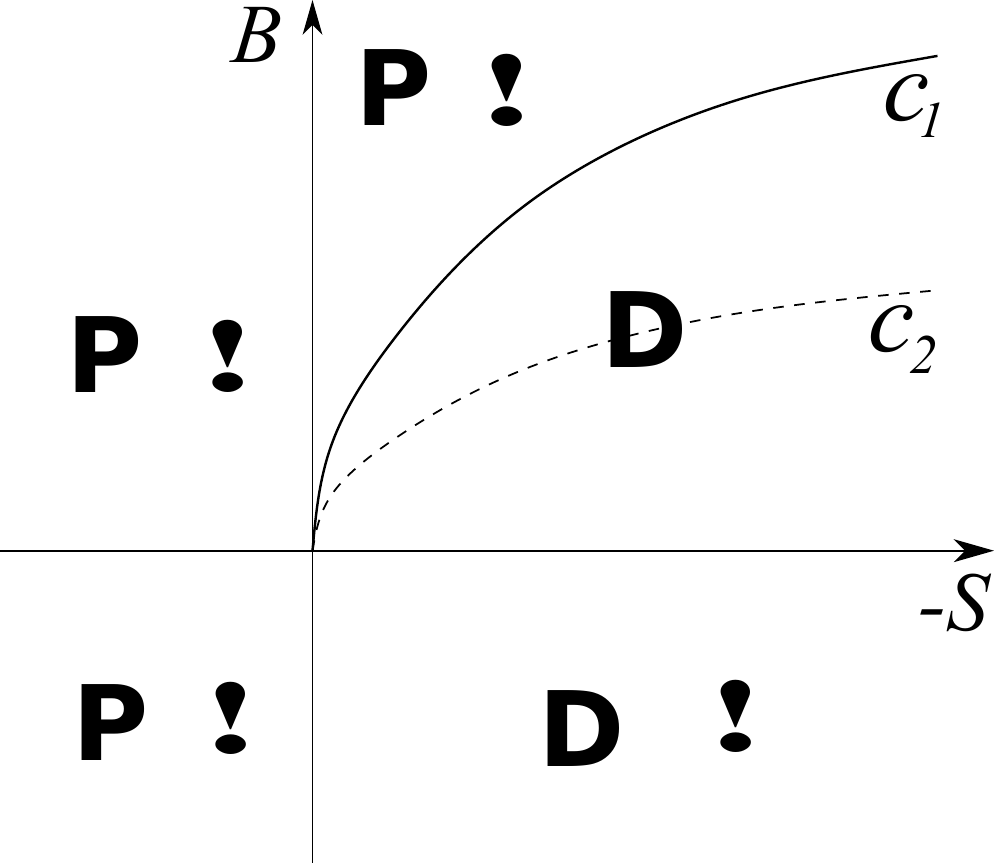}}
  \captionof{figure}{{\footnotesize Model $Q_a$: D=droplet, P=pancake, !=uniqueness.}}
  \label{fig-Q1a}
\end{minipage}

\subsection{Open questions}

We conclude by discussing what we believe to be the main open questions in the problem we discussed.

\smallskip

{\bf Uniqueness.} Uniqueness is left open only in some cases (see Table \ref{tabella}). In particular, we expect non-uniqueness phenomena to occur whenever $Q$ is not convex in $(0,\e_*)$, without the additional assumption that $R$ is not injective (Theorem \ref{no-un}); e.g., model $Q_a$ with $-S>0$ and $0<B< c_2(A,S)$.

\smallskip

{\bf Higher dimension.} Our qualitative study is one-dimensional. In higher dimensions, it is still possible to characterize the eigenvalue $\lambda$. However, the relation between $u$ and $\lambda$ becomes nonlocal, involving integrals of functions of $u$ rather than $u(0)$ alone (cf. \ref{def-R-intro}). In addition, the Euler-Lagrange equation (in radial variable) becomes non-autonomous. The combination of these two features so far prevented us from developing an analogous qualitative study for $N>1$.

\smallskip

{\bf Critical points and their stability.} This manuscript is concerned with global minimizers of $E$ in $\mathcal D$. However, we expect that $E$ also has critical points in $\mathcal D$, consisting of two or more radially decreasing solutions to \eqref{EL-intro}-\eqref{def-R-intro}, suitably translated so to have disjoint positivity sets, with a possibly different $\lambda$ for each of them. It would be very interesting to prove that such configurations are indeed critical points of $E$ in $\mathcal D$ and to study their (in)stability in either variational and/or dynamical sense, see e.g. \cite{BCS,CC10,CC17,CZ,KNC,LP3,N}.

\smallskip

{\bf Full curvature problem.} The gradient part of the functional in \eqref{def-E} may be derived from Stokes system on the basis of the main assumption in lubrication approximation: the vertical lengthscale is much smaller than the horizontal one \cite{GO1,GO2}. If not for the full Stokes system, it would be interesting to perform an analogous study at least for the functional $E$ with $\tfrac12 |\nabla u|^2$ replaced by $\sqrt{1+|\nabla u|^2}$; in other words, the full curvature effect is retained, though the droplet is yet assumed to be a subgraph. In this case, we are only aware of the studies in \cite{NC1,NC2,NC3,NC4}, which concern existence and uniqueness for convex potentials in the one-dimensional case.

\smallskip

{\bf Dynamics.} Since the nineties \cite{BF}, a lot of work has been done on existence \cite{G95,BBD,BP,DGG,Gr} and qualitative properties (such as finite speed of propagation, waiting time, long-time behavior) \cite{B1,B2,BDGG,DGG1,GG,F1,F2} of the spreading dynamics associated to $E$, as modeled by {\em thin-film equations}, which in one space dimension formally read as
\begin{equation}\label{para}
u_t+\left(f(u)(u_{xx}-Q'(u))_x\right)_x=0\quad\mbox{on \ $\{u>0\}$}
\end{equation}
with $f$ depending on the slip condition adopted at the liquid-solid interface ($f(u)=u^3+bu^2$, $b>0$, for Navier slip). When the potential is sufficiently singular ($Q(u)\equiv +\infty$ for $u\le 0$ and/or $m\ge 3$), existence and uniqueness are rather simple, since the datum is to be positive and the solution will as well \cite{GRu,BGW}. On the other hand, for mildly or non- singular potentials, \eqref{para} turns into a genuine free boundary problem. Concerning weak solutions, most efforts in its study concentrated on existence and qualitative properties of ``zero contact-angle'' solutions, which satisfy $u_x=0$ at $\partial\{u>0\}$ for a.e. $t>0$. We mention in particular \cite{BP2,DGS,NS}, where the case of power-law potentials is discussed. However, these zero contact-angle solutions have the property of converging to their mean for the Neumann problem on a bounded domain \cite{BP2}, regardless of their initial mass. It would be very interesting to see whether different classes of solutions to \eqref{para} exist, which instead satisfy a right-angle condition at $\partial \{u>0\}$ and converge to a stable critical point of $E$ for long times. Such achievements would be analogous to the ones regarding weak solutions with $Q\equiv 0$ and finite non-zero microscopic contact-angle \cite{O,BGK,M,CGw}. First results in this direction are contained in \cite{DuGi2}: there, formal arguments support the existence of generic (both advancing and receding) traveling wave solutions of \eqref{para} for any speed and any $m\in (1,3)$, with a contact angle of $\pi/2$ at $\partial\{ u>0\}$. Notably, such waves exist even {\em without} slip conditions (i.e. for $b=0$): hence mildly singular potentials may be seen as an alternative solution to the contact-line paradox.

More recently, a well-posedness theory of ``classical'' solutions has been developed for both zero \cite{GKO,GGKO,Gn1,Gn2,GP,S} and fixed non-zero \cite{K1,K2,KM1,KM2} contact-angle. As a further step, it would also be interesting to develop a theory of ``classical'' solutions for the singular potentials $Q$ addressed here. Another interesting question concerns, in the pancake case with $\e_*\ll 1$, intermediate scaling laws for macroscopic droplets spreading over a microscopic pancake, in the spirit of \cite{GO3}.

\subsection{Notations}

We define $\R_+:=(0,+\infty)$. By $C^{\alpha}_{[loc]}(\Omega)$ we mean the space of [locally] H\"older continuous functions with exponent $\alpha$ in $\Omega\subset \RN$. The subscript $c$ denotes spaces of functions with compact support.
We denote by $|\Omega|$ the Lebesgue measure of a Lebesgue measurable subset $\Omega$ of $\R^N$. We denote by $B_R(x)$ the ball of radius $R$ and center $x$ in $\RN$ and by $\omega_{N-1}$ the $(N-1)$-dimensional measure of the unit sphere $\mathbb S^{N-1}=\partial B_1(0)$. The Sobolev conjugate exponent of $2$ is denoted by $2^*=\frac{2N}{N-2}$.
For a measurable function $f$, we define
$$
f_+:=\max(f,0), \quad f_-:= -\min(f,0), \quad \supp f:=\overline{\{x\in\mbox{dom}\,f : \ f(x)\neq 0\}}.
$$
We omit the domain of integration when it coincides with $\R^N$, and (when no ambiguity occurs) we also omit the differential $dx$ when $x$ is (a rescaling of) the spatial independent variable.
If not otherwise specified, we will denote by $C$ several constants whose value may change from line to line. These values will only depend on the data (for instance, $C$ may depend on $N$).

\smallskip

We say that a function is {\em radially strictly decreasing}, resp. {\em radially non-increasing} (with respect to $x_0\in \R^N$), if it is radially symmetric (with respect to $x_0\in \R^N$) and the corresponding radial function is strictly decreasing, resp. non-increasing.

\section{Existence of a minimizer}
\label{s1}

Note that \eqref{H} implies that
\begin{equation}\label{def-s1}
s_1:=\sup\{s\in \R: \ Q>0\mbox{ in $(0,s)$} \} \in (0,+\infty].
\end{equation}
In this section we discuss existence and basic properties of minimizers.

\begin{theorem}[Existence and basic properties of minimizers]
\label{exist}
Assume \eqref{H}, $m<3$, and $M>0$. Then there exists a minimizer $u$ of $E$ in $\D$. Moreover, $u$ is radially non-increasing w.r.to a certain $x_0\in \R^N$, $u$ is compactly supported, and $u\in C^{1/2}_{loc}(\R^N\setminus \{x_0\})$.
\end{theorem}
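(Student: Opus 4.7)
The approach is the direct method of the calculus of variations, combined with Schwarz symmetrization (which simultaneously gives the radial structure, with $x_0$ taken to be the origin). The lower bound on $E$ follows from the singularity structure: since $Q(u)\sim A u^{1-m}$ with $A>0$, the quantity $s_1\in(0,+\infty]$ from \eqref{def-s1} satisfies $Q\geq 0$ on $[0,s_1]$, so the pointwise bound $Q(u)\geq(\inf Q)\chi_{\{u\geq s_1\}}$ combined with the mass-constraint bound $|\{u\geq s_1\}|\leq M/s_1$ yields $\int Q(u)\geq(\inf Q)M/s_1>-\infty$. For a minimizing sequence $\{u_n\}\subset\D$, I would replace $u_n$ by its Schwarz rearrangement $u_n^*$: equimeasurability preserves $\int u_n$ and $\int Q(u_n)$, while the Polya--Szego inequality controls the Dirichlet part, so one may assume each $u_n$ is radially non-increasing about $0$. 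The uniform $H^1$ bound delivers, up to subsequences, weak $H^1$ and a.e.\ convergence to a candidate $u$.

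\textbf{Passage to the limit and compact support.} The tail control ensuring $\int u=M$ uses two inputs: the pointwise bound $u_n(x)\leq CM|x|^{-N}$, standard for radial non-increasing $L^1$ functions, and the energy-derived estimate $\int u_n^{1-m}\,\chi_{\{0<u_n\leq s_2\}}\leq C$, where $s_2\in(0,s_1]$ is small enough that $Q(u)\geq(A/2)u^{1-m}$ on $(0,s_2]$. Together they imply $|\{|x|\geq R_\delta,\,0<u_n\leq\delta\}|\leq C\delta^{m-1}$ and hence $\int_{\{|x|\geq R_\delta\}}u_n\leq C\delta^m$, giving equi-integrability and strong $L^1$ convergence; weak l.s.c.\ of the Dirichlet term, Fatou applied to $Q_+$, and Vitali applied to $Q_-$ (bounded and supported on uniformly bounded-measure sets $\{u_n\geq s_1\}$) then yield $E[u]\leq\liminf E[u_n]$. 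Radial monotonicity is inherited from the approximants. The same ingredients rule out unbounded support: if $u>0$ everywhere, then on $\{|x|\geq R\}$ where $u\leq s_2$,
$$\int_{\{|x|\geq R\}}Q(u)\,dx\;\geq\;C\int_R^\infty r^{Nm-1}\,dr\;=\;+\infty,$$
contradicting $E[u]<+\infty$.

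\textbf{Local H\"older regularity and main obstacle.} In $\{u>0\}\setminus\{x_0\}$, $u$ is locally bounded and, via the Euler--Lagrange equation with continuous $Q$, classically smooth by elliptic theory. Near the free boundary $\partial\{u>0\}$ the singular scaling is the source of regularity: barrier comparison with radial sub-/super-solutions of the ODE $\phi''\asymp A(m-1)\phi^{-m}$ produces the bound $u(x)\leq C\,\mathrm{dist}(x,\partial\{u>0\})^{2/(m+1)}$, and since $m<3$ the exponent $2/(m+1)$ exceeds $1/2$, yielding $u\in C^{1/2}_{loc}(\R^N\setminus\{x_0\})$. The principal obstacle throughout the argument is the compactness step: the $H^1$ bound alone does not prevent mass from escaping to infinity, and one must combine the mass-driven radial decay with the singular structure of $Q$ (through the estimate on $\int u_n^{1-m}$) to upgrade weak convergence to strong $L^1$ convergence and preserve the mass constraint in the limit.
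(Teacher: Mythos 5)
Your overall architecture (direct method, Schwarz symmetrization, mass-and-singularity-driven tail control to preserve the mass constraint, compact support from blow-up of $\int Q(u)$) is the same as the paper's, and your tail estimate $|\{|x|\ge R_\delta,\ 0<u_n\le\delta\}|\le C\delta^{m-1}$ is a legitimate variant of the paper's estimate $\int_{\R^N\setminus B_R}u_k\lesssim R^{-Nm}$. However, there are two genuine problems.

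First, you never verify that $\inf_{\D} E<+\infty$. You cannot even start a minimizing sequence unless some $u\in\D$ has finite energy, and because of the singularity $Q(u)\sim A u^{1-m}$ this is far from automatic; it is in fact exactly the point where the restriction $m<3$ is used. The paper's Lemma~\ref{lem1} exhibits the competitor $d(1-|x|^2)_+^\alpha$ with $\alpha\in(\tfrac12,\tfrac1{m-1})$, a range that is nonempty precisely when $1<m<3$, and Lemma~\ref{m>3} shows that for $m\ge 3$ the infimum is identically $+\infty$. Without this step your argument is incomplete.

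Second, your route to $C^{1/2}_{loc}(\R^N\setminus\{x_0\})$ cannot be carried out under the hypotheses of Theorem~\ref{exist}. You invoke the Euler--Lagrange equation, interior elliptic regularity, and a free-boundary barrier bound $u\lesssim\mathrm{dist}(\cdot,\partial\{u>0\})^{2/(m+1)}$. But at this stage $Q$ is only assumed continuous (assumption \eqref{H}), not $C^1$: the Euler--Lagrange equation involves $Q'$ and is derived in the paper only later, under the stronger hypothesis \eqref{H-EL1}. Hence neither the elliptic regularity in $\{u>0\}$ nor the barrier comparison (which anyway needs additional justification for this singular nonlinearity) is available to you. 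The paper's Lemma~\ref{lem5} gets the conclusion by a much more elementary and robust observation: for any radially symmetric $u\in H^1(\R^N)$ with profile $v$, the bound $\int_{\delta}^\infty|v'|^2 r^{N-1}dr\ge\delta^{N-1}\int_\delta^\infty|v'|^2dr$ gives $v\in H^1((\delta,+\infty))$ for every $\delta>0$, and the one-dimensional Sobolev embedding then yields $v\in C^{1/2}((\delta,+\infty))$. No equation, no free-boundary analysis, and it is exactly what makes the regularity claim provable under \eqref{H} alone.
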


We divide the proof into lemmas.

\begin{lemma}
\label{lem1}
There exists $u\in \D$ such that $E[u]<+\infty$.
\end{lemma}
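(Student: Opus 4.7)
The plan is to exhibit an explicit compactly supported test function whose shape near the boundary of its support is tuned so that both the Dirichlet energy and the potential energy remain finite. Given the assumption $m<3$, the interval $(\tfrac12, \tfrac{1}{m-1})$ is nonempty, so I would pick an exponent $\alpha$ in it (for concreteness, $\alpha=\tfrac{2}{m+1}$ works). Then I define
\[
u(x) = c\,(R-|x|)_+^{\alpha},
\]
with $c,R>0$ parameters that I will choose to match the mass constraint. A direct computation in spherical coordinates gives $\int u = c\,\omega_{N-1}\int_0^R (R-r)^\alpha r^{N-1}\,dr$, which is finite and depends monotonically on $c$ for fixed $R$, so $c$ can be fixed to enforce $\int u = M$.

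Next I would check $u\in H^1(\R^N)$ and $E[u]<+\infty$. Clearly $u\geq 0$, $u$ is bounded with compact support, so $u\in L^2$. The gradient is $|\nabla u(x)|=c\alpha(R-|x|)_+^{\alpha-1}$, whence
\[
\int|\nabla u|^2 = c^2\alpha^2\,\omega_{N-1}\int_0^R (R-r)^{2\alpha-2} r^{N-1}\,dr,
\]
which is finite precisely because $2\alpha-2>-1$, i.e.\ $\alpha>\tfrac12$. For the potential part, split $\supp u$ into a bulk region $\{u\geq\delta\}$ and a thin collar $\{0<u<\delta\}$ for some small $\delta$. On the bulk, $Q(u)$ is bounded because $Q$ is continuous on $(0,+\infty)$ and this set is compact with $u$ bounded above and below by positive constants. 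On the collar, the hypothesis $Q(u)\sim Au^{1-m}$ together with $\inf Q>-\infty$ yields $Q(u)\leq C u^{1-m}\leq C'(R-|x|)^{\alpha(1-m)}$, and this bound is integrable near $|x|=R$ since $\alpha(1-m)>-1$, which is exactly $\alpha(m-1)<1$. Summing the two contributions yields $\int Q(u)<+\infty$.

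There is no serious obstacle here: the whole point of the argument is the compatibility of the two boundary-layer conditions $\alpha>\tfrac12$ and $\alpha<\tfrac{1}{m-1}$, which is possible if and only if $m<3$. This is also where the sharpness of the hypothesis $m<3$ becomes transparent, in line with the companion Lemma \ref{m>3} that rules out finite-energy admissible profiles when $m\geq 3$.
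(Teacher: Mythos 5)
Your proposal is correct and takes essentially the same approach as the paper: an explicit compactly supported profile with a boundary exponent $\alpha$ chosen in $(\tfrac12,\tfrac{1}{m-1})$ to balance finiteness of the Dirichlet and potential energies. The paper's choice is $u(x)=d(1-|x|^2)_+^\alpha$ rather than $(R-|x|)_+^\alpha$, which is a cosmetic difference (and avoids the harmless kink at the origin), while the bulk/collar split of the potential term is implicit in the paper's one-line estimate.
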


\begin{proof}
We define $u:\RN\to [0,+\infty)$ as
\begin{equation*}
u(x):=d(1-|x|^2)_+^\alpha,\qquad \alpha\in\left(\tfrac{1}{2},\tfrac{1}{m-1}\right),
\end{equation*}
where $d$ is chosen so that $\int u=M$. Note that the range of $\alpha$ is not empty since $1<m<3$.
Straightforward computations show that
\begin{eqnarray*}
\int |\nabla u|^2=4\alpha^2d^2 \int (1-|x|^2)_+^{2(\alpha-1)}|x|^2=4\alpha^2d^2\omega_{N-1} \int_0^1 (1-r^2)^{2(\alpha-1)}r^{N+1} dr\stackrel{\alpha>\frac{1}{2}}<+\infty.
\end{eqnarray*}
Since $\supp u=B_1(0)$, we deduce that $u\in\mathcal D$. Moreover $Q(0)=0$, so that
\begin{eqnarray*}
\int Q(u) &=& \int_{B_1(0)} Q(u) \stackrel{\eqref{H}}\leq C \int_{B_1(0)}\left(1+\frac{1}{(1-|x|^2)^{\alpha(m-1)}}\right) \\
&=& C\omega_{N-1}\int_0^1 \left(1+ \frac{1}{(1-r^2)^{\alpha(m-1)}}\right) r^{N-1} dr\stackrel{\alpha<\frac{1}{m-1}}<+\infty,
\end{eqnarray*}
hence $E[u]<+\infty$.
\end{proof}

\begin{remark}{\rm
\label{m<3}
The previous lemma is false if $m\ge 3$ (see Lemma \ref{m>3}).
}\end{remark}

\begin{lemma}
\label{lem2}
There exists $C\ge 0$ such that
\begin{equation}
\label{lower-bound}
\int Q(u)\ge -C
\end{equation}
for any $u\in \mathcal D$. In particular, $E$ is bounded from below in $\mathcal{D}$.
\end{lemma}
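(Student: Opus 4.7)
The plan is to split $\int Q(u)$ according to the height of $u$, exploiting the fact that $Q$ is \emph{positive} near $0^+$ (by the singular profile $Au^{1-m}$ with $A>0$ and $m>1$) and that $Q$ has a finite infimum, compensating the potentially unbounded measure of the positivity set by Chebyshev's inequality.

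First, I would extract from the asymptotic $Q(u)\sim Au^{1-m}$ a threshold $\delta\in(0,1)$ such that
\begin{equation*}
Q(u)\ge \tfrac{A}{2}u^{1-m}\ge 0 \qquad \text{for all } 0<u<\delta.
\end{equation*}
This is where the sign assumption $A>0$ enters decisively. Consequently
\begin{equation*}
\int_{\{0<u<\delta\}}Q(u)\,dx\;\ge\;0.
\end{equation*}
Since $Q\equiv 0$ on $(-\infty,0]$, the set $\{u=0\}$ contributes nothing either, so the whole task reduces to controlling the tail $\{u\ge\delta\}$.

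Second, I would set $K:=-\inf Q\in[0,+\infty)$, well defined by \eqref{H}, so that $Q\ge -K$ on $(0,+\infty)$. For $u\in\mathcal D$, Chebyshev's inequality gives
\begin{equation*}
|\{u\ge \delta\}|\;\le\;\frac{1}{\delta}\int u\,dx\;=\;\frac{M}{\delta}.
\end{equation*}
Therefore
\begin{equation*}
\int_{\{u\ge\delta\}}Q(u)\,dx\;\ge\;-K\,|\{u\ge\delta\}|\;\ge\;-\frac{KM}{\delta}.
\end{equation*}
Combining the two pieces yields \eqref{lower-bound} with $C:=KM/\delta$, depending only on $Q$ and $M$. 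The ``in particular'' part follows at once from $\int\tfrac12|\nabla u|^2\ge 0$, which gives $E[u]\ge -C$ on $\mathcal D$.

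There is no serious obstacle here; the only point one needs to be attentive to is that $\{u>0\}$ may have infinite measure, which is precisely why the naive bound $Q\ge -K$ alone is insufficient and why the splitting at level $\delta$, together with the singular positivity of $Q$ near $0^+$, is essential. The constant $C$ depends only on $M$, $A$, $m$, and $\inf Q$, so the bound is uniform on $\mathcal D=\mathcal D_M$ as required.
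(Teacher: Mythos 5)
Your proof is correct and follows essentially the same approach as the paper: split the integral at a height threshold below which $Q\ge 0$, drop the nonnegative low-height contribution, and bound the tail via $Q\ge\inf Q$ together with Chebyshev's inequality $|\{u\ge\delta\}|\le M/\delta$. The only cosmetic difference is that the paper takes the threshold to be $s_1:=\sup\{s:Q>0\text{ on }(0,s)\}$ (with the trivial case $s_1=+\infty$ handled separately) whereas you extract a possibly smaller $\delta$ directly from the asymptotic $Q(u)\sim Au^{1-m}$; both yield the same estimate.
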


\begin{proof}
The lower bound of $E$ is immediate from \eqref{lower-bound}. Recall that $Q>0$ in  $(0,s_1)$, with $s_1$ defined in \eqref{def-s1}. If $s_1=+\infty$ then $Q$ is non-negative and \eqref{lower-bound} is obvious with $C=0$. Otherwise, we have
\begin{equation*}
\int Q(u) \geq \int_{\{u\geq s_1\}} Q(u) \stackrel{\eqref{H}}\geq \inf Q\int_{\{u\geq s_1\}} 1 \geq \inf Q\int \frac{u}{s_1} \stackrel{\eqref{def-D}}\geq \frac{M}{s_1}(\inf Q)
\end{equation*}
for any $u$ in $\D$, whence \eqref{lower-bound} with $C=-\frac{M}{s_1}(\inf Q)$.
\end{proof}

\begin{lemma}
\label{lem3}
There exists a minimizer $u$ of $E$ in $\mathcal{D}$. Moreover, $u$ is radially non-increasing.
\end{lemma}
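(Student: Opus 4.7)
The plan is to apply the direct method together with Schwarz symmetrization. Let $\{u_n\}\subset \mathcal D$ be a minimizing sequence, whose existence is ensured by Lemmas~\ref{lem1}--\ref{lem2}. Since Schwarz rearrangement preserves the $L^1$ norm, preserves $\int Q(u)$ (by equimeasurability of $u$ and $u^*$ together with $Q\equiv 0$ on $(-\infty,0]$), and does not increase $\int|\nabla u|^2$ by the P\'olya--Szeg\H{o} inequality, I may replace each $u_n$ by $u_n^*$ and assume $u_n$ is radially non-increasing about the origin. The energy bound combined with \eqref{lower-bound} gives a uniform $L^2$ bound on $\nabla u_n$.

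The crucial step is uniform tightness of the supports. From \eqref{H}, $Q(v)\ge \tilde A v^{1-m}$ on $(0,s_0)$ for some $\tilde A,s_0>0$; coupling this with Chebyshev's bound $|\{u_n\ge s_0\}|\le M/s_0$ yields
\[
\tilde A \int_{A_n} u_n^{1-m} \,\le\, \int Q(u_n) + |\inf Q|\,\tfrac{M}{s_0} \,\le\, C, \qquad A_n:=\{0<u_n<s_0\}.
\]
Since $t\mapsto t^{1-m}$ is convex on $(0,+\infty)$ for $m>1$, Jensen's inequality on $A_n$ gives $|A_n|^m \le M^{m-1}\int_{A_n} u_n^{1-m}\le C'$, uniformly in $n$. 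Together with the Chebyshev bound, this forces $|\supp u_n|\le C''$ uniformly in $n$; by radial monotonicity, $\supp u_n\subset \overline{B_{R_0}}$ for a fixed $R_0$, and Poincar\'e's inequality on a slightly larger ball upgrades the gradient bound to a uniform $H^1$ bound.

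Standard compactness then concludes: up to a subsequence, $u_n\rightharpoonup u$ weakly in $H^1(\RN)$ and, via Rellich--Kondrachov on $B_{R_0+1}$, $u_n\to u$ in $L^1(\RN)$ and almost everywhere, with $u$ radially non-increasing (as an a.e.\ limit of such functions) and $\supp u\subset \overline{B_{R_0}}$. In particular $\int u = M$, so $u\in \mathcal D$. The gradient term is weakly lower semicontinuous. For the potential term, a pointwise check shows $\liminf_n Q(u_n(x))\ge Q(u(x))$ a.e.\ --- the only delicate case is $u(x)=0$ with $u_n(x)>0$ for infinitely many $n$, in which case $Q(u_n(x))\to+\infty$ along that subsequence --- and Fatou applied to $Q(u_n)-\inf Q\ge 0$ yields $\int Q(u)\le \liminf \int Q(u_n)$. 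Hence $E[u]\le \liminf E[u_n]=\inf_{\mathcal D} E$ and $u$ is a minimizer.

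I expect the main difficulty to be the tightness step. Neither the mass constraint nor weak $H^1$ compactness prevents a minimizing sequence from losing mass at infinity: smooth bumps of height $M/|B_n|$ on $B_n$ have both fixed mass and vanishing Dirichlet energy. It is precisely the mild singularity $Au^{1-m}$ of $Q$ at $0^+$ that forbids such wide, low-height profiles; via Jensen, spreading a given mass over a set of measure $|A|$ makes $\int Q$ grow at least like $|A|^m/M^{m-1}$, ruling out unboundedly large supports.
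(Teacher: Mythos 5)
Your proof is correct, and it takes a genuinely different route from the paper's at the crucial tightness step. The paper does not establish a uniform bound on the supports of the minimizing sequence; instead it derives a pointwise tail decay $v_k(R)\le CR^{-N}$ from radial monotonicity and the mass constraint, combines this with the singularity of $Q$ to show $\int_{\RN\setminus B_R} u_k\le CR^{-Nm}$ (so that no mass leaks to infinity in the limit), and then performs a careful Fatou/Lebesgue argument for the potential term, splitting at a level $\delta$ chosen so that $|\{u=\delta\}|=0$. Your argument instead proves a \emph{uniform} bound on $|\supp u_n|$: Chebyshev controls $|\{u_n\ge s_0\}|$, while Jensen applied to the convex map $t\mapsto t^{1-m}$ on $A_n=\{0<u_n<s_0\}$, together with $\int_{A_n}u_n^{1-m}\lesssim 1$ (which follows from $\int Q(u_n)\le C$), yields $|A_n|^m\le M^{m-1}\int_{A_n}u_n^{1-m}\le C$. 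By radial monotonicity this puts all $u_n$ inside a fixed ball, after which Rellich--Kondrachov gives strong $L^1$ convergence (hence $u\in\mathcal D$) and Fatou on that fixed ball gives lower semicontinuity of $\int Q$. This Jensen trick is, if anything, cleaner than the paper's tail-estimate route, and it also bypasses the need to separately establish compact support of the limit (which the paper does in Lemma \ref{lem4}). One small point to make explicit: as you wrote it, Fatou applied to $Q(u_n)-\inf Q\ge 0$ over all of $\RN$ fails when $\inf Q<0$ (the constant $-\inf Q$ is not integrable); this is remedied by integrating over $B_{R_0}$, which is precisely what your uniform support bound licenses.
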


\begin{proof}
It follows from Lemma \ref{lem1} and Lemma \ref{lem2} that there exists a minimizing sequence $\{\tilde{u}_k\}$ in $\D$ such that
\begin{equation}\label{limEk}
\lim_{k\to +\infty}E[\tilde{u}_k]=\inf_\D E \in \mathbb R.
\end{equation}
In particular, for all $k\in \N$, we have that
\begin{equation}
\label{precQ}
-C\stackrel{\eqref{lower-bound}}\le \int Q(\tilde u_k) \le E[\tilde u_k] \stackrel{\eqref{limEk}}\le C
\end{equation}
and
\begin{equation}\label{prec}
\frac12 \int|\nabla \tilde u_k|^2 \stackrel{\eqref{def-E}}= E[\tilde u_k] -\int Q(\tilde u_k) \stackrel{\eqref{precQ}}\le C.
\end{equation}
In addition, by Nash inequality \cite{CM},
\begin{equation}\label{prec2}
\int \tilde u_k^2\le C \left(\int \tilde u_k\right)^{\frac{4}{N+2}}\left(\int|\nabla \tilde u_k|^2\right)^{\frac{N}{N+2}}\stackrel{\eqref{prec}}\le C M^{\frac{4}{N+2}}.
\end{equation}
For $k\in\N$, let $u_k$ be the Schwarz symmetrization of $\tilde{u}_k$. Then (\cite[\S 1.3 and 1.4]{Kes})
\begin{equation}
\label{bas-sd}
u_k\ge 0, \quad \mbox{$u_k(x)=v_k(|x|)$ with $v_k$ non-increasing in $(0,+\infty)$}, \quad \int u_k=M.
\end{equation}
Applying the P\'olya-Szeg\H{o} inequality (\cite[Theorem 2.3.1 and Remark 2.3.5]{Kes}) and since the Schwarz symmetrization preserves the $L^2$-norm (\cite[\S 1.3 and 1.4]{Kes}), we have that
\begin{equation}\label{symmprop1}
\int|\nabla u_k|^2\le \int|\nabla \tilde u_k|^2 \quad\mbox{and}\quad  \int u_k^2=\int \tilde u_k^2.
\end{equation}

In order to show that $Q(\tilde u_k)$ is uniformly bounded in $L^1(\RN)$, we estimate
\begin{equation}
\label{min-2}
\int Q_-(\tilde u_k)\stackrel{\eqref{H},\eqref{def-s1}}\leq -\inf Q \int_{\{\tilde u_k\geq s_1\}} 1 \leq -\inf Q \int \frac{\tilde u_k}{s_1}\stackrel{\tilde u_k\in\mathcal D}=-\frac{M}{s_1}(\inf Q)
\end{equation}
and
\begin{equation}
\label{min-3}
\int Q_+(\tilde u_k)=\int Q(\tilde u_k) + \int Q_-(\tilde u_k)\stackrel{\eqref{precQ},\eqref{min-2}}\leq C.
\end{equation}
Combining \eqref{min-2} and \eqref{min-3} we conclude that
\begin{equation}\label{|Q|}
\int |Q(\tilde u_k)| \le C \quad\mbox{for all $k\in\N$.}
\end{equation}
Since $Q(\tilde u_k) \in L^1(\RN)$, we may apply the results in \cite[Section 1.3]{Kes}:
\begin{equation}
\label{symmprop2}
\int Q(u_k)=\int Q(\tilde u_k) \quad\mbox{and}\quad \int |Q(u_k)|=\int |Q(\tilde u_k)| \stackrel{\eqref{|Q|}}\le C \quad\mbox{for every $k\in\N$.}
\end{equation}
In particular,
\begin{equation}\label{boundH1}
\|u_k\|_{\h}\stackrel{\eqref{symmprop1}}\leq \|\tilde{u}_k\|_{\h} \stackrel{\eqref{prec},\eqref{prec2}}\le C\quad\mbox{and}\quad  E [u_k]\stackrel{\eqref{symmprop1},\eqref{symmprop2}}\leq E[\tilde{u}_k]
\end{equation}
for all $k$ in $\N$.
This implies that $\{u_k\}$ is another minimizing sequence in $\D$. In view of \eqref{boundH1}, there exists a non-negative, radially non-increasing function $u\in \h$ such that
\begin{equation}
\label{conv}
\begin{array}{l}
u_k \rightharpoonup u \text{ in } \h,\\
[2ex]
u_k \rightarrow u \text{ a.e. in $\R^N$ and in $L^p(B_R(0))$ for any $R>0$ and any }1\leq p < 2^{*}.
\end{array}
\end{equation}

We now show that $\int u=M$, hence $u\in \mathcal D$. For any $R\ge 1$, we have
\begin{equation*}
M \geq \int_{B_R(0)\setminus B_{R/2}(0)} u_k \stackrel{\eqref{bas-sd}}=\omega_{N-1}\int_{R/2}^R r^{N-1}v_k(r)dr \stackrel{\eqref{bas-sd}}\geq \omega_{N-1}\left(\frac{R}{2}\right)^Nv_k(R),
\end{equation*}
therefore
\begin{equation}
\label{min3}
v_k(R)\leq CR^{-N} \quad \mbox{for all $R\geq 1$ and all $k\in\N$}.
\end{equation}
Since $\int u_k=M$ for all $k$, we have
\begin{eqnarray}
\label{min6}
M && =\lim_{k\to\infty}\int u_k =\lim_{k\to\infty}\int_{B_R(0)} u_k + \lim_{k\to\infty}\int_{\RN\setminus B_R(0)}u_k \nonumber \\
&&\stackrel{\eqref{conv}}=\int_{B_R(0)} u + \lim_{k\to\infty}\int_{\RN\setminus B_R(0)}u_k.
\end{eqnarray}
In order to estimate the second integral on the right-hand side of \eqref{min6}, we note that, in view of \eqref{H} and \eqref{min3},
\begin{equation}
\label{pippo}
\mbox{$Q(u_k)\ge \tfrac{A}{2} u_k^{1-m}\chi_{\{u_k>0\}}$ in $\R^N\setminus B_R(0)$ for $R\gg 1$.}
\end{equation}
Therefore
\begin{eqnarray}\label{rev1}
\nonumber \frac{1}{\omega_{N-1}}\int_{\RN\setminus B_{R}(0)} u_k
&\stackrel{\eqref{bas-sd},\eqref{min3}} \leq  & C R^{-N}\int_{R}^{+\infty}\chi_{\{v_k>0\}}r^{N-1}d r
\\
 & \stackrel{\eqref{bas-sd}}\le & CR^{-N}(v_k(R))^{m-1} \int_{R}^{+\infty} (v_k(r))^{1-m}\chi_{\{v_k>0\}}r^{N-1} dr
\nonumber \\ & \stackrel{\eqref{pippo},\eqref{min3}}\le &  C R^{-Nm} \int_{R}^{+\infty} Q(v_k) r^{N-1} dr
\nonumber \\ & \stackrel{\eqref{bas-sd}}\le & C R^{-Nm} \int |Q(u_k)| \stackrel{\eqref{symmprop2}}\le C R^{-Nm}.
\end{eqnarray}
Passing to the limit in \eqref{min6} as $R\to +\infty$ using \eqref{rev1} and Beppo Levi's theorem, we conclude that $\int u=M$, hence $u\in\mathcal D$.

\smallskip

Now we prove that $u$ is a minimizer of $E$ in $\mathcal{D}$. The passage to the limit in the Dirichlet energy is straightforward by lower semi-continuity:
\begin{equation}
\label{min7}
\int |\nabla u|^2\leq \liminf_{k\to\infty}\int |\nabla u_k|^2.
\end{equation}
Let's focus on the potential energy.
Let $0<\delta<s_1$ be such that $|\{u=\delta\}|=0$ (note that this is the case for a.e. $\delta>0$, see Lemma \ref{aux2}). Choosing $R\gg\delta^{-1/N}$ and using \eqref{min3}, we deduce that $u_k \leq C R^{-N}<\delta$ in $\RN\setminus B_R(0)$ for all $k$ sufficiently large. Therefore
\begin{equation}
\label{min9}
\int Q(u_k)\stackrel{\delta<s_1}\geq \int Q(u_k)\chi_{\{u_k>\delta\}}=\int_{B_R(0)}Q_{+}(u_k)\chi_{\{u_k>\delta\}}-\int_{B_R(0)}Q_{-}(u_k)\chi_{\{u_k>\delta\}}.
\end{equation}
It follows from \eqref{conv} that $u_k\to u$ a.e. in $\RN$. Hence, since $|\{u=\delta\}|=0$, $\chi_{\{u_k>\delta\}}\to \chi_{\{u>\delta\}}$ a.e.. Moreover, by \eqref{H}, $Q_{-}\le C$. Hence by Fatou lemma, applied to the first term on the righ-hand side of \eqref{min9}, and Lebesgue theorem, applied to the second term on the right-hand side of \eqref{min9}, we obtain
\begin{equation}
\label{min10}
\liminf_{k\to\infty}\int Q(u_k) \geq \int_{B_R(0)} Q(u)\chi_{\{u>\delta\}}.
\end{equation}
Since $u<\delta$ in $\RN\setminus B_R(0)$, the right-hand side of \eqref{min10} coincides with $\displaystyle \int Q(u)\chi_{\{u>\delta\}}$. Therefore
\begin{equation*}
\liminf_{k\to\infty}\int Q(u_k) \geq \int Q(u)\chi_{\{u>\delta\}}=\int Q(u)\chi_{\{\delta<u<s_1\}}+\int Q(u)\chi_{\{u\geq s_1\}}.
\end{equation*}
By Lemma \ref{aux2}, $|\{u=\delta\}|=0$ for a.e. $\delta>0$; by Lemma \ref{aux}, $\chi_{\{u>\delta\}}\to \chi_{\{u>0\}}$ a.e. in $\RN$ as $\delta \to 0$. Therefore, by Beppo Levi's theorem, we obtain
\begin{equation}
\label{min11}
\liminf_{k\to\infty}\int Q(u_k) \geq \lim_{\delta\to 0} \int Q(u)\chi_{\{\delta<u<s_1\}}+\int Q(u)\chi_{\{u\geq s_1\}} = \int Q(u).
\end{equation}
Since $\{u_k\}$ is a minimizing sequence, it follows from \eqref{min7} and \eqref{min11} that $E[u]\le \liminf_{k\to +\infty}E[u_k]=\inf_\D E$, hence $u$ is a minimizer of $E$ in $\mathcal{D}$.
\end{proof}

\begin{remark}
\label{bN=1}
{\rm If $N=1$, the compact embedding $H^1(\R)\Subset C^{\frac{1}{2}}(\R)$ implies that any minimizer belongs to $C^{\frac{1}{2}}(\R)$.
}\end{remark}

\begin{remark}
\label{|Qu|}
{\rm
Any minimizer $u$ of $E$ in $\mathcal D$ satisfies $Q(u)\in L^1(\R^N)$. Indeed, with the same argument just used in the proof of Lemma \ref{lem3} (cf. \eqref{min-2}-\eqref{|Q|}), we have
\begin{equation*}
\int Q_-(u)\stackrel{\eqref{H},\eqref{def-s1},u\in\mathcal{D}}\leq -\frac{M}{s_1}(\inf Q) \quad \text{and}\quad \int Q_+(u)=\int Q(u) + \int Q_-(u)\stackrel{E[u]<+\infty}\leq C.
\end{equation*}
}
\end{remark}

The next Lemma implies that the minimizer given by Lemma \ref{lem3} has compact support.

\begin{lemma}
\label{lem4}
Any radially non-increasing function $u:\RN\to [0,+\infty)$ such that
\begin{equation}
\label{hlem4}
u\in L^1(\RN) \qquad\text{and}\qquad \int Q(u)<+\infty
\end{equation}
has compact support.
\end{lemma}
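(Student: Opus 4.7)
The plan is to argue by contradiction: I will assume that $\supp u$ is not compact and then exhibit a region on which the mass constraint forces $u$ to decay so slowly that $\int Q(u)=+\infty$, contradicting \eqref{hlem4}.

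Write $u(x)=v(|x|)$ with $v:[0,+\infty)\to[0,+\infty)$ non-increasing. Since $v$ is non-increasing, $\supp u$ is compact if and only if $v$ vanishes eventually, so under the contradiction hypothesis we have $v(r)>0$ for every $r>0$. The mass constraint together with monotonicity gives, exactly as in the derivation of \eqref{min3} in Lemma \ref{lem3}, the pointwise decay estimate
\begin{equation*}
v(R)\le C R^{-N}\qquad\text{for all } R\ge 1,
\end{equation*}
where $C$ depends only on $N$ and $\|u\|_{L^1}$. In particular $v(R)\to 0$ as $R\to+\infty$.

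Now I invoke the short-range assumption in \eqref{H}, namely $Q(u)\sim Au^{1-m}$ as $u\to 0^+$ with $A>0$ and $m>1$. Fix $\delta\in(0,s_1)$ small enough that $Q(t)\ge \tfrac{A}{2}t^{1-m}$ for every $t\in(0,\delta]$. By the decay above, we can choose $R_0\ge 1$ so large that $v(R)\le\delta$ for all $R\ge R_0$. Then for every $r\ge R_0$, since also $v(r)>0$ by the contradiction assumption,
\begin{equation*}
Q(v(r))\ \ge\ \tfrac{A}{2}\,v(r)^{1-m}\ \stackrel{1-m<0}{\ge}\ \tfrac{A}{2}\,(C r^{-N})^{1-m}\ =\ C'\,r^{N(m-1)}.
\end{equation*}

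Integrating in spherical coordinates over $\R^N\setminus B_{R_0}(0)$ yields
\begin{equation*}
\int Q(u)\ \ge\ \int_{\R^N\setminus B_{R_0}(0)}Q(u)\ =\ \omega_{N-1}\int_{R_0}^{+\infty}Q(v(r))\,r^{N-1}\,dr\ \ge\ C''\int_{R_0}^{+\infty}r^{Nm-1}\,dr,
\end{equation*}
and since $Nm-1\ge m-1>0$ this last integral is $+\infty$, contradicting $\int Q(u)<+\infty$. Hence $\supp u$ must be compact.

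The only mildly delicate point is the passage to the lower bound $Q(v(r))\ge \tfrac{A}{2}v(r)^{1-m}$, which requires us to know that $v(r)$ is small (so as to be in the regime where the short-range asymptotics in \eqref{H} is effective) and strictly positive (so as to live in the domain where $Q$ is defined and behaves like $u^{1-m}$). Both facts are supplied respectively by the $L^1$-based decay $v(R)\le CR^{-N}$ and by the contradiction hypothesis that the support is non-compact; no further regularity of $u$ or $Q$ is needed.
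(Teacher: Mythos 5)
Your proof is correct and rests on exactly the same two ingredients as the paper's argument: the $L^1$-based decay $v(R)\lesssim R^{-N}$ and the short-range lower bound $Q(t)\gtrsim t^{1-m}$ as $t\to 0^+$. The paper avoids a contradiction argument by directly combining these to bound $\int_R^{+\infty}\chi_{\{v>0\}}r^{N-1}\,dr$ and then invoking monotonicity of $v$, but the inequalities you use are a rearrangement of the same chain, so the approaches are essentially identical.
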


\begin{proof}
We can assume without loss of generality that $u$ is radially symmetric with respect to $x_0=0$ in $\RN$: $u(x)=v(|x|)$ with $v$ non-increasing in $(0,+\infty)$. Since $u\in L^1(\Omega)$, arguing as in the proof of \eqref{min3} we see that $v(R)\leq CR^{-N}$ for all $R\geq 1$. Consequently, arguing as in \eqref{rev1} we obtain for $R\gg 1$
\begin{eqnarray}\label{rev1-part-bis}
\int_{R}^{+\infty}\chi_{\{v>0\}}r^{N-1} dr \le  C R^{-N(m-1)} \int_{\R^N\setminus B_R(0)} Q(u).
\end{eqnarray}
Since
$$
\int_{\R^N\setminus B_R(0)} Q(u) = \int Q(u)-\int_{B_R(0)} Q(u) \le \int Q(u) + \omega_{N-1}\frac{R^N}{N} \max\{0, -\inf Q\},
$$
it follows from \eqref{rev1-part-bis} that
\begin{equation*}
\int_{R}^{+\infty}\chi_{\{v>0\}}r^{N-1} dr \leq C R^{-N(m-1)}\left(\int Q(u) + R^N \max\{0, -\inf Q\}\right) \stackrel{\eqref{hlem4},\eqref{H}} <+\infty.
\end{equation*}
Since $v$ is non-increasing, this implies that $v(r)=0$ for all $r$ sufficiently large and completes the proof.
\end{proof}

Now we recall a simple property of radially symmetric functions in $H^1(\RN)$.
\begin{lemma}
\label{lem5}
If $u\in H^1(\RN)$ is radially symmetric w.r.to $x_0$, then $u\in C^{\frac{1}{2}}_{loc}(\RN\setminus\{x_0\})$.
\end{lemma}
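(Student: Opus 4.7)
The plan is to reduce the statement to the one-dimensional Sobolev embedding $H^1\hookrightarrow C^{1/2}$ applied to the radial profile on compact annuli bounded away from the symmetry center. Without loss of generality we may translate and assume $x_0=0$, and write $u(x)=v(|x|)$ for some measurable $v:(0,+\infty)\to\R$. The aim is to show that for any $0<R_1<R_2<+\infty$, the function $v$ belongs to $H^1((R_1,R_2))$, hence to $C^{1/2}([R_1,R_2])$ by the standard one-dimensional embedding, and then to transfer this Hölder estimate to $u$ on the annulus $A_{R_1,R_2}=\{R_1\le|x|\le R_2\}$.

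First, I would check that the assumption $u\in H^1(\R^N)$ translates, via polar coordinates, into
\[
\int_0^{+\infty}\left(v(r)^2+v'(r)^2\right)r^{N-1}\,dr<+\infty,
\]
where $v'$ denotes the weak derivative. On any bounded interval $[R_1,R_2]\subset(0,+\infty)$, the weight $r^{N-1}$ is comparable to a positive constant, so $v\in H^1((R_1,R_2))$. The one-dimensional Sobolev embedding then yields $v\in C^{1/2}([R_1,R_2])$, with an explicit estimate
\[
|v(r)-v(s)|\le C(R_1,R_2)\,\|v\|_{H^1((R_1,R_2))}\,|r-s|^{1/2}
\]
for all $r,s\in[R_1,R_2]$.

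To conclude, for arbitrary $x,y\in A_{R_1,R_2}$ the triangle inequality gives $\big||x|-|y|\big|\le|x-y|$, so
\[
|u(x)-u(y)|=|v(|x|)-v(|y|)|\le C(R_1,R_2)\,\|v\|_{H^1((R_1,R_2))}\,|x-y|^{1/2}.
\]
Since any compact subset of $\R^N\setminus\{0\}$ is contained in such an annulus, this gives $u\in C^{1/2}_{loc}(\R^N\setminus\{0\})$, as required. No significant obstacle is expected: the only subtle point is that the embedding fails as $R_1\to 0$ because the weight $r^{N-1}$ degenerates at the origin, which is precisely why the conclusion is only local away from $x_0$.
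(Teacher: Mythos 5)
Your argument is essentially identical to the paper's: both reduce to the one-dimensional Sobolev embedding $H^1\hookrightarrow C^{1/2}$ for the radial profile $v$, using the lower bound on the weight $r^{N-1}$ away from the origin, and then transfer the Hölder estimate to $u$ via $\bigl||x|-|y|\bigr|\le|x-y|$. The only cosmetic difference is that you work on compact annuli $[R_1,R_2]$ whereas the paper works on $[\delta,+\infty)$; both are correct.
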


\begin{proof}
We may assume w.l.o.g. that $x_0=0$. Let $\delta>0$. Since $u$ is radially symmetric, there exists a function $v:(0,+\infty)\to\R$ such that $u(x)=v(|x|)$. Since $u\in\h$, there exists $v'$, the distributional derivative of $v$, such that $\nabla u(x)= v'(|x|)x/|x|$. Then
$$
C\geq \int_{\RN\setminus B_\delta(0)} |\nabla u|^2=\omega_{N-1}\int_\delta^{+\infty} |v'(r)|^2r^{N-1}dr\geq \omega_{N-1}\delta^{N-1}\int_\delta^{+\infty} |v'(r)|^2dr.
$$
This implies that $v'$ belongs to $L^2((\delta,+\infty))$. Using the same argument we get $v\in L^2((\delta,+\infty))$. Therefore $v\in H^1((\delta,+\infty))\Subset C^{\frac{1}{2}}((\delta,+\infty))$, by the Sobolev embedding theorem. Hence
$$
|u(x_1)-u(x_2)|= |v(|x_1|)-v(|x_2|)|\le C_\delta ||x_1|-|x_2||^{1/2}\le C_\delta |x_1-x_2|^{1/2}
$$
for all $x_1,x_2\in \R^N\setminus B_\delta(0)$. Since $\delta$ is arbitrary, the proof is complete.
\end{proof}

Collecting Lemma \ref{lem3}, Lemma \ref{lem4} and Lemma \ref{lem5} we obtain Theorem \ref{exist}.

\medskip

We conclude the section by proving the result anticipated in Remark \ref{m<3}.

\begin{lemma}
\label{m>3}
If $m\ge 3$, then $\displaystyle \int Q(u) = +\infty$ for all $u\in \mathcal D$. Consequently, $E[u]=+\infty$ for all $u\in \mathcal D$.
\end{lemma}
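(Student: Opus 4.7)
The natural approach is proof by contradiction: suppose some $u\in\D$ has $\int Q(u)<+\infty$, and derive a contradiction from $m\ge 3$. First I would reduce to a radially non-increasing profile by Schwarz symmetrization: the symmetrized function $u^*$ still lies in $\h$ (P\'olya--Szeg\H o), still has $\int u^*=M$, and, since $Q(0)=0$ and the distribution function is preserved under symmetrization, $\int Q(u^*)=\int Q(u)<+\infty$ -- exactly as in \eqref{symmprop2}. So I may assume $u(x)=v(|x|)$ with $v$ non-increasing on $[0,+\infty)$.

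Under the standing hypothesis $\int Q(u)<+\infty$, Lemma~\ref{lem4} now applies to $u$ and gives $\supp u=\overline{B_R(0)}$ for some finite $R>0$, so $v(R)=0$. Combined with $u\in\h$, Lemma~\ref{lem5} -- or a one-line Cauchy--Schwarz from $v(r)=-\int_r^R v'(s)\,ds$ using that $|v'|^2\,r^{N-1}$ is integrable near $R$ -- yields the boundary decay
$$
v(r)\le C(R-r)^{1/2}\qquad\text{for all }r\in[R/2,R].
$$

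Finally, \eqref{H} gives some $s_0>0$ with $Q(s)\ge \tfrac{A}{2}s^{1-m}$ for $0<s\le s_0$; shrinking $\delta>0$ so that $v\le s_0$ on $[R-\delta,R]$ and passing to polar coordinates,
$$
\int Q(u)\,dx\ \ge\ \omega_{N-1}\int_{R-\delta}^R \tfrac{A}{2}\,v(r)^{1-m}\,r^{N-1}\,dr\ \ge\ C'\int_0^\delta \sigma^{(1-m)/2}\,d\sigma,
$$
which is $+\infty$ exactly when $(1-m)/2\le -1$, i.e.\ when $m\ge 3$. This contradicts $\int Q(u)<+\infty$, proving the first assertion; the second then follows from $E[u]\ge\int Q(u)$.

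The conceptual content, and the reason the threshold $m=3$ appears, is the tight match between the two middle steps: $H^1$-regularity of a radially decreasing function, together with $v(R)=0$, forces precisely $v=O((R-r)^{1/2})$ at the free boundary, and this is exactly the decay rate at which the short-range singularity $u^{1-m}$ ceases to be integrable. The one technical step I would verify most carefully is the radial reduction preserving $\int Q(u)$, which requires $Q(0)=0$ (true here) and that $Q(u)\in L^1$ (which is our working assumption); an alternative route would bypass Lemma~\ref{lem4} altogether, using instead a co-area representation $\int u^{1-m}\chi_{\{0<u<s_0\}}=(m-1)\int_0^{s_0}t^{-m}|\{0<u<t\}|\,dt$ and a lower bound on the sublevel measures coming from $\int u=M$ and $\|u\|_{H^1}<+\infty$, but invoking Lemma~\ref{lem4} is considerably shorter.
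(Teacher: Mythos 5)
Your proof is correct and follows essentially the same route as the paper: contradiction hypothesis, Schwarz symmetrization to a radially non-increasing profile, compact support from Lemma~\ref{lem4}, the $(R-r)^{1/2}$ boundary decay from Lemma~\ref{lem5}, and a polar-coordinates lower bound showing $\int Q(u)=+\infty$ when $m\ge 3$. The alternative co-area route you mention at the end is not used in the paper, which indeed invokes Lemma~\ref{lem4} as you do.
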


\begin{remark}{\rm
Lemma \ref{m>3} is related to Theorem 2 of \cite{lm} (see also \cite{SunZha}): there, in the model case $Q(s)=As^{1-m}$, it is proved that a solution to the Euler-Lagrange equation associated to $E$ (cf. \eqref{eqmin} below) belongs to $H^1$ if and only if $m<3$.
}\end{remark}

\begin{proof}
Assume by contradiction that $\tilde u\in \mathcal D$ exists such that  $\displaystyle \int Q(\tilde u)<+\infty$. Let $u$ be the Schwarz symmetrization of $\tilde{u}$. Then
\begin{equation}
\label{nex2}
u\ge 0 \quad\text{and}\quad \mbox{$u(x)=v(|x|)$ with $v$ non-increasing in $(0,+\infty)$}.
\end{equation}
Arguing as in the first part of the proof of Lemma \ref{lem3}, we deduce that
\begin{equation}
\label{nex3}
u\in\mathcal\D \quad\text{and}\quad \int Q(u)=\int Q(\tilde u) <+\infty.
\end{equation}
It follows from Lemma \ref{lem4} and \eqref{nex2} that $\bar r$ exists such that $\supp v =[0,\bar r]$. Furthermore, by Lemma \ref{lem5}, $C>0$ exists such that
\begin{equation}
\label{nex5}
v(r)= v(r)-v(\bar r) \leq C(\bar r-r)^{\frac{1}{2}} \qquad \mbox{for all $\bar r/2\le r\leq \bar r$}.
\end{equation}
This implies that there exists $R\in [\bar r/2,\bar r)$ such that
\begin{equation}
\label{nex4}
\displaystyle Q(v) \stackrel{\eqref{H}}\ge \tfrac{A}{2} v^{1-m} \stackrel{\eqref{nex5}}\ge C (\rr-r)^{\frac{1-m}{2}} \quad\mbox{for all $v\in [R,\bar r)$}.
\end{equation}
Therefore
\begin{eqnarray*}
\int Q(\tilde u) &\stackrel{\eqref{nex3}}= & \int Q(u) \stackrel{\eqref{nex2}} = \omega_{N-1}\int_0^{\bar r} Q(v)r^{N-1} dr \nonumber \\
& \stackrel{\eqref{H},\eqref{nex4}}\geq &\frac{\omega_{N-1}}{N} R^N \inf Q +C \omega_{N-1} \int_R^{\bar r} (\bar r - r)^{\frac{1-m}{2}} dr
\stackrel{m\ge 3}= +\infty,
\end{eqnarray*}
and we have obtained a contradiction.
\end{proof}

\section{The Euler-Lagrange equation}
\label{s2}

In this section we assume that
\begin{equation}\label{H-EL1}
\mbox{$m<3$ \ and \ $Q\in C^1((0,+\infty))$ is such that \eqref{H} holds.}
\end{equation}
In higher dimension we will also need additional information on the behavior of $Q'(s)$ for large $s$ (more precisely on $Q'_{-}$ and $Q'_{+}$, the negative and positive parts of $Q'$):
\begin{equation}\label{H2}
Q'_{-}(s)\leq C \ \text{ for }s\gg 1 \quad\mbox{and}\quad Q'_{+}(s)\leq Cs^q  \ \text{ for }s\gg 1 \qquad \mbox{if $N\ge 2$}
\end{equation}
for some $C>0$, with $q<+\infty$ if $N=2$ and $q\le 2^*$ if $N\geq 3$. We will show:
\begin{theorem}
\label{alldec}
Assume \eqref{H-EL1}; if $N\ge 2$, assume in addition \eqref{H2}. Then any minimizer of $E$ in $\D$ has compact support, is radially strictly decreasing  w.r.to some $x_0\in \R^N$ in $\{u>0\}$, and is a classical solution to
\begin{equation}
\label{eqmin}
-\Delta u +Q'(u)=\lambda \quad \text{in } \{u>0\},
\end{equation}
for some $\lambda \in \R$, in the sense that $u\in C^{2}(\{u>0\})\cap C^{\frac{1}{2}}(\R^N)$ with $\nabla u(x_0)=\mathbf 0$.
\end{theorem}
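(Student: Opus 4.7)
The plan is to proceed in three stages: first, derive the Euler--Lagrange equation for any minimizer by a constrained variation; second, bootstrap to interior $C^2$-regularity; third, deduce radial symmetry and strict monotonicity from equality in P\'olya--Szeg\H{o} combined with an analysis of the radial ODE.

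\textbf{EL equation and interior regularity.} Let $u$ be an arbitrary minimizer. Fix $\delta>0$ and pick $\varphi,\psi\in C_c^\infty(\{u>\delta\})$ with $\int\psi=1$. The mass-preserving competitor $u_\varepsilon=u+\varepsilon(\varphi-(\int\varphi)\psi)$ stays non-negative for small $|\varepsilon|$, since $u\ge\delta$ a.e.\ on the perturbation support. Using $Q\in C^1((0,+\infty))$ together with $u\ge\delta$ there to pass the derivative under the integral, the first-variation condition $\tfrac{d}{d\varepsilon}\big|_{\varepsilon=0}E[u_\varepsilon]=0$ yields
\[
\int(\nabla u\cdot\nabla\varphi+Q'(u)\varphi)=\lambda\int\varphi,
\]
where $\lambda:=\int(\nabla u\cdot\nabla\psi+Q'(u)\psi)\in\R$ is easily shown to be independent of $\psi$. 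Letting $\delta\downarrow 0$ gives \eqref{eqmin} weakly in $\{u>0\}$. On any $U\Subset\{u>0\}$, $u$ is bounded and bounded below by a positive constant, so $Q'(u)\in L^\infty(U)$, and a standard Calder\'on--Zygmund/Schauder bootstrap produces $u\in C^{2,\alpha}_{\mathrm{loc}}(\{u>0\})$. Hypothesis \eqref{H2} is used in $N\ge 2$ to place $Q'(u)$ in a suitable Lebesgue space so that the bootstrap can start.

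\textbf{Radial symmetry and strict monotonicity.} Let $u^*(x)=v(|x|)$ denote the Schwarz symmetrization of $u$; by Lemma~\ref{lem3}, $u^*$ is also a minimizer, with $\int Q(u^*)=\int Q(u)$ and hence $\int|\nabla u^*|^2=\int|\nabla u|^2$, i.e., equality in P\'olya--Szeg\H{o}. By the Brothers--Ziemer theorem, $u=u^*(\cdot-x_0)$ for some $x_0\in\R^N$, provided
\[
\bigl|\{\nabla u^*=0\}\cap\{0<u^*<v(0)\}\bigr|=0.
\]
To check this non-degeneracy, apply the previous step to $u^*$: its radial profile $v$ is in $C^2(\{v>0\})$ and satisfies
\[
-v''-\tfrac{N-1}{r}v'+Q'(v)=\lambda\quad\text{on }\{v>0\}.
\]
If the non-degeneracy failed, a standard $C^1$ argument would give $v''=0$ and hence $Q'(v)=\lambda$ on a positive-measure subset of $\{v'=0\}\cap\{0<v<v(0)\}$; by continuity and monotonicity of $v$, this would force a genuine plateau $v\equiv c\in(0,v(0))$ on some interval $[r_1,r_2]\subset(0,R)$, $R:=\sup\{r:v(r)>0\}$. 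Integrating $(r^{N-1}v')'=r^{N-1}(Q'(v)-\lambda)$ past $r_2$, using $v'(r_2)=0$ and $v\le c$ for $r>r_2$, contradicts either Lemma~\ref{lem4} (compact support) or the non-increasing character of $v$. The same argument applied to $u$ itself then yields strict monotonicity on $\{u>0\}$.

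\textbf{Conclusion and main obstacle.} Compact support of $u$ follows from Lemma~\ref{lem4}; global $C^{1/2}$-regularity follows from Lemma~\ref{lem5} on $\R^N\setminus\{x_0\}$ together with the interior $C^2$-regularity and boundedness near $x_0$ (and in $N=1$ directly from Remark~\ref{bN=1}); finally, $\nabla u(x_0)=\mathbf 0$ is a consequence of radial $C^2$-smoothness at the centre. The main technical obstacle is the plateau-exclusion in Step~2: since $Q$ is only $C^1$, classical ODE uniqueness is unavailable at heights $c$ with $Q'(c)=\lambda$, so the contradiction must be drawn from the integrated EL equation combined with monotonicity and compactness of support, rather than from higher smoothness of $Q$. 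I expect the remaining steps to be fairly routine applications of constrained variational calculus, interior elliptic regularity, and symmetrization.
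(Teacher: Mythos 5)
Your overall strategy matches the paper's: constrained first variation to obtain the Euler--Lagrange equation, interior elliptic regularity, Schwarz symmetrization with equality in P\'olya--Szeg\H{o}, a Brothers--Ziemer rigidity theorem (the paper cites \cite[Theorem 2.3.3]{Kes} and \cite[Theorem 1]{FV}), and plateau exclusion via the radial ODE to verify non-degeneracy. However, Step 1 as written has a real gap for $N\ge 2$: for a generic minimizer $u\in H^1(\RN)$ the superlevel set $\{u>\delta\}$ is only defined up to measure zero (quasi-open, not open), so ``$C_c^\infty(\{u>\delta\})$'' need not contain any nontrivial function, and your constrained variation cannot even be set up. The paper avoids this by symmetrizing \emph{before} deriving the Euler--Lagrange equation: a radially non-increasing minimizer is continuous away from its center (Lemma~\ref{lem5}), so $\{u>\delta\}$ is an open ball and the variation in Proposition~\ref{EL} is legitimate. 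Your own Step~2 in fact only needs the ODE for the profile $v$ of $u^*$, so the fix is a reordering rather than a new idea: symmetrize $\tilde u\mapsto u^*$, derive the EL equation and $C^2$-regularity for $u^*$, establish $v'<0$ on $(0,\bar r)$, note that $u^*$ is a minimizer with $\int Q(u^*)=\int Q(\tilde u)$ so P\'olya--Szeg\H{o} holds with equality, apply Brothers--Ziemer to get $\tilde u=u^*(\cdot-x_0)$, and transport all properties to $\tilde u$ by translation.

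A secondary issue concerns your plateau exclusion: passing from $|\{v'=0\}\cap\{0<v<v(0)\}|>0$ to a genuine interval plateau $v\equiv c$ on $[r_1,r_2]$ is not automatic --- a $C^2$ non-increasing function can have $v'=0$ on a set of positive measure without being locally constant. The paper's argument is cleaner and should replace yours: take the \emph{first} $r_0\in(0,\bar r)$ with $v'(r_0)=0$; since $v'\le 0$ has an interior maximum there, $v''(r_0)=0$, hence $Q'(v(r_0))=\lambda$ from the ODE, so the constant $v(r_0)$ solves the equation with the same Cauchy data at $r_0$, forcing $v\equiv v(r_0)$ on $[r_0,\bar r)$ and contradicting $v(\bar r)=0$. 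You are right to flag that $Q\in C^1$ alone does not provide Lipschitz $Q'$, hence classical ODE uniqueness needs care; but starting from the first zero of $v'$ is the way to organize the argument, whereas the positive-measure-to-interval step you propose is an extra unproved claim.
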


\begin{remark}{\rm
Starting from the pioneering works \cite{CRT,lm}, an enormous interest has been given to the homogeneous Dirichlet problem for elliptic equations with a source term which is singular with respect to $u$ (see e.g. the Introduction of \cite{HMV}). Of course, in this case both the domain and $\lambda$ are fixed. Concerning the case $Q'(u)\approx - u^{-m}$ with $1<m<3$, we refer in particular to \cite{GiS,OlPe,GoGu,CaMu,DuOl}.
}\end{remark}

We begin by proving that \eqref{eqmin} is satisfied by any radially non-increasing minimizer.

\begin{proposition}
\label{EL}
Assume \eqref{H-EL1}; if $N\ge 2$, assume in addition \eqref{H2}. Then any radially non-increasing minimizer $u$ of $E$ in $\D$ is a distributional solution to \eqref{eqmin} for some $\lambda\in\R$.
\end{proposition}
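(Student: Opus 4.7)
The plan is to derive the Euler-Lagrange equation by mass-preserving inner variations supported strictly inside the positivity set $\Omega:=\{u>0\}$, thereby staying away from the singularity of $Q$ at $0$. First I would observe that, since $u$ is radially non-increasing with respect to some $x_0\in\R^N$, has compact support, and satisfies $\int u=M>0$, the set $\Omega$ is an open ball $B_{r_0}(x_0)$ with $r_0\in(0,+\infty)$, and by Lemma \ref{lem5} $u$ is continuous on $\R^N\setminus\{x_0\}$. In particular, on every compact set $K\Subset\Omega\setminus\{x_0\}$ there exist $0<c_K\leq C_K<+\infty$ with $c_K\leq u\leq C_K$ on $K$, keeping us uniformly away from both the singularity of $Q$ at $0$ and a possible blow-up of $u$ at $x_0$ (when $N\geq 2$).

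Next, I would fix once and for all a reference bump $\psi\in C_c^\infty(\Omega\setminus\{x_0\})$ with $\psi\geq 0$ and $\int\psi=1$, and for any $\varphi\in C_c^\infty(\Omega\setminus\{x_0\})$ consider the one-parameter family
$$u_t:=u+t\Big(\varphi-\Big(\int\varphi\Big)\psi\Big),\qquad t\in\R.$$
Clearly $u_t\in H^1(\R^N)$ and $\int u_t=M$. Setting $K:=\operatorname{supp}\varphi\cup\operatorname{supp}\psi\Subset\Omega\setminus\{x_0\}$ with $0<c\leq u\leq C$ on $K$, for $|t|$ small enough we have $u_t\in[c/2,2C]$ on $K$ and $u_t=u\geq 0$ off $K$; hence $u_t\in\mathcal D$. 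The Dirichlet part of $E[u_t]$ is a quadratic polynomial in $t$, while for the potential part I would note that $Q(u_t)=Q(u)$ off $K$ and, on $K$, $t\mapsto Q(u_t)$ is $C^1$ (since $Q\in C^1((0,+\infty))$) with difference quotients uniformly bounded by $\sup_{[c/2,2C]}|Q'|\cdot(|\varphi|+|\int\varphi|\,\psi)\in L^1(K)$; dominated convergence then gives $\frac{d}{dt}\big|_{t=0}\int Q(u_t)=\int Q'(u)(\varphi-(\int\varphi)\psi)$. Since $u$ minimizes $E$ in $\mathcal D$, $t\mapsto E[u_t]$ has a minimum at $t=0$, so its derivative vanishes, yielding
$$\int\big[\nabla u\cdot\nabla\varphi+Q'(u)\varphi\big]=\lambda\int\varphi,\qquad\lambda:=\int\big[\nabla u\cdot\nabla\psi+Q'(u)\psi\big],$$
for every $\varphi\in C_c^\infty(\Omega\setminus\{x_0\})$, i.e. the distributional form of \eqref{eqmin} on $\Omega\setminus\{x_0\}$.

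Finally I would promote the identity to all of $\Omega$. If $N=1$, Sobolev embedding makes $u$ continuous on $\R$, $u(x_0)=\max u>0$, and the construction above applies verbatim to any $\varphi\in C_c^\infty(\Omega)$ (just taking $K$ to be its support). For $N\geq 2$, hypothesis \eqref{H2} combined with the Sobolev embedding $H^1\hookrightarrow L^{2^*}$ (or Moser-Trudinger when $N=2$) ensures $Q'(u)\in L^1_{\mathrm{loc}}(\Omega)$, so both sides of the weak identity make sense against any $\varphi\in C_c^\infty(\Omega)$; since the singleton $\{x_0\}$ has zero $H^1$-capacity in $\R^N$ for $N\geq 2$, a standard approximation of $\varphi\in C_c^\infty(\Omega)$ by $\varphi(1-\eta_\varepsilon)$ for suitable cut-offs $\eta_\varepsilon$ vanishing outside a small ball around $x_0$ extends the identity, giving the desired distributional equation on all of $\Omega$. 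The hard part of the argument is precisely this interplay of two singular effects—the singularity of $Q$ at $0$, which forces variations to be supported strictly inside $\Omega$ (hence away from the free boundary $\partial\Omega$), and the possible unboundedness of $u$ at $x_0$ for $N\geq 2$, which is the reason why assumption \eqref{H2} is needed in higher dimensions.
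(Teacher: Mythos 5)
Your proof is correct and reaches the same conclusion, but the route is genuinely different in one structural respect, which is worth noting.

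The paper's argument tests directly against $\varphi\in C_c^1(\R^N)$ with $\supp\varphi\Subset B_{\bar r}(x_0)$ and $\int\varphi=0$, allowing $x_0\in\supp\varphi$ from the start: radial monotonicity still gives $u\geq\varepsilon$ on $\supp\varphi$, but $u$ may be unbounded at $x_0$ when $N\geq 2$, and this is exactly where \eqref{H2} enters --- inside the dominated-convergence step that differentiates $t\mapsto\int Q(u+t\varphi)$, to dominate $|Q'(u+\tau\varphi)\varphi|$ via $|Q'(s)|\lesssim 1+s^q$ and the Sobolev embedding. You instead split the proof in two: first a variational step with test functions supported away from both $\partial\Omega$ and $x_0$, where $u$ is trapped between two positive constants and only continuity of $Q'$ on compacts of $(0,\infty)$ is used (no growth hypothesis at all); then an extension across $x_0$ by a zero-capacity cut-off argument, in which \eqref{H2} plays the purely integrability-theoretic role of ensuring $Q'(u)\in L^1_{\mathrm{loc}}(\Omega)$, while $\nabla u\in L^2$ handles the gradient term against $\varphi\nabla\eta_\varepsilon$. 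Your version makes the role of \eqref{H2} more transparent (it is an $L^1_{\mathrm{loc}}$-integrability issue at $x_0$, not a variational one) and cleanly isolates where the possible blow-up of $u$ at the centre actually matters; the paper's version is more compact since it avoids the capacity machinery. Minor remark: for $N=2$ the capacity cut-off must be logarithmic rather than the naive rescaled bump (which does not have vanishing Dirichlet energy in two dimensions); you implicitly allow for this by saying ``suitable cut-offs,'' but it is worth stating explicitly, since this is the only place in your extension step where $N\geq 2$ is actually used rather than merely assumed.
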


\begin{proof}
Let $x_0$ be the symmetry center of $u$. It follows from Lemma \ref{lem4} and Lemma {\ref{lem5}} that $u\in C(\R^N\setminus\{x_0\})$ and that $\supp u =\overline{B_{\bar r}(x_0)}$ for some $\rr\in(0,+\infty)$. Let $\varphi\in \mathcal I$, where
$$
\mathcal{I}=\left\{\varphi\in C^1_c(\R^N) : \supp \varphi\Subset B_{\bar r}(x_0), \int \varphi=0\right\}.
$$
Since $u\in C(\R^N\setminus \{x_0\})$ and $u$ is radially non-increasing, $\varepsilon>0$ (depending on $\varphi$) exists such that $u\geq \varepsilon$ in $\supp \varphi$. Choosing
$|t|<\frac{\varepsilon}{\|\varphi\|_{\infty}},$
we have that
\begin{equation}\label{m1}
u+t\varphi\in \mathcal{D} \quad\mbox{and}\quad \chi_{\{u+t\varphi>0\}}=\chi_{\{u>0\}}=B_{\rr}(x_0) \quad \mbox{for all } \ |t|<\frac{\varepsilon}{\|\varphi\|_{\infty}}.
\end{equation}
It follows that
\begin{equation}
\label{app1}
\displaystyle\frac{E[u+t\varphi]-E[u]}{t} \stackrel{\eqref{def-E},\eqref{m1}}=\int \nabla u\cdot \nabla\varphi+\frac{t}{2}\int |\nabla\varphi|^2+\int \frac{Q(u+t\varphi)-Q(u)}{t}\chi_{\{u>0\}}.
\end{equation}
Passing to the limit as $t\to 0$ on the first two terms on the right-hand side of \eqref{app1} is trivial. For the third one, we will apply Lebesgue theorem. Firstly, since $Q\in C^1((0,+\infty))$ and in view of \eqref{m1}, we have
\begin{equation}\label{L1}
\frac{Q(u+t\varphi)-Q(u)}{t} \to Q'(u)\varphi \quad\mbox{pointwise in $B_{\rr}(x_0)\setminus \{x_0\}$\quad as $t\to 0$.}
\end{equation}
Next, we work out the $L^1$-estimate. Since $u\geq \varepsilon$ in $\supp \varphi$, we have
\begin{equation}\label{m3}
\frac{Q(u+t\varphi)-Q(u)}{t}\equiv 0 \quad\mbox{in } \ \{u<\eps\} \quad\mbox{for all $|t|<\frac{\varepsilon}{\|\varphi\|_{\infty}}$.}
\end{equation}
On $\{u\ge \eps\}\setminus \{x_0\}$, we write
\begin{eqnarray}\label{m2}
\frac{Q(u+t\varphi)-Q(u)}{t} = \tfrac{1}{t}\int_0^{t} Q'(u+\tau\varphi)\varphi d\tau.
\end{eqnarray}
We note that $u+t\varphi\ge \eps/2$ in $\{u\ge \eps\}$ for any $|t|<\frac{\varepsilon}{2\|\varphi\|_{\infty}}$. Therefore, if $N=1$, the boundedness of $u$ guaranteed by Remark \eqref{bN=1} implies that
\begin{equation}\label{N1}
\left|\tfrac{Q(u+t\varphi)-Q(u)}{t}\right| \stackrel{\eqref{m2}}\le \|\varphi\|_\infty \|Q'\|_{L^\infty([\frac{\varepsilon}{2}, \|u\|_\infty])} \quad\mbox{in $\{u\ge \eps\}$ $\forall |t|<\frac{\varepsilon}{2\|\varphi\|_{\infty}}$} \quad\mbox{if \ $N=1$.}
\end{equation}
If $N\ge 2$, since $u+t\varphi\ge \eps/2$ and $Q\in C^1((0,+\infty))$ satisfies \eqref{H2}, a constant $C$ (depending on $\eps$) exists such that $|Q'(s)|\le C(1+s^q)$ for any $s\in [\frac{\eps}{2},+\infty)$. Therefore, by the Sobolev embedding theorem,
\begin{eqnarray}
\nonumber \lefteqn{\left|\tfrac{Q(u+t\varphi)-Q(u)}{t}\right| \stackrel{\eqref{m2}}\le  C\|\varphi\|_\infty \left(1+(u+|t|\varphi)^q\right)}
\\ &\le & C\|\varphi\|_\infty \left(1+u^q\right)\in L^1(\{u\ge \eps\})\quad\mbox{for any $|t|<\frac{\varepsilon}{2\|\varphi\|_{\infty}}$} \quad\mbox{if \ $N\ge 2$.}
\label{N2}\end{eqnarray}
In view of \eqref{L1}, \eqref{m3}, and \eqref{N1}-\eqref{N2}, an application of Lebesgue theorem yields $\frac{Q(u+t\varphi)-Q(u)}{t}\chi_{\{u>0\}}\to Q'(u)\varphi$ in $L^1(B_{\bar r}(x_0))$ as $t\to 0$. Hence, passing to the limit as $t\to 0$ in \eqref{app1}, since $u$ is a minimizer of $E$ in $\mathcal D$, we obtain
\begin{equation}\label{apnea}
\int_{B_{\bar r}(x_0)} \left(\nabla u\cdot\nabla \varphi + Q'(u)\varphi\right)=0 \quad\mbox{for all $\varphi\in \mathcal I$.}
\end{equation}
To conclude, fix $\phi\in C_c^1(B_{\bar r}(x_0))$ such that $\phi\geq 0$ and $\int \phi \neq 0$. For any $\psi\in C_c^1(B_{\bar r}(x_0))$, we have
$$
\varphi= \psi - \left(\frac{\int \psi}{\int \phi}\right)\phi \in \mathcal I.
$$
It follows from \eqref{apnea} that
\begin{equation}\label{apnea2}
\int_{B_{\bar r}(x_0)} \left(\nabla u\cdot\nabla \psi + Q'(u)\psi\right)= \lambda \int_{B_{\bar r}(x_0)} \psi \qquad\mbox{for all $\psi\in C_c^1(B_{\bar r}(x_0))$},
\end{equation}
where
$$
\lambda=\frac{1}{\int \phi} \int_{B_{\bar r}(x_0)} \left(\nabla u\cdot\nabla \phi + Q'(u)\phi\right).
$$
Note that $\lambda\in \R$. Indeed, since $\phi \in C_c^1(B_{\bar r}(x_0))$, $\eps>0$ exists such that $u\geq \eps$ in $\supp \phi$. If $N\geq 2$, it follows from \eqref{H2} and the Sobolev embedding theorem that $Q'(u)\phi\in L^1(B_{\bar r}(x_0))$ (cf. \eqref{N2}). If $N=1$, the boundedness of $u$ implies that $Q'(u)\phi\in L^\infty(B_{\bar r}(x_0))$ (cf. \eqref{N1}). Thus, the arbitrariness of $\psi$ in \eqref{apnea2} implies the result.
\end{proof}

Next, we give a boundedness result for solutions to \eqref{eqmin} if $N\ge 2$.
\begin{lemma}
\label{bound}
Let $N\geq 2$. Assume \eqref{H-EL1} and \eqref{H2}. Let $u\in H^1_0(B_{\bar r}(x_0))$ be a radially non-increasing (with respect to $x_0\in \RN$) distributional solution to \eqref{eqmin} with $\supp u=\overline {B_{\bar r}(x_0)}$. Then $u\in L^{\infty}(B_{\bar r}(x_0))$.
\end{lemma}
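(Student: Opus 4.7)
The proof will use Moser iteration on the part of $u$ above a large threshold, where \eqref{H2} controls $Q'(u)$ polynomially. First I would choose $s_0 \gg 1$ so that \eqref{H2} applies on $\{u > s_0\}$, and set $v := (u - s_0)_+$. Since $u$ is radially non-increasing with $\supp u = \overline{B_{\rr}(x_0)}$ and is continuous away from $x_0$ (Lemma \ref{lem5}), $\{u > s_0\}$ is a ball $B_\rho(x_0) \subset B_{\rr}(x_0)$, and $v \in H^1_0(B_\rho(x_0))$, $v \ge 0$. Equation \eqref{eqmin} together with \eqref{H2} yields
\begin{equation*}
-\Delta v = \lambda - Q'(v + s_0) \quad\mbox{in $B_\rho(x_0)$}, \qquad |Q'(v+s_0)| \le C(1 + v^q),
\end{equation*}
with $q < \infty$ if $N=2$ and $q \le 2^*$ if $N \ge 3$, so it suffices to show $v \in L^\infty(B_\rho(x_0))$.

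Next I would perform a Moser iteration on $v$. For $\beta \ge 1$ and a truncation parameter $n \in \N$, the function $\phi := v \min(v, n)^{2(\beta-1)}$ belongs to $H^1_0(B_\rho(x_0))$ and is admissible. Using the chain rule, the identity $\nabla v \cdot \nabla \min(v,n) = \chi_{\{v \le n\}} |\nabla v|^2$, the Sobolev embedding $H^1_0(B_\rho(x_0)) \hookrightarrow L^{2^*}(B_\rho(x_0))$, and letting $n \to +\infty$ (under the a priori assumption $v \in L^{2\beta + q - 1}$), one obtains an estimate of the form
\begin{equation*}
\|v^\beta\|_{L^{2^*}(B_\rho(x_0))}^2 \le C\beta^2 \int_{B_\rho(x_0)} \left( v^{2\beta} + v^{2\beta + q - 1} \right).
\end{equation*}
Starting from $v \in L^{2^*}(B_\rho(x_0))$ (guaranteed by the Sobolev embedding), in the subcritical case $q < 2^*$ (automatic for $N = 2$) the inequality iterates through an increasing sequence of exponents $\beta_k \to +\infty$, and a geometric-series argument in logarithmic scale produces $\|v\|_{L^\infty} \le C \|v\|_{L^{2^*}}$, concluding the proof.

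The main obstacle is the critical case $q = 2^*$ with $N \ge 3$, where the term $\int v^{2\beta + 2^* - 1}$ on the right-hand side scales exactly like the left-hand side via H\"older's inequality, and the iteration does not close directly. I would handle this via the standard Brezis--Kato trick: since $u \in L^{2^*}$, absolute continuity of the integral lets me pick a higher threshold $K > s_0$ so that $\left(\int_{\{u > K\}} u^{2^*}\right)^{(2^*-2)/2^*} < 1/(2C)$; working with $\tilde v := (u - K)_+$ in place of $v$ at $\beta = 1$, the critical term is absorbed into the left-hand side, upgrading $\tilde v \in L^{2^*}$ to $\tilde v \in L^p$ for some $p > 2^*$. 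Iterating this step (with $C$ independent of $K$), we reach $\tilde v \in L^p$ for every $p < +\infty$, after which the subcritical argument applies and yields $u \in L^\infty(B_{\rr}(x_0))$.
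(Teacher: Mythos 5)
Your proposal is correct as a route, but it is genuinely different from — and considerably heavier than — what the paper does, and the difference is instructive. You bound $|Q'(v+s_0)|\le C(1+v^q)$, which forgets the crucial sign information in \eqref{H2}: the part of $Q'$ that may grow like $s^q$ is $Q'_+$, which enters the equation $-\Delta u = \lambda - Q'_+(u)+Q'_-(u)$ with the \emph{favorable} sign. The paper tests against non-negative $\varphi$ supported in $\{u\ge 1\}$ and simply drops the term $-\int Q'_+(u)\varphi\le 0$, leaving $\int \nabla u\cdot\nabla\varphi \le \int(\lambda+Q'_-(u))\varphi \le C\int\varphi$, i.e.\ effectively $-\Delta u\le C$ weakly on $\{u\ge 1\}$. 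With $\varphi=G_k(u)=(u-k)_+$ this yields, in one Sobolev-plus-H\"older step, the level-set decay $(h-k)^q|\{u\ge h\}|\le C|\{u\ge k\}|^{q-1}$, and Stampacchia's Lemma 4.1 (\cite{Stam}) concludes immediately. No Moser iteration, no bootstrap of integrability, no critical-exponent difficulty. Your Moser iteration combined with the Brezis--Kato absorption trick for $q=2^*$ is a classical and workable strategy (the smallness of $\int_{\{u>K\}}u^{2^*}$ for $K$ large is exactly what makes the critical term absorbable), but here it solves a harder problem than necessary: by discarding the one-sided structure of $Q'$ you are handling an equation with a genuinely critical source when in fact the source is bounded from above. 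The paper's argument is shorter, needs only that $Q'_-$ be bounded for $s\gg1$, and avoids the entire subcritical/critical dichotomy.
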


\begin{proof}
Since $u$ is a distributional solution to \eqref{eqmin} with $\supp u=\overline {B_{\bar r}(x_0)}$, we have
\begin{equation*}
\int_{B_{\bar r}(x_0)} \nabla u\cdot\nabla \varphi + \int_{B_{\bar r}(x_0)} Q'(u)\varphi = \lambda\int_{B_{\bar r}(x_0)}\varphi \qquad \forall \varphi\in C^1_c(B_{\bar r}(x_0)).
\end{equation*}
Hence, taking a non-negative $\varphi$ and dropping the non-negative term concerning $Q'_{+}$, we obtain
\begin{equation}
\label{b2}
\int_{B_{\bar r}(x_0)} \nabla u\cdot\nabla \varphi \leq \int_{B_{\bar r}(x_0)} \left(\lambda + Q'_{-}(u)\right)\varphi  \qquad \forall \,\, 0 \leq \varphi\in C^1_c(B_{\bar r}(x_0)).
\end{equation}
By Lemma \ref{lem5}, $u$ is continuous outside $x_0$, which, since $u$ is radially non-increasing, implies that $u$ is bounded away from zero in any compact subset of $B_{\bar r}(x_0)$. This fact, together with \eqref{H2}, allow us to extend the class of test functions in \eqref{b2} to non-negative functions in $H^1_c(B_{\bar r}(x_0))$.

\smallskip

Let assume that $\sup u >1$ (otherwise there is nothing to prove). For $k\ge 1$, let $G_k(u)=(u-k)_+$. Note that $G_k(u)\in H^1_0(B_{\bar r}(x_0))$ and $\supp G_k(u)=\{u\geq k\}\subseteq \{u\geq 1\}\Subset B_{\bar r}(x_0)$ (again using continuity and monotonicity of $u$). Hence we can take $G_k(u)$ as test function in \eqref{b2}, obtaining
\begin{equation}
\label{b3}
\int_{B_{\bar r}(x_0)} |\nabla G_k(u)|^2 \leq \left(\lambda + \sup_{[1,+\infty)} Q'_{-}\right)\int_{B_{\bar r}(x_0)}G_k(u) \stackrel{\eqref{H2}}\leq  C\int_{B_{\bar r}(x_0)}G_k(u).
\end{equation}
Now fix
\begin{equation*}
2<q<
\begin{cases} +\infty & \text{ if }N=2, \\
2^* & \text{ if }N\geq 3.
\end{cases}
\end{equation*}
By the Sobolev embedding theorem, applied to the left-hand side of \eqref{b3}, and H\"older inequality, applied to the right-hand side of \eqref{b3}, we have
\begin{equation*}
\left(\int_{B_{\bar r}(x_0)} |G_k(u)|^{q}\right)^{\frac{2}{q}}
\leq C\int_{B_{\bar r}(x_0)}G_k(u) \leq  C|\{u\geq k\}|^{\frac{q-1}{q}} \left(\int_{B_{\bar r}(x_0)}G_k(u)^{q}\right)^{\frac{1}{q}},
\end{equation*}
for a constant $C$ depending on $N$ and $\rr$. Hence, for $h> k$,
\begin{equation}
\label{bn4}
(h-k)^{q}|\{u\geq h\}| \leq \int_{\{u\geq h\}} |G_k(u)|^{q}\leq \int_{\{u\geq k\}} |G_k(u)|^{q} \leq  C|\{u\geq k\}|^{q-1}.
\end{equation}
Starting from inequality \eqref{bn4} and applying Lemma 4.1 of \cite{Stam}, it is standard to conclude that $u\in L^{\infty}(B_{\bar r}(x_0))$.
\end{proof}

The facts that $u$ is bounded and solves the Euler-Lagrange equation yield regularity of $u$:

\begin{corollary}[Regularity of radially non-increasing minimizers]
\label{reg}
Assume \eqref{H-EL1}; if $N\ge 2$, assume in addition \eqref{H2}. Let $u\in \mathcal D$ be a radially non-increasing (w.r.to some $x_0\in \R^N$) minimizer of $E$ in $\D$. Then $u\in C^{2}(\{u>0\})\cap C^{\frac{1}{2}}(\R^N)$ and $\nabla u(x_0)=\mathbf 0$.
\end{corollary}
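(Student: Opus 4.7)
The plan is to combine the results already established in Sections~\ref{s1}--\ref{s2} with standard elliptic regularity theory. Denote by $x_0$ the center of radial symmetry of $u$. From Theorem~\ref{exist} we know that $\supp u=\overline{B_{\bar r}(x_0)}$ for some $\bar r>0$ and that $u\in C^{1/2}_{loc}(\R^N\setminus\{x_0\})$; from Proposition~\ref{EL}, $u$ is a distributional solution to $-\Delta u+Q'(u)=\lambda$ in $B_{\bar r}(x_0)$ for some $\lambda\in\R$; and from Remark~\ref{bN=1} (if $N=1$) or Lemma~\ref{bound} (if $N\geq 2$), $u\in L^\infty(\R^N)$.

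The first preliminary step is to establish continuity across $x_0$. Writing $u(x)=v(|x-x_0|)$ with $v:[0,\bar r]\to[0,+\infty)$ non-increasing, the condition $\supp u=\overline{B_{\bar r}(x_0)}$ forces $v>0$ on $[0,\bar r)$, so that the monotone right limit $v(0^+)\in(0,\|u\|_\infty]$ exists; by adjusting $u$ on the null set $\{x_0\}$ we may assume $u(x_0)=v(0^+)$, which makes $u$ continuous on $B_{\bar r}(x_0)$ with $\{u>0\}=B_{\bar r}(x_0)$. I would then run a Calder\'on--Zygmund/Schauder bootstrap: for every compact $K\Subset B_{\bar r}(x_0)$, continuity and radial monotonicity yield $u\geq c_K>0$ on $K$, hence $Q'(u)$ is bounded on $K$ (since $Q\in C^1((0,+\infty))$), so $-\Delta u=\lambda-Q'(u)\in L^\infty_{loc}(B_{\bar r}(x_0))$. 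Standard $W^{2,p}$ theory and Sobolev embedding give $u\in C^{1,\alpha}_{loc}(B_{\bar r}(x_0))$ for every $\alpha\in(0,1)$, then $Q'(u)\in C^{0,\alpha}_{loc}$ by composition, and Schauder estimates finally yield $u\in C^{2,\alpha}_{loc}(\{u>0\})$, and in particular $u\in C^2(\{u>0\})$.

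For the global H\"older estimate I would glue three pieces: $u$ is $C^{1,\alpha}$ (hence locally Lipschitz, a fortiori $C^{1/2}$) in a neighborhood of $x_0$; $u\in C^{1/2}_{loc}(\R^N\setminus\{x_0\})$ from Theorem~\ref{exist} is uniform on any compact subset avoiding $x_0$, in particular on a closed neighborhood of $\partial B_{\bar r}(x_0)$; and $u\equiv 0$ outside $\overline{B_{\bar r}(x_0)}$. For pairs $(x,y)$ straddling $x_0$ or the free boundary, a triangle inequality through a suitable intermediate point delivers the uniform $C^{1/2}$ bound on $\R^N$. Finally $\nabla u(x_0)=\mathbf 0$ is immediate from the $C^1$ regularity just established: for every unit vector $\xi$, the function $t\mapsto u(x_0+t\xi)=v(|t|)$ is even and $C^1$ in a neighborhood of $t=0$, so $\partial_\xi u(x_0)=0$. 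The main (mild) obstacle is the continuity-at-$x_0$ step, without which the elliptic bootstrap could only be run on $B_{\bar r}(x_0)\setminus\{x_0\}$; once that is secured the rest is a routine bootstrap plus a short gluing argument.
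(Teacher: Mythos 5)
Your proposal mirrors the paper's proof almost step for step: both invoke Proposition~\ref{EL} for the Euler--Lagrange equation, Remark~\ref{bN=1} / Lemma~\ref{bound} for the $L^\infty$ bound, Lemma~\ref{lem4} for compact support, then bootstrap from $Q'(u)\in L^\infty_{loc}(\{u>0\})$ to interior regularity, and finally deduce global H\"older continuity from Lemma~\ref{lem5} and $\nabla u(x_0)=\mathbf 0$ from symmetry together with $C^1$ regularity. You spell out the continuity-at-$x_0$ step and the gluing argument in more detail than the paper does, which is fine.

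The one step that does not go through as written is the claim that ``$Q'(u)\in C^{0,\alpha}_{loc}$ by composition.'' Assumption \eqref{H-EL1} only gives $Q\in C^1((0,+\infty))$, hence $Q'$ is merely continuous, and the composition of a $C^0$ function with a $C^{0,\alpha}$ (or even Lipschitz) function need not be H\"older; as stated, the Schauder step therefore stalls at $u\in C^{1,\alpha}_{loc}$. For $N=1$ this is immaterial: since $u\in C^1$ and $Q'\in C^0$, the right-hand side of $-u''=\lambda-Q'(u)$ is continuous, so $u\in C^2$ follows by elementary ODE reasoning. For $N\ge 2$, the correct route (and the one that underlies the paper's own terse sentence ``we deduce that $u\in C^{2}(\{u>0\})$'') is to exploit radial symmetry: with $u(x)=v(|x-x_0|)$ the equation becomes $(r^{N-1}v')'=-r^{N-1}\bigl(\lambda-Q'(v)\bigr)$, whose integrated form $v'(r)=-r^{1-N}\int_0^r s^{N-1}\bigl(\lambda-Q'(v(s))\bigr)\,ds$ has a visibly continuous derivative on $[0,\bar r)$, yielding $v\in C^2$ directly. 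Replacing the composition claim by this ODE argument closes the gap without changing the structure of your proof.
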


\begin{proof}
By Proposition \ref{EL}, $u$ is a distributional solution to \eqref{eqmin}. Moreover, by Lemma \ref{lem4}, $\supp u=\overline{B_{\bar r}(x_0)}$ for some $\bar r>0$. It follows from Remark \ref{bN=1} (for $N=1$) and Lemma \ref{bound} (for $N>1$) that $u\in L^{\infty}(B_{\bar r}(x_0))$. Since $Q\in C^1((0,+\infty))$, $Q'$ is locally bounded in $(0,+\infty)$: hence $-\Delta u$ is locally bounded in $\{u>0\}$. This implies that $u$ is locally H\"{o}lder continuous in $\{u>0\}$ (Theorem 8.22 of \cite{giltru}). Then, using once again that $u$ solves \eqref{eqmin} and that $Q'\in C((0,+\infty))$, we deduce that $u\in C^2(\{u>0\})$. Finally, H\"older continuity in $\R^N$ follows from Lemma \ref{lem5} and $\nabla u(x_0)= \mathbf 0$ follows from regularity and symmetry.
\end{proof}

\begin{remark}{\rm
Under the assumptions of Corollary \ref{reg}, if $Q$ is more regular, say $Q\in C^{k}((0,+\infty))$, then by a bootstrap argument we obtain $u\in C^{k+1}(\{u>0\})$.
}\end{remark}

We are now ready to prove Theorem \ref{alldec}.

\begin{proof}[Proof of Theorem \ref{alldec}]
Let $\tilde u$ be a minimizer of $E$ in $\D$ and let $u$ be the Schwarz symmetrization of $\tilde u$. Arguing as in the proof of Lemma \ref{lem3} we deduce that
\begin{equation}
\label{ancora}
\int |\nabla u|^2 \leq \int |\nabla \tilde u|^2, \quad \int Q(u)=\int Q(\tilde u), \quad\mbox{and}\quad \int u=M,
\end{equation}
that is, $u\in \mathcal D$ and $E[u]\le E[\tilde u]$. Therefore $E[u]=E[\tilde u]$, which together with \eqref{ancora} implies that
\begin{equation}
\label{app4}
\displaystyle \int |\nabla u|^2 = \int | \nabla \tilde u|^2.
\end{equation}
Since $u$ is a radially non-increasing minimizer, by Lemma \ref{lem4} it has compact support. Let $v$ be the non-increasing function defined by $u(x)=v(r)$ with $r=|x-x_0|$ and let $\supp v =[0,\bar r]$. By Proposition \ref{EL} and Corollary \ref{reg}, $v$ is a classical solution of the following one-dimensional problem:
\begin{equation*}
\begin{cases}
\displaystyle-\frac{d^2v}{dr^2} -\frac{N-1}{r}\frac{dv}{dr}+Q'(v)=\lambda & \text{ in } (0,\bar r),
\\[1ex]
\displaystyle v(0)=u(x_0), \  v(\bar r)=0, \  \frac{dv}{dr}(0)=0,
\\[1ex]
\displaystyle \int r^{N-1}v(r)dr=\omega_{N-1}^{-1}M.
\end{cases}
\end{equation*}
We claim that
\begin{equation}
\label{claim-lem6}
\frac{dv}{dr} < 0 \quad\mbox{ in } (0,\bar r).
\end{equation}
We argue by contradiction assuming that there exists a first $r_0\in (0, \bar r)$ such that $\frac{dv}{dr}(r_0)=0$.
Since $\frac{dv}{dr}\le 0$, $\frac{d^2v}{dr^2}(r_0)=0$: but then $Q'(v(r_0))=\lambda$, whence $v(r)\equiv v(r_0)>0$ for all $r\geq r_0$, in contradiction with $v(\bar r)=0$.

\smallskip

In view of \eqref{app4} and \eqref{claim-lem6}, it follows from \cite[Theorem 2.3.3]{Kes} or \cite[Theorem 1]{FV} that $\tilde u=u$ (up to a translation of the center of symmetry). This, together with Proposition \ref{EL} and Corollary \ref{reg}, completes the proof of Theorem \ref{alldec}.
\end{proof}

\section{The one-dimensional case}
\label{s3}

The rest of the manuscript is concerned with the case $N=1$. In the next statement we summarize, for $N=1$, the results contained in Section \ref{s2}:
\begin{corollary}
\label{coro1}
Assume \eqref{H-EL1} and $N=1$. Any minimizer $u$ of $E$ in $\D$ is even with respect to some $x_0\in \R$, which up to a translation we may assume to be zero: $x_0=0$. Moreover $\supp u=[-\rr, \rr]$ for some $\rr \in (0,+\infty)$, $u'<0$  in $(0, \rr)$, $u\in C^2((-\rr,\rr))\cap C([-\rr,\rr])$, and $\lambda >0$ exists such that $u$ is a classical solution to
\begin{equation}
\label{eqmin1}
\begin{cases}
-u''+Q'(u)=\lambda & \text{ in }\{u>0\}=(-\rr,\rr),\\
u'(0)=0, \ u(\pm \rr)=0.
\end{cases}
\end{equation}
\end{corollary}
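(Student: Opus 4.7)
The plan is to derive Corollary~\ref{coro1} as a specialization of Theorem~\ref{alldec} to $N=1$, together with a separate argument for $\lambda>0$.

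First, I would invoke Theorem~\ref{alldec}. Since the extra hypothesis \eqref{H2} is required only when $N\ge 2$, it is vacuous in dimension one, so \eqref{H-EL1} is the only assumption needed. The theorem provides, for any minimizer $u$, a symmetry center $x_0\in\R$ (taken to be $0$ by translation) with respect to which $u$ is radial---equivalently even when $N=1$---compactly supported, and strictly radially decreasing in $\{u>0\}$. Combining these properties forces $\supp u=[-\bar r,\bar r]$ for some $\bar r\in(0,+\infty)$ (finite by compactness, positive because $M>0$) and $u'<0$ on $(0,\bar r)$. The regularity $u\in C^2(\{u>0\})\cap C^{1/2}(\R)$ from Theorem~\ref{alldec} specializes to $u\in C^2((-\bar r,\bar r))\cap C([-\bar r,\bar r])$, with $u(\pm\bar r)=0$ by continuity at the support boundary and $u'(0)=\nabla u(x_0)=0$ from radial regularity. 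The Euler--Lagrange equation $-u''+Q'(u)=\lambda$ in $(-\bar r,\bar r)$ for some $\lambda\in\R$ then follows immediately.

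Second, to pin down the sign of $\lambda$, I would combine two mass-preserving variational identities with the ODE's first integral. Multiplying the ODE by $u'$ and integrating on $(0,x)$, using $u'(0)=0$, yields $\tfrac12(u'(x))^2=Q(u(x))-Q(u_0)+\lambda(u_0-u(x))$ with $u_0:=u(0)$; this forces the asymptotics $u(x)\sim(A(m+1)^2/2)^{1/(m+1)}(\bar r-x)^{2/(m+1)}$ as $x\to\bar r^-$ via \eqref{H}, and hence $uu'\sim(\bar r-x)^{(3-m)/(m+1)}\to 0$ (the condition $m<3$ is essential here). Multiplying the ODE by $u$ and integrating over $(-\bar r,\bar r)$, the boundary terms $[uu']_{-\bar r}^{\bar r}$ vanish, giving $\int(u')^2+\int uQ'(u)=\lambda M$. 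Independently, minimality under the mass-preserving scaling $u_t(x):=(1+t)u((1+t)x)\in\mathcal D_M$, combined with $\tfrac{d}{dt}E[u_t]|_{t=0}=0$, yields the Pohozaev-type identity $\tfrac32\int(u')^2=\int Q(u)-\int uQ'(u)$. Combining these two produces $\lambda M=\int Q(u)-\tfrac12\int(u')^2$, and inserting this expression into the first integral integrated over $(-\bar r,\bar r)$ yields the clean identification
\[ \lambda\,u_0 = Q(u_0),\qquad\text{i.e.}\qquad \lambda=\frac{Q(u_0)}{u_0}.\]

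The remaining point---and in my view the main obstacle---is to show $Q(u_0)>0$, equivalently $u_0\in(0,s_1)$ with $s_1$ as in \eqref{def-s1}. If $s_1=+\infty$, this is immediate modulo excluding the degenerate case $Q(u_0)=0$, which would give $\lambda=0$ and lead via the ODE to a profile incompatible with having finite mass and compact support simultaneously. In general, I would argue by contradiction: assuming $Q(u_0)\le 0$, one constructs a competitor in $\mathcal D_M$ by capping $u$ at a level $s\in(0,u_0)$ where $Q(s)<Q(u_0)$ and compensating via a mass-preserving horizontal rescaling of the support; the resulting configuration has strictly smaller $E$, violating minimality. An alternative, likely cleaner route is to import the statement from Theorem~\ref{th-lambda}, whose joint ODE/variational analysis simultaneously identifies $\lambda=R(u_0)$ and pins down the admissible range of $u_0$, yielding $\lambda>0$ as a byproduct.
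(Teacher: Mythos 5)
Your first paragraph reproduces the paper's intent exactly: Corollary~\ref{coro1} is stated without a separate proof, as a ``summary'' of Section~\ref{s2}, i.e.\ the specialization of Theorem~\ref{alldec} to $N=1$ (where \eqref{H2} is vacuous). The evenness, compact support $[-\rr,\rr]$ with $\rr\in(0,+\infty)$, $u'<0$ on $(0,\rr)$, $C^2(\{u>0\})\cap C([-\rr,\rr])$ regularity, $u'(0)=0$, $u(\pm\rr)=0$, and the Euler--Lagrange equation with some $\lambda\in\R$ all follow as you describe.

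The second paragraph, concerning $\lambda>0$, is where the argument cannot go through. First, the Pohozaev-type identity $\tfrac32\int(u')^2=\int Q(u)-\int u Q'(u)$ on which your identification $\lambda=Q(u_0)/u_0$ rests is Lemma~\ref{lem8}/\eqref{eq1c}; its proof (differentiating $\alpha\mapsto\int Q(\alpha u)$, and even making sense of $\int uQ'(u)\in L^1$) uses the extra hypothesis $|sQ'(s)|\le CQ(s)$ for $s\ll1$ from \eqref{H-EL}, which is strictly stronger than the hypothesis \eqref{H-EL1} of the corollary. So your route is not available under the stated assumptions. Second, and more to the point, neither of your two closing options can succeed, because the sign claim is not true in general: even under \eqref{H-EL}, Theorem~\ref{th-lambda} gives $\lambda=R(u_0)$ with $u_0\in\A\subseteq(0,\e_*)$ (Lemmas~\ref{fin1} and~\ref{bnd-inf2}), but $R$ need not be positive on $(0,\e_*)$. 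For instance, in model $Q_a$ with $-S<0$ one has $\e_*<+\infty$ and $R(\e_*)=\min R<0$ (cf.\ the proof of Proposition~\ref{prop:Qa}), and since $u_{0M}\to\e_*$ as $M\to+\infty$ in the pancake case (Theorem~\ref{thm:finale}), $\lambda=R(u_{0M})<0$ for $M$ large. You correctly flagged $Q(u_0)>0$ as ``the main obstacle''; the resolution is that it simply does not always hold, so your capping-and-rescaling competitor would not produce a contradiction and importing Theorem~\ref{th-lambda} does not yield $\lambda>0$ as a byproduct. The inequality $\lambda>0$ appears to be a slip in the statement of Corollary~\ref{coro1}: what Theorem~\ref{alldec} actually delivers, and what the rest of the paper uses, is $\lambda\in\R$ (refined to $\lambda=R(u(0))$ once \eqref{H-EL} is assumed).
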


In the rest of the manuscript, we will always assume (up to a translation) that the symmetry point of a minimizer is located at $x_0=0$. In one space dimension, we will be able to obtain uniqueness (up to a translation) and qualitative properties of minimizers of $E$ in $\D$. The key additional information is a characterization of the eigenvalue $\lambda$, which we now discuss under a mild additional information on the behavior of $Q'(s)$ for $s\ll 1$. We assume that $C>0$ exists such that
\begin{equation}
\label{H-EL}
\mbox{\eqref{H-EL1} holds \quad and \quad  $|sQ'(s)|\leq C Q(s)$ for $s\ll 1$}.
\end{equation}
\begin{remark}{\rm
For $s\ll 1$, \eqref{H-EL}$_2$ essentially rules out exponential growth of $Q(s)$, which is however already excluded by \eqref{H}, as well as too wild oscillations of $Q'(s)$, such as $Q(s)=A s^{1-m}(1+s\sin s^{-2})$ for $s\ll 1$. It is obviously satisfied by all model cases.
}\end{remark}

\subsection{The identification of the eigenvalue $\lambda$}
\label{ss-lambda}

The identification of $\lambda$ is based on three identities:
\begin{lemma}
\label{lem8}
Assume \eqref{H-EL} and $N=1$. Let $u$ be a minimizer of $E$ in $\D$. Then a constant $K\in \R$ exists such that
\begin{equation}
\label{eq1a}
\frac{1}{2}u'^2-Q(u)+\lambda u=K \quad \text{in }\{u>0\}.
\end{equation}
In addition
\begin{equation}\label{eq1b}
\int_{\{u>0\}} [u'^2+Q'(u)u-\lambda u]=0
\end{equation}
and
\begin{equation}
\label{eq1c}
\frac{3}{2}\int_{\{u>0\}} |u'|^2 -\int_{\{u>0\}} Q(u) +\int_{\{u>0\}} Q'(u)u = 0.
\end{equation}
\end{lemma}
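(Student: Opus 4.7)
The plan is to establish the three identities via distinct mechanisms: \eqref{eq1a} as the first integral of the Euler--Lagrange equation, \eqref{eq1b} by multiplying that equation by $u$ and integrating, and \eqref{eq1c} as a Pohozaev-type identity generated by a mass-preserving scaling. Throughout, I rely on Corollary~\ref{coro1}: $u\in C^2((-\bar r,\bar r))$ is even, strictly decreasing on $(0,\bar r)$, and solves $-u''+Q'(u)=\lambda$ in $(-\bar r,\bar r)$.

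For \eqref{eq1a}, multiplying the Euler--Lagrange equation by $u'$ and using the chain rule gives $\frac{d}{dx}\!\left[-\tfrac12 u'^2 + Q(u) - \lambda u\right] = 0$ pointwise in $\{u>0\}$, so the bracketed quantity is a constant $-K\in\R$, which is \eqref{eq1a}.

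For \eqref{eq1b}, I would fix a small $\eps>0$ and, by strict monotonicity, let $r_\eps \in (0,\bar r)$ be the unique point with $u(r_\eps) = \eps$. Multiplying $-u'' + Q'(u) = \lambda$ by $u$ and integrating over $(-r_\eps, r_\eps)$, the integration by parts produces the boundary contribution $2\eps\, u'(r_\eps)$ by evenness of $u$; the first integral \eqref{eq1a} combined with \eqref{H} gives $u'(r_\eps)^2 \sim 2A\eps^{1-m}$, hence $\eps\, u'(r_\eps) = O(\eps^{(3-m)/2}) \to 0$ thanks to $m<3$. The bulk integrals $\int u'^2$ and $\int u$ pass to the limit by monotone convergence, and $\int Q'(u)\, u$ by Lebesgue's dominated convergence: indeed, \eqref{H-EL}$_2$ yields $|Q'(u)\, u|\le C\,Q(u)$ for small $u$, and by Remark~\ref{|Qu|} the right-hand side belongs to $L^1(\R)$.

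For \eqref{eq1c}, I would use the mass-preserving rescaling $u_\sigma(x) := \sigma\, u(\sigma x)$, which keeps $u_\sigma \in \mathcal D$ for every $\sigma>0$. Since $u$ is a minimizer, $\sigma \mapsto E[u_\sigma]$ attains its minimum at $\sigma = 1$; a change of variables yields
\begin{equation*}
E[u_\sigma] \;=\; \frac{\sigma^3}{2}\int u'^2 \,+\, \frac{1}{\sigma}\int Q(\sigma u(y))\, dy,
\end{equation*}
and differentiating at $\sigma = 1$ produces \eqref{eq1c}. The main technical obstacle, common to \eqref{eq1b} and \eqref{eq1c}, is controlling the singular behavior of $Q'$ near $\{u = 0\}$: the assumption \eqref{H-EL}$_2$ combined with $Q(u)\in L^1(\R)$ supplies a uniform $L^1$-dominant for both the integrand $|Q'(u)\, u|$ in \eqref{eq1b} and the difference quotients $(\sigma - 1)^{-1}[Q(\sigma u) - Q(u)]$ for $\sigma$ near $1$ in \eqref{eq1c}, so Lebesgue's theorem justifies both passages.
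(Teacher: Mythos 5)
Your proof is correct and follows essentially the same strategy as the paper's: deriving \eqref{eq1a} as the first integral of the Euler--Lagrange equation, \eqref{eq1b} by multiplying by $u$ and integrating by parts (with the boundary term $u u'$ shown to vanish at $\pm\bar r$ via \eqref{eq1a}, \eqref{H}, and $m<3$), and \eqref{eq1c} via the mass-preserving dilation $u_\sigma(x)=\sigma u(\sigma x)$ and the criticality of $\sigma=1$, with Lebesgue's theorem justifying the differentiation of the potential term using \eqref{H-EL}$_2$ and $Q(u)\in L^1$. The only cosmetic difference is that you perform the integration by parts on $(-r_\eps,r_\eps)$ and pass to the limit, whereas the paper works directly on $(-\bar r,\bar r)$ using the established decay of $uu'$; these are equivalent.
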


\begin{remark}{\rm
The function $uQ'(u)\chi_{\{u>0\}}$ in \eqref{eq1b} and \eqref{eq1c} belongs to $L^1(\R)$ since $Q(u)\in L^1(\R)$ (cf. Remark \ref{|Qu|}) and \eqref{H-EL} holds.
}\end{remark}

\begin{proof}
In order to prove \eqref{eq1a}, it suffices to multiply the equation in \eqref{eqmin1} by $-u'$:
$$
\frac{1}{2}[u'^2]'-Q'(u)u'+\lambda u'=\left[\frac{1}{2}u'^2-Q(u)+\lambda u\right]'=0 \quad \text{in }\{u>0\}=(-\rr,\rr),
$$
hence \eqref{eq1a} holds. We now prove \eqref{eq1b}. As $x\to \rr^-$, we have
$$
(u')^2 \stackrel{\eqref{eq1a}}= 2\left(Q(u)-\lambda u+K\right) \sim 2Q(u), \quad\mbox{that is,}\quad  u'\sim -\sqrt{2Q(u)}\stackrel{\eqref{H}}\sim -\sqrt{2A} u^{\frac{1-m}{2}}.
$$
Therefore
\begin{equation}\label{uu'}
uu'\sim -\sqrt{2A} u^{\frac{3-m}{2}}\to 0 \quad\mbox{as $x\to \rr^-$.}
\end{equation}
Multiplying \eqref{eqmin1} by $u$ and integrating over $(-\rr,\rr)$, we have
$$
\int_{(-\rr,\rr)}[-u''u+Q'(u)u-\lambda u]=0.
$$
Integrating by parts the first term and using \eqref{uu'}, we obtain \eqref{eq1b}.

\smallskip

We now prove \eqref{eq1c}. For $\alpha>0$, we consider the mass-preserving rescaling $u_\alpha(\hat x)=\alpha u(\alpha \hat x)\in \mathcal D$. Performing the change of variable $x=\alpha \hat x$, we obtain
$$
E[u_\alpha]= \int \frac12 \left|\frac{d u_\alpha(\hat x)}{d\hat x}\right|^2 d\hat x + \int Q(u_\alpha(\hat x)) d\hat x = \alpha^{3}\int \frac12 |u'|^2 dx + \alpha^{-1}\int Q(\alpha u)d x.
$$
We show that $E[u_\alpha]$ is differentiable with respect to $\alpha$ and
\begin{equation}
\label{Ealp1}
\frac{d}{d\alpha}E[u_\alpha]=\frac{3}{2}\alpha^{2}\int |u'|^2 -\alpha^{-2}\int Q(\alpha u) +\alpha^{-1}\int Q'(\alpha u)u \qquad\mbox{for all $\alpha>0$.}
\end{equation}
The only non-trivial limit is
\begin{equation}\label{apnea21}
\lim_{t\to 0} \int \frac{Q((\alpha + t)u)-Q(\alpha u)}{t} = \int Q'(\alpha u)u,
\end{equation}
for which we apply Lebesgue theorem. Fix $\alpha>0$. First of all, it follows from \eqref{H-EL1} that $\frac{Q((\alpha + t)u)-Q(\alpha u)}{t}\to Q'(\alpha u)u$ pointwise in $(-\rr,\rr)$ as $t\to 0$. For the $L^1$-bound, we write
\begin{eqnarray}\label{pb}
\left|\frac{Q((\alpha+ t)u)-Q(\alpha u)}{t}\right| &\le & \frac{1}{|t|} \int_0^{|t|} |Q'((\alpha+\tau) u)| u d\tau.
\end{eqnarray}
Hereafter in the proof, $C$ denotes a generic constant which may depend on $\alpha$, but not on $t$ and $\tau$.  Take $|t|<\frac{\alpha}{2}$, so that $\frac{\alpha}{2}< \alpha+\tau< \frac{3\alpha}{2}$ for $|\tau|<|t|$. In view of \eqref{H} and \eqref{H-EL}, $\delta>0$ exists such that
\begin{equation}\label{gh}
 \tfrac{A}{2}s^{1-m}\le Q(s)\le 2A s^{1-m} \quad\mbox{and}\quad s|Q'(s)|\le C Q(s) \quad\mbox{for all $s\in (0,\delta)$}.
\end{equation}
Using \eqref{gh}$_1$ twice, we see that
\begin{equation}\label{D1}
Q((\alpha+\tau)u)\le C ((\alpha+\tau)u)^{1-m} \le C u^{1-m} \le C Q(u) \quad\mbox{if $u<\delta_0:=\min\{\frac{2\delta}{3\alpha},\delta\}$.}
\end{equation}
Therefore
$$
|Q'((\alpha+\tau)u)|u
\left\{\begin{array}{ll}
= \frac{|Q'((\alpha+\tau)u)|}{\alpha+\tau}(\alpha+\tau)u \stackrel{\eqref{gh}_2,\eqref{D1}}\le C Q(u) & \mbox{if $u <\delta_0$}
\\[2ex]
\le u \|Q'\|_{L^\infty([\frac{\alpha \delta_0}{2},\frac32 \alpha u(0)])} & \mbox{if $u\ge \delta_0$}
\end{array}\right.
$$
and the $L^1$-bound follows from \eqref{pb} since $Q(u)\in L^1(\R)$ (cf. Remark \ref{|Qu|}). Thus \eqref{apnea21}, whence \eqref{Ealp1}, hold. Since $u=u_1$ is a minimizer of $E$ in $\D$, it follows that $\frac{d}{d\alpha}E[u_\alpha]|_{\alpha=1}=0$: hence \eqref{Ealp1} coincides with \eqref{eq1c}.
\end{proof}

Now we are ready to characterize $\lambda$.
\begin{theorem}
\label{th-lambda}
Assume \eqref{H-EL} and $N=1$. Let $u$ be a minimizer of $E$ in $\D$. Then $K=0$ in \eqref{eq1a}, that is
\begin{equation}
\label{app}
\tfrac{1}{2}u'^2-Q(u)+\lambda u=0 \quad \text{in }\{u>0\},
\end{equation}
and
\begin{equation}
\label{lam1}
\lambda=\tfrac{Q(u(0))}{u(0)}.
\end{equation}
\end{theorem}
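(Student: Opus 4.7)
The plan is to combine the three identities in Lemma~\ref{lem8} to deduce that $K=0$ in \eqref{eq1a}, and then to evaluate the resulting first integral at $x=0$ to read off $\lambda$.

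First, I would integrate \eqref{eq1a} over $\{u>0\}=(-\bar r,\bar r)$. By Corollary~\ref{coro1} this support is bounded, and since $u\in H^1(\R)$ while $Q(u)\in L^1(\R)$ by Remark~\ref{|Qu|}, all terms are finite. Using the mass constraint $\int u=M$, the integration yields
$$
\tfrac{1}{2}\int_{\{u>0\}}u'^2-\int_{\{u>0\}}Q(u)+\lambda M=2\bar r\,K.
$$

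Second, I would eliminate $\int Q'(u)u$ between \eqref{eq1b} and \eqref{eq1c}. From \eqref{eq1c} one has $\int Q'(u)u=\int Q(u)-\tfrac{3}{2}\int u'^2$; plugging this into \eqref{eq1b} gives
$$
-\tfrac{1}{2}\int_{\{u>0\}}u'^2+\int_{\{u>0\}}Q(u)-\lambda M=0,
$$
which is exactly the negative of the previous displayed identity. Adding the two relations produces $2\bar r K=0$, whence $K=0$, proving \eqref{app}.

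Finally, to obtain \eqref{lam1}, I would evaluate \eqref{app} pointwise at $x=0$. By Corollary~\ref{coro1}, $u\in C^2((-\bar r,\bar r))$ with $u'(0)=0$, and $u(0)>0$ since $u$ is strictly decreasing on $(0,\bar r)$ with $u(\bar r)=0$. Substituting $u'(0)=0$ in \eqref{app} gives $-Q(u(0))+\lambda u(0)=0$, which rearranges to \eqref{lam1}. The whole argument is essentially an algebraic manipulation of the three identities already established; the real work was done in Lemma~\ref{lem8}, where the boundary behavior \eqref{uu'} and the differentiability of $\alpha\mapsto E[u_\alpha]$ had to be justified. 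No further obstacle is expected here.
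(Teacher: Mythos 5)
Your proposal is correct and follows essentially the same route as the paper: both proofs reduce to an algebraic combination of the three identities from Lemma~\ref{lem8} — the paper adds the integrated \eqref{eq1a} to \eqref{eq1b} and compares with \eqref{eq1c}, while you eliminate $\int Q'(u)u$ first, but these are trivial reorderings of the same cancellation — and then read off $\lambda$ from \eqref{app} at $x=0$ using $u'(0)=0$.
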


\begin{proof}
By Lemma \ref{lem8}, $u$ satisfies \eqref{eq1a}, \eqref{eq1b} and \eqref{eq1c}. Integrating \eqref{eq1a} over $\{u>0\}$ and adding \eqref{eq1b}, we obtain
\begin{equation}
\label{app2}
\frac{3}{2}\int_{\{u>0\}}|u'|^2 -\int_{\{u>0\}}Q(u) +\int_{\{u>0\}}Q'(u)u = \int_{\{u>0\}}K.
\end{equation}
Subtracting \eqref{app2} from \eqref{eq1c}, we have
$$
\int_{\{u>0\}}K=0,
$$
hence $K=0$. Evaluating \eqref{app} in $x=0$ and recalling that $u'(0)=0$, we deduce \eqref{lam1}.
\end{proof}

\subsection{Admissible maximal heights and the first integral}

In view of \eqref{lam1}, it is convenient to introduce the function
\begin{equation}
\label{G1}
R(s):=\frac{Q(s)}{s}\stackrel{\eqref{H-EL1}}\in C^1((0,+\infty)), \quad R'(s)=\frac{1}{s}\left(Q'(s)-R(s)\right), \quad s>0.
\end{equation}
In view of \eqref{eqmin1} and \eqref{lam1}, any minimizer of $E$ in $\mathcal D$ is a solution to
\begin{equation}
\label{eqmin1bis}\tag{$P_{u_0}$}
 \begin{cases}
\displaystyle -u''+Q'(u)=R(u_0) & \quad \mbox{in }\{u>0\},
\\
u(0)=u_0>0,\ u'(0)=0
\end{cases}
\end{equation}
whose solutions we now discuss. First of all, any solution to \eqref{eqmin1bis} is even and
\begin{equation}\label{u''0}
u''(0) \stackrel{\eqref{eqmin1bis}}= Q'(u_0)-R(u_0) \stackrel{\eqref{G1}}= u_0R'(u_0).
\end{equation}
In addition, multiplying \eqref{eqmin1bis} by $u'$ and integrating from $x=0$, we obtain
\begin{equation}\label{appR}
\tfrac12 (u')^2=Q(u)-R(u_0) u \stackrel{\eqref{G1}}= u(R(u)-R(u_0)) \quad\text{in }\{u>0\}.
\end{equation}

In the next lemma we give a necessary and sufficient condition on $u_0$ for a solution of \eqref{eqmin1bis} to have compact support ($\supp u=[-\rr,\rr]$) and negative derivative in $(0,\rr)$. This will identify an {\it admissible set} $\A$ to which the maximal height of minimizers must belong. We also list a few properties of such solutions, which will be used in the sequel.

\captionsetup{width=0.9\linewidth}
{\centering{
\raisebox{\dimexpr \topskip-\height}{
\includegraphics[width=0.4\textwidth]{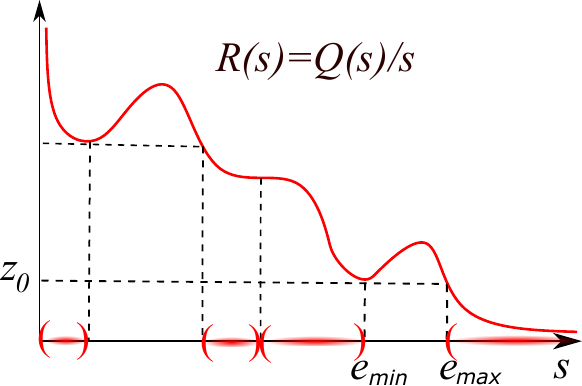}\quad \includegraphics[width=0.4\textwidth]{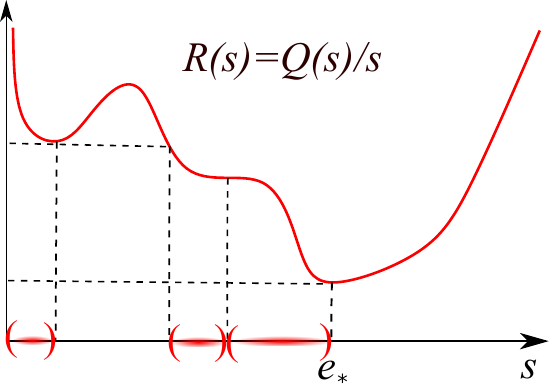}}
  \captionof{figure}{{\footnotesize The set $\A$ in Lemma \ref{fin1} in two cases:  $\e_*=+\infty$ and $z_0<+\infty$ (cf. \eqref{def-s*} and \eqref{def-z0}) and $\e_*<+\infty$. The numbers $\e_*$, $\emin$ and $\emax$ are also visualized (cf. \eqref{def-s*} and \eqref{def-et}).}} \label{fig-A}
  }}

\begin{lemma}
\label{fin1}
Assume \eqref{H-EL} and $N=1$. Let $u$ be a solution of \eqref{eqmin1bis}. Then the following are equivalent:
\begin{itemize}
\item[$(a)$] $\supp u=[-\rr,\rr]$ for some $\rr\in (0,+\infty)$ and $u'<0$ in $(0,\rr)$;
\item[$(b)$] $u_0\in \A$, where $\A\subseteq(0,+\infty)$ is the open set defined by
\begin{equation}
\label{def-setR}
\A:=\left\{s\in (0,+\infty): \ \mbox{$R'(s)<0$ and $R(t)>R(s)$ for all $t<s$}\right\}.
\end{equation}
\end{itemize}
In particular, $\max u\in\A$ for any minimizer $u$ of $E$ in $\mathcal D$. If $(a)$ or $(b)$ hold, then
\begin{equation}\label{as7}
u'=-\sqrt{2u(R(u)-R(u_0))}\quad\mbox{in $[0,\rr)$},
\end{equation}
\begin{equation}\label{as10}
u_0-u(x) = \int_0^x \sqrt{2u(R(u)-R(u_0))} \quad \mbox{for all $x\in [0,\rr)$,}
\end{equation}
and for all $x\in [0,\rr)$ it holds that
\begin{equation}
\label{as4}
Z(u(x))=x, \ \mbox{ where }\ Z(u):=\tfrac{1}{\sqrt 2}\int_u^{u_0} \left(s(R(s)-R(u_0))\right)^{-\frac{1}{2}} ds \quad \text{for }u\in [0,u_0].
\end{equation}
\end{lemma}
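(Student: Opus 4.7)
The central tool is the first integral \eqref{appR}, obtained by multiplying the equation in \eqref{eqmin1bis} by $u'$, integrating, and using the initial conditions $u(0)=u_0$, $u'(0)=0$: it reads $(u')^2=2u(R(u)-R(u_0))$ wherever $u>0$ and underlies both implications. The identity \eqref{u''0}, which gives $u''(0)=u_0 R'(u_0)$, also plays a role near $x=0$.

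For $(a)\Rightarrow(b)$, since $u'<0$ on $(0,\bar r)$ and $u$ continuously maps $[0,\bar r]$ onto $[0,u_0]$, plugging into the first integral forces $R(u(x))>R(u_0)$ for every $x\in(0,\bar r)$, which translates into $R(t)>R(u_0)$ for every $t\in(0,u_0)$ -- the second requirement in the definition of $\A$. From \eqref{u''0} and the fact that $u$ has a strict local maximum at $0$, one gets $u''(0)\le 0$, i.e.\ $R'(u_0)\le 0$. To exclude $R'(u_0)=0$: in that case $Q'(u_0)=R(u_0)$, so the constant $u\equiv u_0$ is a solution of the same ODE matching both initial conditions; a standard ODE/phase-plane analysis at the degenerate equilibrium $(u_0,0)$ (the travel time from any point near $u_0$ to $u_0$ diverges) forces $u\equiv u_0$ or prevents $u$ from reaching $u=0$ in finite time, contradicting (a).

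For $(b)\Rightarrow(a)$, I would invert the first integral explicitly via the map $Z$ in \eqref{as4}. I first check that the integrand $(s(R(s)-R(u_0)))^{-1/2}$ is integrable on $(0,u_0)$: near $s=u_0$, the hypothesis $R'(u_0)<0$ gives $R(s)-R(u_0)\sim -R'(u_0)(u_0-s)$, so the integrand behaves like $C(u_0-s)^{-1/2}$; near $s=0$, \eqref{H} gives $s(R(s)-R(u_0))=Q(s)-R(u_0)s\sim A s^{1-m}$, so the integrand behaves like $C\,s^{(m-1)/2}$, integrable since $m>1$. Hence $\bar r:=Z(0)\in(0,+\infty)$, and $Z\in C^1((0,u_0))$ is strictly decreasing from $\bar r$ to $0$ (by the second condition in $\A$, which ensures $R(s)-R(u_0)>0$ on $(0,u_0)$). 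I then take $u:=Z^{-1}$ on $[0,\bar r]$, extend evenly and by zero outside $[-\bar r,\bar r]$, and verify from the definition of $Z$ that $u'=-\sqrt{2u(R(u)-R(u_0))}$ on $(0,\bar r)$, whence differentiating recovers the ODE $-u''+Q'(u)=R(u_0)$ (using $uR'(u)=Q'(u)-R(u)$). Since the first-order ODE $u'=-\sqrt{2u(R(u)-R(u_0))}$ is separable with continuous right-hand side (and positive wherever $u\in(0,u_0)$), this $u$ is the unique profile compatible with (a), and it satisfies (a) by construction.

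Formulas \eqref{as7}, \eqref{as10} and \eqref{as4} are then immediate from the above: \eqref{as7} is the signed square root of the first integral with the sign dictated by $u'<0$; \eqref{as10} follows by integrating $-u'$ from $0$ to $x$; \eqref{as4} is the change of variable $s=u(\xi)$ in $x=\int_0^x d\xi$. Finally, the claim that $\max u\in\A$ for every minimizer follows by combining Corollary \ref{coro1} (which guarantees (a) for minimizers) with the just-established equivalence $(a)\Leftrightarrow(b)$. The main delicate point I expect is ruling out $R'(u_0)=0$ in $(a)\Rightarrow(b)$, which requires the phase-plane/uniqueness argument at the degenerate equilibrium.
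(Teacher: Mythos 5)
Your strategy---deriving the first integral \eqref{appR}, using $u''(0)=u_0R'(u_0)$, and controlling the support through the integrability of $Z$---follows the paper's, and your treatment of \eqref{as7}--\eqref{as4}, of the integrability of the $Z$-integrand near $0$ and near $u_0$, and of the ``$\max u\in\A$'' remark is correct. For $(a)\Rightarrow(b)$ the two proofs coincide up to the exclusion of $R'(u_0)=0$, which you rightly flag as delicate; the paper disposes of it by asserting that $R'(u_0)=0$ would force $u\equiv u_0$ (that is, by invoking uniqueness for the IVP, consistently with its later use of Picard--Lindel\"of), whereas your proposed replacement---divergence of the travel time to the degenerate equilibrium---does not follow from $R\in C^1$ alone. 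Indeed $R'(u_0)=0$ only gives $R(s)-R(u_0)=o(u_0-s)$, and $\int^{u_0}\bigl(s(R(s)-R(u_0))\bigr)^{-1/2}\,ds$ can still converge; e.g.\ $R(s)-R(u_0)\sim (u_0-s)/\log\tfrac{1}{u_0-s}$ is compatible with $R\in C^1$, $R'(u_0)=0$, and yields a finite travel time. So the phase-plane route is not a safe substitute for the uniqueness step.

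The more substantive deviation is in $(b)\Rightarrow(a)$. The paper argues directly on the given solution $u$ of \eqref{eqmin1bis}: $u''(0)=u_0R'(u_0)<0$ makes $u$ strictly decreasing near $0$; if $u'$ vanished at a first $x_0>0$, \eqref{appR} would force $R(u(x_0))=R(u_0)$ with $u(x_0)<u_0$, contradicting $u_0\in\A$; and finiteness of $Z(0)$ then yields $\rr<\infty$. You instead \emph{construct} the profile $Z^{-1}$ and verify it satisfies $(a)$. As written, that shows some solution of $(P_{u_0})$ satisfies $(a)$, not that the given $u$ does; the closing sentence ``this $u$ is the unique profile compatible with $(a)$'' does not by itself identify the two functions. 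To close the gap you must return to the given $u$: $u''(0)<0$ forces it to decrease strictly near $0$ with no waiting time at the level $u_0$, so on its decreasing range it solves the separable first-order ODE $u'=-\sqrt{2u(R(u)-R(u_0))}$ and is therefore pinned down by the $Z$-formula, hence equal to your $Z^{-1}$. The paper's direct argument (non-vanishing of $u'$ via the definition of $\A$) is shorter precisely because it never needs this identification.
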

Note that $Z$ is well defined since $R'(u_0)\ne 0$.

\begin{proof}
{\sl Proof of $(a) \Longrightarrow (b)$.}
Since $u$ is strictly decreasing in $(0,\rr)$, $R'(u_0)\le 0$ (by \eqref{u''0}). If $R'(u_0)=0$ we would have $u\equiv u_0$, in contradiction with $u'<0$. Therefore $R'(u_0)<0$. In view of \eqref{appR}, $R(u(x))>R(u_0)$ for any $x\in (0,\rr)$. Since $u$ is continuous and $u(\rr)=0$, we deduce that $R(t)>R(u_0)$ for every $t\in (0,u_0)$, hence $u_0\in \A$.

\smallskip

{\sl Proof of $(b) \Longrightarrow (a)$.}
We now assume that $u_0\in \A$, which implies $R'(u_0)<0$. Because of \eqref{u''0}, $u$ is strictly decreasing in a right neighborhood of $x=0$. Assume by contradiction that $x>0$ exists such that $u'(x)=0$ and $u'<0$ in $(0,x)$. In particular, $u(x)<u_0$ and, by \eqref{appR}, $R(u(x))=R(u_0)$. This contradicts the definition of $\A$. Therefore $u'<0$ as long as $u$ is defined, and \eqref{as7} follows from \eqref{appR}. Integrating \eqref{as7} with respect to $x$ we obtain \eqref{as10}. Integrating \eqref{as7} with respect to $u$, we obtain \eqref{as4}. In particular,
$$
\sqrt 2 x=\int_{u(x)}^{u_0} \left(s(R(s)-R(u_0))\right)^{-\frac{1}{2}} ds\le  \int_{0}^{u_0} \left(s(R(s)-R(u_0))\right)^{-\frac{1}{2}} ds.
$$
Since $R'(u_0)\ne 0$ and $s(R(s)-R(u_0))\sim Q(s)\to +\infty$ as $s\to 0^+$, the right-hand side is finite: therefore $u$ has compact support and the proof is complete.
\end{proof}

\subsection{Asymptotics near the interface}

Now we investigate the asymptotic behaviour near $\partial\{u>0\}=\{-\rr,\rr\}$ of solutions to \eqref{eqmin1bis} with $u_0\in \A$; in particular, for minimizers of $E$. Since any solution of \eqref{eqmin1bis} is even, it suffices to study the behaviour of $u$ as $x\to \rr^-$.

\begin{theorem}
\label{asbe1}
Assume \eqref{H-EL} and $N=1$. Let $u$ be a solution of \eqref{eqmin1bis} with $u_0\in \A$. Then
\begin{eqnarray}
\label{asu}
u(x) & \sim & \left(\tfrac{A(m+1)^2}{2}\right)^{\frac{1}{m+1}}(\rr- x)^{\frac{2}{m+1}} \quad \text{as } x \to \rr^-\, ,
\\
\label{asu'}
u'(x) & \sim & -\tfrac{2}{m+1}\left(\tfrac{A(m+1)^2}{2}\right)^{\frac{1}{m+1}}(\rr- x)^{\frac{2}{m+1}-1}\quad \text{as } x \to \rr^-\,.
\end{eqnarray}
In particular, $u\in H^1_0((-\rr, \rr))$.
\end{theorem}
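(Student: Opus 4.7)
The key ingredient is the explicit integral representation from Lemma \ref{fin1}. Since $u_0\in\A$, formula \eqref{as4} gives $x = Z(u(x))$ for $x\in[0,\bar r)$, and passing to the limit $x\to\bar r^-$ (where $u\to 0^+$) yields $\bar r = Z(0)$, hence
\[
\bar r - x \;=\; \tfrac{1}{\sqrt 2}\int_0^{u(x)} \bigl(s(R(s)-R(u_0))\bigr)^{-1/2}\,ds\qquad\text{for all }x\in[0,\bar r).
\]
The problem therefore reduces to the asymptotic analysis of the right-hand side as $u(x)\to 0^+$. The short-range form of $Q$ in \eqref{H} gives $s(R(s)-R(u_0))=Q(s)-sR(u_0)\sim A s^{1-m}$ as $s\to 0^+$ (the linear term $sR(u_0)$ is negligible since $1-m<0$ and $m>1$), so a standard $(1\pm\varepsilon)$-squeeze on the integrand justifies
\[
\bar r - x \;\sim\; \tfrac{1}{\sqrt{2A}}\int_0^{u(x)} s^{(m-1)/2}\,ds \;=\; \tfrac{2}{(m+1)\sqrt{2A}}\,u(x)^{(m+1)/2}\qquad\text{as }x\to\bar r^-.
\]
Solving for $u(x)$ gives \eqref{asu}.

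For \eqref{asu'}, I would substitute back into the first integral \eqref{as7}: since $u(R(u)-R(u_0))\sim Q(u)\sim Au^{1-m}$ as $u\to 0^+$, one has
\[
u'(x)\;=\;-\sqrt{2u(R(u)-R(u_0))}\;\sim\;-\sqrt{2A}\,u(x)^{(1-m)/2},
\]
and inserting \eqref{asu} on the right-hand side produces \eqref{asu'}. No independent argument is needed, as \eqref{as7} already provides the correct first-order behavior of $u'$ in terms of $u$.

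Finally, for the $H^1_0$ statement: by Corollary \ref{coro1}, $u\in C^2((-\bar r,\bar r))\cap C([-\bar r,\bar r])$ with $u(\pm\bar r)=0$, so only the integrability of $(u')^2$ near the endpoints needs to be checked. From \eqref{asu'},
\[
(u'(x))^2 \;\sim\; C(\bar r-x)^{2(2/(m+1)-1)} \;=\; C(\bar r-x)^{(2-2m)/(m+1)}\qquad\text{as }x\to\bar r^-,
\]
which is integrable precisely when $(2-2m)/(m+1)>-1$, i.e.\ when $m<3$, exactly the standing assumption. The symmetry $u(-x)=u(x)$ handles the other endpoint, and thus $u\in H^1_0((-\bar r,\bar r))$.

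The only non-routine point, and therefore the one requiring care, is the justification of the $\sim$ relation in the integral formula for $\bar r-x$; everything afterwards is algebraic manipulation or a direct application of the first integral. This justification is straightforward via the squeeze $(1-\varepsilon)As^{1-m}\le s(R(s)-R(u_0))\le (1+\varepsilon)As^{1-m}$ for $s\ll 1$, which follows from \eqref{H} and the fact that $R(u_0)$ is a finite constant.
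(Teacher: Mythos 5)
Your proposal is correct and follows essentially the same route as the paper: both derive the integral representation $\bar r - x = \tfrac{1}{\sqrt 2}\int_0^{u(x)}\bigl(s(R(s)-R(u_0))\bigr)^{-1/2}\,ds$ from Lemma \ref{fin1} and then exploit the short-range asymptotics $Q(s)\sim As^{1-m}$ to extract the leading-order behavior. The paper applies L'H\^opital's rule to the ratio $(\bar r - Z(u))/u^{(m+1)/2}$, whereas you bound the integrand by $(1\pm\varepsilon)A s^{1-m}$ and integrate directly — a purely cosmetic difference, since both amount to the same asymptotic computation.
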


\begin{proof}
By Lemma \ref{fin1}, \eqref{as4} holds. Since both $Z$ and $u$ are strictly decreasing, whence invertible, $x\to \rr^-$ if and only if $u\to 0^+$. Therefore
\begin{eqnarray}
\label{as5}
\lim_{x\to \rr^-}\frac{\rr - x}{u^\alpha(x)} & \stackrel{\eqref{as4}}= & \lim_{u\to 0^+} \frac{\rr - Z(u)}{u^\alpha} = \lim_{u\to 0^+} \frac{\frac{1}{\sqrt{2}}\left(Q(u)-R(u_0)u\right)^{-1/2}}{\alpha u^{\alpha-1}} \nonumber \\
& \stackrel{\eqref{H}}= &
\lim_{u\to 0^+} \frac{1}{\sqrt{2A}\alpha u^{\alpha-1+\tfrac{1-m}{2}}},
\end{eqnarray}
where in the second equality of \eqref{as5} we used L'H\^opital's rule. Choosing $\alpha=\frac{m+1}{2}$ we obtain \eqref{asu}. Consequently, we obtain \eqref{asu'}:
\begin{eqnarray*}
u'(x) &\stackrel{\eqref{as7}} \sim & -\sqrt{2Q(u)} \stackrel{\eqref{H}}\sim -\sqrt{2A} u(x)^{\frac{1-m}{2}} \nonumber \stackrel{\eqref{asu}} \sim  -\sqrt{2A}\left(\sqrt{\tfrac{A(m+1)^2}{2}}\right)^{\tfrac{1-m}{m+1}}(\rr- x)^{\frac{1-m}{m+1}}
\\
&=& -\tfrac{2}{m+1}\left(\tfrac{A(m+1)^2}{2}\right)^{\tfrac{1}{m+1}}(\rr- x)^{\frac{2}{m+1}-1}
\quad \text{as } x \to \rr^-\,.
\end{eqnarray*}
Finally, since $u$ is even, $u\in C^2((-\rr,\rr))$, and \eqref{asu'} holds, $C>0$ exists such that
$$
\int_{-\rr}^{\rr} (u')^2\leq C \int_{0}^{\rr} (\rr- x)^{\frac{2(1-m)}{m+1}}\stackrel{m<3}<+\infty.
$$
\end{proof}

\subsection{Bounds on the maximal height}

To avoid pathological situations, we assume in what follows that $\delta>0$ exists such that
\begin{equation}\label{H-R}
\mbox{$R'\ne 0$ in $(0,\delta)\cup (\delta^{-1},+\infty)$.}
\end{equation}
In particular, the limit of $R$ as $s\to +\infty$ exists, and it follows from $\inf Q>-\infty$ that
\begin{equation}
\label{Rlim}
\lim_{s\to +\infty} R(s) = \lim_{s\to +\infty} \frac{Q(s)}{s} \geq 0.
\end{equation}
We let $\e_*$ be the smallest among the absolute minimum points of $R$, provided they exist:
\begin{equation}
\label{def-s*}
\e_*:=\left\{\begin{array}{ll}
+\infty & \mbox{if $\not\exists \, \min R$},
\\[1ex]
\min R^{-1}(\min R) & \mbox{otherwise}.
\end{array}\right.
\end{equation}

\begin{remark}
\label{rem:R2}{\rm{
If $\e_*=+\infty$, then $\inf\limits_{(0,+\infty)}R = \lim\limits_{s\to +\infty} R(s)$. In particular, in view of \eqref{Rlim}, $Q>0$ in $(0,+\infty)$. Note that in this case $Q(s)\lesssim s$ for $s\gg 1$.  Moreover, if $R(+\infty)=0$ the converse holds true: $Q>0$ in $(0,+\infty)$ implies $\e_*=+\infty$.
}}\end{remark}

\begin{remark}\label{rem:R1}{\rm
In view of \eqref{Rlim}, $Q\not > 0$ in $(0,+\infty)$ --that is, $R\not > 0$ in $(0,+\infty)$-- implies that $\e_*<+\infty$.
}\end{remark}

If $\e_*=+\infty$, we introduce the smallest level at which $R$ ceases to be injective:
\begin{equation}
\label{def-z0}
z_0:=\sup\{z>0 : \#\{R^{-1}(z')\} \leq 1 \text{ for every }z'\in (0,z)\}\quad\mbox{if $\e_*=+\infty$.}
\end{equation}
Thanks to \eqref{H-R} and Remark \ref{rem:R2}, the set is non-empty and $z_0\in (0,+\infty]$.  If $z_0=+\infty$, then $R$ is strictly decreasing in $(0,+\infty)$. If instead $z_0<+\infty$, we define (cf. Fig. \ref{fig-A})
\begin{equation}
\label{def-et}
\emax:=\max\{s>0 : \ R(s)= z_0 \}
\quad \text{and}\quad \emin:=\min\{s>0 :\ R(s)=z_0\}.
\end{equation}
It follows from \eqref{H-R} that $0<\emin<\emax<+\infty$.

\smallskip

In the following lemma we give a characterization of $\e_*$ in terms of $\A$ and some restrictions on the maximal height.
\begin{lemma}
\label{bnd-inf2}
Assume $N=1$, \eqref{H-EL} and \eqref{H-R}. Then $\e_*=\sup \A$, hence $\A\subseteq (0,\e_*)$. In addition, if $\e_*=+\infty$ and $z_0<+\infty$ (cf. \eqref{def-z0}), then $\A \subseteq (0,\emin) \cup (\emax, +\infty)$ (cf. \eqref{def-et}).
\end{lemma}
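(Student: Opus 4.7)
Plan. The statement has two parts: the identification $\e_*=\sup\A$, and (in the sub-case $\e_*=+\infty$ with $z_0<+\infty$) the exclusion $\A\cap[\emin,\emax]=\emptyset$. My plan proceeds in three steps.

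First, the upper bound $\sup\A\le\e_*$ is immediate from the definition of $\A$. Indeed, for $\e_*<+\infty$, any $s\ge\e_*$ lies outside $\A$: either $s=\e_*$, and then $R'(\e_*)=0$ since $\e_*$ is an interior minimizer of the $C^1$ function $R$, contradicting $R'(s)<0$; or $s>\e_*$, and then $R(\e_*)=\min R\le R(s)$, so the condition $R(t)>R(s)$ for all $t<s$ fails at $t=\e_*$. The case $\e_*=+\infty$ is vacuous.

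Second, the lower bound $\sup\A\ge\e_*$ splits into two subcases. When $\e_*=+\infty$, I would use \eqref{H-R} and $R(s)\to+\infty$ as $s\to 0^+$ to rule out $R$ being increasing on $(\delta^{-1},+\infty)$ (else the infimum of $R$ over $(0,\delta^{-1}]$ would be a global minimum, giving $\e_*<+\infty$); hence $R$ is strictly decreasing on $(\delta^{-1},+\infty)$ with limit $R_\infty:=\inf R<m_0:=\min_{(0,\delta^{-1}]}R$. For $s$ large enough, $R(s)<m_0$, so $R(t)\ge m_0>R(s)$ on $(0,\delta^{-1}]$ and $R(t)>R(s)$ on $(\delta^{-1},s)$ by monotonicity, giving $s\in\A$. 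When $\e_*<+\infty$, I would argue by contradiction: suppose $\sigma:=\sup\A<\e_*$, and introduce the running minimum $\varphi(s):=\inf_{t\in(0,s]}R(t)$, which is continuous and non-increasing, strictly exceeds $R(\e_*)$ on $(0,\e_*)$ (since $\e_*$ is the smallest absolute minimizer of $R$), and satisfies $\varphi(s)\to R(\e_*)$ as $s\to\e_*^-$ by continuity of $R$. Consequently, $\varphi$ must strictly decrease at points accumulating at $\e_*$, forcing the existence of components $(a_i,b_i)$ of the open set $B:=\{R'<0\}$ with $b_i\to\e_*^-$ and $\varphi(a_i)>R(b_i)$; on each such component the subinterval $\{s\in(a_i,b_i):R(s)<\varphi(a_i)\}$ is non-empty and contained in $\A$, contradicting $\sigma<\e_*$.

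Third, for the exclusion $\A\subseteq(0,\emin)\cup(\emax,+\infty)$ when $\e_*=+\infty$ and $z_0<+\infty$, I would first establish that $R\ge z_0$ on $[\emin,\emax]$: were $R(s_0)<z_0$ at some $s_0\in(\emin,\emax)$, then applying the intermediate value theorem on $[\emin,s_0]$ and on $[s_0,\emax]$ would yield two distinct preimages of any value in $(R(s_0),z_0)$, contradicting the injectivity of $R$ below the level $z_0$. The exclusion then follows: at $s=\emin$, $R\ge z_0=R(\emin)$ on a right-neighborhood gives $R'(\emin)\ge 0$, violating $R'(s)<0$; for $s\in(\emin,\emax]$, the condition $R(t)>R(s)$ fails at $t=\emin<s$ since $R(\emin)=z_0\le R(s)$.

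The main obstacle is the second subcase of Step 2, where $\e_*<+\infty$: producing a point of $\A$ arbitrarily close to $\e_*^-$ requires reconciling the two defining conditions of $\A$ (the strict derivative $R'(s)<0$ \emph{and} the strict running-minimum inequality $R(t)>R(s)$ for all $t<s$) at a location where $R$ may behave delicately, which is handled by carefully exploiting the $C^1$-regularity of $R$, the continuity and near-$\e_*$ strict decrease of $\varphi$, and the structure of the components of $\{R'<0\}$ in a left-neighborhood of $\e_*$.
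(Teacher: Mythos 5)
Your proposal is correct and has the same overall skeleton as the paper's proof: the trivial upper bound $\sup\A\le\e_*$, the lower bound $\sup\A\ge\e_*$, and the exclusion of $[\emin,\emax]$; Steps 1 and 3 essentially match the paper's argument. For the lower bound, however, you supply considerably more detail than the paper, which only asserts that $\sup\A<\e_*$ ``contradicts the definition of $\e_*$'' without elaboration; your running-minimum function $\varphi$ gives a genuine construction. The one place where your sketch needs tightening is the passage ``forcing the existence of components $(a_i,b_i)$ of $\{R'<0\}$ with $\varphi(a_i)>R(b_i)$''. A clean way to close this: since $R\in C^1$ and $R\to+\infty$ at $0$, $\varphi$ is locally Lipschitz and non-increasing, with $\varphi(\sigma)>\varphi(\e_*)$ for every $\sigma<\e_*$; hence $\{s\in(\sigma,\e_*):\varphi'(s)<0\}$ has positive measure. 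At such an $s$ one checks that $\varphi(s)=R(s)$ (otherwise $\varphi$ is locally constant near $s$, forcing $\varphi'(s)=0$), that $R'(s)\le\varphi'(s)<0$, and that $\varphi(t)>\varphi(s)$ for all $t<s$ (otherwise $\varphi$ is constant on some $[t,s]$ and the left derivative at $s$ vanishes), whence $R(t)\ge\varphi(t)>R(s)$ and $s\in\A$ directly, with no need to route through components of $\{R'<0\}$. An equally short alternative is via Sard: pick a regular value $c\in(R(\e_*),\varphi(\sigma))$ and let $s_c$ be the first point $>\sigma$ with $R(s_c)=c$; then $R>c$ on $(0,s_c)$ and $R'(s_c)<0$, so $s_c\in\A$, and $s_c\to\e_*$ as $c\to R(\e_*)^+$.
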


\begin{proof}
It is obvious that $\sup \A\leq \e_*$. If by contradiction $\sup \A<\e_*$, then for all $s\in (\sup \A,\e_*)$ we would have either $R'(s)\ge 0$ or $R(t)\le R(s)$ for some $t<s$, in contradiction with the definition of $\e_*$. Therefore $\sup \A=\e_*$.

If $\e_*=+\infty$ and $z_0<+\infty$, let $u_0\in \A$. By its definition, $R(\emin)$ is a global minimum for $R$ in $(0,\emax]$, which implies that $R'(\emin)=0$ and $R(s)\geq R(\emin)$ for $s\in [\emin,\emax]$. The former implies that $u_0\ne\emin$ and the latter implies that $u_0\notin (\emin,\emax]$.
\end{proof}

\subsection{Uniqueness}

We will now prove comparison  and uniqueness results for minimizers of $E$ in $\mathcal D_M$ under the following additional assumption on $Q$:
\begin{equation}
\label{H3}
Q'(s) \text{ is non-decreasing for }s\in(0,\e_*).
\end{equation}
In Section \ref{ss-non} we will provide examples of non-uniqueness for a large class of potentials which do not satisfy \eqref{H3}.

\begin{lemma}
\label{lem11}
Assume $N=1$, \eqref{H-EL}, \eqref{H-R}, and \eqref{H3}. Let $u_1$ and $u_2$ be minimizers of $E$ in $\D_{M_1}$, resp. $\D_{M_2}$, both symmetric with respect to $x_0=0$. Then the following are equivalent:
\begin{itemize}
\item[$(i)$] $M_1<M_2$;
\item[$(ii)$] $u_1(0)=:u_{01}<u_{02}:=u_2(0)$;
\item[$(iii)$] $u_1<u_2$ in $\supp u_1$.
\end{itemize}
\end{lemma}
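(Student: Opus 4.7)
The plan is to establish (ii) $\Rightarrow$ (iii) by a comparison/convexity argument for the Euler-Lagrange ODE, and then to close the cycle (iii) $\Rightarrow$ (i) $\Rightarrow$ (ii) with elementary observations. Writing $\lambda_i := R(u_{0i})$, Corollary \ref{coro1} gives that each $u_i$ is a classical solution of $-u_i'' + Q'(u_i) = \lambda_i$ on $(-\rr_i, \rr_i)$ with $u_i(0) = u_{0i}$, $u_i'(0) = 0$, and $u_i(\pm \rr_i) = 0$. By Lemma \ref{fin1}, each $u_{0i}$ lies in the admissible set $\A$; by Lemma \ref{bnd-inf2}, $\A \subseteq (0, \e_*)$, so $0 < u_i(x) < \e_*$ wherever $u_i > 0$ and the convexity hypothesis \eqref{H3} applies to $Q'$ at all relevant arguments.

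Assuming (ii), the defining property of $\A$ in \eqref{def-setR}, applied at $u_{02}$ with the choice $t = u_{01} < u_{02}$, yields $\lambda_1 = R(u_{01}) > R(u_{02}) = \lambda_2$. Setting $w := u_2 - u_1$, subtraction of the two Euler-Lagrange equations produces
\begin{equation*}
w'' = (\lambda_1 - \lambda_2) + (Q'(u_2) - Q'(u_1)) \quad \text{on } \{u_1, u_2 > 0\},
\end{equation*}
together with $w(0) = u_{02} - u_{01} > 0$ and $w'(0) = 0$. Suppose for contradiction that (iii) fails. Using $w(0) > 0$, continuity, and evenness, $x^* := \inf\{x \in (0, \rr_1] : w(x) \leq 0\}$ is well-defined and belongs to $(0, \rr_1]$, with $w > 0$ on $[0, x^*)$ and $w(x^*) = 0$. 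For any small $\eps > 0$, strict monotonicity of $u_1$ on $[0, \rr_1)$ gives $u_1(x^* - \eps) > u_1(x^*) \geq 0$, so $0 < u_1 \leq u_2$ and both functions take values in $(0, \e_*)$ throughout $[0, x^* - \eps]$. The convexity \eqref{H3} then yields $Q'(u_2) \geq Q'(u_1)$ and hence $w'' \geq \lambda_1 - \lambda_2 > 0$ on $[0, x^* - \eps]$. Integrating twice from $0$ using $w'(0) = 0$,
\begin{equation*}
w(x) \geq w(0) + \tfrac{1}{2}(\lambda_1 - \lambda_2) x^2 > w(0) > 0 \quad \text{on } (0, x^* - \eps].
\end{equation*}
Sending $\eps \to 0^+$ yields $w(x^*) \geq w(0) > 0$, contradicting $w(x^*) = 0$. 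Therefore $u_2 > u_1$ on $[0, \rr_1]$; in particular $u_2(\rr_1) > 0$, so $\rr_2 > \rr_1$, and by evenness $u_1 < u_2$ on $\supp u_1$.

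The remaining implications are straightforward. (iii) $\Rightarrow$ (ii) is immediate from evaluation at $x = 0$, and (iii) $\Rightarrow$ (i) follows from integrating $u_1 < u_2$ over $\supp u_1$ and using $u_2 \geq 0$ elsewhere. For (i) $\Rightarrow$ (ii), I argue by contradiction: if $u_{01} = u_{02}$, the formula \eqref{as4} in Lemma \ref{fin1} forces $u_1 \equiv u_2$ (both are determined by the common value of $u(0) \in \A$), hence $M_1 = M_2$; if instead $u_{01} > u_{02}$, the already-proven (ii) $\Rightarrow$ (iii) with the roles of the two minimizers interchanged gives $u_2 < u_1$ on $\supp u_2$, hence $M_2 < M_1$; both conclusions contradict (i). The delicate point is the contradiction step above, where one must allow for the possibility $\rr_2 = \rr_1$ so that $u_1$ and $u_2$ both vanish at the right endpoint of the common support: since \eqref{H3} is formulated only on $(0, \e_*)$, the differential inequality for $w$ cannot be applied up to the boundary, and the $\eps$-approximation is precisely what transfers the strict positivity of $w$ from the interior to the endpoint.
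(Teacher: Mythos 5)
Your proof is correct and follows essentially the same route as the paper's: comparison of the Euler--Lagrange equations with $\lambda_1 > \lambda_2$ (from $u_{02}\in\A$), the convexity hypothesis \eqref{H3} to control the $Q'$-difference, a second-order differential inequality integrated from the center, and the $(ii)\Rightarrow(iii)$ implication with roles swapped to close $(i)\Rightarrow(ii)$. The only noteworthy differences are cosmetic: the paper phrases $(ii)\Rightarrow(iii)$ as a continuation argument (``as long as $u_1\le u_2$''), while you make the boundary case explicit via the $\eps$-approximation before the first touching point; and for the $u_{01}=u_{02}$ case of $(i)\Rightarrow(ii)$ you invoke the explicit formula \eqref{as4} in place of Picard--Lindel\"of, which is in fact cleaner given that $Q'$ is only assumed continuous.
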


\begin{remark}{\rm
\label{rem:nonbaro}
If $\e_*=+\infty$, \eqref{H3} can be attained only if $z_0=+\infty$.
}\end{remark}

\begin{proof}
The proof of $(iii)\Rightarrow (i)$ is obvious.

\smallskip

{\it Proof of $(ii)\Rightarrow (iii)$.} It follows from Lemma \ref{fin1} that $u_{0i}\in \A$. Since $u_i$ are strictly decreasing, it follows from Lemma \ref{bnd-inf2} that $u_1,u_2< \e_*$. Subtracting the corresponding equations, we obtain
\begin{equation*}
-(u_1-u_2)''=R(u_{01})-R(u_{02})-(Q'(u_1)-Q'(u_2)) \quad \text{in }\{u_1>0\}\cap\{u_2>0\}.
\end{equation*}
Since $u_{02}\in \A$, $R(u_{02})<R(t)$ for all $t\in (0,u_{02})$: in particular, $R(u_{02})<R(u_{01})$. As long as $u_1\le u_2$, by \eqref{H3}, we have $Q'(u_1)\le Q'(u_2)$. Hence $(u_1-u_2)''< 0$ as long as $u_1\le u_2$. Integrating twice using $u_1'(0)=u_2'(0)=0$, we deduce that $(u_1-u_2)\leq (u_1-u_2)(0)<0$ as long as $u_1\le u_2$, whence $u_1<u_2$ in $\supp u_1$.

\smallskip

{\it Proof of $(i)\Rightarrow (ii)$.} First we note that $u_{01}\ne u_{02}$: otherwise, $u_1$ and $u_2$ would solve the same equation \eqref{eqmin1bis}, whence $u_1=u_2$ by Picard-Lindel\"of theorem, in contradiction with $M_1\neq M_2$. On the other hand, using $(ii)\Rightarrow (iii)$, $u_{01}>u_{02}$ implies $u_1>u_2$ in $\supp u_2$, in contradiction with $M_1<M_2$.
\end{proof}

As a by-product of Lemma \ref{lem11} we obtain the uniqueness result:
\begin{theorem}
\label{uniq1d}
Assume $N=1$, \eqref{H-EL}, \eqref{H-R}, and \eqref{H3}. Then for any $M>0$ there exists at most one minimizer of $E$ in $\D_M$ (up to translation).
\end{theorem}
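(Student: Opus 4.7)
The plan is a short, direct reduction to Lemma \ref{lem11} and Lemma \ref{fin1}. Fix $M>0$ and let $u_1, u_2$ be two minimizers of $E$ in $\mathcal D_M$. By Corollary \ref{coro1}, each $u_i$ is even about some center $x_{0,i}\in\R$; translating each minimizer (which is admissible since the statement is only ``up to translation'') I may assume both $u_1$ and $u_2$ are symmetric about the origin, so that the hypotheses of Lemma \ref{lem11} are in force.

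Next I would apply Lemma \ref{lem11} to the pair $(u_1,u_2)$ with $M_1=M_2=M$. Since condition $(i)$ fails for $M_1=M_2$, the equivalence $(i)\Leftrightarrow (ii)$ implies $u_{01}\not<u_{02}$; swapping the roles gives $u_{02}\not<u_{01}$. Therefore $u_{01}=u_{02}=:u_0$.

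Finally I would conclude via the first-integral representation of Lemma \ref{fin1}. By Theorem \ref{th-lambda} both $u_1,u_2$ correspond to the same eigenvalue $\lambda=R(u_0)$, and by Lemma \ref{fin1} the common maximal height $u_0$ lies in $\mathcal A$. Consequently \eqref{as4} applies to both: writing $Z$ for the function in \eqref{as4} (which depends only on $u_0$), one has $Z(u_i(x))=x$ on $[0,\rr_i)$ for $i=1,2$. Since $Z$ is strictly decreasing on $(0,u_0]$, and hence invertible, this forces $u_1\equiv u_2$ on the overlap of their supports; as each $u_i$ is continuous and vanishes exactly at $\pm\rr_i$, one also has $\rr_1=\rr_2$. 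Evenness then extends the identity to all of $\R$, completing the proof.

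The argument is essentially a bookkeeping step: the genuine work was done earlier. The comparison Lemma \ref{lem11} already converts equal mass into equal maximal height, exploiting the monotonicity of $Q'$ on $(0,\e_*)$ together with the crucial sign information $R(u_{02})<R(u_{01})$ provided by the characterization of $\mathcal A$. The explicit inversion $u_i(x)=Z^{-1}(x)$ from Lemma \ref{fin1} bypasses any delicate ODE-uniqueness issue near the degeneracy $\{u=0\}$, so no further obstacle remains.
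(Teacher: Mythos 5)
Your proposal is correct and follows the paper's strategy: reduce to the comparison Lemma \ref{lem11}, which forces $u_1(0)=u_2(0)$, and then conclude that the two solutions of the same initial-value problem coincide. The only difference is in the final step: the paper invokes the Picard--Lindel\"of theorem directly, whereas you conclude by inverting $Z$ from \eqref{as4}. Your route is in fact the more careful one: since \eqref{H-EL1} only assumes $Q\in C^1$, the right-hand side of \eqref{eqmin1bis} is merely continuous in $u$, so Picard--Lindel\"of does not apply verbatim; uniqueness here really rests on the first-integral structure (equivalently, on the explicit inversion $u=Z^{-1}$ on $\mathcal A$), which is exactly the argument you make. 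This is a cosmetic but genuine improvement in rigor over the paper's one-line closing.
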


\begin{proof}
Let $u_1$ and $u_2$ be two minimizers of $E$ in $\D_M$ (both of them symmetric with respect to $x_0=0$). Since $u_1$ and $u_2$ have the same mass, it follows from Lemma \ref{lem11} that $u_1(0)=u_2(0)$. This implies $u_1=u_2$ by Picard-Lindel\"of theorem.
\end{proof}

\section{Pancakes versus droplets}
\label{s4}

We assume throughout the section that
\begin{equation}\label{Q'0}
\mbox{$N=1$, \eqref{H-EL}, and \eqref{H-R} hold.}
\end{equation}
For $s\in \A$, we consider the solution $u_s$ to $(P_s)$ with $\supp u_s=[-\rr_s,\rr_s]$ (cf. Lemma \ref{fin1}) and we define
\begin{equation}\label{def-mu}
\mu:\A\to (0,+\infty), \quad \mu(s):=2\int_0^{\rr_s} u_s.
\end{equation}
We claim that
\begin{equation}
\label{M-cont}
\mu\in C(\A).
\end{equation}
Let $u_0\in \A$. Since $\A$ is open, $s\in \A$ in a neighbourhood of $u_0$. By Lemma \ref{fin1}, we know that $u_{s}$ is even, has compact support, say $[-\rr_{s},\rr_{s}]$, and is strictly decreasing in $(0,\rr_{s})$. By Lemma \ref{lemtec}, $u_s\to u_{u_0}$ in $C^2_{loc}((-\rr_{u_0},\rr_{u_0}))$ and $\rr_s\to \rr_{u_0}$ as $s\to u_0$, hence a.e. in $(-\rr_{u_0},\rr_{u_0})$. Therefore, by dominated convergence, $\mu(s)\to \mu(u_0)$.

\smallskip

We will also need the following a-priori estimate, in the spirit of Theorem \ref{asbe1}.

\begin{lemma}\label{lem:lb}
Assume \eqref{Q'0}. A constant $C>0$ exists such that for all $K\ge 1$ there exists $\delta_K>0$ such that
\begin{equation}
\label{lb}
u(x)\ge C(\rr-x)^\frac{2}{m+1} \quad \mbox{for all $x\in (\rr-\delta_K,\rr)$}
\end{equation}
for all $u_0\in \A\cap [K^{-1},K]$ and for all $u_0\in \A\cap [K^{-1},+\infty)$ if $R(+\infty)<+\infty$, where $u$ is the solution to $(P_{u_0})$ and $[-\rr,\rr]=\supp u$.
\end{lemma}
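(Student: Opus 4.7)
The plan is to exploit the first-integral identity from Lemma \ref{fin1}, namely
$$(u')^2 = 2u(R(u)-R(u_0)) \quad \text{in } [0,\rr),$$
together with the short-range expansion $Q(u)\sim Au^{1-m}$ from \eqref{H}, to extract an autonomous ODE inequality of the form $-u'u^{(m-1)/2}\ge \text{const}$ valid in a universal left-neighborhood of $\rr$. The key point is that $R(u)=Q(u)/u\to +\infty$ as $u\to 0^+$, whereas $R(u_0)$ stays bounded uniformly over the admissible set: indeed, $R$ is continuous on $(0,+\infty)$ and bounded on $[K^{-1},K]$ by compactness; and, under the additional hypothesis $R(+\infty)<+\infty$, also on $[K^{-1},+\infty)$, since $R$ admits a finite limit at infinity (cf. \eqref{Rlim}).

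First, I would fix $M_K:=\sup\{|R(u_0)|:u_0\in \A\cap I_K\}<+\infty$, where $I_K$ is either $[K^{-1},K]$ or $[K^{-1},+\infty)$ depending on the case. By \eqref{H}, there exists $\tilde\eps_K\in(0,1)$ such that $\tfrac{A}{2}u^{1-m}\le Q(u)\le 2Au^{1-m}$ and $R(u)\ge 4M_K$ for every $u\in(0,\tilde\eps_K)$. For such $u$ one has $R(u)-R(u_0)\ge \tfrac12 R(u)$, whence
$$(u')^2=2u(R(u)-R(u_0))\ge uR(u)=Q(u)\ge \tfrac{A}{2}u^{1-m}.$$
Since $u$ is strictly decreasing, this gives $-u'\ge \sqrt{A/2}\,u^{(1-m)/2}$, and after multiplying by $u^{(m-1)/2}$ and integrating from an $x$ with $u(x)\le \tilde\eps_K$ up to $\rr$,
$$\tfrac{2}{m+1}u(x)^{(m+1)/2}\ge \sqrt{A/2}\,(\rr-x),$$
which yields $u(x)\ge C(\rr-x)^{2/(m+1)}$ with $C=\bigl(\tfrac{(m+1)\sqrt{A/2}}{2}\bigr)^{2/(m+1)}$, a constant depending only on $A$ and $m$.

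It remains to identify a uniform $\delta_K>0$ such that $u(x)\le \tilde\eps_K$ for all $x\in(\rr-\delta_K,\rr)$. If $u_0\le\tilde\eps_K$ this is automatic and any $\delta_K\le \rr$ works; otherwise there is a (unique) $x^\ast\in[0,\rr)$ with $u(x^\ast)=\tilde\eps_K$. Using again the first-integral identity together with $R(u)\le 2Au^{-m}$ and $R(u_0)\ge -M_K$ on $(0,\tilde\eps_K)$, one obtains the complementary upper bound $(u')^2\le 4Au^{1-m}+2M_K u\le 5Au^{1-m}$ (possibly after shrinking $\tilde\eps_K$). Integrating $-u'u^{(m-1)/2}\le \sqrt{5A}$ from $x^\ast$ to $\rr$ gives
$$\rr-x^\ast \ge \frac{2\,\tilde\eps_K^{(m+1)/2}}{(m+1)\sqrt{5A}}=:\delta_K>0,$$
which is uniform in $u_0\in\A\cap I_K$ and completes the proof.

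The main obstacle is the case $u_0\in\A\cap[K^{-1},+\infty)$, where the possible unboundedness of $u_0$ forces us to require $R(+\infty)<+\infty$ in order to keep $R(u_0)$ (in particular its negative part) uniformly under control; this is precisely the hypothesis singled out in the statement. Once this uniform bound on $R(u_0)$ is secured, the remainder is a standard ODE comparison built on the first integral of $(P_{u_0})$ and the structural condition \eqref{H}.
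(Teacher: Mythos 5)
Your proposal is correct and takes essentially the same approach as the paper: it exploits the first-integral \eqref{as7}, secures a uniform bound on $|R(u_0)|$ over $\A\cap I_K$ (using compactness on $[K^{-1},K]$ and the hypothesis $R(+\infty)<+\infty$ on $[K^{-1},+\infty)$), uses the divergence of $R$ near $0$ to turn the first integral into a universal comparison $-u'u^{(m-1)/2}\gtrsim 1$, and uses a complementary upper bound on $-u'$ to produce a uniform $\delta_K$. The only difference from the paper's proof is cosmetic ordering: the paper establishes the upper bound and $\delta_K$ first, then the lower bound; you present the conditional lower bound first and then close the argument by fixing $\delta_K$.
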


\begin{proof}
For $u_0$ as in the statement, the properties of $Q$ imply that $\eps_K<K^{-1}$ exist such that
\begin{equation}\label{contedeche}
Q(s)-R(u_0)s\le 2As^{1-m} \quad\text{ for all } s\in (0,\eps_K),
\end{equation}
\begin{equation}
\label{conte4bis}
Q(s)\geq \tfrac{A}{2} s^{1-m} \quad\text{ for all } s\in (0,\eps_K),
\end{equation}
\begin{equation}
\label{conte5}
R(s)\geq 2|R(u_{0})| \quad \mbox{for all } s\in (0,\eps_K).
\end{equation}
We preliminarily work out an upper bound on $u$. It follows from \eqref{as7} that
$$
-u'=\sqrt{2(Q(u)-R(u_{0})u)} \stackrel{\eqref{contedeche}}\le C u^\frac{1-m}{2} \quad\mbox{as long as $u<\eps_K$.}
$$
Integrating it in $(x,\rr)$, we see that
$$
u(x)\le C(\rr-x)^\frac{2}{m+1}\quad\mbox{as long as $u<\eps_K$}.
$$
Choosing $\delta_K$ such that $C\delta_K^\frac{2}{m+1}=\eps_K/2$, we conclude that
$u(x)<\eps_K/2$ for all $x\in (\rr-\delta_K,\rr)$. Now we can work out the lower bound:
\begin{equation}
\label{conte7}
-u'\stackrel{\eqref{as7}}= \sqrt{2u(R(u)-R(u_{0}))} \stackrel{\eqref{conte5}}\geq \sqrt{Q(u)} \stackrel{\eqref{conte4bis}}\geq C u^\frac{1-m}{2} \quad\mbox{in $(\rr-\delta_K,\rr)$}.
\end{equation}
Integrating \eqref{conte7} in $(x,\rr)$, $x\in(\rr-\delta_K,\rr)$, we deduce
\begin{equation*}
C (\rr-x) \stackrel{\eqref{conte7}}\le -\int_{x}^{\rr} u^\frac{m-1}{2}u' = \tfrac{2}{m+1} u^{\frac{m+1}{2}}\quad\mbox{for all $x\in (\rr-\delta_K,\rr)$},
\end{equation*}
whence \eqref{lb}.
\end{proof}

\subsection{The droplet case}\label{ss-par}

In this subsection we prove the following result.
\begin{lemma}[the droplet case]\label{thm:par}
Assume \eqref{Q'0}, $\e_*=+\infty$,
\begin{equation}\label{H-add}
\mbox{$Q(s)\to -S\in (0,+\infty)$,\ \ and \ \ $sQ'(s)\to 0$ as $s\to +\infty$.}
\end{equation}
If $\A \ni u_{0k}\to +\infty$ as $k\to +\infty$ for a sequence, then $\mu(u_{0k})\to +\infty$,
\begin{equation}\label{par23}
u_{0k}^4 \sim \frac{9|S|}{32} \mu^2(u_{0k}),\quad \rr_k^4\sim \frac{9}{8|S|} \mu^2(u_{0k}) \quad\mbox{as $k\to +\infty$}
\end{equation}
and
\begin{equation}\label{par24}
w_k(y)= u_{0k}^{-1} u_k(\rr_ky) \to 1-y^2\quad\mbox{in $C^2_{loc}((-1,1))$}\quad\mbox{as $k\to +\infty$},
\end{equation}
where $u_k$ is the solution to $(P_{u_{0k}})$.
\end{lemma}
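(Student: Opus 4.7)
The strategy is to read off the asymptotics directly from the quadrature formulas of Lemma~\ref{fin1}, after a rescaling that normalises the integration interval to $(0,1)$. With the change of variable $s=u_{0k}t$, the identity \eqref{as4} at $u=0$ and the substitution $u=u_k(x)$ in \eqref{def-mu} (using \eqref{as7}) yield
\begin{equation*}
\rr_k=\frac{u_{0k}}{\sqrt{2}}\int_0^1 F_k(t)^{-1/2}\,dt,\qquad \mu(u_{0k})=\sqrt{2}\,u_{0k}^{2}\int_0^1 t F_k(t)^{-1/2}\,dt,
\end{equation*}
where $F_k(t):=Q(u_{0k}t)-tQ(u_{0k})$. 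Under \eqref{H-add}, for each fixed $t\in(0,1)$ both $Q(u_{0k}t)$ and $Q(u_{0k})$ converge to $|S|$, so $F_k(t)\to|S|(1-t)$ pointwise. Dominated convergence then delivers
\begin{equation*}
\int_0^1 F_k(t)^{-1/2}\,dt\longrightarrow\frac{2}{\sqrt{|S|}},\qquad \int_0^1 t F_k(t)^{-1/2}\,dt\longrightarrow\frac{4}{3\sqrt{|S|}},
\end{equation*}
whence $\rr_k\sim\sqrt{2/|S|}\,u_{0k}$ and $\mu(u_{0k})\sim\tfrac{4\sqrt{2}}{3\sqrt{|S|}}\,u_{0k}^{2}\to+\infty$; eliminating $u_{0k}$ between these two asymptotics gives \eqref{par23}.

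The technical heart of the argument is the uniform integrable majorant for $F_k^{-1/2}$, which I would construct by splitting $(0,1)$ into three pieces. Near $t=1$, differentiate: $F_k'(t)=u_{0k}Q'(u_{0k}t)-Q(u_{0k})$; the hypothesis $sQ'(s)\to 0$ forces $u_{0k}Q'(u_{0k}t)=\tfrac{1}{t}(u_{0k}t)Q'(u_{0k}t)\to 0$ \emph{uniformly} on $(1-\delta,1)$, while $Q(u_{0k})\to|S|$, so $F_k'(t)\le -|S|/2$ and hence $F_k(t)\ge(|S|/2)(1-t)$ for $k$ large, giving the integrable majorant $C(1-t)^{-1/2}$. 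On a middle interval $[\varepsilon_1,1-\delta]$, $u_{0k}t\to+\infty$ uniformly, so $F_k\ge c_\delta>0$. On the shrinking short-range interval $(0,s_0/u_{0k})$, assumption \eqref{H} provides $Q(u_{0k}t)\ge\tfrac{A}{2}(u_{0k}t)^{1-m}$, so $F_k^{-1/2}\le C$ and this piece contributes $O(1/u_{0k})$. Finally on $[s_0/u_{0k},\varepsilon_1]$, $u_{0k}t\ge s_0$ gives $Q(u_{0k}t)\ge|S|/2$ while $tQ(u_{0k})\le\varepsilon_1|S|$, so $F_k\ge|S|/4$ provided $\varepsilon_1$ is small.

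For the profile convergence \eqref{par24} I would invert \eqref{as4} at $x=\rr_k y$: after the same rescaling it reads $\sqrt{2}\,\rr_k u_{0k}^{-1}y=\int_{w_k(y)}^1 F_k(t)^{-1/2}\,dt$. Passing to the limit with the dominated-convergence argument of the previous paragraph, and using that $w\mapsto\int_w^1 (|S|(1-t))^{-1/2}\,dt=2\sqrt{1-w}/\sqrt{|S|}$ is a strictly decreasing homeomorphism of $[0,1]$ onto $[0,2/\sqrt{|S|}]$, one recovers $w_k(y)\to 1-y^2$ pointwise on $(-1,1)$. Upgrading to $C^2_{loc}((-1,1))$ convergence is then immediate from Lemma~\ref{lemtec}, since on any compact subset of $(-1,1)$ the profiles $w_k$ solve a non-degenerate second-order ODE whose coefficients are locally bounded away from the singularity. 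The main obstacle throughout is the uniform control of $F_k$ near $t=1$: it is precisely here that $sQ'(s)\to 0$ is needed, ensuring that the macroscopic constants in \eqref{par23}--\eqref{par24} depend on $|S|$ only and not on the long-range decay of $Q$.
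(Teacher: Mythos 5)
Your proof is correct and takes a genuinely different route from the paper's. You work directly from the quadrature formulas $\rr_k=\tfrac{u_{0k}}{\sqrt 2}\int_0^1 F_k^{-1/2}$ and $\mu(u_{0k})=\sqrt 2\,u_{0k}^2\int_0^1 tF_k^{-1/2}$ with $F_k(t)=Q(u_{0k}t)-tQ(u_{0k})$, and read off \emph{all} asymptotics at once via dominated convergence. The paper instead proceeds in three steps of contradiction arguments: Step 1 shows $\rr_k\to+\infty$ and bounds $u_{0k}/\rr_k$ above, Step 2 bounds it below by assuming $u_{0k}/\rr_k\to 0$ and deriving a contradiction, and Step 3 extracts a convergent subsequence $u_{0k}/\rr_k\to\beta$ and rules out $\beta^2<|S|/2$ by yet another contradiction, only computing the mass at the end. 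Both approaches hinge on exactly the same use of $sQ'(s)\to 0$ to control $F_k$ near $t=1$ — your inequality $F_k(t)\ge\tfrac{|S|}{2}(1-t)$, obtained by differentiating $F_k$, is precisely the paper's estimate \eqref{par30bis}, which the paper obtains via Lagrange's theorem instead. Your route is cleaner: it replaces three contradiction arguments by a single dominated-convergence step with a carefully built majorant, and it automatically delivers the profile convergence by the same inversion. Two small remarks: on the region $[s_0/u_{0k},\varepsilon_1]$ you assert $Q(u_{0k}t)\ge|S|/2$, which need not hold for a non-monotone $Q$; the correct lower bound is $\inf_{(0,+\infty)}Q>0$ (strictly positive because $Q>0$, $Q(0^+)=+\infty$, $Q(+\infty)=|S|>0$), which works just as well. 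And while the hypothesis \eqref{F-lim} of Lemma \ref{lemtec} is not verified by the rescaled profiles $w_k$, that hypothesis is only needed there to control the endpoints $r_k\to r$; here the rescaled domains are all $(-1,1)$, so the $C^2_{loc}$-convergence you need follows from the continuous-dependence portion of that lemma (the rescaled nonlinearity $f_k(w)=\frac{\rr_k^2}{u_{0k}^2}\bigl(Q(u_{0k})-u_{0k}Q'(u_{0k}w)\bigr)\to 2$ in $C_{loc}((0,+\infty))$), which is essentially what the paper silently invokes in Step 3.
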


\begin{proof}
We recall that $\e_*=+\infty$ implies $Q>0$ (thus $R>0$) in $(0,+\infty)$ (Remark \ref{rem:R2}), and we note for later reference that only $-S\ge 0$ is used in Step 1 of this proof.

\smallskip

{\it Step 1.} We show that $C\ge 1$ exists such that
\begin{equation}\label{par-s2}
\rr_k\to +\infty \quad\mbox{and}\quad \frac{u_{0k}}{\rr_k}\le C \quad\mbox{as $k\to +\infty$}.
\end{equation}
Let $\overline s$ be such that $Q(s)\le |S|+1$ in $(\overline s, +\infty)$, and let $x_k=u_k^{-1}(\overline s)$ (which is well defined for $k\gg 1$ since $u_{0k}\to +\infty$). Since $R>0$ we obtain \eqref{par-s2}:
$$
+\infty \leftarrow u_{0k}-\overline s \stackrel{\eqref{as10}}= \int_0^{x_k} \sqrt{2(Q(u_k)-R(u_{0k})u_k)} \le  \sqrt{2( |S|+1)} x_k\le \sqrt{2( |S|+1)} \rr_k.
$$

{\it Step 2.} Note that $-S>0$ and $Q>0$ in $(0,+\infty)$ imply that $\inf\limits_{(0,+\infty)} Q>0$.  Let $w_k(y)=u_{0k}^{-1} u_k(\rr_k y)$.
It follows from \eqref{as7} that
\begin{equation}\label{par12}
-w_k'=\sqrt 2 \frac{\rr_k}{u_{0k}}\sqrt{Q(u_{0k}w_k)-Q(u_{0k})w_k} \qquad \text{in } [0,1),
\end{equation}
that is,
\begin{equation}\label{par30}
1=\sqrt 2 \frac{\rr_k}{u_{0k}}\int_0^1\sqrt{Q(u_{0k}w_k(y))-Q(u_{0k})w_k(y)}dy.
\end{equation}
We claim that a constant $C\ge 1$ exists such that
\begin{equation}\label{par20}
\frac{u_{0k}}{\rr_k}\ge C^{-1}.
\end{equation}
Assume by contradiction that $\frac{u_{0k}}{\rr_k}\to 0$ as $k\to +\infty$ for a subsequence (not relabeled). Then \eqref{par30} yields
\begin{equation}\label{par13}
Q(u_{0k}w_k(y))-Q(u_{0k})w_k(y) \to 0 \quad\mbox{for a.e. $y\in (0,1)$ as $k\to +\infty$.}
\end{equation}
Take one of such $y$'s and any convergent subsequence $w_k(y)\to c\in [0,1]$. If $c=0$, we would have $Q(u_{0k})w_k(y)\to |S|\cdot 0$, whence by \eqref{par13} $Q(u_{0k}w_k(y))\to 0$, as $k\to +\infty$, which is impossible since $\inf\limits_{(0,+\infty)} Q>0$. Therefore $c>0$, and \eqref{par13} implies $|S|-|S|c=0$, i.e. $c=1$. Thus $w_k(y)\to 1$ for a.e. $y\in (0,1)$. Now, take $0<b<1$. Integrating \eqref{par12} in $(0,b)$, we obtain
\begin{equation}\label{par21}
\frac{1}{\sqrt 2 }\int_{w_k(b)}^{1}\frac{dw}{\sqrt{Q(u_{0k}w)-Q(u_{0k})w}} =\frac{\rr_k}{u_{0k}} b\to +\infty \quad\mbox{as $k\to +\infty$.}
\end{equation}
On the other hand, by Lagrange theorem, for all $w\in [w_k(b),1]$ we have
$$
Q(u_{0k}w)-Q(u_{0k})w = \left(Q(u_{0k}) - u_{0k}Q'(u_{0k}\eta_{k,w})\right)(1-w) \quad\mbox{for some $\eta_{k,w}\in [w_k(b),1]$.}
$$
Noting that $\eta_{k,w}\to 1$ and, by \eqref{H-add}, that $u_{0k}Q'(u_{0k}\eta_{k,w})=  \frac1{\eta_{k,w}}u_{0k}\eta_{k,w} Q'(u_{0k}\eta_{k,w})\to 0$ uniformly with respect to $w\in [w_k(b),1]$ as $k\to +\infty$, since $Q(s)\to |S|>0$ as $s\to +\infty$ we obtain
\begin{equation}
\label{par30bis}
Q(u_{0k}w)-Q(u_{0k})w \ge \frac{|S|}{2}(1-w) \quad\mbox{for all $w\in [w_k(b),1]$ and all $k\gg 1$,}
\end{equation}
hence
$$
\frac{1}{\sqrt 2 }\int_{w_k(b)}^1\frac{dw}{\sqrt{Q(u_{0k}w)-Q(u_{0k})w}} \le  \frac{1}{\sqrt{|S|}}\int_{w_k(b)}^{1}\frac{d w}{\sqrt{1-w}} <+\infty,
$$
in contradiction with \eqref{par21}. Hence \eqref{par20} holds.

\smallskip

{\it Step 3.} We now prove that, as $k\to +\infty$,
\begin{equation}\label{par22}
\left(\frac{u_{0k}}{\rr_k}\right)^2\to \frac{|S|}{2} \quad\mbox{and}\quad w_k(y)\to 1- y^2\ \quad\mbox{in $C^2_{loc}((-1,1))$.}
\end{equation}
Take any subsequence $k\to+\infty$ (not relabeled) such that $\frac{u_{0k}}{\rr_k}\to \beta$. By \eqref{par-s2}  and \eqref{par20}, $\beta\in (0,+\infty)$. Since $w_k$ is strictly decreasing, $y_k:=w_k^{-1}$ is well-defined in $[0,1]$. Integrating \eqref{par12} in $(0,y_k(w))$ with $w>0$ we obtain
\begin{equation}
\label{par25}
y_k(w)=\frac{1}{\sqrt{2}}\frac{u_{0k}}{\rr_k}\int_{w}^1\frac{d\tilde w}{\sqrt{Q(u_{0k}\tilde w)-Q(u_{0k})\tilde w}} \sim \frac{\beta}{\sqrt{2}}\int_{w}^1\frac{d\tilde w}{\sqrt{Q(u_{0k}\tilde w)-Q(u_{0k})\tilde w}}.
\end{equation}
Since $u_{0k}\to +\infty$, by \eqref{H-add} we deduce that $Q(u_{0k}\tilde w)-Q(u_{0k})\tilde w\to |S|(1-\tilde w)$ a.e. in $(w,1)$. Now, repeating the same argument used to obtain \eqref{par30bis} we get
$$
\frac{1}{\sqrt{Q(u_{0k}\tilde w)-Q(u_{0k})\tilde w}}\leq \frac{\sqrt 2}{\sqrt{|S|(1-\tilde w)}}\in L^1((w,1)) \quad\mbox{for all $k\gg 1$}.
$$
Thus we can apply Lebesgue theorem to \eqref{par25}, obtaining
\begin{equation}
\label{par26}
\lim_{k\to+\infty} y_k(w)=y(w):=\frac{\beta}{\sqrt{2}}\int_{w}^1\frac{d\tilde w}{\sqrt{|S|(1-\tilde w)}}=\frac{\sqrt{2}\beta}{\sqrt{|S|}}\sqrt{1- w}\in \left[0,\tfrac{\sqrt{2}\beta}{\sqrt{|S|}}\right)
\end{equation}
for every $w\in (0,1]$. By construction, $\tfrac{\sqrt{2}\beta}{\sqrt{|S|}}\leq 1$. Since $y_k$ and $y$ are strictly decreasing, \eqref{par26} is equivalent to
\begin{equation}\label{par26-eq}
\lim_{k\to+\infty} w_k(y)= w(y):= 1-\frac{|S|}{2\beta^2}y^2 \quad \mbox{for all $y\in \left[0,\tfrac{\sqrt{2}\beta}{\sqrt{|S|}}\right)$}.
\end{equation}
Assume by contradiction that $\tfrac{\sqrt{2}\beta}{\sqrt{|S|}}<1$ and let $y\in\left(\tfrac{\sqrt{2}\beta}{\sqrt{|S|}},1\right)$. Then, by \eqref{par26-eq}, $w_k(y)\to 0$ as $k\to +\infty$, which implies for $k\gg 1$
\begin{equation}
\label{par28}
-w_k'(y)\stackrel{\eqref{par12}}=\sqrt 2 \frac{\rr_k}{u_{0k}}\sqrt{Q(u_{0k}w_k(y))-Q(u_{0k})w_k(y)} \geq \frac{1}{\beta} \inf\limits_{(0,+\infty)} \sqrt Q>0.
\end{equation}
Therefore
$$
w_k(y)=-\int_y^1 w'_k(z)dz \stackrel{\eqref{par28}}\geq \frac{1}{\beta} \inf\limits_{(0,+\infty)} \sqrt Q >0 \quad \text{for }k\gg 1,
$$
a contradiction. Hence $\tfrac{\sqrt{2}\beta}{\sqrt{|S|}}= 1$ and \eqref{par22} follows.

\smallskip

{\it Conclusion.} As $k\to +\infty$, we have
\begin{equation}\label{M-drop}
\frac {\mu(u_{0k})} 2 = \int_0^{\rr_k} u_k dx = u_{0k} \rr_k \int_0^1 w_k dy \sim u_{0k} \rr_k \int_0^1 (1-y^2)d y = \frac23 u_{0k} \rr_k.
\end{equation}
In particular, $\mu(u_{0k})\to +\infty$. Combined with \eqref{par22}, \eqref{M-drop} leads after straightforward computations to \eqref{par23}, and \eqref{par24} follows from \eqref{par22}.
\end{proof}

\begin{lemma}[the droplet's energy]\label{thm:par:en}
Under the assumptions of Lemma \ref{thm:par},
$$
E[u_k]\lesssim \sqrt{\mu(u_{0k})}\quad\mbox{for $k\gg 1$}.
$$
\end{lemma}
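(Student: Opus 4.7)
The first step is to convert the energy into a purely potential-like quantity by invoking the first-integral identity \eqref{app}: since $\tfrac{1}{2}(u_k')^2 = Q(u_k)-R(u_{0k})u_k$ in $\{u_k>0\}$, integrating over $[-\rr_k,\rr_k]$ yields
\begin{equation*}
E[u_k]\ =\ 2\int Q(u_k)\ -\ R(u_{0k})\,\mu(u_{0k}).
\end{equation*}
The problem is thereby reduced to bounding both terms on the right by a multiple of $\sqrt{\mu(u_{0k})}$.

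The term $R(u_{0k})\mu(u_{0k})$ is handled directly by the droplet asymptotics of Lemma \ref{thm:par} and hypothesis \eqref{H-add}. Indeed, $Q(u_{0k})\to -S$ implies $R(u_{0k})=Q(u_{0k})/u_{0k}\sim (-S)/u_{0k}$, and \eqref{par23} gives $u_{0k}^{-1}\mu(u_{0k})\sim (32/(9|S|))^{1/4}\sqrt{\mu(u_{0k})}$, so this term is $O(\sqrt{\mu(u_{0k})})$.

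For the principal term $\int Q(u_k)$ I would split the support $[-\rr_k,\rr_k]$ into a \emph{bulk} $\{|x|\le \rr_k-\delta\}$ and a \emph{boundary layer} $\{\rr_k-\delta<|x|\le \rr_k\}$, where $\delta>0$ is to be chosen independently of $k$. In the bulk, the monotonicity of $u_k$ together with an application of Lemma \ref{lem:lb} at $x=\pm(\rr_k-\delta)$ yields a uniform lower bound $u_k\ge \underline{u}>0$. Since $Q$ is continuous on $(0,+\infty)$ and $Q(s)\to-S$ as $s\to+\infty$, it is bounded on $[\underline{u},+\infty)$, so the bulk contribution is controlled by a constant times $\rr_k$, which is $O(\sqrt{\mu(u_{0k})})$ by \eqref{par23}. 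In the boundary layer, since the asymptotic behavior \eqref{par23} gives $u_{0k}\to+\infty$ while the hypothesis \eqref{H-add} (finite $-S$) implies $R(+\infty)=0<+\infty$, Lemma \ref{lem:lb} is applicable in the favorable second case and provides a constant $C>0$ and a $\delta_K>0$, independent of $k$ for $k\gg 1$, such that $u_k(x)\ge C(\rr_k-x)^{2/(m+1)}$ on the boundary layer. Taking $\delta=\delta_K$ small enough that $u_k\le\eps_K$ there, the structural assumption \eqref{H} gives $|Q(u_k(x))|\lesssim u_k(x)^{1-m}\lesssim (\rr_k-x)^{2(1-m)/(m+1)}$, which is integrable on $(\rr_k-\delta,\rr_k)$ precisely because $m<3$; the boundary-layer contribution is therefore bounded by a constant independent of $k$.

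The main obstacle is the boundary layer: there $Q(u_k)\sim A u_k^{1-m}$ genuinely blows up, so a direct pointwise bound is hopeless, and one needs the matching lower bound of Lemma \ref{lem:lb} --- uniform in $k$ --- to turn this singularity into an integrable power in $x$. The applicability of the ``$R(+\infty)<+\infty$'' branch of Lemma \ref{lem:lb} to a sequence $u_{0k}\to+\infty$ is exactly what \eqref{H-add} ensures, and this is the key technical ingredient.
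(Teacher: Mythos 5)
Your argument is essentially the paper's: start from the first integral \eqref{app}/\eqref{appR}, reduce to bounding $\int Q(u_k)$, split the support into a bulk region (where $u_k$ is bounded below by a constant, so $Q(u_k)$ is bounded) and a boundary layer near $\pm\rr_k$ (where the singularity of $Q$ at the origin is tamed by the lower bound of Lemma~\ref{lem:lb}, yielding a power of $\rr_k-x$ that is integrable precisely because $m<3$), then conclude with $\rr_k\lesssim\sqrt{\mu(u_{0k})}$ from \eqref{par23}. You also correctly identify that \eqref{H-add} implies $R(+\infty)=0$, which is what makes the $u_{0}\in[K^{-1},+\infty)$ branch of Lemma~\ref{lem:lb} applicable to $u_{0k}\to+\infty$; this is indeed the key ingredient.

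Two minor remarks. First, since $R>0$ in $(0,+\infty)$ (Remark~\ref{rem:R2}), the term $-R(u_{0k})\mu(u_{0k})$ in your decomposition is nonpositive and can simply be dropped, as the paper does; your explicit asymptotic estimate of it is fine but superfluous. Second, your claim that ``$\delta=\delta_K$ is small enough that $u_k\le\eps_K$ there'' is not part of the statement of Lemma~\ref{lem:lb}, which only supplies a lower bound; the needed upper bound $u_k<\eps_K$ on $(\rr_k-\delta_K,\rr_k)$ is established inside the lemma's proof but not exported. The paper sidesteps this by observing that $Q>0$, $-S>0$, and \eqref{H} give the global two-sided bound $1+u^{1-m}\lesssim Q(u)\lesssim 1+u^{1-m}$ on all of $(0,+\infty)$, which removes any need to know that $u_k$ is small in the boundary layer. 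Adopting that global bound makes your boundary-layer estimate self-contained without appealing to the interior of Lemma~\ref{lem:lb}'s proof.
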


\begin{proof}
We let $u=u_{k}$  and $\rr=\rr_{k}$ for notational convenience. Recalling that $Q$, thus $R$, is positive in $(0,+\infty)$ (Remark \ref{rem:R2}), we note that
\begin{equation}\label{E-min}
E[u] = 2\int_0^{\rr} \left(\tfrac12(u')^2+ Q(u)\right) \stackrel{\eqref{appR}}= 2\int_0^{\rr} \left(2 Q(u)-R(u_{0k})u\right) \lesssim \int_0^{\rr} Q(u).
\end{equation}
Also, note that $Q>0$, $-S>0$, and \eqref{H} imply that
\begin{equation}
\label{Eest1}
1+u^{1-m}\lesssim Q(u)\lesssim 1+u^{1-m}\quad\mbox{for all $u>0$}.
\end{equation}
In particular,
\begin{equation}\label{E-min1}
E[u] \stackrel{\eqref{E-min}}\lesssim \int_0^{\rr} Q(u) \stackrel{\eqref{Eest1}}\lesssim \rr + \int_0^{\rr} u^{1-m}.
\end{equation}
Note that \eqref{H-add} implies that $R(+\infty)=0$. Therefore we may apply Lemma \ref{lem:lb} with $K=1$, leading to
\begin{equation}\label{Eest2}
u(x)\gtrsim (\rr-x)^\frac{2}{m+1}\quad\mbox{for $x\in (\rr-\delta_K,\rr)$},
\end{equation}
which by monotonicity implies that
\begin{equation}\label{Eest3}
u(x)\ge u(\rr-\delta_K) \stackrel{\eqref{Eest2}}\sim 1\quad\mbox{for $x\in (0,\rr-\delta_K)$}.
\end{equation}
Therefore, since $\frac{2(1-m)}{m+1}+1=\frac{3-m}{m+1}>0$,
\begin{equation*}
E[u] \stackrel{\eqref{E-min1}}\lesssim \rr + \int_0^{\rr-\delta_K} u^{1-m} +\int_{\rr-\delta_K}^{\rr} u^{1-m} \stackrel{\eqref{Eest2},\eqref{Eest3}}\lesssim \rr+ \int_{\rr-\delta_K}^{\rr} (\rr-x)^\frac{2(1-m)}{m+1} \lesssim \rr,
\end{equation*}
hence the result follows from \eqref{par23}$_2$.
\end{proof}

\subsection{The pancake case}

\begin{lemma}[the pancake case]\label{thm:pk}  Assume \eqref{Q'0} and $\e_*<+\infty$. If
$\A\ni u_{0k}\to \e_* (>0)$ as $k\to +\infty$ for a sequence, then $\mu(u_{0k})\to +\infty$ as $k\to +\infty$ and
\begin{equation}\label{panlim}
\left.
\begin{array}{c}
\dfrac{2}{\mu(u_{0k})} \rr_k\to \dfrac{1}{\e_*} \quad\mbox{and}
\\[2ex]
 v_k(y)=u_k(\rr_k y) \to \e_* \quad\mbox{in $C^2_{loc}((-1,1))$}
\end{array}
\right\}\quad\mbox{as $k\to +\infty$,}
\end{equation}
where $u_k$ is the solution to $(P_{u_{0k}})$.
\end{lemma}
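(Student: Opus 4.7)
The plan exploits the key identity $R'(\e_*)=0$ (which follows from $\e_*$ being an interior minimum of the $C^1$ function $R$) and the consequent identity $Q'(\e_*)=R(\e_*)$, which makes the constant $\phi\equiv\e_*$ a stationary solution of $(P_{\e_*})$. As $u_{0k}\to\e_*$, the solutions $u_k$ of $(P_{u_{0k}})$ flatten towards this constant, and the goal is to turn this observation into the quantitative pancake statement.

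\textbf{$\rr_k\to +\infty$ and $C^0$-flatness.} By continuous dependence of ODE solutions on parameters (applied where $u$ is bounded away from zero), the maximal solutions to $(P_{u_{0k}})$ converge to $\e_*$ uniformly on every compact $[-L,L]$. Since $u_k(\pm\rr_k)=0$, this forces $\rr_k\to +\infty$. To control $v_k(y)=u_k(\rr_k y)$, I would use the first integral \eqref{as4}: for $\delta>0$ small and $k$ large, the level $x_k^\delta\in(0,\rr_k)$ where $u_k(x_k^\delta)=\e_*-\delta$ satisfies
\begin{equation*}
1-\frac{x_k^\delta}{\rr_k}=\frac{\int_0^{\e_*-\delta}(u(R(u)-R(u_{0k})))^{-1/2}\,du}{\sqrt{2}\,\rr_k}\longrightarrow 0,
\end{equation*}
since the numerator is bounded in $k$ (because $R(u)-R(u_{0k})\ge c(\delta)>0$ on $[0,\e_*-\delta]$ for $k$ large, by continuity of $R$ and the minimality of $\e_*$ in the preimage of $\min R$) while the denominator tends to $+\infty$. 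Monotonicity of $u_k$ then sandwiches $\e_*-\delta\le v_k(y)\le u_{0k}\le\e_*$ for $|y|\le x_k^\delta/\rr_k$, so $v_k\to\e_*$ uniformly on every compact of $(-1,1)$.

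\textbf{Mass scaling and $C^2_{\mathrm{loc}}$ convergence.} Writing $\mu(u_{0k})/(2\rr_k)=\int_0^1 v_k(y)\,dy$, dominated convergence (with the pointwise limit $\e_*$ on $[0,1)$ and uniform domination $v_k\le\e_*$) yields $\mu(u_{0k})/(2\rr_k)\to\e_*$; combined with $\rr_k\to+\infty$ this gives $\mu(u_{0k})\to+\infty$ together with the first claim in \eqref{panlim}. To upgrade the $C^0$ convergence to $C^2_{\mathrm{loc}}((-1,1))$, I would quantify the flatness by integrating the first integral $(v_k')^2=2\rr_k^2 v_k(R(v_k)-R(u_{0k}))$ locally around $\e_*$ (using the second-order Taylor expansion of $R$ at $\e_*$), producing a cosh-type implicit relation that forces $\e_*-v_k(y)\lesssim(\e_*-u_{0k})^{1-|y|}$ on every compact of $(-1,1)$. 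This power-type smallness in $\e_*-u_{0k}$ dominates any polylogarithmic factor coming from $\rr_k$, so $\rr_k^2(\e_*-v_k)\to 0$ uniformly on compacts, whence $v_k'\to 0$ through the first integral and $v_k''\to 0$ through the rescaled Euler--Lagrange equation $v_k''=\rr_k^2(Q'(v_k)-R(u_{0k}))$ (recalling $Q'(\e_*)=R(\e_*)$). The main obstacle is precisely this last step: the ODE carries the diverging prefactor $\rr_k^2$, so one needs a quantitative balance between the pancake's widening and its flatness, both driven by the (presumed non-degenerate) second-order behaviour of $R$ at its minimum.
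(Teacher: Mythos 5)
Your derivation of the $C^0_{\mathrm{loc}}$ convergence and the mass scaling is correct, and it takes a genuinely different route from the paper. You work from the level-set representation \eqref{as4}, showing that for each fixed $\delta>0$ the level $x_k^\delta$ satisfies $1-x_k^\delta/\rr_k\to 0$ because the $Z$-integral $\int_0^{\e_*-\delta}\bigl(u(R(u)-R(u_{0k}))\bigr)^{-1/2}du$ is bounded uniformly in $k$, and then you sandwich via monotonicity. That is valid: near $u=0$ the integrand behaves like $u^{(m-1)/2}$, which is bounded for $m>1$, and on $[\eta,\e_*-\delta]$ one has $R(u)-R(u_{0k})\ge c(\delta)>0$ for $k$ large because $\e_*$ is the \emph{smallest} point of $R^{-1}(\min R)$. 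The paper instead integrates the first integral over $(0,1)$ to get $u_{0k}=\sqrt2\,\rr_k\int_0^1\sqrt{Q(v_k)-R(u_{0k})v_k}\,dy$, deduces a.e.\ convergence $v_k\to\e_*$ from $\rr_k\to+\infty$ and the characterization $R>R(\e_*)$ on $(0,\e_*)$, and then upgrades to $C_{\mathrm{loc}}([0,1))$ by noting that the total variation $u_{0k}-v_k(\bar y)\to 0$ controls the oscillation of the monotone $v_k$ on $[0,\bar y]$. Both routes are sound; the paper's avoids having to estimate the $Z$-integrals uniformly, while yours gives an explicit rate at which the interior plateau fills up.

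Your argument for the $C^2_{\mathrm{loc}}$ upgrade, however, has a real gap, and you yourself flag it. The cosh-type ansatz $\e_*-v_k(y)\lesssim(\e_*-u_{0k})^{1-|y|}$ is obtained from a second-order Taylor expansion of $R$ at $\e_*$, which requires $R$ (hence $Q$) to be $C^2$ near $\e_*$ and, crucially, $R''(\e_*)>0$. Neither of these is among the hypotheses of the lemma: under \eqref{Q'0} only $Q\in C^1((0,+\infty))$ is assumed, and nothing constrains the order of vanishing of $R-\min R$ at $\e_*$. If $R''(\e_*)=0$ or $Q$ is not $C^2$, the exponential profile and the resulting polynomial-versus-log balance break down, so your quantitative flattening estimate is unavailable, and the chain $\rr_k^2(\e_*-v_k)\to 0\Rightarrow v_k'\to 0\Rightarrow v_k''\to 0$ collapses. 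The paper's proof deliberately avoids any second-order structure at $\e_*$: after the total-variation argument gives $v_k\to\e_*$ in $C_{\mathrm{loc}}([0,1))$ it appeals to standard ODE arguments for the $C^2$ upgrade. So while your computation is a nice explicit description of the boundary layer in the generic non-degenerate situation, as stated it does not prove the lemma under its actual hypotheses; you would either need to add $R\in C^2$ with $R''(\e_*)>0$ to your assumptions, or replace the last step with an argument that does not presuppose a non-degenerate minimum.
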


\begin{proof}
Since $\e_* <+\infty$ is a stationary solution of $(P_{e_*})$ (recall \eqref{u''0} and that $R'(\e_*)=0$), by continuous dependence (Theorem 8.40 of \cite{KePe}) $u_k\to \e_*$ in $C^2_{loc}(\R)$ and $\rr_k\to +\infty$ as $k\to +\infty$. Since $u_{0k}\in \A$, Lemma \ref{bnd-inf2} implies that $u_{0k}<\e_*$ for all $k$.
Let $v_k(y)=u_k(\rr_k y)$.
It follows from \eqref{as7} that
\begin{equation*}
-v_k'=\sqrt 2 \rr_k\sqrt{Q(v_k)-R(u_{0k})v_k} \qquad \text{in } [0,1).
\end{equation*}
Integrating, we obtain
\begin{equation*}
\e_*\leftarrow  u_{0k}=\sqrt 2 \rr_k\int_{0}^{1} \sqrt{Q(v_k)-R(u_{0k})v_k}\, dy \quad\mbox{as $k\to+\infty$}.
\end{equation*}
Since $\rr_k\to +\infty$ as $k\to +\infty$,  we deduce that
\begin{equation}
\label{pan11}
0=\lim_{k\to +\infty} \left(Q(v_k)-R(u_{0k})v_k\right) \quad \text{a.e. in } [0,1).
\end{equation}
Take any such $y$, and take any subsequence (not relabeled) such that $v_k(y)\to \overline v\in [0,\e_*]$ as $k\to +\infty$ (recall that $v_k(y)\leq u_{0k}\to \e_*$). If $\overline v=0$ we would have $0=+\infty$ in \eqref{pan11}. Therefore $\overline v>0$ and $Q(\overline v)=R(\e_*)\overline v$, that is, $R(\e_*)= R(\overline v)$. By the definition of $\e_*$ (cf. \eqref{def-s*}), $R(s)>R(\e_*)$ for $s\in(0,\e_*)$: therefore $\overline v=\e_*$. The arbitrariness of the subsequence and of $y$ (in this order) implies that $v_k(y)\to \e_*$ a.e. in $(0,1)$. By dominated convergence we obtain \eqref{panlim}$_1$:
$$
\frac{\mu(u_{0k})}{\rr_k}= \frac{1}{\rr_k}\int_{-\rr_k}^{\rr_k}u_k(x)dx = \int_{-1}^{1}v_k(y)dy \to \int_{-1}^{1} \e_* =2\e_*.
$$
Finally, fix $\overline y<1$ such that $v_k(\overline y)$ converges. We have
$$
0 =\e_*-\e_* \leftarrow u_{0k}-v_k(\overline y)= \omega_k:= \sqrt 2 \rr_k\int_{0}^{\overline y} \sqrt{Q(v_k)-R(u_{0k})v_k}\, dy
$$
with $\omega_k\to 0$ as $k\to +\infty$. Hence $|v_k(y_1)-v_k(y_2)| \le \omega_k$ for all $y_1,y_2\in [0,\overline y]$. Therefore $v_k\to \e_*$ in $C_{loc}([0,1))$, and the conclusion follows from standard ODE theory.
\end{proof}

\begin{lemma}[the pancake's energy]\label{pan:en}
Under the assumptions of Lemma \ref{thm:pk},
\begin{equation*}
\frac{E[u_k]}{\mu(u_{0k})} \to R(e_*)  \quad\mbox{as $k\to +\infty$}.
\end{equation*}
\end{lemma}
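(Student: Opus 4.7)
The plan is to reduce the computation of $E[u_k]$ to a single ratio of integrals and to analyse that ratio via a change of variable. First, using the first integral \eqref{app} with $\lambda=R(u_{0k})$, namely $\tfrac12(u_k')^2=Q(u_k)-R(u_{0k})u_k$, I would substitute into the functional and obtain
$$E[u_k] \;=\; 2\int_{-\rr_k}^{\rr_k}Q(u_k)\,dx \;-\; R(u_{0k})\,\mu(u_{0k}),$$
so that
$$\frac{E[u_k]}{\mu(u_{0k})}\;=\;2\cdot\frac{\int_0^{\rr_k}Q(u_k)\,dx}{\int_0^{\rr_k}u_k\,dx}\;-\;R(u_{0k}).$$
Since $R(u_{0k})\to R(e_*)$ by continuity, the lemma reduces to proving that the above ratio of integrals tends to $R(e_*)$.

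Next I would exploit the change of variable $u=u_k(x)$ on $[0,\rr_k]$ together with \eqref{as7} to rewrite both integrals as weighted integrals over $u\in(0,u_{0k})$ sharing the common weight $W_k(u):=\sqrt{u}/\sqrt{2(R(u)-R(u_{0k}))}$:
$$\int_0^{\rr_k} u_k\, dx \;=\; \int_0^{u_{0k}} W_k(u)\,du\;=\;\tfrac12\mu(u_{0k}),\qquad \int_0^{\rr_k} Q(u_k)\,dx \;=\; \int_0^{u_{0k}} R(u)\,W_k(u)\,du,$$
where I used $Q(u)=u R(u)$. The target ratio is therefore the $W_k$-weighted average of $R$ on $(0,u_{0k})$, and I want to show it converges to $R(e_*)$. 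I would split the interval at some $u=e_*-\eta$. On $(0,e_*-\eta)$, since $R(0^+)=+\infty$ and $R(u)>R(e_*)$ for all $u\in(0,e_*)$ (by the definition of $e_*$ as the smallest absolute minimizer of $R$), the quantity $R-R(e_*)$ is bounded away from zero; combined with $R(u_{0k})\to R(e_*)$, this gives for $k$ large a uniform bound on $W_k$ on this sub-interval, hence uniform boundedness of both $\int_0^{e_*-\eta}W_k\,du$ and $\int_0^{e_*-\eta}R\,W_k\,du$. On $(e_*-\eta,u_{0k})$, continuity of $R$ at $e_*$ gives $|R(u)-R(e_*)|<\varepsilon$ as soon as $\eta$ is small enough, so the contribution of this piece to the weighted average differs from $R(e_*)$ by at most $\varepsilon$.

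The crucial quantitative input is Lemma \ref{thm:pk}, which tells us $\mu(u_{0k})\to+\infty$ and hence $\int_0^{u_{0k}}W_k\,du\to+\infty$. Together with the uniform boundedness of the $(0,e_*-\eta)$-contribution, this means the tail $(e_*-\eta,u_{0k})$ dominates the weighted average, and a routine $\varepsilon$--argument then yields the convergence of the ratio to $R(e_*)$, whence $E[u_k]/\mu(u_{0k})\to 2R(e_*)-R(e_*)=R(e_*)$. The main obstacle is precisely the concentration of $W_k$ near the right endpoint: since $R'(e_*)=0$, the denominator $\sqrt{R(u)-R(u_{0k})}$ degenerates at $u=u_{0k}\to e_*$ and $W_k$ blows up there. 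The virtue of the argument above is that this concentration happens exactly where $R$ is close to $R(e_*)$, so the qualitative facts $R>R(e_*)$ on $(0,e_*)$ and the continuity of $R$ at $e_*$ suffice: no finer asymptotics of $R$ near $e_*$ are required.
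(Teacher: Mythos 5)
Your proposal is correct, and it takes a genuinely different route from the paper. The paper rescales in $x$, setting $v_k(y)=u_k(\rr_k y)$, and shows $\frac{E[u_k]}{\mu(u_{0k})}=\frac{2\rr_k}{\mu(u_{0k})}\int_0^1\left(2Q(v_k)-R(u_{0k})v_k\right)dy$, then passes to the limit via dominated convergence: $\frac{2\rr_k}{\mu(u_{0k})}\to\frac{1}{\e_*}$ and $R(u_{0k})v_k\to Q(\e_*)$ in $L^1$ follow from \eqref{panlim}, while $Q(v_k)\to Q(\e_*)$ in $L^1$ requires the a-priori lower bound of Lemma \ref{lem:lb} to produce a dominating function (this is where $m<3$ enters, via integrability of $(1-y)^{2(1-m)/(m+1)}$). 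You instead perform the phase-plane change of variable $u=u_k(x)$, turning both the potential energy and the mass into $W_k$-weighted integrals of $R$ and of $1$ on $(0,u_{0k})$, and interpret $2\int Q(u_k)/\int u_k$ as a weighted average of $R$ that concentrates near $u_{0k}\to\e_*$; the only quantitative input you need from Lemma \ref{thm:pk} is $\mu(u_{0k})\to+\infty$, plus continuity of $R$ at $\e_*$ and the strict inequality $R>R(\e_*)$ on $(0,\e_*)$. The hypothesis $m<3$ enters your argument at the corresponding spot: near $u=0$ one has $R(u)W_k(u)\sim \sqrt{A/2}\,u^{(1-m)/2}$, so $\int_0^{\e_*-\eta}RW_k\,du$ is finite and uniformly bounded in $k$ precisely because $(1-m)/2>-1$; this integrability of $RW_k$ (not mere pointwise boundedness of $W_k$, as your phrasing briefly suggests) is what you should cite to control that piece. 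Net: your argument is slightly more elementary in that it bypasses Lemma \ref{lem:lb} and the full $C^2_{loc}$ convergence of Lemma \ref{thm:pk}, trading them for a split-interval concentration estimate; the paper's approach is more uniform with the style used elsewhere in Section \ref{s4}.
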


\begin{proof}
With $v_k(y)=u_k(\rr_ky)$, we write
\begin{eqnarray*}
\frac{E[u_k]}{\mu(u_{0k})} &=& \frac{2}{\mu(u_{0k})}\int_0^{\rr_k} \left(\tfrac12(u_k')^2+ Q(u_k)\right) \stackrel{\eqref{appR}}=  \frac{2}{\mu(u_{0k})}\int_0^{\rr_k}  \left(2 Q(u_k)-R(u_{0k})u_k\right)
\\ &  \stackrel{[x=\rr_k y]}=& \frac{2\rr_k}{\mu(u_{0k})}\int_{0}^{1} \left(2 Q(v_k)-R(u_{0k})v_k\right)dy.
\end{eqnarray*}
By \eqref{panlim}, $\frac{2\rr_k}{\mu(u_{0k})}\to \frac{1}{\e_*}$ and $R(u_{0k})v_k\to R(\e_*)\e_*=Q(\e_*)$ in $L^1((0,1))$ as $k\to +\infty$. Hence, it suffices to prove that $Q(v_k)\to Q(\e_*)$ in $L^1((0,1))$. Applying Lemma \ref{lem:lb} with $K=\e_*+1$, we have $u_k(x)\ge C(\rr_k-x)^\frac{2}{m+1}$ for all $x\in (\rr_k-\delta_K,\rr_k)$, which in terms of $v_k$ means that
$$
v_k(y)\ge C \rr_k^\frac{2}{m+1}(1-y)^\frac{2}{m+1}\quad\mbox{for all $\frac{\rr_k-\delta_K}{\rr_k}< y< 1$.}
$$
Therefore, by monotonicity of $v_k$,
\begin{equation}
\label{bnm}
|Q(v_k)| \stackrel{\eqref{H}}\le C \left(1+\rr_k^\frac{2(1-m)}{m+1}(1-y)^\frac{2(1-m)}{m+1}\right) \stackrel{\rr_k>1, m>1}\le  C\left(1+ (1-y)^\frac{2(1-m)}{m+1}\right).
\end{equation}
Since $\frac{2(1-m)}{m+1}>-1$ if $m<3$, the right-hand side of \eqref{bnm} belongs to $L^1((0,1))$: therefore, an application of Lebesgue theorem yields the result.
\end{proof}

\subsection{Conclusion}

Let $u_M$ be a minimizer of $E$ in $\mathcal D_M$, $\supp u_M=[-\rr_M,\rr_M]$, and $u_{0M}=u_M(0)$.  We are interested in the behaviour of $u_M$ as $M\to +\infty$. We preliminarily estimate the energy of any sequence of minimizers whose maximal height remains finite.

\begin{lemma}\label{thm:pk-fake}  Assume \eqref{Q'0}. If $\A\ni u_{0M}\to \alpha\in [0,+\infty)$ for a sequence $M\to +\infty$ (not labeled), then the corresponding solutions $u_M$ to $(P_{u_{0M}})$ are such that:
\begin{itemize}
\item[$(i)$] $\supp u_M=[-\rr_M,\rr_M]$, $\rr_M<+\infty$, with $\rr_M\to +\infty$ as $M\to +\infty$;
\item[$(ii)$] $\displaystyle \liminf_{M\to +\infty}\frac{1}{M}E[u_M] \ge R(\alpha)$ ($R(0)=+\infty$).
\end{itemize}
\end{lemma}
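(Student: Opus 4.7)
The plan is to derive both assertions directly from the first integral \eqref{appR} together with the characterization of the admissible set $\A$ provided by Lemma \ref{fin1}. Both parts turn out to be short once these two ingredients are available.

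\textbf{Part (i).} This should be essentially immediate from the mass constraint combined with monotonicity. Since $u_{0M}\in \A$, Lemma \ref{fin1} already guarantees that $\supp u_M=[-\rr_M,\rr_M]$ with $\rr_M\in(0,+\infty)$. To produce $\rr_M\to+\infty$, I would use that $u_M$ is non-increasing along radii with $u_M(0)=u_{0M}$: hence
\begin{equation*}
M=\int_{-\rr_M}^{\rr_M} u_M\ \leq\ 2\rr_M\, u_{0M}.
\end{equation*}
Because $u_{0M}\to \alpha<+\infty$, the right factor remains bounded, forcing $\rr_M\gtrsim M\to +\infty$.

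\textbf{Part (ii).} For the lower bound on the energy, I would first rewrite $E[u_M]$ by means of \eqref{appR}. On $\{u_M>0\}$ one has $\tfrac12(u_M')^2=Q(u_M)-R(u_{0M})u_M$, so integrating over $\R$ (where outside $\supp u_M$ both sides vanish) gives
\begin{equation*}
E[u_M]=\int\Bigl(\tfrac12(u_M')^2+Q(u_M)\Bigr)=2\int Q(u_M)\,-\,R(u_{0M})\,M.
\end{equation*}
The key pointwise inequality is then inherited from the very definition \eqref{def-setR} of $\A$: since $u_{0M}\in\A$, we have $R(t)>R(u_{0M})$ for every $t\in(0,u_{0M})$, and as $u_M(x)\in[0,u_{0M}]$, this yields on $\supp u_M$
\begin{equation*}
Q(u_M(x))=u_M(x)\,R(u_M(x))\ \geq\ R(u_{0M})\,u_M(x).
\end{equation*}
Integrating gives $\int Q(u_M)\geq R(u_{0M})M$, whence
\begin{equation*}
\frac{E[u_M]}{M}\ \geq\ 2R(u_{0M})-R(u_{0M})\ =\ R(u_{0M}).
\end{equation*}
Passing to $\liminf_{M\to+\infty}$ then concludes: if $\alpha>0$, continuity of $R\in C^1((0,+\infty))$ at $\alpha$ gives $R(u_{0M})\to R(\alpha)$; if $\alpha=0$, assumption \eqref{H} gives $R(u)=Q(u)/u\sim A u^{-m}\to +\infty$ as $u\to 0^+$, in agreement with the stated convention $R(0)=+\infty$.

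I do not foresee a real obstacle: once the first integral \eqref{appR} and the characterization \eqref{def-setR} of $\A$ are in hand, the proof reduces to a short chain of elementary pointwise inequalities and one passage to the $\liminf$. The only mildly delicate point is the treatment of the case $\alpha=0$, where $R(u_{0M})\to +\infty$ provides precisely the "cost of losing mass near $u=0$" needed to rule out such a limit for genuine minimizers; this fits naturally with the convention $R(0)=+\infty$ adopted in the statement.
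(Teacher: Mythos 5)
Your proof is correct and follows essentially the same route as the paper's: part (i) combines the compact support (Lemma \ref{fin1} / Corollary \ref{coro1}) with the elementary mass bound $M\le 2\rr_M u_{0M}$, and part (ii) rewrites the energy via the first integral \eqref{appR} and then uses $R(u_M)>R(u_{0M})$ on $\{u_M>0\}$, which is exactly the definition of $\A$. The only cosmetic difference is that the paper groups the integrand as $2\int u_M\bigl(2R(u_M)-R(u_{0M})\bigr)$ while you split it into $2\int Q(u_M)-R(u_{0M})M$ before invoking the pointwise bound, but the content is identical.
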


\begin{proof}
$(i)$ follows from Corollary \ref{coro1} and mass constraint. For $(ii)$, since $u_{0M}\in \A$, we have $R(u_M)>R(u_{0M})$ in $(0,\rr_M)$. Therefore, as $M\to +\infty$,
\begin{eqnarray*}
E[u_M] &=& 2\int_0^{\rr_M} \left(\tfrac12(u_M')^2+ Q(u_M)\right) \stackrel{\eqref{appR}}=  2\int_0^{\rr_M}  \left(2 Q(u_M)-R(u_{0M})u_M\right)
\\&=& 2\int_0^{\rr_M} u_M \left(2 R(u_M)-R(u_{0M})\right) \ge R(u_{0M})M.
\end{eqnarray*}
\end{proof}

To avoid pathological situations, we assume a minimal monotonicity property on $R$:
\begin{eqnarray}
\label{Q'11}
\mbox{$R'<0$ in a left neighbourhood of $\e_*$.}
\end{eqnarray}
Note that \eqref{Q'11} is already included in \eqref{H-R} if $\e_*=+\infty$. Assumption \eqref{Q'11} suffices to infer the following.

\begin{lemma}
\label{rd1}
Assume \eqref{Q'0} and \eqref{Q'11}. Then a left neighbourhood of $\e_*$ is contained in $\A$.
\end{lemma}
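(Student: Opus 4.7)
The plan is to show that, for some $a<\e_*$, every $s$ close enough to $\e_*$ from the left lies in the open set $\A$ defined in \eqref{def-setR}. I have to verify two properties: $R'(s)<0$ and $R(t)>R(s)$ for all $t<s$.

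The first property is immediate from \eqref{Q'11}: there exists $a<\e_*$ such that $R'<0$ on $(a,\e_*)$ (understood as $(a,+\infty)$ if $\e_*=+\infty$). Fix any $b\in(a,\e_*)$; then $R$ is strictly decreasing on $[b,\e_*)$, which already yields $R(t)>R(s)$ whenever $b\le t<s<\e_*$. It therefore only remains to control $R(t)$ for $t\in(0,b]$.

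For this, I would use that \eqref{H} forces $R(s)=Q(s)/s\sim A s^{-m}\to+\infty$ as $s\to 0^+$. Together with continuity of $R$ on $(0,+\infty)$, this implies that the infimum of $R$ over $(0,b]$ is attained at some interior point $t^*\in(0,b]$; set $m^*:=R(t^*)$. The key claim is that $m^*>R(s)$ once $s$ is sufficiently close to $\e_*$. When $\e_*<+\infty$, the definition \eqref{def-s*} of $\e_*$ as the smallest global minimum point of $R$ implies $R(t)>R(\e_*)=\min R$ for every $t<\e_*$, hence $m^*>R(\e_*)$, and then $R(s)<m^*$ for $s$ close to $\e_*$ by continuity. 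When $\e_*=+\infty$, no global minimum of $R$ is attained, so in particular $m^*>\inf_{(0,+\infty)}R$; monotonicity from \eqref{Q'11} combined with \eqref{Rlim} yields $R(s)\searrow \inf R\ge 0$ as $s\to+\infty$, so again $R(s)<m^*$ for $s$ large. In either case, combining with the strict monotonicity of $R$ on $[b,\e_*)$ produces a left neighbourhood of $\e_*$ on which $R(t)>R(s)$ for all $t<s$ and $R'(s)<0$, i.e.\ a neighbourhood contained in $\A$.

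The essential work is the comparison of $m^*$ with the ``limit'' value of $R$ near $\e_*$. The case $\e_*<+\infty$ is automatic from \eqref{def-s*}. The case $\e_*=+\infty$ is the mildly delicate one: non-attainment of $\inf R$ forces $m^*>\inf R$, and the monotone convergence $R(s)\to\inf R$ at $+\infty$ (which needs both \eqref{Q'11} and \eqref{Rlim} to guarantee finiteness and non-degeneracy of the limit) then crosses $R$ below any value exceeding $\inf R$, in particular below $m^*$.
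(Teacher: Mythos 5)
Your proof is correct and follows essentially the same strategy as the paper's: obtain strict monotonicity of $R$ on a left neighbourhood of $\e_*$ from \eqref{Q'11}, compare $R(s)$ near $\e_*$ with the minimum $m^*$ of $R$ over an initial interval $(0,b]$ (the paper calls it $R_0$), using the definition \eqref{def-s*} of $\e_*$ (when $\e_*<+\infty$) or non-attainment of $\inf R$ together with Remark~\ref{rem:R2} (when $\e_*=+\infty$) to conclude $R(s)<m^*$. The only cosmetic difference is that you run the $m^*$-argument uniformly in both cases, whereas the paper disposes of the case $\e_*=+\infty$ by citing Remark~\ref{rem:R2} and treats the case $\e_*<+\infty$ with an additional sub-case distinction.
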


\begin{proof}
If $\e_*=+\infty$, then $\inf R= R(+\infty)$ (cf. Remark \ref{rem:R2}): together with \eqref{Q'11}, this immediately implies the conclusion. If $\e_*<+\infty$, let $\delta>0$ such that $R'<0$ in $(\e_* -\delta,\e_*)$. If $\e_*$ is the first local minimum point of $R$, $R$ is non-increasing in $(0,\e_*)$ and there is nothing to prove. Otherwise, let $R_0:=\min_{(0,\e_*-\delta]} R>R(\e_*)$ and take $\tilde \delta\le \delta$ such that $R(e_*-\tilde\delta)<R_0$. Then $(\e_*-\tilde\delta,e_*)\subseteq \A$.
\end{proof}

We are now ready to conclude the analysis of generic macroscopic shapes of minimizers.

\begin{theorem} \label{thm:finale}
Assume \eqref{Q'0} and \eqref{Q'11}. Let $u_M$ be a minimizer of $E$ in $\mathcal D_M$. Let $\supp u_M=[-\rr_M,\rr_M]$ and $u_{0M}=u_M(0)$.
\begin{itemize}
\item (droplet) If $\e_*=+\infty$ and \eqref{H-add} holds, then
\begin{equation*}
\left.
\begin{array}{c}
\displaystyle u_{0M}^4 \sim \frac{9|S|}{32} M^2,\quad \rr_M^4\sim \frac{9}{8|S|} M^2, \quad\mbox{and}
\\[2ex]
\displaystyle u_{0M}^{-1} u_M(\rr_M y) \to 1-y^2\quad\mbox{in $C^2_{loc}((-1,1))$}
\end{array}
\right\}\quad\mbox{as $M\to +\infty$.}
\end{equation*}
\item  (pancake) If $\e_*<+\infty$, then
\begin{equation*}
\left.
\begin{array}{c}
\displaystyle u_{0M} \sim e_*, \quad \rr_M\sim \frac{1}{2\e_*}M, \quad\mbox{and}
\\[2ex]
\displaystyle u_M(\rr_M y) \to \e_* \quad\mbox{in $C^2_{loc}((-1,1))$}
\end{array}
\right\}\quad\mbox{as $M\to +\infty$.}
\end{equation*}
\end{itemize}
\end{theorem}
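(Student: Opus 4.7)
The plan is to pin down the limiting behavior of the maximal height $u_{0M}$ of a minimizer $u_M\in\mathcal D_M$: to show $u_{0M}\to +\infty$ in the droplet case and $u_{0M}\to \e_*$ in the pancake case. Once this convergence is established, the rest of each statement follows immediately by applying Lemma \ref{thm:par} (combined with Lemma \ref{thm:par:en}) or Lemma \ref{thm:pk} (combined with Lemma \ref{pan:en}) along the sequence $\{u_M\}$, since by Corollary \ref{coro1} and Lemma \ref{fin1} every minimizer solves $(P_{u_{0M}})$ for some $u_{0M}\in\A$, with $\mu(u_{0M})=M$.

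The method is variational comparison. First I would use Lemma \ref{rd1}, which places a left neighbourhood of $\e_*$ inside $\A$, together with the continuity of $\mu$ on $\A$ (equation \eqref{M-cont}) and the fact that $\mu(u_0)\to +\infty$ as $u_0\to +\infty$ (Lemma \ref{thm:par}) or as $u_0\to \e_*^-$ (Lemma \ref{thm:pk}). The intermediate value theorem on a suitable sub-interval of $\A$ then provides, for all sufficiently large $M$, a competitor height $\overline u_0(M)\in\A$ with $\mu(\overline u_0(M))=M$ such that $\overline u_0(M)\to +\infty$ (droplet) or $\overline u_0(M)\to \e_*$ (pancake). The corresponding solutions $\overline u_M$ to $(P_{\overline u_0(M)})$ lie in $\mathcal D_M$ by Theorem \ref{asbe1}, so that $E[u_M]\le E[\overline u_M]$.

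In the droplet case ($\e_*=+\infty$ and \eqref{H-add}), Lemma \ref{thm:par:en} yields $E[\overline u_M]\lesssim \sqrt{M}$, whence $E[u_M]/M\to 0$. If $u_{0M_k}\to \alpha\in [0,+\infty)$ along a subsequence, Lemma \ref{thm:pk-fake}(ii) would give $\liminf E[u_{M_k}]/M_k\ge R(\alpha)>0$ (since $R>0$ on $(0,+\infty)$ by Remark \ref{rem:R2}, and $R(0^+)=+\infty$ by \eqref{H}), contradicting $E[u_M]/M\to 0$. Hence $u_{0M}\to +\infty$ and Lemma \ref{thm:par} closes the argument. In the pancake case ($\e_*<+\infty$), Lemma \ref{pan:en} gives $E[\overline u_M]/M\to R(\e_*)$, so $\limsup E[u_M]/M\le R(\e_*)$. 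Since $\A\subseteq (0,\e_*)$ by Lemma \ref{bnd-inf2}, if $u_{0M_k}\to \alpha\in[0,\e_*)$ along some subsequence, Lemma \ref{thm:pk-fake}(ii) would force $\liminf E[u_{M_k}]/M_k\ge R(\alpha)>R(\e_*)$; the strict inequality holds because $\e_*$ is the \emph{smallest} minimum point of $R$ (cf. \eqref{def-s*}), ruling out $R(\alpha)=R(\e_*)$ for $\alpha<\e_*$, and because $R(0^+)=+\infty$. This contradiction forces $u_{0M}\to \e_*$, and Lemma \ref{thm:pk} concludes the proof.

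The main obstacle is the construction of the comparison sequence $\overline u_M$ with the prescribed mass: it requires that, in a left neighbourhood of $+\infty$ (resp. of $\e_*$), the admissible set $\A$ contains a connected interval on which $\mu$ is continuous and surjective onto $(M_0,+\infty)$ for some $M_0$. Once this surjectivity is secured via Lemma \ref{rd1} and the divergence lemmas, the rest reduces to applying the previously established asymptotic and lower-bound results to the two sides of the energy inequality $E[u_M]\le E[\overline u_M]$.
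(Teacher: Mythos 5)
Your proposal is correct and follows essentially the same route as the paper's proof: you construct a competitor sequence $\overline u_M\in\mathcal D_M$ with $\overline u_0(M)\to+\infty$ (resp.\ $\e_*$) using Lemma~\ref{rd1}, the continuity of $\mu$ (\ref{M-cont}), and the divergence of $\mu$ from Lemma~\ref{thm:par} (resp.\ Lemma~\ref{thm:pk}), then compare $E[u_M]\le E[\overline u_M]$ against the lower bound of Lemma~\ref{thm:pk-fake}(ii) to force $u_{0M}\to+\infty$ (resp.\ $\e_*$), and finally invoke Lemma~\ref{thm:par} (resp.\ Lemma~\ref{thm:pk}) for the asymptotics. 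This is precisely the paper's argument, including the use of $R(\alpha)>R(\e_*)$ for $\alpha<\e_*$ and $R(0^+)=+\infty$.
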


\begin{proof}
{\sl The droplet's case.} By Lemma \ref{rd1}, $a>0$ exists such that $(a,+\infty)\subset \A$. By \eqref{M-cont}, $\mu\in C((a,+\infty))$. Hence Lemma \ref{thm:par} is applicable and yields $\mu(s)\to +\infty$ as $s\to +\infty$. Therefore, for any sequence $M_k\to +\infty$ there exists a sequence $\A \ni u_{0k}\to +\infty$ such that $\mu (u_{0k})=M_k$. Moreover, by Theorem \ref{asbe1}, $u_k$ (the solution to $(P_{u_{0k}})$) belongs to $\mathcal D_{M_k}$. It follows from Lemma \ref{thm:par:en} that $u_k$ is such that $E[u_k]\lesssim \sqrt{M_k}$.

\smallskip

To conclude, assume by contradiction that $u_{0M}$ does not converge to $+\infty$. Then a subsequence $M_k$ exists such that $u_{0M_k}\to \alpha\in [0,+\infty)$. By Lemma \ref{fin1}, $u_{0M_k}\in \A$ for all $k$. By Lemma \ref{thm:pk-fake}, since $R>0$ in $(0,+\infty)$ (cf. Remark \ref{rem:R2}), we have $E[u_{M_k}] \gtrsim M_k$ as $k\to +\infty$. Hence, for $k$ sufficiently large we would have $E[u_k]<E[u_{M_k}]$, in contradiction with the definition of $u_M$. Thus $u_{0M}\to +\infty$ as $M\to +\infty$, and the result follows from Lemma \ref{thm:par}.

\smallskip

{\sl The pancake's case.}
By Lemma \ref{bnd-inf2}, $\A\subseteq (0,e_*)$. By Lemma \ref{rd1}, $I=(\e_*-\delta,\e_*)\subseteq \A$ for some $\delta\le\e_*$. By \eqref{M-cont}, $\mu\in C(I)$. By Lemma \ref{thm:pk}, $\mu(s)\to +\infty$ as $s\to e_*^-$. Therefore, for any sequence $M_k\to +\infty$ there exists a sequence $\A \ni u_{0k}\to \e_*^-$ such that $\mu (u_{0k})=M_k$. By Theorem \ref{asbe1} the solution $u_k$ to $(P_{u_{0k}})$ belongs to $\mathcal D_{M_k}$ and, by Lemma \ref{pan:en}, $u_k$ is such that $\frac{E[u_k]}{M_k} \to R(\e_*)$.

\smallskip

Assume by contradiction that $u_{0M}$ does not converge to $\e_*$. Then a subsequence $M_k$ exists such that $u_{0M_k}\to \alpha\in [0,\e_*)$. By Lemma \ref{fin1}, $u_{0M_k}\in \A$ for all $k$. By Lemma \ref{thm:pk-fake}, we have $\liminf\limits_{M_k\to +\infty}\frac{E[u_{M_k}]}{M_k} \ge R(\alpha)$.
Since $R(\e_*) <R(\alpha)$ (cf. \eqref{def-s*}), for $k$ sufficiently large we would have $E[u_{M_k}]>E[u_k]$, in contradiction with the definition of $u_M$. Hence $u_{0M}$ converges to $\e_*$ as $M\to +\infty$, and the conclusion follows from Lemma \ref{thm:pk}.
\end{proof}

\subsection{Transition profiles}

This is a limiting case: the same assumptions leading to Lemma \ref{thm:par} are assumed to hold, except that in this case $S=0$. In addition, we need to assume monotonicity of $Q$:
\begin{equation}\label{Q'<0}
Q'(s)<0 \quad \text{for all }\ s\in(0,+\infty).
\end{equation}

\begin{lemma}
\label{rd3}
Assume \eqref{Q'0}, \eqref{Q'<0}, and $S=0$.
Let $u_M$ be a minimizer of $E$ in $\mathcal D_M$ and let $u_{0M}=u_M(0)$. Then $u_{0M}\to +\infty$ as $M\to +\infty$.
\end{lemma}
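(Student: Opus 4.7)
The plan is to argue by contradiction via an energy bound: I will show that $E[u_M]/M\to 0$ as $M\to+\infty$, whereas any bounded limiting maximal height would force $\liminf E[u_{M_k}]/M_k>0$ through Lemma \ref{thm:pk-fake}. First, the hypotheses $Q'<0$ on $(0,+\infty)$ together with $S=0$ (i.e.\ $Q(s)\to 0$ as $s\to+\infty$) yield $Q>0$ on $(0,+\infty)$, whence $R'(s)=(sQ'(s)-Q(s))/s^2<0$; in particular $R$ is strictly positive and strictly decreasing with $R(0^+)=+\infty$ and $R(+\infty)=0$, so $\e_*=+\infty$ and $\A=(0,+\infty)$.

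To bound $E[u_M]/M$ I construct a ``plateau'' competitor. Fix $s\in(0,+\infty)$ and let $u^*$ be the solution to $(P_s)$ (compactly supported and radially strictly decreasing by Lemma \ref{fin1}), with $\supp u^*=[-\rr^*,\rr^*]$ and mass $\mu(s)$. Since $u^*(0)=s$ and $(u^*)'(0)=0$, for any $M\ge\mu(s)$ the function
\[
\tilde u(x)=\begin{cases} u^*(x+L) & x\le-L,\\ s & -L\le x\le L,\\ u^*(x-L) & x\ge L,\end{cases}\qquad L=\frac{M-\mu(s)}{2s},
\]
glues in $C^1$ across $x=\pm L$ and thus belongs to $H^1(\R)$, has compact support, and satisfies $\int\tilde u=\mu(s)+2sL=M$, so $\tilde u\in\mathcal D_M$. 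Since the plateau contributes zero to the gradient energy, a direct computation gives
\[
E[\tilde u]=E[u^*]+2L\,Q(s)=E[u^*]+R(s)\,(M-\mu(s)).
\]
By minimality of $u_M$, dividing by $M$ and letting $M\to+\infty$ with $s$ fixed, I obtain $\limsup_M E[u_M]/M\le R(s)$. Since $s$ is arbitrary and $R(s)\to 0$ as $s\to+\infty$, this yields $E[u_M]/M\to 0$.

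The contradiction now follows directly. If $u_{0M_k}\to\alpha\in[0,+\infty)$ along some subsequence $M_k\to+\infty$, then $u_{0M_k}\in\A$ by Lemma \ref{fin1}, so Lemma \ref{thm:pk-fake}$(ii)$ gives
\[
\liminf_{k\to+\infty}\frac{E[u_{M_k}]}{M_k}\ge R(\alpha),
\]
which is strictly positive when $\alpha>0$ and equals $+\infty$ when $\alpha=0$ (by the convention $R(0)=+\infty$). Either way this contradicts $E[u_M]/M\to 0$, completing the proof. The only delicate step is verifying the energy identity for $\tilde u$; this is elementary once one notes that $(u^*)'(0)=0$ guarantees $\tilde u\in H^1(\R)$ and that the plateau contributes exactly $2LQ(s)=R(s)(M-\mu(s))$ to the potential energy.
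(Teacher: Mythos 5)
Your proof is correct, and it takes a genuinely different route from the paper's. The paper argues directly on the ODE level: assuming $u_{0M}\to\alpha$ along a subsequence, it excludes $\alpha=0$ by integrating $(P_{u_{0M}})$ twice and showing $u_M$ would become negative, and it excludes $\alpha\in(0,+\infty)$ by continuous dependence (Theorem 8.40 of [KePe]) together with continuity of $\mu$, which yields $M=\mu(u_{0M})\to\mu(\alpha)<+\infty$, a contradiction. You instead prove an energy upper bound of the form $\limsup_M E[u_M]/M\le R(s)$ for every $s$ by a plateau competitor --- glue two half-copies of the solution $u^*$ of $(P_s)$ to a constant plateau of height $s$ and length $2L$ with $L=(M-\mu(s))/(2s)$; since $(u^*)'(0)=0$ the glue is $C^1$, $\tilde u\in H^1(\R)$, $\int\tilde u=M$, and $E[\tilde u]=E[u^*]+R(s)(M-\mu(s))$. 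Combined with $E\ge 0$ (as $Q>0$ here) and $R(s)\to0$, this gives $E[u_M]/M\to0$, which contradicts the lower bound $E[u_{M_k}]/M_k\ge R(u_{0M_k})\to R(\alpha)>0$ from Lemma~\ref{thm:pk-fake}$(ii)$ whenever $u_{0M_k}\to\alpha<+\infty$. Your variational argument is self-contained (modulo Lemma~\ref{thm:pk-fake}, which the paper establishes independently and already uses in Theorem~\ref{thm:finale}), avoids continuous dependence and the auxiliary Lemma~\ref{lemtec}, and is arguably more robust since it rests only on the construction of a low-energy competitor; the paper's ODE route is more hands-on and reuses tools already set up for the continuity of $\mu$. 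One small verification you gloss over --- that $E[u^*]<+\infty$, i.e., that $Q(u^*)\in L^1$ --- does hold: near the contact line $u^*\sim C(\rr^*-x)^{2/(m+1)}$ by Theorem~\ref{asbe1}, so $Q(u^*)\sim C(\rr^*-x)^{2(1-m)/(m+1)}$ is integrable precisely because $m<3$.
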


\begin{remark}\label{rk:Q>0}{\rm
Note that \eqref{Q'<0} and $S=0$ already imply $Q>0$ and $R>0$ in $(0,+\infty)$. Thus $R'(s)=s^{-1} Q'(s)- s^{-2} Q(s)<0$ in $(0,+\infty)$, which implies $\A=(0,+\infty)$ and $\e_*=+\infty$.
}
\end{remark}

\begin{proof}
Let $\supp u_M=[-\rr_M,\rr_M]$. We recall that $u_M$ solves $(P_{u_{0M}})$. Let $u_{0M}$ be any sequence (not relabeled) such that $u_{0M}\to \alpha\in [0,+\infty]$ as $M\to +\infty$. We will exclude that $\alpha=0$ and that $\alpha\in (0,+\infty)$, which implies the statement.

\smallskip

Assume that $\alpha=0$. Then $R(u_{0M}) \to +\infty$. By Lemma \ref{thm:pk-fake} $(i)$, $(0,\rr_M)\supset (0,1)$ for $M\gg 1$. Since $u_M\le u_{0M}$ in $(0,\rr_M)$, by \eqref{Q'<0} we have
$$
u_M'(y)=u_M'(0) + \int_{0}^y u_M'' \stackrel{(P_{u_{0M}})}= \int_{0}^y \left(Q'(u_M)-R(u_{0M})\right) \le -R(u_{0M}) y
$$
for all $y\in(0,1)$, that is,
$$
0<u_M(x)= u_{0M} + \int_{0}^{x} u_M' \le u_{0M} - R(u_{0M})\int_{0}^{x} y dy  = u_{0M}-\tfrac12 R(u_{0M}) x^2 \to -\infty
$$
for all $x\in (0,1)$, a contradiction.

\smallskip

Assume that $\alpha\in (0,+\infty)$. By continuous dependence (Theorem 8.40 of \cite{KePe}), $u_M\to u$ in $C^2_{loc}(\{u>0\})$, where $u$ is the solution of $(P_\alpha)$. By Remark \ref{rk:Q>0} we have $\alpha\in \A$, hence it follows from Lemma \ref{fin1} that $u$ has compact support and, therefore, finite mass $\mu(\alpha)$. Since $\mu\in C(\A)$, we have $M=\mu(u_{0M}) \to \mu(\alpha)$ which contradicts $M\to +\infty$.
\end{proof}

In this limiting case, the macroscopic shape turns out to depend on the behavior of $Q$ and its derivatives as $s\to +\infty$: we assume that
\begin{equation}
\label{crit2}
\begin{array}{l}
\mbox{$K> 0$ and $p>1$ exist such that}
\\[1ex]
Q(s)\sim K s^{1-p}\quad\mbox{and}\quad Q'(s)\sim -(p-1) K s^{-p} \quad\mbox{as $s\to +\infty$.}
\end{array}
\end{equation}
We note for later reference that
\begin{equation}\label{crit2G}
-s^2R'(s) =G(s):=Q(s)-sQ'(s) \sim p K s^{1-p}\quad\mbox{as $s\to +\infty$.}
\end{equation}

\begin{remark}\label{rem:KvsAB}
{\rm
In the model cases $Q_a$ and $Q_b$ with $S=0$, we have $K=|B|>0$ and $p=n$ for $Q_{b}$, $K=-B>0$ and $p=n$ for $Q_{a}$ with $B<0$, and $K=A>0$ and $p=m$ for $Q_{a}$ with $B=0$ (cf. \eqref{modelQ1a} and \eqref{modelQ1b}).
}\end{remark}

\begin{theorem}[transition profiles]\label{thm:crit}
Assume \eqref{Q'0}, \eqref{Q'<0}, and \eqref{crit2}. Then, as $M\to +\infty$,
\begin{equation}
\label{crit3quad}
u_{0M}^{p+3} \sim \frac{pK}{2 c_p^2f_p(0)^2} M^2, \quad
\rr_M^{p+3} \sim \frac{f_p(0)^2}{2^{p+2} p K c_p^{p+1}} M^{p+1}, \quad\mbox{thus}\quad\frac{u_{0M}^{p+1}}{\rr_M^2}\sim \frac{2pK}{f_p(0)^2},
\end{equation}
and
\begin{equation}
\label{crit3bis}
w_M(y)= u_{0M}^{-1} u_M(\rr_My) \to f_p^{-1}(f_p(0)|y|) \quad\mbox{in $C^2_{loc}((-1,1))$},
\end{equation}
where
\begin{equation}
\label{def-fp}
f_p(w):=\int_{w}^1 \frac{\sqrt{p}\tilde w^{\frac{p-1}{2}}d\tilde w}{\sqrt{1-\tilde w^p}} \qquad \text{for }w\in [0,1],
\end{equation}
and $\displaystyle c_p=\int_0^1 f_p^{-1}(f_p(0)y)dy.$
\end{theorem}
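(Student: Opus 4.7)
The argument closely parallels Lemma \ref{thm:par}, with the rescaling now dictated by the long-range asymptotics \eqref{crit2} of $Q$. From Lemma \ref{rd3} we already know $u_{0M}\to +\infty$; arguing as in Step 1 of Lemma \ref{thm:par} (replacing the bound $Q\le |S|+1$ for $s$ large by $Q\le 1$ for $s$ large, which holds under \eqref{crit2}), one also obtains $\rr_M\to +\infty$. Setting $w_M(y):=u_{0M}^{-1}u_M(\rr_My)$ and
$$
F_M(w):=\frac{\rr_M}{u_{0M}}\sqrt{2\bigl(Q(u_{0M}w)-wQ(u_{0M})\bigr)},\qquad \beta_M:=\frac{\sqrt{2K}\,\rr_M}{u_{0M}^{(p+1)/2}},
$$
the first integral \eqref{as7} yields $-w_M'(y)=F_M(w_M(y))$ on $[0,1)$; using \eqref{crit2} with $u_{0M}w\to +\infty$ gives the uniform asymptotics $F_M(w)\sim \beta_M\,w^{(1-p)/2}\sqrt{1-w^p}$ on intervals $(\eps/u_{0M},1]$ with $\eps>0$ fixed (the exponent $(p+1)/2$ in $\beta_M$ arises from balancing $u_{0M}$ against $R(u_{0M})\sim K u_{0M}^{-p}$ in $u_{0M}w(R(u_{0M}w)-R(u_{0M}))$). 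The change of variable $w=w_M(y)$ turns the ODE into the basic identity $1=\int_0^1 dw/F_M(w)$.

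The core step is to identify the limit of $\beta_M$. Splitting $\int_0^1 = \int_0^{\eps/u_{0M}}+\int_{\eps/u_{0M}}^1$, the uniform asymptotics above yield
$$
\int_{\eps/u_{0M}}^1 \frac{dw}{F_M(w)} = \frac{1}{\beta_M}\cdot \frac{f_p(0)}{\sqrt{p}}\,(1+o(1)),
$$
while the tail is controlled by the derivative estimate $|u_M'|\gtrsim u_M^{(1-m)/2}$ (the core ingredient in the proof of Lemma \ref{lem:lb}, applicable since $R(+\infty)=0$ under \eqref{crit2}), which translates into $F_M(w)\gtrsim \rr_M u_{0M}^{-(m+1)/2}\,w^{(1-m)/2}$ on $(0,\eps/u_{0M})$, whence $\int_0^{\eps/u_{0M}} dw/F_M(w)\lesssim 1/\rr_M\to 0$. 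Substituting back,
$$
1=o(1)+\frac{1}{\beta_M}\cdot \frac{f_p(0)}{\sqrt{p}}\,(1+o(1)),
$$
which forces $\beta_M\to f_p(0)/\sqrt{p}$ (and, as a byproduct, that $\beta_M$ is bounded above and below). The main obstacle here is precisely the uniform control of the tail, where the short-range singularity of $Q$ interferes with the long-range asymptotics used in the uniform approximation of $F_M$; Lemma \ref{lem:lb} is tailor-made to handle it.

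Having established $\beta_M\to f_p(0)/\sqrt{p}$, the limit profile follows by separation of variables: since $f_p'(w)=-\sqrt{p}\,w^{(p-1)/2}/\sqrt{1-w^p}$, the ODE rewrites as $(f_p\circ w_M)'(y)\to f_p(0)$, and integrating from $y=0$ using $f_p(w_M(0))=f_p(1)=0$ gives $f_p(w_M(y))\to f_p(0)\,y$ pointwise, i.e.\ $w_M(y)\to f_p^{-1}(f_p(0)|y|)$. The convergence upgrades to $C^2_{loc}((-1,1))$ by continuous dependence on initial data (Theorem 8.40 of \cite{KePe}) applied to the autonomous equation \eqref{eqmin1bis} rescaled by $\rr_M$. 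Finally, the scalings in \eqref{crit3quad} follow from $\beta_M\to f_p(0)/\sqrt{p}$ (equivalently, $\rr_M^2/u_{0M}^{p+1}\sim f_p(0)^2/(2pK)$) combined with the mass constraint
$$
M=2u_{0M}\rr_M\int_0^1 w_M(y)\,dy \sim 2c_p\, u_{0M}\,\rr_M
$$
(justified by dominated convergence), by elementary algebra.
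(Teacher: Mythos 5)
Your route is genuinely different from the paper's in the pivotal step of identifying the limit of $\beta_M$. The paper first establishes a two-sided a priori bound on $\rr_M\sqrt{G(u_{0M})}/u_{0M}$ (see \eqref{crit6}; the upper bound exploits the monotonicity in $w$ of $Q(u_{0M}w)-wQ(u_{0M})$ implied by $Q'<0$), extracts a convergent subsequence with limit $1/\ell$, passes to the pointwise limit of the inverse function $y_M(w)$ to get $y_M(w)\to\ell f_p(w)/\sqrt 2$, and then rules out $\ell f_p(0)/\sqrt{2}<1$ by a contradiction argument based on the derivative lower bound \eqref{crit26}. You bypass both the a priori bound and the contradiction by feeding the rescaled width into the identity $1=\int_0^1 dw/F_M(w)$, killing the tail near $w=0$ with Lemma~\ref{lem:lb}, and reading off $\beta_M\to f_p(0)/\sqrt p$ directly. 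This is a cleaner derivation where it works, and the remainder (separation of variables for the profile, $C^2_{loc}$ upgrade by continuous dependence, mass constraint) coincides with the paper's.

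There is, however, a real gap in the key estimate. You claim ``the uniform asymptotics $F_M(w)\sim\beta_M w^{(1-p)/2}\sqrt{1-w^p}$ on intervals $(\eps/u_{0M},1]$ with $\eps>0$ fixed.'' But on such an interval $u_{0M}w$ ranges over $(\eps,u_{0M}]$, and near the lower endpoint $u_{0M}w\approx\eps$ is a fixed \emph{bounded} value, so \eqref{crit2} gives no information there and the asymptotics fail. Meanwhile the lower bound coming from Lemma~\ref{lem:lb} (via $-u_M'\gtrsim u_M^{(1-m)/2}$) holds only where $u_M<\eps_K$, i.e.\ where $w<\eps_K/u_{0M}$ with $\eps_K$ the small constant of that lemma, so for the two pieces to touch you would need $\eps\le\eps_K$ --- which makes the uniformity claim near $w=\eps/u_{0M}$ even worse. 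The repair is a three-way split $(0,\eps_K/u_{0M})\cup(\eps_K/u_{0M},L/u_{0M})\cup(L/u_{0M},1]$ with $L$ large and fixed: Lemma~\ref{lem:lb} controls the innermost piece as you say; on the middle piece $u_{0M}w\in(\eps_K,L)$ and, since $Q'<0$, $w\to0$ and $Q(u_{0M})\to0$, one has $Q(u_{0M}w)-wQ(u_{0M})\ge\tfrac12\inf_{[\eps_K,L]}Q>0$ for $M\gg1$, hence $F_M(w)\gtrsim\rr_M/u_{0M}$ and that contribution is $O(L/\rr_M)\to0$; only on $(L/u_{0M},1]$ does $Q(s)\sim Ks^{1-p}$ apply, with relative error $O(\eta(L))$, $\eta(L)\to0$, and you must send $L\to\infty$ after $M\to\infty$. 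With that fix your argument goes through.
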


\begin{proof}
We let $f=f_p$ for notational convenience. Thanks to Lemma \ref{rd3}, $u_{0M}\to +\infty$ as $M\to +\infty$. As we noticed there, Step 1 in the proof of Lemma \ref{thm:par} holds for $-S\ge 0$: therefore
\begin{equation*}
\rr_M\to +\infty \quad\mbox{as $M\to +\infty$.}
\end{equation*}
Let $w_M(y)=u_{0M}^{-1} u_M(\rr_M y)$. It follows from \eqref{as7} that
\begin{equation}
\label{critnew}
-\frac{w_M'}{\sqrt{2}\sqrt{Q(u_{0M}w_M)-Q(u_{0M})w_M}}=\frac{\rr_M}{u_{0M}} \qquad \text{in } [0,1).
\end{equation}
We claim that $C\ge 1$ exists such that
\begin{equation}
\label{crit6}
C^{-1}\leq \frac{\rr_M^2 G(u_{0M})}{u_{0M}^2}\stackrel{(\ref{crit2G})}\sim \frac{pK\rr_M^2}{u_{0M}^{p+1}} \leq C \quad\mbox{for all $M\gg 1$}.
\end{equation}
Integrating \eqref{critnew} in $(0,1)$ and multiplying by $\sqrt{G(u_{0M})}$ we obtain
\begin{equation}
\label{crit5}
\frac{\rr_M\sqrt{G(u_{0M})}}{u_{0M}} = \frac{1}{\sqrt 2 }\int_0^1\frac{\sqrt{G(u_{0M})}dw}{\sqrt{Q(u_{0M}w)-Q(u_{0M})w}}.
\end{equation}
Since $u_{0M}\to +\infty$, we have
\begin{equation}\label{crit-1/2-1}
\int_{1/2}^1\frac{\sqrt{G(u_{0M})}dw}{\sqrt{Q(u_{0M}w)-Q(u_{0M})w}} \stackrel{\eqref{crit2},\eqref{crit2G},\eqref{def-fp}}\sim f(1/2),
\end{equation}
which, by \eqref{crit5}, already proves the lower bound in \eqref{crit6}.  For the upper bound, we note that, since $Q>0$ in $(0,+\infty)$ (cf. Remark \ref{rk:Q>0}),
\begin{eqnarray}
\frac{d}{dw}\left(Q(u_{0M}w)-Q(u_{0M})w\right) = u_{0M}Q'(u_{0M}w)-Q(u_{0M})<u_{0M}Q'(u_{0M}w) \stackrel{\eqref{Q'<0}}<0.
\label{crit7}
\end{eqnarray}
Therefore, again since $u_{0M}\to +\infty$,
\begin{equation*}
\int_0^{\frac{1}{2}}\frac{\sqrt{G(u_{0M})}dw}{\sqrt{Q(u_{0M}w)-Q(u_{0M})w}} \stackrel{\eqref{crit7}}\leq \frac{1}{2}\frac{\sqrt{G(u_{0M})}}{\sqrt{Q\left(\frac{u_{0M}}{2}\right)-\frac12 Q(u_{0M})}} \stackrel{\eqref{crit2},\eqref{crit2G}}\sim \frac{1}{\sqrt 2}\sqrt{\frac{p}{2^p-1}}
\end{equation*}
as $M\to +\infty$, which together with \eqref{crit-1/2-1} yields the upper bound in \eqref{crit6}.

\smallskip

Thanks to \eqref{crit6}, any sequence $M\to +\infty$ has a subsequence (not relabeled) such that
\begin{equation}
\label{crit15}
\frac{\rr_M\sqrt{G(u_{0M})}}{u_{0M}} \sim \frac 1 \ell >0 \qquad \text{as }M\to +\infty
\end{equation}
for some $\ell>0$. We will show that $\ell=\sqrt{2}/f(0)$: together with the arbitrariness of the (sub)sequence and with \eqref{crit2G}, this will prove \eqref{crit3quad}$_3$. Since $w_M$ is strictly decreasing in $(0,1)$, $y_M=w_M^{-1}$ is well defined. Fix $0<w\leq 1$. Multiplying \eqref{critnew} by $\sqrt{G(u_{0M})}$ and integrating in $(0,y_M(w))$, since $u_{0M}\to +\infty$ we obtain
\begin{eqnarray}
\nonumber
f(w) &\stackrel{\eqref{def-fp}} =&\int_{w}^1 \frac{\sqrt{p}\tilde w^{\frac{p-1}{2}}d\tilde w}{\sqrt{1-\tilde w^p}} \stackrel{\eqref{crit2}}\sim \int_{w}^{1}\frac{\sqrt{G(u_{0M})}d\tilde w}{\sqrt{Q(u_{0M}\tilde w)-Q(u_{0M})\tilde w}}
\\ &=& \frac{\rr_M\sqrt{2 G(u_{0M})}}{u_{0M}} y_M(w) \stackrel{\eqref{crit15}}\sim \frac{\sqrt{2}}{\ell} y_M(w)\quad\mbox{as $M\to +\infty$}.
\label{crit17}
\end{eqnarray}
It follows from \eqref{crit17} that, for every $0< w\leq 1$,
\begin{equation}
\label{crit18}
y_M(w) \to  y(w):= \tfrac{\ell}{\sqrt{2}}f(w)\in \left[0,\tfrac{\ell f(0)}{\sqrt{2}}\right)
\ \text{ as $M\to +\infty$}.
\end{equation}
By construction, $\frac{\ell f(0)}{\sqrt{2}}\leq 1$. Since $w_M$ and $f$ are strictly decreasing, \eqref{crit18} is equivalent to
\begin{equation}
\label{critnew2}
w_M(y)\to w(y):=f^{-1}\left(\tfrac{\sqrt 2}{\ell}y\right) \ \mbox{ as $M\to +\infty$}\quad \mbox{for all $y\in \left[0,\frac{\ell f(0)}{\sqrt{2}}\right)$}.
\end{equation}
We now prove that $\frac{\ell f(0)}{\sqrt{2}}=1$. It follows from \eqref{critnew} that for all $M\gg 1$
\begin{eqnarray}
\label{crit25}
-w_M'(y)&=&\frac{\rr_M\sqrt{G(u_{0M})}}{u_{0M}} \sqrt{\frac{2(Q(u_{0M}w_M(y))-Q(u_{0M})w_M(y))}{G(u_{0M})}} \nonumber \\
&\stackrel{\eqref{crit15},\eqref{crit2G}}\geq & \ell^{-1} \sqrt{\frac{Q(u_{0M}w_M(y))-Q(u_{0M})w_M(y)}{pKu_{0M}^{1-p}}}.
\end{eqnarray}
By \eqref{Q'<0} and since $w_M(y)\leq 1$, we have for all $M\gg 1$
$$
Q(u_{0M}w_M(y))-Q(u_{0M})w_M(y)\geq Q(u_{0M})(1-w_M(y))\stackrel{\eqref{crit2}}\geq \frac{K}{2}u_{0M}^{1-p}(1-w_M(y)),
$$
which inserted in \eqref{crit25} implies
\begin{equation}
\label{crit26}
-w_M'(y)\geq \frac{\ell^{-1}}{\sqrt{2p}}\sqrt{1-w_M(y)} \qquad\text{for every }y\in [0,1).
\end{equation}
Assume by contradiction that $\frac{\ell f(0)}{\sqrt{2}}<1$. Then, by \eqref{critnew2},
$w_M(y)\to 0$ for every $y\in \left(\frac{\ell f(0)}{\sqrt{2}},1\right]$. Hence, using \eqref{crit26}, we deduce for all $M\gg 1$ that $-w_M'(y)\geq \frac{\ell^{-1}}{2\sqrt{p}}$ for every $y\in \left(\frac{\ell f(0)}{\sqrt{2}},1\right]$. It follows that
$$
w_M(y)=-\int_y^1 w_M'(z)dz \geq \frac{\ell^{-1}}{2\sqrt{p}}(1-y) \qquad \text{for all }y\in \left(\tfrac{\ell f(0)}{\sqrt{2}},1\right], \ M\gg 1,
$$
a contradiction. Therefore $\ell = \sqrt{2}/f(0)$.

The pointwise convergence in \eqref{crit3bis} now follows from \eqref{critnew2} recalling that $w_M$ is an even function. To prove the uniform convergence, we note that $w_M$ solves
\begin{equation*}
\begin{cases}
\displaystyle -\tfrac{u_{0M}^2}{\rr_M^2 G(u_{0M})} w_M''  = \frac{Q(u_{0M})- u_{0M}Q'(u_{0M}w_M)}{G(u_{0M})} \\
w_M(0)=1, \ w_M'(0)=0.
\end{cases}
\end{equation*}
In view of \eqref{crit15} and \eqref{crit2}, as $M\to +\infty$, $w_M$ converges in $C^2_{loc}(\{w>0\})$ to the unique solution of
\begin{equation*}
\begin{cases}
\displaystyle -\tfrac{2}{f(0)^2} w''  = \frac{w^p+p-1}{pw^p} \\
w(0)=1, \ w'(0)=0,
\end{cases}
\end{equation*}
which coincides with $f^{-1}_p(f_p(0)|y|)$ for $|y|<1$. In particular, as $M\to + \infty$ we have
\begin{equation}
\label{critnew3}
\frac{M}{2} = \int_0^{\rr_M} u_M dx = u_{0M} \rr_M \int_0^1 w_M(y)dy \stackrel{\eqref{crit3bis}}\sim u_{0M} \rr_M \int_0^1 f_p^{-1}(f_p(0)y)d y = c_p u_{0M} \rr_M.
\end{equation}
Combining \eqref{crit3quad}$_3$ and \eqref{critnew3}, after straightforward computations we obtain \eqref{crit3quad}$_1$ and \eqref{crit3quad}$_2$.
\end{proof}

\begin{remark}\label{rem:f}{\rm
The function $f_p$ behaves as
$$
f_p(w)\sim \left\{\begin{array}{ll}
f_p(0) -\frac{2\sqrt{p}}{p+1} w^{\frac{p+1}{2}} & \mbox{as $w\to 0^+$}
\\[1ex]
2\sqrt{1-w} & \mbox{as $w\to 1^-$.}
\end{array}\right.
$$
Hence
$$
w_p(y):=f_p^{-1}(f_p(0)y) \sim \left\{\begin{array}{ll}
\left(\frac{p+1}{2\sqrt{p}}f_p(0)(1-y)\right)^\frac{2}{p+1} & \mbox{as $y\to 1^-$}
\\[1ex]
1-\frac{f_p(0)^2}{4}y^2 & \mbox{as $y\to 0^+$.}
\end{array}\right.
$$
}
\end{remark}

\begin{remark}\label{rem:f1}{\rm
The previous remark implies that the critical case is a transition between droplets and pancakes:
\begin{itemize}
\item[$(i)$] for $p=1$, we have $f_1(w)=2\sqrt{1-w}$, $w_1(y)=1-y^2$, and $c_1=2/3$, so that
$$
u_{0M}^{4} \sim \frac{9 K}{ 32} M^2 \quad\text{and}\quad
\rr_M^{4} \sim \frac{9}{8 K} M^{2}
\quad\text{as $M\to +\infty$ and $p\to 1$},
$$
thus recovering the droplet case with $K=-S>0$ in the limit as $p\to 1$;

\item[$(ii)$] as $p\to +\infty$, we have $w_p(y)\to 1$, $c_p\to 1$, and
$$u_{0M} \sim 1 \quad\text{and}\quad
\rr_M \sim \frac{1}{2} M \quad\text{as $M\to +\infty$ and $p\to +\infty$},
$$
thus recovering the pancake case with $\e_*=1$ in the limit as $p\to +\infty$.
\end{itemize}
To see $(ii)$, note that $f_p(0)\to 0$, $\sqrt{p}f_p(0)\to \pi$, and $\left(\frac{f_p(0)^2}{2^{p+2} p}\right)^{\frac{1}{p+3}} \to \frac12$ as $p\to +\infty$. Since $w_p$ is strictly decreasing, $w_p(0)=1$ and $w_p(1)=0$, $y_p$ exists such that $w_p(y_p)=1-\frac{f_p(0)^2}{4}$ for $p\gg 1$. Moreover $p\to +\infty$ if and only if $w_p(y_p)\to 1$, hence we deduce that as $p\to +\infty$
$$
f_p(0)y_p=f_p(w_p(y_p))\sim 2\sqrt{1-w_p(y_p)}=f_p(0),
$$
which implies that $y_p\to 1$ as $p\to +\infty$. Therefore $w_p\to 1$ and $(ii)$ follows.
}
\end{remark}

\section{Asymptotic behaviour for small masses}\label{ss-small}

As expected, minimizers tend uniformly to zero as $M\to 0$.

\begin{lemma}
\label{smallM}
Assume \eqref{Q'0}. Let $u_M$ be a minimizer of $E$ in $\mathcal D_M$. Then $u_{0M}:=u_M(0)\to 0$ as $M\to 0^+$.
\end{lemma}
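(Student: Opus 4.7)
The plan is to sandwich the quantity $u_{0M}^3/M$ between an upper bound on the minimum energy coming from a good competitor, and a lower bound on $E[u_M]$ coming from the one-dimensional geometry of minimizers. For the upper bound I would reuse the trial function of Lemma~\ref{lem1} in one dimension: $\phi_{M,R}(x)=d(R^2-x^2)_+^\alpha$ with $\alpha\in(1/2,1/(m-1))$ fixed and $d=d(M,R)$ chosen so that $\int\phi_{M,R}=M$, which forces $d\asymp M R^{-(2\alpha+1)}$. Using the short-range bound $Q(s)\le 2As^{1-m}$ from \eqref{H}, valid once $\max\phi_{M,R}=dR^{2\alpha}\asymp M R^{-1}$ is small, routine scaling gives $E[\phi_{M,R}]\lesssim M^2 R^{-3}+M^{1-m}R^m$; optimizing in $R$ with the choice $R=R_M\sim M^{(m+1)/(m+3)}$ yields
\[
E[u_M]\le E[\phi_{M,R_M}]\le C\,M^{(3-m)/(m+3)},
\]
whose right-hand side vanishes as $M\to 0^+$ since $m<3$; the admissibility check $\max\phi_{M,R_M}\sim M^{2/(m+3)}\to 0$ is automatic.

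For the lower bound I would use Corollary~\ref{coro1}: $u_M$ is even, supported on $[-\rr_M,\rr_M]$, and strictly decreasing on $(0,\rr_M)$. Picking $x_*\in(0,\rr_M)$ with $u_M(x_*)=u_{0M}/2$, the estimate $u_M\ge u_{0M}/2$ on $[-x_*,x_*]$ together with the mass constraint forces $x_*\le M/u_{0M}$, and Cauchy--Schwarz gives
\[
\tfrac{u_{0M}}{2}=\int_0^{x_*}|u_M'|\le \sqrt{x_*}\,\|u_M'\|_{L^2(0,x_*)},\quad\text{hence}\quad \int(u_M')^2\ge \frac{u_{0M}^3}{2M}.
\]
Coupled with Lemma~\ref{lem2}, whose constant depends linearly on $M$, this produces $E[u_M]\ge u_{0M}^3/(4M)-CM$.

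Combining the two inequalities gives $u_{0M}^3\lesssim M^{6/(m+3)}+M^2\to 0$, since $m\in(1,3)$ makes both exponents positive. I do not foresee any genuine obstacle; the only nontrivial bookkeeping is verifying that the short-range form of $Q$ controls $Q(\phi_{M,R_M})$ throughout the support of the trial function, which reduces (as above) to $\max\phi_{M,R_M}\to 0$. As a byproduct the argument yields the quantitative rate $u_{0M}\lesssim M^{2/(m+3)}$, consistent with the natural rescaling $u_s(x)=s\phi(A^{1/2}s^{-(m+1)/2}x)$ of problem $(P_s)$ as $s\to 0^+$.
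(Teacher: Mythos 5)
Your argument is correct, but it departs substantially from the paper's proof. The paper argues by contradiction: it extracts a subsequence $u_{0M}\to\alpha\in\overline{\A}\setminus\{0\}$ and rules out $\alpha<+\infty$ via the ODE continuous-dependence result (Lemma~\ref{lemtec}) combined with Fatou's lemma, and $\alpha=+\infty$ via the first integral \eqref{as7} together with the linear bound $Q(s)\lesssim s$ for $s\geq 1$ forced by $\e_*=+\infty$. You instead run a direct energy-comparison argument: a rescaled version of the Lemma~\ref{lem1} trial function, optimized over its support radius $R$, gives the upper bound $\inf_{\mathcal D_M}E\lesssim M^{(3-m)/(m+3)}$, and a Cauchy--Schwarz estimate on the interval where $u_M\geq u_{0M}/2$ (which has length $\lesssim M/u_{0M}$ by the mass constraint) gives the Dirichlet lower bound $\int (u_M')^2\gtrsim u_{0M}^3/M$; combining with Lemma~\ref{lem2} yields $u_{0M}^3\lesssim M^{6/(m+3)}+M^2\to 0$. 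Your computations check out: for $\phi_{M,R}=d(R^2-x^2)_+^\alpha$ with $\alpha\in(1/2,1/(m-1))$, one indeed gets $\int(\phi')^2\asymp M^2R^{-3}$, $\int Q(\phi)\lesssim M^{1-m}R^m$ once $\max\phi\asymp M/R\to 0$, and $R\sim M^{(m+1)/(m+3)}$ balances the two. The trade-off is that the paper's argument is a short consequence of the ODE machinery it has already set up and fits the $\A$/eigenvalue framework used throughout Section 7, whereas your argument is self-contained and more elementary (no appeal to Lemma~\ref{lemtec} or to the first integral), and it produces the quantitative rate $u_{0M}\lesssim M^{2/(m+3)}$, which is in fact the sharp scaling one gets from rescaling $(P_{u_0})$ in the model case $Q=As^{1-m}$.
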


\begin{proof}
By Corollary \ref{coro1}, Theorem \ref{th-lambda}, and Lemma \ref{fin1}, $\supp u_M=[-\rr_M,\rr_M]$ with $r_M<+\infty$, $\{u_{0M}\}\subset \A$, and $\lambda_M=R(u_{0M})$. Assume by contradiction that $u_{0M}\to \alpha \in \overline{\A}\setminus \{0\}$ for a subsequence (not relabeled).

We first analyze the case $\alpha<+\infty$. Since $\lambda_M\to \lambda:=R(\alpha)$, it follows from Lemma \ref{lemtec} that $\rr_M\to \rr\in (0,+\infty]$ and $u_M\to u$ in $C_{loc}((-\rr,\rr))$ as $M\to 0^+$, where $u$ is the solution to $(P_{\alpha})$ and $\{u>0\}=(-\rr,\rr)$. This implies, applying Fatou's Lemma, that
$$
0=\lim_{M\to 0^+} M=\liminf_{M\to 0^+} \int_{-\rr_M}^{\rr_M} u_M \geq \int_{-\rr}^{\rr} u>0,
$$
a contradiction.

If instead $u_{0M}\to +\infty$, then $R>0$ in $(0,+\infty)$: indeed, otherwise we would have $\e_*<+\infty$ (Remark \ref{rem:R1}), which would imply $u_{0M}<\e_*$ (Lemma  \ref{bnd-inf2}), a contradiction. In addition, $Q(s)\leq c s$ for $s\geq 1$ for some $c>0$ (otherwise, we would again have $\e_*<+\infty$). Since $u_{0M}\to +\infty$ and $u_M$ is strictly decreasing with compact support, $x_M\in (0,\rr_M)$ exists such that $x_M=u_M^{-1}(1)$ for $M$ sufficiently small. Hence
\begin{eqnarray}
\label{smallapp}
u_{0M}-1 &= & \int_{0}^{x_M}-u'(x) \stackrel{\eqref{as7}}= \int_0^{x_M} \sqrt{2(Q(u_M(x))-R(u_{0M})u_M(x))} \nonumber \\
& \stackrel{R>0}\leq &\int_0^{x_M} \sqrt{2Q(u_M(x))} \stackrel{Q(s)\leq cs^2}\leq \sqrt{2c} \int_{-\rr_M}^{\rr_M} u_M(x) = \sqrt{2c}M.
\end{eqnarray}
Letting $M\to 0$ in \eqref{smallapp} we obtain a contradiction.
\end{proof}

\section{Non-uniqueness of minimizers}\label{ss-non}

In this subsection we prove non-uniqueness of minimizers of $E$ in $\mathcal D_M$ (here we explicit the dependence of $\mathcal D$ on $M$) when the weighted potential $R$ is such that
\begin{equation}
\label{nonunfond}
\A\neq (0,\e_*).
\end{equation}
Moreover, we need an additional property of $Q$, namely:
\begin{equation}
\label{strongH}
Q'(s)\sim -A(m-1)s^{-m} \quad \text{as }s\to 0^+.
\end{equation}

Throughout the section, $\{u_{0k}\}\subset \A$ and $u_k$ is the solution of $(P_{u_{0k}})$, defined in $\{u_k>0\}=(-\rr_k,\rr_k)$. We let $E(u_{0k}):=E[u_k]$ and we recall that $\mu(u_{0k})$ denotes the mass of $u_k$ (cf. \eqref{def-mu}). In what follows we study the behaviour of $\mu$ and $E$ on $\overline{\A}$. Let's start by showing what happens when $u_{0k}$ tends to $0$.

\begin{lemma}
\label{tendus0}
Assume \eqref{Q'0} and \eqref{strongH}. Let $\{u_{0k}\}\subset \A$ be such that $u_{0k}\to 0$ as $k\to +\infty$. Then $\mu(u_{0k}) \to 0$ as $k\to +\infty$.
\end{lemma}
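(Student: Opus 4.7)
The plan is to derive an explicit integral formula for $\mu(u_{0k})$ via the first integral \eqref{as7} and then to extract its decay as $u_{0k}\to 0^+$ using the short-range asymptotics \eqref{H} together with the derivative asymptotics \eqref{strongH}. Fix $u_0=u_{0k}\in\A$ and let $u=u_k$ be the solution of $(P_{u_0})$ with $\supp u=[-\rr,\rr]$. Changing variable $s=u(x)$ in $\mu(u_0)=2\int_0^{\rr}u\,dx$ and using $u'=-\sqrt{2u(R(u)-R(u_0))}$ from \eqref{as7}, I obtain
$$
\mu(u_0)=\sqrt{2}\int_0^{u_0}\sqrt{\frac{s}{R(s)-R(u_0)}}\,ds.
$$
Note that the integrand is integrable at both endpoints: near $s=0$ it behaves like $s^{(m+1)/2}/\sqrt{A}$, and near $s=u_0$ like $\sqrt{u_0/|R'(u_0)|}\,(u_0-s)^{-1/2}$, using $R'(u_0)\ne 0$ (guaranteed by $u_0\in\A$).

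The next step is to bound $R(s)-R(u_0)$ from below uniformly for $s\in(0,u_0)$ and $u_0$ small. From $R'(s)=s^{-1}(Q'(s)-R(s))$, the asymptotic $R(s)\sim As^{-m}$ (implied by \eqref{H}) and \eqref{strongH} yield $-R'(s)\sim Am\,s^{-m-1}$ as $s\to 0^+$. Integrating from $s$ to $u_0$, for every $\eps\in(0,1)$ there exists $\delta>0$ such that, for all $0<s<u_0<\delta$,
$$
R(s)-R(u_0)\ge (1-\eps)A\bigl(s^{-m}-u_0^{-m}\bigr).
$$

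Inserting this lower bound into the integral formula for $\mu(u_0)$ and rescaling via $s=u_0 t$ gives, for all $u_0<\delta$,
$$
\mu(u_0)\le \sqrt{\tfrac{2}{(1-\eps)A}}\,u_0^{(m+3)/2}\int_0^1 \frac{t^{(m+1)/2}}{\sqrt{1-t^m}}\,dt.
$$
The $t$-integral is finite (the only singularity, at $t=1$, is of order $(1-t)^{-1/2}$), and since $m>1$ the exponent $(m+3)/2$ is positive. Letting $u_0=u_{0k}\to 0^+$ yields $\mu(u_{0k})\to 0$.

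The main obstacle is the uniform lower estimate on $R(s)-R(u_0)$: assumption \eqref{H} alone does not provide pointwise control on $R'$, so one cannot integrate $R'$ without \eqref{strongH}. This is precisely the role of the strengthened short-range assumption \eqref{strongH} in this section. Once the lower bound is in place, the rescaling $s=u_0 t$ trivializes the asymptotics and in fact yields the quantitative rate $\mu(u_0)=O(u_0^{(m+3)/2})$.
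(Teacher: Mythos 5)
Your proof is correct, and it takes a genuinely different route from the paper. The paper's argument rescales the solution as $w_k(y)=u_{0k}^{-1}u_k(\omega_k^{-1}y)$ with $\omega_k=\sqrt{A/u_{0k}^{m+1}}$ and shows, via \eqref{strongH}, that the rescaled nonlinearities converge in $C_{loc}((0,+\infty))$ to $1+(m-1)w^{-m}$; it then invokes the ODE continuous-dependence result (Lemma~\ref{lemtec}) to deduce that $w_k$ converges to the (concave, hence compactly supported) limit profile, so $\omega_k\rr_k\to\rr<+\infty$, whence $\rr_k\to 0$ and $\mu(u_{0k})\le 2\rr_k u_{0k}\to 0$. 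You instead convert the mass into the explicit integral $\mu(u_0)=\sqrt{2}\int_0^{u_0}\sqrt{s/(R(s)-R(u_0))}\,ds$ via the first integral \eqref{as7}, use \eqref{strongH} to get the pointwise asymptotic $R'(s)\sim -Am\,s^{-m-1}$ and hence (after integrating on $(s,u_0)\subset(0,\delta)$) the uniform lower bound $R(s)-R(u_0)\ge(1-\eps)A(s^{-m}-u_0^{-m})$, and rescale $s=u_0t$ to obtain $\mu(u_0)\lesssim u_0^{(m+3)/2}$. Both arguments use \eqref{strongH} for the same purpose (pointwise control of $Q'$, hence $R'$, near the origin) and yield the same scaling rate $u_0^{(m+3)/2}$. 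Your approach is more elementary, bypasses Lemma~\ref{lemtec} entirely, and is quantitative from the outset; the paper's approach fits into the blow-up machinery already set up for the other asymptotic results in Section~\ref{s4} and identifies the limiting rescaled profile, which is extra structural information your argument does not provide.
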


\begin{proof}
Let $\omega_k= \sqrt{{A/u_{0k}^{m+1}}}$. The function $w_k(y)=u_{0k}^{-1}u_k\left(\omega_k^{-1}y\right)$ is a solution to
\begin{equation*}
\begin{cases}
- w''_k= f_k(w_k):=A^{-1} (u_{0k}^{m-1}Q(u_{0k})-u_{0k}^m Q'(u_{0k}w_k)) \quad\mbox{in $(-\omega_k \rr_k,\omega_k \rr_k)$,}
\\[1ex]
w_k(0)=1, \ w_k'(0)=0, \ w_k(\pm \omega_k \rr_k)=0.
\end{cases}
\end{equation*}
By \eqref{H} and \eqref{strongH}, $f_k(w)\to  1+(m-1)w^{-m}$ in $C_{loc}((0,+\infty))$ as $k\to +\infty$ and
$$
F_k(w):=\int_w^1 f_k(t)dt=A^{-1}u_{0k}^{m-1}(Q(u_{0k}w)-Q(u_{0k})w) \stackrel{\eqref{H}} \sim w^{1-m}-w \to +\infty
$$
as $(k,w)\to (+\infty,0^+)$. Applying Lemma \ref{lemtec}, we deduce that, as $k\to +\infty$, $w_k\to w$ in $C^2_{loc}(\{w>0\})$, where $w$ is the solution to
\begin{equation*}
\begin{cases}
-w''=1+(m-1)w^{-m}  & \quad \mbox{in }\{w>0\}=(-\rr,\rr), \\
w(0)=1, \ w'(0)=0, \ w(\rr)=0,
\end{cases}
\end{equation*}
and $\omega_k\rr_k\to \rr$. Note that $w$ is concave, hence $\rr<+\infty$. Therefore $\rr_k \to 0$ as $k\to +\infty$, whence
$$
\mu(u_{0k})=2\int_{0}^{\rr_k} u_k(x)dx \leq 2\rr_k u_{0k} \to 0\quad\mbox{as $k\to +\infty$.}
$$
\end{proof}

Now we show that $E$ is continuous on $\A$.
\begin{lemma}
\label{contE}
Assume \eqref{Q'0}.
Then $\mu\in C(\A)$ and $E\in C(\A)$.
\end{lemma}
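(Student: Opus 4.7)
The plan is to reduce continuity of $E$ to that of $\mu$ (which was already established in \eqref{M-cont}) via the first integral \eqref{appR}. The main obstacle will be controlling $Q(u_s)$ near the moving free boundary $\{|x|=\rr_s\}$, where $u_s\to 0$ and $Q$ is singular; this is precisely what Lemma \ref{lem:lb} is designed to handle.

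First, using \eqref{appR} to substitute $(u_s')^2=2u_s(R(u_s)-R(s))$ and recalling \eqref{def-mu}, I would write
$$
E(s) = 2\int_0^{\rr_s}\!\!\left(\tfrac12 (u_s')^2 + Q(u_s)\right)dx
= 4\int_0^{\rr_s}\!\! Q(u_s)\,dx - R(s)\mu(s).
$$
Since $R\in C^1((0,+\infty))$ by \eqref{G1} and $\mu\in C(\A)$ by \eqref{M-cont}, the second term is continuous in $s$. Hence continuity of $E$ reduces to continuity of $\Phi(s):=\int_0^{\rr_s} Q(u_s)\,dx$.

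Fixing $u_0\in \A$ and taking $\A\ni s\to u_0$, Lemma \ref{lemtec} gives $u_s\to u_{u_0}$ in $C^2_{loc}((-\rr_{u_0},\rr_{u_0}))$ together with $\rr_s\to \rr_{u_0}$. Restricting $s$ to a compact neighbourhood $[K^{-1},K]\subset \A$ of $u_0$, Lemma \ref{lem:lb} furnishes $C>0$ and $\delta_K>0$ such that
$$
u_s(x)\ge C(\rr_s-x)^{2/(m+1)}\qquad\mbox{for all $x\in(\rr_s-\delta_K,\rr_s)$.}
$$
Combined with \eqref{H} and $\inf Q>-\infty$, this yields the uniform pointwise bound
$$
|Q(u_s(x))|\le C\bigl(1+(\rr_s-x)^{2(1-m)/(m+1)}\bigr)\qquad\mbox{on $(\rr_s-\delta_K,\rr_s)$,}
$$
whose right-hand side is integrable since $m<3$ forces $2(1-m)/(m+1)>-1$.

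To conclude, for any $\eta\in(0,\delta_K)$ I would split $\Phi(s)=\int_0^{\rr_{u_0}-\eta}Q(u_s)\,dx + \int_{\rr_{u_0}-\eta}^{\rr_s}Q(u_s)\,dx$, which is legitimate for $s$ close enough to $u_0$. On $[0,\rr_{u_0}-\eta]$, $u_{u_0}$ is bounded below by the positive constant $u_{u_0}(\rr_{u_0}-\eta)$, so the $C^2_{loc}$-convergence of $u_s$ combined with continuity of $Q$ on $(0,+\infty)$ yields $\int_0^{\rr_{u_0}-\eta}Q(u_s)\to \int_0^{\rr_{u_0}-\eta}Q(u_{u_0})$ as $s\to u_0$. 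The two boundary tails, for $u_s$ and for $u_{u_0}$, are each dominated by $C\eta^{(3-m)/(m+1)}$ uniformly in $s$ by the previous estimate. Sending first $s\to u_0$ and then $\eta\to 0^+$ gives continuity of $\Phi$, hence of $E$. The only delicate ingredient is the tail control, for which Lemma \ref{lem:lb} does all the heavy lifting.
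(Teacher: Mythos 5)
Your argument is correct and follows essentially the same route as the paper's: reduce $E$ to the potential integral via the first integral \eqref{appR}, control the tail near the free boundary uniformly in $s$ via Lemma \ref{lem:lb} together with the integrability of $(\rr-x)^{2(1-m)/(m+1)}$ (valid since $m<3$), and pass to the limit using the locally uniform convergence furnished by Lemma \ref{lemtec}. The only cosmetic difference is that you split the integral into an interior part and a small boundary tail on the moving domain $(0,\rr_s)$, whereas the paper rescales via $w_k(y)=u_k(\rr_k y/\rr)$ to the fixed domain $(0,\rr)$ and applies dominated convergence there directly.
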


\begin{proof}
Continuity of $\mu$ follows from \eqref{M-cont}, hence we only show continuity of $E$. Let $\A\ni u_{0k}\to \alpha \in \A$ as $k\to +\infty$. Since $\alpha\in \A$, we have $\alpha>0$. By Lemma \ref{lemtec}, $u_k\to u$, where $u$ solves $(P_{\alpha})$ in $C^2_{loc}((-\rr,\rr))$ with $\{u>0\}=(-\rr,\rr)$, and $\rr_k\to \rr>0$ as $k\to +\infty$. Note that $\rr<+\infty$ since $\alpha\in \A$ (cf. Lemma \ref{fin1}).

\smallskip

Let $\rho_k=\rr_k/\rr$ and $w_k(y)=u_k(\rho_k y)$, $y\in (-\rr,\rr)$. Obviously $\rho_k\to 1 $ and $w_k\to u$ locally uniformly, hence a.e., in $(-\rr,\rr)$. We have
\begin{eqnarray}
\nonumber
\frac12 E(u_{0k}) &=& \int_{0}^{\rr_k} \left(\tfrac{1}{2}(u'_k)^2 + Q(u_k)\right) \stackrel{\eqref{appR}}= \int_{0}^{\rr_k}\left(2 Q(u_k)-R(u_{0k})u_k\right)
\\ &=& \rho_k \int_{0}^{\rr}\left(2 Q(w_k)-R(u_{0k})w_k\right).
\label{jjmm}
\end{eqnarray}
The properties of $Q$ imply that
\begin{equation}
\label{conte4}
|Q(s)|\leq C(1+s^{1-m}) \quad\text{for all } s\in(0,\alpha+1).
\end{equation}
Applying Lemma \ref{lem:lb} with $K=\alpha+1$, we have $u_k(x)\geq C(\rr_k-x)^{\frac{2}{m+1}}$ for all $x\in (\rr_k-\delta_K,\rr_k)$ and $k$ sufficiently large. In terms of $w_k$, this means that
\begin{equation}
\label{conte8}
w_k(y)\geq C(\rr-y)^{\frac{2}{m+1}} \quad \text{for all }y\in \left(\rr-\tfrac{\delta_K}{2},\rr\right)
\end{equation}
for $k$ sufficiently large. It follows from the monotonicity of $w_k$ and the locally uniform convergence $w_k\to u$ that
\begin{equation}
\label{conte9}
|Q(w_k)| \stackrel{\eqref{conte4},\eqref{conte8}}\leq C\left(1+(\rr-y)^{\frac{2(1-m)}{m+1}}\right).
\end{equation}
Since $\frac{2(1-m)}{m+1}>-1$ if $m<3$, the right-hand side of \eqref{conte9} belongs to $L^1((0,\rr))$: therefore an application of Lebesgue theorem in \eqref{jjmm} implies the result.
\end{proof}

Finally, we study the behavior of $\mu$ on $\partial{\A}\setminus \{0\}$.

\begin{lemma}
\label{muuE}
Assume \eqref{Q'0}; if $\e_*=+\infty$, assume in addition \eqref{H-add}. Let $\{u_{0k}\}\subset \A$ be such that $u_{0k}\to \alpha\in \partial{\A}\setminus \{0\}$ as $k\to +\infty$,
where $\partial \A$ denote the boundary of $\A$ with respect to the topology of $\R^*$. Then $\mu(u_{0k})\to +\infty$ as $k\to +\infty$.
\end{lemma}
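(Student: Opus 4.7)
The plan is to split into two cases according to whether $\alpha=\e_*$ or $\alpha\in\partial\A\cap(0,\e_*)$. In the first case, we invoke the previous subsections directly: if $\e_*<+\infty$, Lemma \ref{thm:pk} yields $\mu(u_{0k})\to+\infty$; if $\e_*=+\infty$, then the extra hypothesis \eqref{H-add} is in force, $u_{0k}\to+\infty$, and Lemma \ref{thm:par} gives the same conclusion. So assume throughout the rest of the argument that $\alpha\in\partial\A\cap(0,\e_*)$.

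Openness of $\A$ in $(0,+\infty)$ forces $\alpha\notin\A$. Passing to the limit in the two conditions defining $\A$ (namely $R'(u_{0k})<0$ and $R(t)>R(u_{0k})$ for all $t<u_{0k}$) yields $R'(\alpha)\leq 0$ and $R(t)\geq R(\alpha)$ for all $t\in(0,\alpha)$, and at least one of these must hold with equality. We distinguish two subcases: (a) $R'(\alpha)=0$, in which case $Q'(\alpha)=R(\alpha)$ by \eqref{G1} and $u\equiv\alpha$ is a stationary solution of \eqref{eqmin1bis}; (b) $R'(\alpha)<0$ together with the existence of a largest $t_0\in(0,\alpha)$ with $R(t_0)=R(\alpha)$, in which case the nonnegativity of $R-R(\alpha)$ on both sides of $t_0$ forces $R'(t_0)=0$ as well, so that by the first integral \eqref{app} the orbit $u_\alpha$ is a heteroclinic connection in the $(u,u')$-phase plane from the equilibrium $(\alpha,0)$ to the equilibrium $(t_0,0)$, with $u_\alpha\geq t_0>0$ throughout $\R$.

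The argument concludes via continuous dependence (Lemma \ref{lemtec}), which yields $u_k\to u_\alpha$ in $C^2_{loc}(\R)$ together with $\rr_k\to+\infty$. In case (b), $u_\alpha\geq t_0>0$ on $\R$ as just observed; in case (a), the vanishing of $u_k''(0)=u_{0k}R'(u_{0k})$ in the limit combined with locally uniform convergence selects the constant limit $u_\alpha\equiv\alpha$, so that $u_\alpha\geq\alpha>0$ on $\R$. Setting $c_\alpha:=\alpha$ in case (a) and $c_\alpha:=t_0$ in case (b), for any $L>0$ and all $k$ sufficiently large we have $\rr_k>L$, whence
\begin{equation*}
\mu(u_{0k})=2\int_0^{\rr_k}u_k\,\geq\,2\int_0^L u_k\,\longrightarrow\,2\int_0^L u_\alpha\,\geq\,2Lc_\alpha\qquad\text{as }k\to+\infty,
\end{equation*}
and the arbitrariness of $L$ gives $\mu(u_{0k})\to+\infty$. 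The main obstacle is expected to be case (a): in the absence of local Lipschitz regularity of $Q'$ at $\alpha$, the Cauchy problem $(P_\alpha)$ may in principle admit non-constant solutions (trajectories slowly asymptoting to the equilibrium $(\alpha,0)$), so the identification of the $C^2_{loc}$-limit of $u_k$ as the stationary solution must rest on a quantitative slow-down estimate (encoded in Lemma \ref{lemtec}) rather than on uniqueness of the ODE.
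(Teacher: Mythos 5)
Your argument is correct and closely parallels the paper's, but with a different, equivalent case decomposition. The paper treats $\alpha=+\infty$ separately (via Lemma \ref{bnd-inf2} and Lemma \ref{thm:par}) and then handles all finite $\alpha\in\partial\A$ by continuous dependence plus a Fatou lower bound, distinguishing $(i)$ $R'(\alpha)=0$ and $(ii)$ $\exists\, t<\alpha$ with $R'(t)=0$ and $R(t)=R(\alpha)$. You instead peel off $\alpha=\e_*$ (calling on Lemma \ref{thm:pk} if $\e_*<+\infty$, Lemma \ref{thm:par} if $\e_*=+\infty$) and split $\alpha\in\partial\A\cap(0,\e_*)$ into (a)/(b), which mirror the paper's $(i)$/$(ii)$. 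Peeling off $\alpha=\e_*$ lets you reuse the already-established pancake asymptotics; the paper instead absorbs $\alpha=\e_*<+\infty$ into its case $(i)$. One slip: in your case (b), $(\alpha,0)$ is \emph{not} an equilibrium of the phase plane, since $u''(0)=\alpha R'(\alpha)\ne 0$; the orbit is a trajectory launched from $(\alpha,0)$ that approaches the equilibrium $(t_0,0)$, not a heteroclinic between two equilibria — but the conclusion $u_\alpha\ge t_0$ and the Fatou estimate are unaffected. Finally, your closing concern about non-uniqueness of $(P_\alpha)$ when $Q'$ is merely continuous is legitimate and is in fact also present, unflagged, in the paper's proof; note however that Lemma \ref{lemtec} as stated \emph{presupposes} uniqueness of the limiting IVP rather than supplying the quantitative slow-down you invoke — when $R'(\alpha)=0$, that estimate has to be read off the first integral \eqref{appR} directly, using that $R'(u_{0k})\to 0$ forces the traversal time past any fixed level below $\alpha$ to diverge.
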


\begin{proof}
If $\alpha=+\infty$, it follows from Lemma \ref{bnd-inf2} that $\e_*=+\infty$. Thus, by Lemma \ref{thm:par}, we have that $\mu(u_{0k})\to +\infty$.

\smallskip

Now let $\alpha\in \partial \A\setminus \{0,+\infty\}$. By continuous dependence (Theorem 8.40 of \cite{KePe}), $u_k\to u$ in $C^2_{loc}(\{u>0\})$, where $u$ solves $(P_{\alpha})$ and $\{u>0\}=(-\rr,\rr)$ for $\rr\in (0,+\infty]$. Since $R$ is continuous in $(0,+\infty)$ and $\alpha\in \partial \A\setminus \{0,+\infty\}$, by definition of $\A$ at least one of the following holds:
\begin{itemize}
\item[$(i)$]  $R'(\alpha)=0$. Then $u\equiv \alpha$, hence (by Fatou's Lemma)
$
\liminf\limits_{k\to +\infty} \mu(u_{0k}) \geq \int_{\R} \alpha =+\infty;
$
\item[$(ii)$] $0<t<\alpha$ exists such that $R'(t)=0$ and $R(t)=R(\alpha)$. In this case, we claim that $u>t$ for every $x\in \R$. Indeed, if $x>0$ exists such that $u(x)=t$, then
$$
u''(x)\stackrel{(P_\alpha)}= Q'(u(x))-R(\alpha) = Q'(t)-R(t) \stackrel{\eqref{G1}}=t R'(t)=0 \quad\text{and} \quad u'(x)\stackrel{\eqref{appR}}=0,
$$
whence $u\equiv t$, in contradiction with $u(0)=\alpha>t$. It follows from Fatou's Lemma that
$
\liminf\limits_{k\to +\infty} \mu(u_{0k}) \geq \int_{\R} t =+\infty.
$
\end{itemize}
\end{proof}

Now we are ready to prove the non-uniqueness result.
\begin{theorem}
\label{no-un}
Assume \eqref{Q'0}, \eqref{Q'11}, \eqref{nonunfond}, and \eqref{strongH}; if $\e_*=+\infty$, assume in addition \eqref{H-add}.
Then $M>0$ exists such that $E$ has at least two minimizers in $\mathcal D_M$.
\end{theorem}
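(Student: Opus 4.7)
My plan is to find a threshold mass $M^\sharp$ at which the maximal height of the minimizer must jump across a gap in $\A$, producing two minimizers with distinct profiles. I first dissect $\A$: Lemma \ref{rd1} gives a connected component $\A_R$ of $\A$ containing a left neighbourhood of $\e_*$, while Lemma \ref{smallM} combined with Lemma \ref{fin1} shows $u_{0M} \in \A$ with $u_{0M} \to 0$ as $M \to 0^+$, forcing $\inf \A = 0$ and hence the existence of a component $\A_L = (0, b_L)$ with $b_L > 0$. Assumption \eqref{nonunfond} then yields $\A_L \neq \A_R$, so $b_L < \e_*$ and $b_L \in \partial \A$ by openness of $\A$ and maximality of $\A_L$.

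Let $M^\sharp$ be the supremum of those $M > 0$ for which at least one minimizer $u$ of $E$ in $\D_M$ satisfies $u(0) \in \A_L$. Lemma \ref{smallM} yields $M^\sharp > 0$, while Theorem \ref{thm:finale} gives $u_{0M} \to \e_*$ (pancake) or $u_{0M} \to +\infty$ (droplet) as $M \to +\infty$, so $u_{0M} \notin \A_L$ eventually and $M^\sharp < +\infty$. I construct two distinct minimizers at $M = M^\sharp$ by approaching the threshold from below and from above.

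From below, take $M_k \nearrow M^\sharp$ with minimizers $u_k \in \D_{M_k}$ having $u_k(0) \in \A_L$. The sequence $u_k(0)$ is bounded in $(0, b_L)$ and has a subsequential limit $u_0^\sharp \in [0, b_L]$; Lemma \ref{tendus0} (via \eqref{strongH}) excludes $u_0^\sharp = 0$ and Lemma \ref{muuE} excludes $u_0^\sharp = b_L$, since either would yield $M_k = \mu(u_k(0)) \to 0$ or $+\infty$. Hence $u_0^\sharp \in \A_L$; by continuous dependence (Lemma \ref{lemtec}) and Lemma \ref{contE}, $u_k \to u^\sharp$ in $C^2_{loc}$, where $u^\sharp$ solves $(P_{u_0^\sharp})$ with $\mu(u_0^\sharp) = M^\sharp$ and $E[u_k] \to E[u^\sharp]$. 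From above, take $M_k' \searrow M^\sharp$ with $M_k' > M^\sharp$ and any minimizers $\tilde u_k \in \D_{M_k'}$; by definition of $M^\sharp$, $\tilde u_k(0) \in \A \setminus \A_L$. Extract a subsequential limit $\tilde u_k(0) \to \tilde u_0^\sharp$, where divergence to $+\infty$ is excluded in the droplet case by Lemma \ref{thm:par} (using \eqref{H-add}), the boundary points in $\partial \A$ are excluded by Lemma \ref{muuE}, and $\tilde u_0^\sharp = \e_*$ (when $\e_* < +\infty$) by Lemma \ref{thm:pk}. Thus $\tilde u_0^\sharp \in \A \setminus \A_L$ and the associated $\tilde u^\sharp$ solves $(P_{\tilde u_0^\sharp})$ with mass $M^\sharp$ and $E[\tilde u_k] \to E[\tilde u^\sharp]$.

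Finally, I verify that $u^\sharp$ and $\tilde u^\sharp$ are minimizers at $M^\sharp$. Upper semi-continuity of $F(M) := \inf_{\D_M} E$ at $M^\sharp$ follows from the rescaling $v_\alpha(x) := v(\alpha x)$ applied to a minimizer $v \in \D_{M^\sharp}$: $v_\alpha \in \D_{M^\sharp/\alpha}$ and $E[v_\alpha] = \tfrac{\alpha}{2}\int |v'|^2 + \alpha^{-1}\int Q(v) \to E[v]$ as $\alpha \to 1$ (no singularity issue arises, since $v$ is already admissible). Combining with $E[u_k] = F(M_k)$, $E[u_k] \to E[u^\sharp]$, and $u^\sharp \in \D_{M^\sharp}$ (giving $E[u^\sharp] \ge F(M^\sharp)$), one obtains $F(M^\sharp) \le E[u^\sharp] = \lim F(M_k) \le F(M^\sharp)$, hence $E[u^\sharp] = F(M^\sharp)$; similarly for $\tilde u^\sharp$. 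Since $u_0^\sharp \in \A_L$ and $\tilde u_0^\sharp \in \A \setminus \A_L$ are distinct, and the maximal height is translation-invariant, $u^\sharp$ and $\tilde u^\sharp$ are distinct minimizers in $\D_{M^\sharp}$ even modulo translation. I expect the main obstacle to be the droplet-case analysis of the ``from above'' approach: a priori $\tilde u_k(0)$ could escape to $+\infty$, and ruling this out requires the quantitative asymptotics of Lemma \ref{thm:par}, which is precisely why \eqref{H-add} is imposed when $\e_* = +\infty$.
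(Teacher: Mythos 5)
Your proof is correct, but it takes a genuinely different route from the paper's. The paper argues by contradiction: assuming a unique minimizer $u_M$ for every $M$, it defines $\mathcal P(M):=u_M(0)\in\A$, proves $\mathcal P$ is continuous (using Lemmas \ref{tendus0}, \ref{muuE}, \ref{contE}, \ref{lemtec} and the same kind of rescaling comparison you use, combined crucially with the uniqueness hypothesis to identify the limit), observes $\mathcal P(0^+)=0$ and $\mathcal P(+\infty)=\e_*$, and concludes by the intermediate value theorem that $\mathcal P$ would be onto $(0,\e_*)$, contradicting $\A\neq(0,\e_*)$. Your argument is constructive and does not assume uniqueness at the outset: you isolate the first gap $b_L=\sup\A_L<\e_*$ in $\A$, define the threshold mass $M^\sharp$ at which the maximal height must cross this gap, and directly exhibit two minimizers at $M^\sharp$ by taking one-sided limits. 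Both approaches lean on the same technical pillars (Lemmas \ref{smallM}, \ref{tendus0}, \ref{muuE}, \ref{contE}, \ref{lemtec}, Theorem \ref{thm:finale}, and a mass-changing rescaling used to control $\inf_{\D_M}E$ across nearby masses), but they are orchestrated differently: the paper's contradiction argument is shorter, while yours has the advantage of not presupposing uniqueness and of actually pinning down a specific mass $M^\sharp$ at which non-uniqueness occurs, making the result slightly more informative. One stylistic remark: the step ``\eqref{nonunfond} yields $\A_L\neq\A_R$, so $b_L<\e_*$'' is logically backwards; the cleaner route is $\A\neq(0,\e_*)$ with $\A\subseteq(0,\e_*)$ forces $b_L<\e_*$ directly, and $b_L\in\partial\A\setminus\{0\}$ then follows from openness of $\A$. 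This is a cosmetic point and does not affect the correctness of the argument.
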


\begin{proof}
We argue by contradiction assuming that for every $M>0$ there exists a unique minimizer $u_M$ of $E$ in $\mathcal D_M$. By Lemma \ref{fin1}, we know that $u_{0M}:=u_M(0)\in \A$. Then the function $\mathcal P:(0,+\infty)\to \A$, $\mathcal P(M):=u_{0M}$, is well defined, and $u_M$ is the unique solution to $(P_{u_{0M}})$.

\smallskip

We claim that $\mathcal P$ is continuous. Take any sequence $M_k\to M\in (0,+\infty)$ as $k\to +\infty$ and take any subsequence (not relabeled) such that $\mathcal P(M_k)\to \alpha\in {\overline{\A}}$. Since $\alpha\in \partial\A$ is excluded by Lemmas \ref{tendus0} and \ref{muuE}, in fact $\alpha\in \A$. We will show that $\alpha=\mathcal P(M)$, which implies continuity in view of the arbitrariness of the subsequence.

\smallskip

Let $u$ be the solution of $(P_\alpha)$. By Lemma \ref{contE},
\begin{equation}
\label{noun1}
M_k=\mu(\mathcal P(M_k))\to \mu(\alpha)=M \quad\text{and}\quad E(\mathcal P(M_k))\to E(\alpha)=E[u] \quad\mbox{as $k\to +\infty$.}
\end{equation}
Since $\mu(\alpha)=M$ and since by assumption $u_M$ is unique, it suffices to prove that $E[u]\leq E[u_M]$. Indeed, then $u=u_M$ and thus $\alpha=\mathcal P(M)$, proving continuity of $\mathcal P$.

\smallskip

We set $\gamma_k=\frac{M_k}{M}\to 1$ as $k\to+\infty$. Since $\gamma_k u_M\in \mathcal D_{M_k}$ and $u_{M_k}$ is a minimizer in $\mathcal D_{M_k}$, it holds that
\begin{equation*}
\int \left(\frac{\gamma_k^2}{2} (u'_M)^2 + Q(\gamma_ku_M)\right)=E[\gamma_ku_M]\geq E[u_{M_k}] \qquad\forall k>0.
\end{equation*}
A straightforward application of dominated convergence theorem then yields $E[u]\le E[u_M]$:
$$
E[u_M] \longleftarrow E[\gamma_ku_M]\geq E[u_{M_k}]\stackrel{\eqref{noun1}}\longrightarrow E[u].
$$

We are now ready to conclude. We know that
\begin{equation*}
\begin{array}{ll}
\mathcal P(M)\to 0 \ \text{ as }M\to 0 \quad & \mbox{(Lemma \ref{smallM}),}
\\
\mathcal P(M)\to \e_* \ \text{ as }M\to +\infty  \quad & \mbox{(Theorem \ref{thm:finale}).} \\
\end{array}
\end{equation*}
These two information, together with the assumption $\A\neq (0,\e_*)$, contradict the continuity of $\mathcal P$ in $(0,+\infty)$ and complete the proof.
\end{proof}

\section{Model cases}
\label{s:mod}

Here we take a closer look at the four model cases referred to in the Introduction. We recall that large-mass asymptotic results depend on two quantities: $\e_*$, which is the smallest among the global minimum points of $R(s)=Q(s)/s$, if any, and $+\infty$ otherwise; and the sign of $S$. Uniqueness also depends on the sign of $Q''(s)$ for $s\in (0,\e_*)$.

\smallskip

Throughout the section, $N=1$ and $u_M$ is a minimizer of $E$ in $\mathcal D_M$.  Any $Q$ (and $R$) in this section obviously satisfies \eqref{Q'0}, \eqref{Q'11}, \eqref{strongH}, \eqref{H-add} if $-S>0$ and $D=0$, and \eqref{Q'<0}-\eqref{crit2} if $-S=0$, $B\le 0$ and $D=0$. Therefore, for the macroscopic shape we will only need to check whether $\e_*<+\infty$ or not (cf. Theorem \ref{thm:finale} and Theorem \ref{thm:crit}), and for uniqueness we will only need to discuss convexity of $Q$ in $(0,\e_*)$ (cf. Theorem \ref{uniq1d}).

\begin{proposition}[model $Q_a$]\label{prop:Qa}
Let
\begin{equation}
\label{modelQ1a-disc}
Q(s)=Q_{a}(s) = \begin{cases}
As^{1-m}- Bs^{1-n}-S & \mbox{for $s>0$},\quad A>0, \ B\in \R, \ S\in\R, \ 1<n<m<3, \\
0 & \mbox{for $s\leq 0$}.
\end{cases}
\end{equation}
\begin{itemize}
\item[$(i)$] {\it Uniqueness.} $u_M$ is:
\begin{itemize}
\item unique for any $M\in (0,+\infty)$ if $B\le 0$ or if $B>0$ and $-S\le 0$ or if $-S>0$ and $B\ge c_1(A,S)$, where
\begin{equation}\label{def:c1}
c_1(A,S):= \textstyle (m-1)\left(\frac{A}{n-1}\right)^{\frac{n-1}{m-1}} \left(\frac{-S}{m-n}\right)^{\frac{m-n}{m-1}};
\end{equation}
\item not unique for at least one $M\in (0,+\infty)$ if $-S>0$ and $c_2(A,S) \leq B < c_1(A,S)$, where
$$
c_2(A,S):=  \tfrac{m-1}{n}\left(\tfrac{Am}{n-1}\right)^{\frac{n-1}{m-1}} \left(\tfrac{-S}{m-n}\right)^{\frac{m-n}{m-1}}.
$$
\end{itemize}
\item[$(ii)$] {\it Macroscopic behavior.} For $M\gg 1$, $u_M$ is:
\begin{itemize}
\item droplet-shaped (Theorem \ref{thm:finale}) if $-S>0$ and $B< c_1(A,S)$;
\item pancake-shaped (Theorem \ref{thm:finale}) if $-S<0$, or if $-S=0$ and $B>0$, or if $-S>0$ and $B\ge c_1(A,S)$;
\item transition-shaped (Theorem \ref{thm:crit}) if $S=0$ and $B\le 0$.
\end{itemize}
\end{itemize}
\end{proposition}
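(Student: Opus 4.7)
The plan is to reduce everything to Theorems \ref{uniq1d}, \ref{no-un}, \ref{thm:finale}, and \ref{thm:crit} by computing the explicit form of $Q''$, $R$, $R'$, and $\e_*$ for $Q_a$, and then checking each hypothesis case-by-case. I would begin with the two key elementary computations:
\begin{equation*}
Q''(s)=Am(m-1)s^{-m-1}-Bn(n-1)s^{-n-1},\qquad R'(s)s^2 = -Ams^{1-m}+Bns^{1-n}+S.
\end{equation*}
From the first, $Q''\ge 0$ in all of $(0,+\infty)$ when $B\le 0$, while for $B>0$ the sign of $Q''$ changes exactly once at $s_1:=\bigl(\tfrac{Am(m-1)}{Bn(n-1)}\bigr)^{1/(m-n)}$, with $Q''>0$ on $(0,s_1)$.

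Next I would use $R'(s)s^2$ to determine $\e_*$ in each case. The function $R'(s)s^2$ is concave with maximum at $s_1$ (for $B>0$), goes to $-\infty$ at $0^+$ and to $S$ at $+\infty$. A case analysis (based on sign of $S$ and size of $B$) yields: $\e_*=+\infty$ exactly when $-S\ge 0$, $B\le 0$, or when $-S>0$ and $B<c_1(A,S)$; $\e_*<+\infty$ otherwise. The threshold $B=c_1(A,S)$ arises precisely from the system $\{R'(s_a)=0,\ R(s_a)=0\}$: this is an elementary algebraic computation combining $s_a^{m-n}=\tfrac{A(m-1)}{B(n-1)}$ (from $Q'(s_a)=0$) with the vanishing of $Q(s_a)$. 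Given this characterization of $\e_*$, part (ii) follows: Theorem \ref{thm:finale} gives pancakes when $\e_*<+\infty$ and droplets when $\e_*=+\infty$ together with \eqref{H-add} (immediate for $Q_a$ with $-S>0$); for $-S=0$ and $B\le 0$ one checks $Q'<0$ and invokes Theorem \ref{thm:crit}.

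For part (i), the uniqueness statements all follow from Theorem \ref{uniq1d} once we show $Q''\ge 0$ on $(0,\e_*)$. For $B\le 0$ this is trivial. For $B>0$ the key observation is that $R''(\e_*)=\e_*^{-1}Q''(\e_*)$ (since $R'(\e_*)=0$), so $R''(\e_*)\ge 0$ at a minimum forces $\e_*\le s_1$, hence $Q''\ge 0$ on $(0,\e_*)\subseteq(0,s_1)$. When $-S>0$ and $B\ge c_1$ this reasoning applies with $\e_*=s_a<s_1$. For non-uniqueness, in the range $-S>0$ and $c_2(A,S)\le B<c_1(A,S)$ the max of $R's^2$ is $\ge 0$ while $R(s_a)>0$ and $\e_*=+\infty$. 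Then $R$ decreases, has a positive local minimum at $s_a$, a larger local max at $s_b$, and decreases to $0^+$, so $\A=(0,s_a)\cup(s_*,+\infty)$ for some $s_*>s_b$, strictly contained in $(0,+\infty)=(0,\e_*)$. Since \eqref{strongH} and \eqref{H-add} clearly hold for $Q_a$, Theorem \ref{no-un} yields non-uniqueness. The algebraic identification of the threshold $c_2(A,S)$ as the value of $B$ making $\max R's^2=0$ is the one step that requires a somewhat delicate computation parallel to the one for $c_1$, and verifying the precise shape of $\A$ in the non-uniqueness regime is the main care-requiring point; all other steps are essentially bookkeeping.
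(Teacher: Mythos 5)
Your proposal is correct and follows essentially the same plan as the paper: compute $Q''$, $G(s)=-s^2R'(s)=Ams^{1-m}-Bns^{1-n}-S$, and locate $\e_*$, then reduce to Theorems \ref{uniq1d}, \ref{no-un}, \ref{thm:finale}, and \ref{thm:crit} by case analysis. One genuine improvement is your unified use of the second-order necessary condition $R''(\e_*)=\e_*^{-1}Q''(\e_*)\ge 0$ (valid because $R'(\e_*)=0$), which gives $\e_*\le s_1$ and hence $Q''\ge 0$ on $(0,\e_*)$ in all $B>0$ cases at once; the paper instead carries out two separate explicit computations (one for $B>0$, $-S\le 0$, evaluating $Q''(\e_*)$ via $R'(\e_*)=0$, and another for $B\ge c_1(A,S)$ via $G(g)<0\Rightarrow R'(g)>0\Rightarrow\e_*\le g=s_1$). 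Both routes land at the same place, but yours is the cleaner bookkeeping. Two small points: (a) $R'(s)s^2=-G(s)$ is not actually concave on all of $(0,+\infty)$ for $B>0$ (its second derivative changes sign at $s_2=\left(\tfrac{Am^2(m-1)}{Bn^2(n-1)}\right)^{1/(m-n)}>s_1$); what you use --- unimodality with unique maximum at $s_1$ and the boundary limits $-\infty$ at $0^+$, $S$ at $+\infty$ --- is correct and is all the argument needs. (b) In the boundary case $B=c_2(A,S)$, the zeros of $R'$ coincide ($s_a=s_b=s_1$), so $\A=(0,s_1)\cup(s_1,+\infty)$ and the ``$s_*>s_b$'' in your description should be ``$s_*\ge s_b$''; in any case you only need $\A\ne(0,\e_*)$, which, as the paper notes, already follows from $R'$ having any zero in $(0,\e_*)$.
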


Uniqueness of minimizers remains open if $-S>0$ and $0< B<c_2(A,S)$. Figure \ref{fig-Q1a} summarizes the results.

\begin{proof}
We have
\begin{equation*}
R(+\infty)=0, \qquad G(s):= -s^2 R'(s) = A m s^{1 - m} - B n s^{1 - n} - S \quad \mbox{for $s>0$},
\end{equation*}
hence $G(0)=+\infty$ and $G(+\infty)=-S$. Simple computations show that $G'$ has no zeroes if $B\le 0$. In this case $G$ is monotone decreasing in $(0,+\infty)$, which means that $G$, whence $R'$, never changes sign if $-S\ge 0$, whereas it changes sign once if $-S<0$. If instead $B>0$, $G'$ changes sign once, with its zero located at $g=\left(\frac{Am(m-1)}{Bn(n-1)}\right)^{\frac{1}{m-n}}$, and
\begin{eqnarray*}
g^{m-1} G(g) &=& A m - B n g^{m - n} - Sg^{m-1} = Am\tfrac{n-m}{n-1}-S \left(\tfrac{Am(m-1)}{Bn(n-1)}\right)^{\frac{m-1}{m-n}}.
\end{eqnarray*}
Hence, if $B>0$,
$$
\textstyle G(g)< 0 \quad \Leftrightarrow \quad -S\le 0 \ \mbox{ or } \ -S> 0 \ \mbox{ and } \ -S\left(\tfrac{Am(m-1)}{Bn(n-1)}\right)^{\frac{m-1}{m-n}}<Am\tfrac{m-n}{n-1},
$$
and the latter inequality is equivalent to $B>c_2(A,S)$. Summarizing:
$$
\mbox{$R'$} \left\{
\begin{array}{ll}
\mbox{changes sign once}  & \mbox{if $-S<0$ or if $-S=0$ and $B>0$}
\\[1ex]
\le 0  & \mbox{if $-S\ge 0$ and $B\le c_2(A,S)$}
\\[1ex]
 \mbox{changes sign twice}  & \mbox{if $-S>0$ and $B>c_2(A,S)$}.
\end{array}\right.
$$
If $R'$ never changes sign, then $\e_*=+\infty$. If $R'$ changes sign once, then $\e_*<+\infty$ and $R(\e_*)<0=R(+\infty)$. If $R'$ changes sign twice, then
$$
\mbox{$Q>0$ in $(0,+\infty)$} \ \iff \ 0<B<c_1(A,S)=n m^{\frac{1-n}{m-1}}c_2(A,S),
$$
wtih $c_1(A,S)$ as in \eqref{def:c1}. Note that $c_1(A,S)>c_2(A,S)$ since $n<m$. Therefore, recalling Remark \ref{rem:R2} and Remark \ref{rem:R1}, we deduce that $\e_*<+\infty$ in the cases listed in the pancake case of Proposition \ref{prop:Qa}, and $\e_*=+\infty$ otherwise. Distinguishing between droplets and transition is obvious.

\smallskip

As to $(i)$, we need to discuss the sign of $Q''(s)=Am(m-1) s^{-m-1} -Bn(n-1) s^{-n-1}$ in $(0,\e_*)$.  We distinguish various cases.

\smallskip

- If $B\le 0$, then $Q''\ge 0$ in $(0,+\infty)$.

\smallskip

- If $B>0$ and $-S\le 0$, then $e_*<+\infty$ and $Q''$ has one zero. Since $Q''(s)\to +\infty$ as $s\to 0^+$ and
    $$
    Q''(\e_*)= \e_*^{-2}(m-1) \left(Am \e_*^{1-m} -Bn\tfrac{n-1}{m-1}\e_*^{1-n}\right)\stackrel{R'(\e_*)=0}=  e_*^{-2}(m-1) \left(Bn\tfrac{m-n}{m-1}e_*^{1-n}+S\right)\ge 0,
    $$
$Q''>0$ in $(0,\e_*)$.

\smallskip

- If $-S>0$ and $B\ge c_1(A,S)$, we have seen that $\e_*<+\infty$ and that $R'$ changes sign twice. Since $G'(s)= -sQ''(s)$, the unique zero of $Q''$ is located at the above defined point $s_2=g$. Above, we have also seen that $G(g)<0$, hence $R'(g)>0$. Since $R'$ changes sign twice and recalling the definition of $\e_*$, this implies that $\e_*\le g$. Therefore $Q''\ge 0$ in $(0,\e_*)$.

\smallskip

- If $-S>0$ and $c_2(A,S) \leq B<c_1(A,S)$,  we have seen that $\e_*=+\infty$ and that $R'$ has at least one zero. Hence $\A\ne (0,+\infty)$ and Theorem \ref{no-un} applies.

\end{proof}

\begin{proposition}[model $Q_b$]\label{prop:Qb}
Assume \eqref{strange} and let
\begin{equation}
\label{modelQ1b-disc}
Q(s)=Q_{b}(s) = \begin{cases}
\frac{A|B| s}{|B|s^m+As^{n}}-S & \mbox{for $s>0$},\quad B< 0, \quad  1<m<n, \ m<3,\\
0 & \mbox{for $s\leq 0$}.
\end{cases}
\end{equation}
\begin{itemize}
\item[$(i)$] {\it Uniqueness.} $u_M$ is unique for any $M\in (0,+\infty)$.
\item[$(ii)$] {\it Macroscopic behavior.} For $M\gg 1$, $u_M$ is droplet-shaped if $-S>0$, pancake-shaped if $-S<0$, and transition-shaped if $S=0$ (cf. Theorems \ref{thm:finale} and \ref{thm:crit}).
\end{itemize}
\end{proposition}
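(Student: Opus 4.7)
The plan is to mirror the treatment of Proposition~\ref{prop:Qa}. The first step is to identify $\e_*$ from the structure of $R(s)=Q_b(s)/s=Ab/(bs^m+As^n)-S/s$ (with $b=|B|>0$). The first summand of $R$ is strictly positive, strictly decreasing on $(0,+\infty)$, divergent at $0^+$ and vanishing like $bs^{1-n}$ at $+\infty$. Three scenarios then arise according to the sign of $S$. If $-S>0$, both summands of $R$ are strictly positive and strictly decreasing, so $R$ decreases monotonically from $+\infty$ to $0^+$ and $\e_*=+\infty$. If $-S<0$, $R(s)\sim -S/s\to 0^-$ as $s\to +\infty$, so by continuity $R$ attains a negative global minimum and $\e_*<+\infty$. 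If $S=0$, $R$ is again strictly positive and strictly decreasing from $+\infty$ to $0^+$, so $\e_*=+\infty$.

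For (i), I plan to show that condition \eqref{strange} implies $Q_b''\ge 0$ on $(0,+\infty)$ and then invoke Theorem~\ref{uniq1d}. Writing $\psi(s)=bs^{m-1}+As^{n-1}$ (so that $Q_b+S=Ab/\psi$), the sign of $Q_b''$ equals that of $2\psi'^2-\psi''\psi$; expanding yields
\begin{equation*}
2\psi'(s)^2-\psi''(s)\psi(s)=s^{2m-4}\bigl[\,b^2m(m-1)-AbX\,s^{n-m}+A^2n(n-1)\,s^{2(n-m)}\bigr],
\end{equation*}
where $X:=m^2+n^2-4mn+m+n$. Regarded as a quadratic in $t:=s^{n-m}>0$ with positive outer coefficients, non-negativity on $(0,+\infty)$ is guaranteed by \eqref{strange}. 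Since convexity on $(0,\e_*)\subseteq(0,+\infty)$ is all that Theorem~\ref{uniq1d} requires, uniqueness of $u_M$ follows in all three regimes.

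For (ii), I would apply Theorem~\ref{thm:finale} when $-S\ne 0$ and Theorem~\ref{thm:crit} when $S=0$. The droplet case $-S>0$ additionally needs \eqref{H-add}, which is immediate: $Q_b(s)\to -S$ and $sQ_b'(s)=-b(n-1)s^{1-n}(1+o(1))\to 0$ as $s\to+\infty$. The pancake case $-S<0$ needs no further hypothesis. In the transition case $S=0$, the monotonicity $Q_b'(s)=-Ab\psi'(s)/\psi(s)^2<0$ verifies \eqref{Q'<0}, and $Q_b(s)\sim bs^{1-n}$ together with $Q_b'(s)\sim-b(n-1)s^{-n}$ at $+\infty$ verify \eqref{crit2} with $K=|B|$ and $p=n$, as already anticipated in Remark~\ref{rem:KvsAB}.

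The main technical obstacle is the convexity check in (i), which rests on a direct but slightly opaque algebraic identity; once this is in hand, the remaining verifications amount to reading off the asymptotics of $Q_b$ and $Q_b'$ at $0^+$ and $+\infty$, after which the general machinery developed in Sections~\ref{s3}--\ref{s4} takes over.
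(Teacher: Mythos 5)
Your proof is correct and follows essentially the same route as the paper: reduce uniqueness to the convexity of $Q_b$ on $(0,\e_*)$ via Theorem~\ref{uniq1d}, and determine $\e_*$ from the sign structure of $R$ before invoking Theorems~\ref{thm:finale} and~\ref{thm:crit}. You in fact supply more detail than the paper does on the convexity check — the paper simply asserts ``simple computations show that $Q''\ge 0$ iff \eqref{strange} holds,'' whereas you display the reduction to the sign of $2\psi'^2-\psi''\psi$ and the ensuing quadratic in $t=s^{n-m}$; the one remaining small ellipsis in your argument is the algebraic step identifying the quadratic's discriminant condition with \eqref{strange} (which amounts to the factorization $X^2-4mn(m-1)(n-1)=(m-n)^2\bigl[(1+m+n)^2-8mn\bigr]$, with $X=m^2+n^2-4mn+m+n$), but this matches the level of detail the paper itself allows. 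The paper characterizes $\e_*$ through $G(s)=-s^2R'(s)$ rather than by inspecting the two summands of $R$ directly as you do; the two are equivalent.
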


\begin{proof}
Simple computations show that $Q'<0$ in $(0,+\infty)$ and that $Q''\ge 0$ in $(0,+\infty)$ if and only if \eqref{strange} holds. Hence $(i)$ follows.  As to $(ii)$, we have
$$
G(s)=-s^2R'(s)= \frac{A|B|s(|B|ms^m+Ans^n)}{(|B|s^m+As^n)^2}-S,
$$
which has no zeroes if $-S\geq 0$ (hence $\e_*=+\infty$) and at least one zero if $-S<0$. In the latter case, since $-S<0$, $Q$ is not always positive, hence, by Remark \ref{rem:R1}, $\e_*<+\infty$.
\end{proof}

\begin{proposition}[model $Q_{a,g}$]\label{prop:Qag}
With $Q_a$ as in \eqref{modelQ1a-disc}, let
$$
Q(s)=Q_{a,g}(s)= Q_a(s) +\tfrac12 Ds^2 \quad\mbox{for $s>0$,} \quad D>0.
$$
\begin{itemize}
\item[$(i)$] {\it Uniqueness.} $u_M$ is unique for any $M\in (0,+\infty)$ if $-S\le 0$ or
$$\textstyle
B\leq \frac{m+1}{n(n-1)}\left(\frac{Am(m-1)}{n+1}\right)^{\frac{n+1}{m+1}}\left(\frac{D}{m-n}\right)^{\frac{m-n}{m+1}}=:c_3(A,D).
$$
\item[$(ii)$] {\it Macroscopic behavior.} For $M\gg 1$, $u_M$ is pancake-shaped (Theorem \ref{thm:finale}).
\end{itemize}
\end{proposition}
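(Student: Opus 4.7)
\textbf{Plan for Proposition \ref{prop:Qag}.} The strategy is to apply Theorem \ref{thm:finale} (pancake branch) for part (ii) and Theorem \ref{uniq1d} for part (i), after verifying the structural hypotheses \eqref{Q'0}, \eqref{Q'11}, and \eqref{strongH} for $Q_{a,g}$, which is routine from the explicit form of the potential: $R'(s)\sim -mAs^{-m-1}$ at $0$ and $R'(s)\sim D/2>0$ at $+\infty$ gives \eqref{H-R}; the asymptotic $Q'(s)\sim -A(m-1)s^{-m}$ at $0$ gives \eqref{strongH}; and real-analyticity of $R$ combined with $R(\e_*)=\min R$ yields \eqref{Q'11}.

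For part (ii), I would observe that
$$
R_{a,g}(s)=\frac{Q_{a,g}(s)}{s}=As^{-m}-Bs^{-n}-\frac{S}{s}+\frac{D}{2}s
$$
tends to $+\infty$ both as $s\to 0^+$ (due to the $As^{-m}$ term) and as $s\to +\infty$ (driven by the gravitational term $Ds/2$ with $D>0$). Hence $R_{a,g}$ attains its minimum on $(0,+\infty)$, so $\e_*<+\infty$ unconditionally on $S$ and $B$, and the pancake asymptotics follow directly from Theorem \ref{thm:finale}.

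For part (i), by Theorem \ref{uniq1d} it suffices to prove $Q_{a,g}''\ge 0$ on $(0,\e_*)$, where
$$
Q_{a,g}''(s)=Am(m-1)s^{-m-1}-Bn(n-1)s^{-n-1}+D.
$$
If $B\le 0$, $Q_{a,g}''>0$ automatically. For $B>0$, note that $Q_{a,g}''(0^+)=+\infty$, $Q_{a,g}''(+\infty)=D$, and $Q_{a,g}''$ has a unique minimizer at $s_0=\bigl(\tfrac{Am(m-1)(m+1)}{Bn(n-1)(n+1)}\bigr)^{1/(m-n)}$. A direct substitution gives $Q_{a,g}''(s_0)=D-\tfrac{Bn(n-1)(m-n)}{m+1}s_0^{-n-1}$, and the condition $Q_{a,g}''(s_0)\ge 0$ simplifies, after inserting the explicit value of $s_0$ and taking $(m+1)$-th roots, exactly to $B\le c_3(A,D)$. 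This settles the range $B\le c_3(A,D)$.

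The remaining case $-S\le 0$ with $B>c_3(A,D)$ is the main obstacle: here $Q_{a,g}''$ has two zeroes $s_1<s_2$ with $Q_{a,g}''<0$ on $(s_1,s_2)$, so convexity fails globally and one must instead prove $\e_*\le s_1$. The key tool is the identity $G(s):=Q(s)-sQ'(s)=-s^2R'(s)$, whose derivative satisfies $G'(s)=-sQ''(s)$; thus $G$ is strictly decreasing on $(0,s_1)$ from $G(0^+)=+\infty$. Using $Q_{a,g}''(s_1)=0$ to eliminate $D$ in the expression for $G(s_1)$ yields
$$
G(s_1)=\tfrac{Am(m+1)}{2}s_1^{1-m}-\tfrac{Bn(n+1)}{2}s_1^{1-n}-S.
$$
The delicate algebraic step is to show that the first two terms sum to a non-positive quantity: using the relation $Bn(n-1)s_1^{m-n}=Am(m-1)+Ds_1^{m+1}$, this reduces to $D(n+1)s_1^{m+1}\ge 2Am(n-m)$, whose right-hand side is negative since $n<m$. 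Hence $G(s_1)\le -S\le 0$, so $G$ must vanish on $(0,s_1]$, forcing $\e_*\le s_1$ and $Q_{a,g}''>0$ on $(0,\e_*)$. Everything else is routine bookkeeping; this one algebraic reduction is the only non-trivial point.
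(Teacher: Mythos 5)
Your part (ii) and the sub-cases $B\le 0$ and $0<B\le c_3(A,D)$ of part (i) coincide with the paper's argument. For the remaining case $-S\le 0$, $B>c_3(A,D)$ you take a genuinely different and more self-contained route: the paper writes $R'=R_a'+\tfrac{D}{2}$ and $Q''=Q_a''+D$ and recycles the facts, already established for $Q_a$ with $B>0$, $-S\le 0$, that $R_a'\ge0$ on $[\e_a,+\infty)$ and $Q_a''\ge0$ on $(0,\e_a)$, so that $\e_*<\e_a$ and $Q''>0$ on $(0,\e_*)$; whereas you evaluate $G=Q-sQ'=-s^2R'$ directly at the first inflection point $s_1$ of $Q$, using $Q''(s_1)=0$ to eliminate $D$, and your algebra correctly yields $G(s_1)\le -S\le 0$. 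The paper's route is shorter because it reuses Proposition \ref{prop:Qa}; yours avoids that dependency.

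There is, however, a gap in your final step. From ``$G(s_1)\le 0$, so $G$ vanishes on $(0,s_1]$'' you cannot yet conclude $\e_*\le s_1$: this only produces a local minimum $t_1\le s_1$ of $R$ (the first zero of $G$), while $\e_*$ is the \emph{smallest global} minimizer of $R$. You must still exclude a deeper local minimum of $R$ beyond $s_2$. Since $G'=-sQ''$, $G$ increases on $(s_1,s_2)$ and decreases to $-\infty$ on $(s_2,+\infty)$; if $G(s_2)>0$, then $R$ has a local maximum in $(s_1,s_2)$ followed by another local minimum $t_3>s_2$, and a priori $R(t_3)<R(t_1)$ is possible, which would give $\e_*=t_3>s_1$ and invalidate the conclusion. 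Fortunately the fix is one line: your identity $G(s)=\tfrac{Am(m+1)}{2}s^{1-m}-\tfrac{Bn(n+1)}{2}s^{1-n}-S$ and the ensuing reduction to $D(n+1)s^{m+1}\ge 2Am(n-m)$ used only that $Q''(s)=0$, which holds equally at $s_2$; hence $G(s_2)\le -S\le 0$ as well. With $G(s_1)\le 0$, $G(s_2)\le 0$, and the sign of $G'$, one obtains $G\le 0$ (i.e.\ $R'\ge 0$) on all of $[s_1,+\infty)$, so $R$ is non-decreasing on $[t_1,+\infty)$ and $\e_*=t_1\le s_1$. Once you make that observation explicit, your proof is correct.
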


In the case $-S>0$ and $B>c_3(A,D)$, we do not know how to ascertain the sign of $Q''$ in $(0,\e_*)$.

\begin{proof} Since $R(0)=R(+\infty)=+\infty$, we always have $\e_*<+\infty$ (cf. \eqref{def-s*}), hence $(ii)$ is trivial. As for uniqueness, we discuss the sign of $Q''(s)=Am(m-1) s^{-m-1} -Bn(n-1) s^{-n-1} +D$ in $(0,\e_*)$. If $B\le 0$, then $Q''>0$ in $(0,+\infty)$. Hence we only consider $B>0$. It follows from the study of $Q'''$ that $s_3= \left(\frac{A(m+1)m(m-1)}{B(n+1)n(n-1)}\right)^{\frac{1}{m-n}}$ is the unique point of minimum of $Q''$. After straightforward computations we obtain that $Q''(s_3)\geq 0$ if and only if $B\le c_3(A,D)$. In this case $Q''\ge 0$ in $(0,+\infty)$. It remains to consider $B>c_3(A,D)$. If $-S\leq 0$, we take advantage of the results seen for $Q_a$. Indeed, in this case $R_a'$ has a unique zero, located at $\e_a$, with $R'_a\geq 0$ in $[\e_a,+\infty)$ and $Q''_a\geq 0$ in $(0,\e_a)$. Thus $R'=R'_{a}+\tfrac{D}{2}>0$ in $[\e_a,+\infty)$. Since by definition $R'(\e_*)=0$, this implies that $\e_*<\e_a$. Since $Q''=Q''_a+D\ge D >0$ in $(0,\e_a)$, we conclude that $Q''>0$ in $(0,\e_*)$.
\end{proof}

\begin{proposition}[model $Q_{b,g}$]\label{prop:Qbg}
Assume \eqref{strange}. With $Q_b$ as in \eqref{modelQ1b-disc}, let
$$
Q(s)=Q_{b,g}(s)= Q_b(s) +\tfrac12 Ds^2 \quad\mbox{for $s>0$,} \quad D>0.
$$
\begin{itemize}
\item[$(i)$] {\it Uniqueness.} $u_M$ is unique for any $M\in (0,+\infty)$.
\item[$(ii)$] {\it Macroscopic behavior.} For $M\gg 1$, $u_M$ is pancake-shaped (Theorem \ref{thm:finale}).
\end{itemize}
\end{proposition}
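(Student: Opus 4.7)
The plan is to reduce both assertions to the general theorems already established (Theorem \ref{thm:finale} for macroscopic shape and Theorem \ref{uniq1d} for uniqueness), exploiting two simple observations: the gravitational term $\tfrac12 Ds^2$ forces $R$ to blow up at infinity, and it contributes a strictly positive amount to $Q''$.

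The first step is to verify the standing hypotheses \eqref{Q'0} and \eqref{Q'11} for $Q_{b,g}$. This is routine: as $s\to 0^+$ the gravity term is negligible and $Q_{b,g}(s)\sim As^{1-m}$, so \eqref{H} and \eqref{H-EL} are inherited directly from $Q_b$; moreover $R_{b,g}'$ is a sum of rational and power terms, so its zeros are isolated, with $R_{b,g}'(s)\to -\infty$ as $s\to 0^+$ and $R_{b,g}'(s)\to D/2>0$ as $s\to +\infty$, which gives \eqref{H-R}.

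For (ii), I would simply observe that
\[
R_{b,g}(s)=\frac{Q_b(s)}{s}+\tfrac12 Ds \;\longrightarrow\; +\infty \quad\text{both as $s\to 0^+$ and as $s\to +\infty$},
\]
the first limit inherited from the short-range singularity $As^{1-m}$ of $Q_b$, the second from the gravity contribution (since $Q_b(s)/s\to 0$). Continuity on $(0,+\infty)$ then forces $R_{b,g}$ to attain a global minimum, so $\e_*<+\infty$ by \eqref{def-s*}. Because $\e_*$ is a minimum of a non-constant analytic function, $R_{b,g}'<0$ on a subinterval abutting $\e_*$ from the left, verifying \eqref{Q'11}. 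Theorem \ref{thm:finale} then delivers the pancake asymptotics.

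For (i), I would verify the convexity condition \eqref{H3}. In the proof of Proposition \ref{prop:Qb} it was shown that assumption \eqref{strange} is precisely equivalent to $Q_b''\ge 0$ on $(0,+\infty)$; adding the gravitational term then yields
\[
Q_{b,g}''(s)=Q_b''(s)+D>0\quad \text{for all } s>0,
\]
which \textit{a fortiori} verifies \eqref{H3} on $(0,\e_*)$. Theorem \ref{uniq1d} immediately gives uniqueness. There is essentially no obstacle here: the gravity term does all the work, by pushing $R$ to infinity (ensuring pancake shape) and by strictly increasing $Q''$ (ensuring convexity). The only point worth flagging is the verification of \eqref{Q'11}, which ultimately rests on the elementary fact that zeros of $R_{b,g}'$ are isolated, a consequence of the rational/monomial structure of $Q_{b,g}$.
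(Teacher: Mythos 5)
Your proposal is correct and follows essentially the same route as the paper: $\e_*<+\infty$ because $D>0$ forces $R(0)=R(+\infty)=+\infty$, and $Q_{b,g}''=Q_b''+D>0$ by \eqref{strange}, so Theorems \ref{thm:finale} and \ref{uniq1d} apply directly. The only difference is that you spell out the verification of the standing hypotheses \eqref{Q'0} and \eqref{Q'11}, which the paper dispenses with in one remark at the start of Section \ref{s:mod}; that is a reasonable expansion, not a genuinely different argument.
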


\begin{proof}
Since $D>0$, we have $R(0)=R(+\infty)=+\infty$, thus $\e_*<+\infty$. Under assumption \eqref{strange}, $Q''_b\geq 0$ in $(0,+\infty)$, thus $Q''=Q''_b + D>0$ in $(0,+\infty)$.
\end{proof}

\section*{Appendix}
\setcounter{definition}{0}
\setcounter{equation}{0}
\renewcommand{\theequation}{A.\arabic{equation}}
\renewcommand{\thedefinition}{A.\arabic{definition}}
\renewcommand{\thetheorem}{A.\arabic{theorem}}
\renewcommand{\theremark}{A.\arabic{remark}}
\renewcommand{\thelemma}{A.\arabic{lemma}}

\begin{lemma}
\label{aux2}
Let $u$ be a non-negative measurable function. Then the set $\mathcal{C}=\{\delta>0 : |\{u=\delta\}|>0\}$ is countable.
\end{lemma}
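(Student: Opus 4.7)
The plan is to exploit disjointness of the level sets together with $\sigma$-finiteness of the ambient measure. The key observation is that the sets $\{u=\delta\}$, as $\delta$ ranges over $\mathcal{C}$, are pairwise disjoint; hence their measures cannot be too large too often in any set of finite measure.

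I would proceed as follows. First I would write $\R^N = \bigcup_{k\in \N} B_k(0)$, which is a countable exhaustion by sets of finite Lebesgue measure. For each $k,n\in \N$, set
\begin{equation*}
\mathcal{C}_{k,n}:=\left\{\delta>0 \,:\, |\{u=\delta\}\cap B_k(0)|>\tfrac{1}{n}\right\}.
\end{equation*}
Since $\delta\in \mathcal{C}$ implies $|\{u=\delta\}|>0$, and since by monotone convergence $|\{u=\delta\}\cap B_k(0)|\nearrow |\{u=\delta\}|$ as $k\to+\infty$, there exist $k,n$ with $\delta\in \mathcal{C}_{k,n}$. Therefore
\begin{equation*}
\mathcal{C}=\bigcup_{k\in\N}\bigcup_{n\in \N} \mathcal{C}_{k,n}.
\end{equation*}

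Next I would show that each $\mathcal{C}_{k,n}$ is finite. Indeed, if $\delta_1,\ldots,\delta_J$ are distinct elements of $\mathcal{C}_{k,n}$, the sets $\{u=\delta_j\}\cap B_k(0)$ are pairwise disjoint subsets of $B_k(0)$, so
\begin{equation*}
\tfrac{J}{n}<\sum_{j=1}^J |\{u=\delta_j\}\cap B_k(0)| \le |B_k(0)|,
\end{equation*}
which forces $J\le n|B_k(0)|<+\infty$. Hence $\mathcal{C}_{k,n}$ has cardinality at most $n|B_k(0)|$, and $\mathcal{C}$, being a countable union of finite sets, is countable.

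There is no serious obstacle here; the only point worth noting is that one cannot immediately bound $\mathcal{C}$ by the total measure of $\R^N$ (which is infinite), so the $\sigma$-finite decomposition is genuinely needed to reduce to the elementary finite-measure argument.
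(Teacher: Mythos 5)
Your proof is correct, but it takes a different route from the paper's. The paper reduces the claim to a standard fact: the distribution function $\mu(\delta)=|\{u>\delta\}|$ is non-increasing, $\delta\in\mathcal{C}$ exactly when $\mu$ jumps at $\delta$, and a monotone function has at most countably many discontinuity points. Your proof instead argues directly at the level of the sets $\{u=\delta\}$: they are pairwise disjoint, so inside any finite-measure piece $B_k(0)$ of a $\sigma$-finite exhaustion only finitely many of them can have measure exceeding $1/n$, and $\mathcal{C}=\bigcup_{k,n}\mathcal{C}_{k,n}$ is then a countable union of finite sets. Your version is more self-contained (it does not invoke the discontinuity-set theorem or the notion of distribution function) and, as a small bonus, it is insensitive to whether $|\{u>\delta\}|$ is finite: the paper's argument is cleanest when the distribution function is real-valued, which holds in the application ($u\in H^1\cap L^1$) but not for a completely arbitrary non-negative measurable $u$. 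Conceptually, though, the two proofs share the same engine — disjoint contributions to a finite total force finiteness above each threshold $1/n$ — since that is precisely how one proves the countability of jumps of a monotone function; your proof essentially unfolds that standard lemma rather than citing it.
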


\begin{proof}
Let $\mu: [0,+\infty)\to [0,+\infty)$ be the distribution function of $u$, that is, $\mu(\delta)=|\{u>\delta\}|$. By definition, $\delta$ is a discontinuity point of $\mu$ if and only if $\delta\in\mathcal{C}$. Since $\mu$ is non-increasing (\cite[Remark 1.1.1]{Kes}) and since a monotone function has at most a countable set of discontinuity points (\cite[Theorem 4.30]{Rud}), $\mathcal{C}$ is at most countable.
\end{proof}

\begin{lemma}
\label{aux}
Let $u$ be a non-negative measurable function on $\RN$. Then $\chi_{\{u>\delta\}}\to \chi_{\{u>0\}}$ almost everywhere in $\RN$ as $\delta\to 0^+$.
\end{lemma}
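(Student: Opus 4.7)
The statement asserts pointwise (in fact, everywhere) convergence $\chi_{\{u>\delta\}}(x)\to \chi_{\{u>0\}}(x)$ as $\delta\to 0^+$. My plan is to fix an arbitrary $x\in\RN$ and split into two cases according to the value of $u(x)$, since the indicator-function structure makes each case essentially immediate.

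First, if $u(x)=0$, then $\chi_{\{u>\delta\}}(x)=0$ for every $\delta>0$ and $\chi_{\{u>0\}}(x)=0$, so the convergence at $x$ is trivial. Second, if $u(x)>0$, then for every $0<\delta<u(x)$ we have $\chi_{\{u>\delta\}}(x)=1$, while $\chi_{\{u>0\}}(x)=1$ as well; hence $\chi_{\{u>\delta\}}(x)\to \chi_{\{u>0\}}(x)$ as $\delta\to 0^+$. Since these two cases exhaust all $x\in\RN$, pointwise convergence holds everywhere, which is a fortiori the claimed a.e.\ convergence.

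There is essentially no obstacle: no measure-theoretic subtlety is needed (we do not even need to throw away a null set), and no monotone/dominated convergence argument is invoked. The only mild comment worth making is that the family $\{\chi_{\{u>\delta\}}\}_{\delta>0}$ is non-decreasing as $\delta\to 0^+$, so one could alternatively phrase the result as $\chi_{\{u>\delta\}}\uparrow \chi_{\{u>0\}}$ pointwise, which is exactly the form used in the proof of Lemma~\ref{lem3} together with Beppo Levi's theorem.
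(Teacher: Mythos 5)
Your proof is correct, and it is in fact a cleaner and stronger argument than the paper's. You simply check each $x\in\RN$: if $u(x)=0$ both sides vanish for every $\delta>0$, and if $u(x)>0$ both sides equal $1$ once $\delta<u(x)$, giving \emph{everywhere} (not just a.e.) pointwise convergence. The paper instead observes that the exceptional set is $\omega=\bigcap_{\delta>0}\{0<u\le\delta\}$, invokes monotone convergence of the measures of the nested sets $\{0<u\le\delta\}$ (which, as stated for a general measurable $u$ on $\RN$, tacitly needs one of those sets to have finite measure — harmless in the paper's application to $L^1$ functions but a small gap in the lemma as written), and then shows $|\omega|=0$ via the inequality $\int_\omega u\le\delta|\omega|$. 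Your approach sidesteps both issues by noticing the stronger fact that $\omega$ is actually empty: no $x$ can satisfy $0<u(x)\le\delta$ for all $\delta>0$. The integral argument and the measure-continuity step in the paper are thus superfluous, and your version would also be the natural one to cite when applying the lemma together with Beppo Levi's theorem, as you observe.
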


\begin{proof}
First we note that
$$
\chi_{\{u>0\}}-\chi_{\{u>\delta\}}=\chi_{\{0<u\leq\delta\}}.
$$
Therefore the result is equivalent to proving that $\chi_{\{0<u\leq\delta\}}\to 0$ a.e. as $\delta\to 0$. Since $\{0<u\leq\delta\}$ monotonically shrinks as $\delta$ decreases, we have
\begin{equation*}
\lim_{\delta\to 0}\left|\{0<u\leq \delta\}\right| =|\omega|, \quad \text{with } \omega=\bigcap_{\delta>0}\{0<u\leq \delta\}.
\end{equation*}
In order to show that $|\omega|=0$, we note that, since $0<u\le \delta$ a.e. in $\omega$ for any $\delta>0$,
$$
\int_\omega u \le \delta |\omega| \quad \forall \delta>0 \qquad\Rightarrow\qquad \int_\omega u=0.
$$
Since $u>0$ a.e. in $\omega$, we conclude that $|\omega|=0$.
\end{proof}

We conclude with an ODE lemma.
\begin{lemma}
\label{lemtec}
Let $D:=(0,+\infty)$ and let $f,f_k:D\to \R$ be continuous functions such that $f_k\to f$ in $C_{loc}(D)$. Let $u_k\in C^2((-r,r))\cap C^0([-r,r])$ be a solution of
\begin{equation}
\label{pbapp}
\begin{cases}
\displaystyle -u_k''= f_k(u_k) & \mbox{in }\{u_k>0\}=(-r_k,r_k) \\[1ex]
u_k(0)=u_{0k},\ u'_k(0)=0,\ u_k(\pm r_k)=0,
\end{cases}
\end{equation}
non-increasing in $(0,r_k)$, and let $u\in C^2((-r,r))\cap C^0([-r,r])$ be the unique solution of
\begin{equation*}
\left\{
\begin{array}{ll}
\displaystyle -u''= f(u) & \mbox{in }\{u>0\}=(-r,r) \\[1ex]
u(0)=u_0,\ u'(0)=0, \mbox{ $u(\pm r)=0$ if $r<+\infty$.}
\end{array}\right.
\end{equation*}
Let $\displaystyle F_k(s)=\int_{s}^{u_{0k}} f_k(t) dt$ be such that
\begin{equation}
\label{F-lim}
\lim_{(k,s)\to (+\infty,0^+)} F_k(s)= +\infty.
\end{equation}
If $u_{0k}\to u_0\in D$ as $k\to +\infty$, then $r_k \to r$ and $u_k \to u$ in $C^2_{loc}((-r,r))$.
\end{lemma}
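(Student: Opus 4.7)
The plan is to reduce the problem to convergence of an explicit quadrature formula arising from the first integral of the (autonomous) ODE. Multiplying $-u_k''=f_k(u_k)$ by $u_k'$ and integrating over $(0,x)$, using $u_k(0)=u_{0k}$, $u_k'(0)=0$, and $F_k(u_{0k})=0$, yields the energy identity
\begin{equation}\label{plapp1}
\tfrac12(u_k'(x))^2=F_k(u_k(x))\quad\text{for all }x\in(-r_k,r_k).
\end{equation}
In particular $F_k\geq 0$ on the range of $u_k$, which is $[0,u_{0k}]$. Since $u_k$ is non-increasing on $[0,r_k]$, \eqref{plapp1} gives $u_k'=-\sqrt{2F_k(u_k)}$ there, and the change of variable $s=u_k(x)$ converts $\int_0^{r_k}dx$ into the quadrature
\begin{equation}\label{plapp2}
r_k=\int_0^{u_{0k}}\frac{ds}{\sqrt{2F_k(s)}}.
\end{equation}
An analogous formula $r=\int_0^{u_0}(2F(s))^{-1/2}ds$ (understood as $+\infty$ if the integral diverges) holds for the limit $u$, by the same reasoning.

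The first main step is to show $r_k\to r$ by passing to the limit in \eqref{plapp2}. For any small $\delta>0$, the local uniform convergence $f_k\to f$ on $D$ together with $u_{0k}\to u_0$ implies $F_k\to F$ uniformly on $[\delta,\max\{u_{0k},u_0\}]$ for $k\gg 1$, with the singularity of $(2F_k)^{-1/2}$ at the upper endpoint of order $(u_{0k}-s)^{-1/2}$ and therefore uniformly dominated by an integrable function; hence dominated convergence yields
\begin{equation*}
\int_\delta^{u_{0k}}\tfrac{ds}{\sqrt{2F_k(s)}}\longrightarrow\int_\delta^{u_0}\tfrac{ds}{\sqrt{2F(s)}}\quad\text{as }k\to+\infty.
\end{equation*}
On the complementary interval $(0,\delta)$, hypothesis \eqref{F-lim} supplies, for any $M>0$, a $\delta_M>0$ and a $k_M$ such that $F_k(s)\geq M$ on $(0,\delta_M]$ for all $k\geq k_M$, whence $\int_0^{\delta_M}(2F_k(s))^{-1/2}ds\leq\delta_M/\sqrt{2M}$ can be made arbitrarily small, uniformly in $k$. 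Combining the two estimates gives $r_k\to r$.

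For the $C^2_{loc}((-r,r))$ convergence of $u_k$ to $u$, invoke the classical continuous dependence theorem for ODEs (e.g. \cite[Theorem~8.40]{KePe}, already invoked elsewhere in the paper) applied to the initial value problem $-v''=f_k(v)$, $v(0)=u_{0k}$, $v'(0)=0$: since $f_k\to f$ locally uniformly on $D$ and $u_{0k}\to u_0\in D$, the corresponding solutions converge in $C^2_{loc}$ on the maximal interval of existence of the limit, which is precisely $(-r,r)$. Alternatively, the convergence can be extracted directly from the quadrature by writing $Z_k(u):=\int_u^{u_{0k}}(2F_k(s))^{-1/2}ds$: the identity $Z_k(u_k(x))=x$ expresses $u_k$ as $Z_k^{-1}$, and the splitting argument above yields $Z_k\to Z$ locally uniformly on $(0,u_0]$, whence $u_k\to u$ locally uniformly on $[0,r)$; convergence of $u_k'$ then follows from \eqref{plapp1} and of $u_k''$ directly from the ODE.

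The principal difficulty lies entirely in the tail of the quadrature \eqref{plapp2} near $s=0$: without uniform control of the singular integrand, $r_k$ could fail to track $r$ (for instance, it could escape to $+\infty$ while $r$ is finite). The hypothesis \eqref{F-lim} is precisely what guarantees the uniform smallness of this tail and thus the convergence of $r_k$.
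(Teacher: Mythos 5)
Your route to $r_k\to r$ is genuinely different from the paper's: the paper first gets $\liminf_k r_k\ge r$ from local $C^2$ convergence (via the continuous-dependence theorem), then proves $\limsup_k r_k\le r$ by contradiction using the first integral and \eqref{F-lim} to force $u_k$ negative before $R=\limsup r_k$; you instead pass to the limit directly in the quadrature $r_k=\int_0^{u_{0k}}(2F_k)^{-1/2}\,ds$, splitting at $\delta$ and using \eqref{F-lim} for the tail. That is a legitimate, arguably more transparent strategy, and the use of \eqref{F-lim} for the near-zero tail is the same idea in a different guise. However, the middle step is stated loosely and would need to be tightened. First, the quadrature presupposes $F_k>0$ on $(0,u_{0k})$, i.e.\ that $u_k$ is \emph{strictly} decreasing; the hypotheses only say non-increasing, so a short argument (from $\tfrac12(u_k')^2=F_k(u_k)\ge 0$ and the fact that $F_k$ can only touch $0$ at a local minimum, combined with $r_k<+\infty$) is needed to rule out plateaus. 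Second, the ``uniformly dominated by an integrable function'' near the upper endpoint is not literally Lebesgue dominated convergence: the singularity sits at $s=u_{0k}$, which moves with $k$, so you really need uniform integrability (Vitali) or an explicit split near $u_0$; moreover the bound $F_k(s)\gtrsim (u_{0k}-s)$ there requires $f(u_0)>0$, which fails precisely when the limit $u$ is the constant solution and $r=+\infty$ -- a case that occurs in the paper's applications (e.g.\ Lemma \ref{smallM} with $\alpha\in\partial\A$). That case should be treated separately, say via Fatou, which gives $\liminf_k r_k\ge\int_0^{u_0}(2F)^{-1/2}=+\infty$. With these repairs the argument goes through; the paper's contradiction argument avoids these technicalities by not manipulating the singular quadrature globally, at the cost of being less explicit.
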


\begin{proof}
Thanks to Theorem 8.39 and Theorem 8.40 of \cite{KePe}, $u_k\to u$ in $C^2_{loc}((-r,r))$. It follows that for all $x\in (-r,r)$, $u_k(x)$ is positive for $k$ sufficiently large; hence $\displaystyle r \leq \liminf_{k\to +\infty} r_k$. It remains to prove that $\displaystyle r\geq \limsup_{k\to +\infty} r_k=:R$. If $r=+\infty$, nothing is to be proved. If $r<+\infty$, assume by contradiction that $\displaystyle r< R$. Multiplying  the equation in \eqref{pbapp} by $-u_k'$, integrating in $(0,x)$ and using the initial conditions, we have
\begin{equation}
\label{int}
(u'_k(x))^2=2 F_k(u_k(x)).
\end{equation}
Since $u$ is continuous at $x=r$ and $u(r)=0$, for every $\varepsilon>0$ there exists $\delta>0$ such that $u(r-\delta)<\varepsilon/2$. By locally uniform convergence and recalling that $u_k$ is non-increasing, $u_k<\varepsilon$ in $(r-\delta, R-\delta)$ for all $k$ sufficiently large.
Fix $x\in (r-\delta, R-\delta)$. Since $u_k$ is non-increasing, it follows from \eqref{int} that $u'_k(x)=-\sqrt{2} \sqrt{F_k(u_k(x))}.$
Hence, \eqref{F-lim} implies that for every $M>0$ we can choose $\varepsilon$ sufficiently small and $k$ sufficiently large such that
\begin{equation}
\label{edoapp}
u'_k(y)<-M \qquad \forall y\in(r-\delta,x),
\end{equation}
whence we deduce that for every $k$ sufficiently large
$$
u_k(x)=\int_{r-\delta}^x u'_k(y) dy + u_k(r-\delta) \stackrel{\eqref{edoapp}}< -M(x-r+\delta) + \varepsilon.
$$
Choosing $M$ sufficiently large and recalling that $u_k$ is non-negative, we obtain a contradiction.
\end{proof}

\subsection*{Acknowledgements} The first author gratefully thanks the Department of Mathematics ``Ulisse Dini'' of the University of Florence for having supported this work with a Research Grant.

\subsection*{Data availability statement} All data generated or analysed during this study are included in this published article.

\bibliography{biblio}{}
\bibliographystyle{abbrv}

\end{document}